\def\@footnotecolor{red}
\def\@footnotemark{%
    \leavevmode
    \ifhmode\edef\@x@sf{\the\spacefactor}\nobreak\fi
    \stepcounter{Hfootnote}%
    \global\let\Hy@saved@currentHref\@currentHref
    \hyper@makecurrent{Hfootnote}%
    \global\let\Hy@footnote@currentHref\@currentHref
    \global\let\@currentHref\Hy@saved@currentHref
    \hyper@linkstart{footnote}{\Hy@footnote@currentHref}%
    \@makefnmark
    \hyper@linkend
    \ifhmode\spacefactor\@x@sf\fi
    \relax
  }%
\DeclareMathAlphabet\mathbfcal{OMS}{cmsy}{b}{n}
\newcommand{\be}{\begin{equation}}
\newcommand{\ee}{\end{equation}}
\newcommand{\bea}{\begin{eqnarray}}
\newcommand{\eea}{\end{eqnarray}}
\newcommand{\beas}{\begin{eqnarray*}}
	\newcommand{\eeas}{\end{eqnarray*}}
\newcommand{\E}{{\mathbf{E}}}
\newcommand{\R}{{\mathbf{R}}}
\newcommand{\U}{{\mathbf{U}}}
\newcommand{\diag}{{\rm diag}}
\newcommand{\polylog}{{\rm polylog}}
\newcommand{\SVD}{{\rm SVD}}
\newcommand{\argmin}{\mathop{\rm arg\min}}
\newcommand{\argmax}{\mathop{\rm arg\max}}
\newcommand{\bp}{\boldsymbol{p}}
\newcommand{\br}{\boldsymbol{r}}
\renewcommand{\t}{^{\top}}
\newcommand{\utilde}{\mathbf{\widetilde{U}}}
\newcommand{\uhat}{\mathbf{\widehat{U}}}
\newtheorem{lemma}{Lemma}
\newtheorem{theorem}{Theorem}
\newtheorem{remark}{Remark}
\newtheorem{corollary}{Corollary}
\newtheorem{proposition}{Proposition}
\newtheorem{assumption}{Assumption}
\newcommand{\eps}{\varepsilon}
\newcommand{\p}{\mathbb{P}}
\newcommand{\inv}{^{-1}}
\newcommand{\ku}{^{(k)}}
\newcommand{\dl}{\delta_{\mathrm{L}}}
\renewcommand{\E}{\mathbb{E}}
\renewcommand{\hat}{\widehat}
\renewcommand{\tilde}{\widetilde}
\newcommand{\pure}{^{(\mathrm{pure})}}
\newcommand\numberthis{\addtocounter{equation}{1}\tag{\theequation}}
\newcommand{\red}{\color{red}}
 \newcommand*{\rom}[1]{\expandafter\@slowromancap\romannumeral #1@}
\begin{document}

\title{
Estimating Higher-Order Mixed Memberships via the $\ell_{2,\infty}$ Tensor Perturbation Bound}

\author{Joshua Agterberg\thanks{Departments of Electrical and Systems Engineering and Statistics and Data Science, University of Pennsylvania, Email: jagt@seas.upenn.edu} ~ and ~ Anru R. Zhang\thanks{Departments of Biostatistics \& Bioinformatics, Computer Science, Mathematics, and Statistical Science, Duke University. Email: anru.zhang@duke.edu.}}

\date{\today}

\maketitle

\begin{abstract}
Higher-order multiway data is ubiquitous in machine learning and statistics and often exhibits community-like structures, where each component (node) along each different mode has a community membership associated with it. In this paper we propose the \emph{tensor mixed-membership blockmodel}, a generalization of the tensor blockmodel positing that memberships need not be discrete, but instead are convex combinations of latent communities. We establish the identifiability of our model and propose a computationally efficient estimation procedure based on the higher-order orthogonal iteration algorithm (HOOI) for tensor SVD composed with a simplex corner-finding algorithm. We then demonstrate the consistency of our estimation procedure by providing a per-node error bound, which showcases the effect of higher-order structures on estimation accuracy. To prove our consistency result, we develop the $\ell_{2,\infty}$ tensor perturbation bound for HOOI under independent, heteroskedastic, subgaussian noise that may be of independent interest. Our analysis uses a novel leave-one-out construction for the iterates, and our bounds depend only on spectral properties of the underlying low-rank tensor under nearly optimal signal-to-noise ratio conditions such that tensor SVD is computationally feasible. Finally, we apply our methodology to real and simulated data, demonstrating some effects not identifiable from the model with discrete community memberships.
\end{abstract}

\bigskip

\tableofcontents

\section{Introduction}\label{sec:intro}
Higher-order multiway data, i.e., tensor data, is ubiquitous in modern machine learning and statistics, and there is a need to develop new methodologies for these types of data that succinctly capture the underlying structures. In a variety of scenarios, tensor data may exhibit community-like structures, where each component (node) along each different mode is associated with a certain community/multiple communities. High-order clustering aims to partition each mode of a dataset in the form of a tensor into several discrete groups. In many settings, the assumption that groups are discrete, or that each node belongs to only one group, can be restrictive, particularly if there is a domain-specific reason that groups need not be distinct. \textcolor{black}{For example, in the global trade data we consider in \cref{sec:numericalresults}, one observes trading patterns for different goods between countries. Imposing the assumption that the underlying tensor has discrete communities assumes that each country can be grouped into distinct ``buckets" -- however, geography is a continuous parameter, and different countries may belong to multiple communities.  Similarly, different goods need not belong to distinct groups.}

To ameliorate this assumption of distinct communities, in this paper, we propose the \textcolor{black}{(subgaussian)} \emph{tensor mixed-membership blockmodel}, which relaxes the assumption that communities are discrete. Explicitly, we assume that each entry of the underlying tensor $\mathcal{T} \in \mathbb{R}^{p_1 \times p_2 \times p_3}$ can be written via the decomposition
\begin{align}
    \mathcal{T}_{i_1i_2i_3} &= \sum_{l_1=1}^{r_1} \sum_{l_2=1}^{r_2} \sum_{l_3=1}^{r_3} \mathcal{S}_{l_1 l_2 l_3} \big(\mathbf{\Pi}_1 \big)_{i_1l_1}\big(\mathbf{\Pi}_2 \big)_{i_2l_2} \big(\mathbf{\Pi}_3 \big)_{i_3 l_3}, \label{mmsbm}
\end{align}
where $\mathbf{\Pi}_k \in [0,1]^{p_k \times r_k}$ satisfies $\sum_{l=1}^{r_k} (\mathbf{\Pi}_k)_{il} =1$ and $\mathcal{S}\in \mathbb{R}^{r_1\times r_2\times r_3}$ is a mean tensor. In other words, the model \eqref{mmsbm} associates to each index along each mode a $[0,1]$-valued membership vector. For each index $i$ of each mode $k$, the entries of its membership vector $\big(\mathbf{\Pi}_k\big)_{i\cdot}$ correspond to one of the $r_k$ latent underlying communities, with the magnitude of the entry governing the intensity of membership within that community. The entry $i_1, i_2, i_3$ of the underlying tensor is then a weighted combination of the entries of the mean tensor $\mathcal{S} \in \mathbb{R}^{r_1 \times r_2 \times r_3}$, with weights corresponding to three different membership vectors $\big(\mathbf{\Pi}_1\big)_{i_1\cdot}, \big(\mathbf{\Pi}_2\big)_{i_2\cdot}, \big(\mathbf{\Pi}_2\big)_{i_3\cdot}$.

In the previous example of \textcolor{black}{global trade data}, 
considering just the mode corresponding to \textcolor{black}{country}, the mixed-membership tensor blockmodel posits that there are latent ``pure'' \textcolor{black}{countries} and each individual \textcolor{black}{country} is a convex combination of these pure \textcolor{black}{countrie}s. For a given index $i$, each entry of the $i$'th row of the membership matrix $(\mathbf{\Pi}_{\mathrm{country}})_{i\cdot}$ corresponds to how much the \textcolor{black}{country} $i$ reflects each of the latent ``pure communities.'' 
When the matrices $\mathbf{\Pi}_k$ are further assumed to be $\{0,1\}$-valued, every index is ``pure" and this model reduces to the tensor blockmodel considered in \citet{han_exact_2021,chi_provable_2020,wu_general_2016}, and \citet{wang_multiway_2019}.   \textcolor{black}{We emphasize that in our theoretical results, we do not assume that there are underlying symmetries that arise in the hypergraph or multilayer undirected network settings (e.g., \citet{jing_community_2021,ke_community_2020}).  While we make this assumption for simplicity, the model can be naturally extended to account for symmetry along certain modes of the tensor.  Our theoretical results can also be extended to this setting by modifying our constructions described in \cref{sec:proofoverview}. 
}

The factorization in \eqref{mmsbm} can be related to the so-called \emph{Tucker decomposition} of the tensor $\mathcal{T}$. A tensor $\mathcal{T}$ is said to be of Tucker rank $(r_1,r_2,r_3)$ if it can be written via
\begin{align*}
    \mathcal{T} &= \mathcal{C} \times_1 \U_1 \times_2 \U_3 \times_3 \U_3,
\end{align*}
where $\mathcal{C} \in \mathbb{R}^{r_1 \times r_2 \times r_3}$ is a \emph{core tensor} and $\U_k \in \mathbb{R}^{p_k \times r_k}$ are orthonormal \emph{loading matrices} (see \cref{sec:notation} for details).  
In this paper, 
we consider estimating $\mathbf{\Pi}_k$ (i.e., the community memberships) by considering the explicit relationship between the decomposition \eqref{mmsbm} and the loading matrices $\U_k$ in its Tucker decomposition. Our main contributions are as follows:
\begin{itemize}
    \item We provide conditions for the identifiability of the model \eqref{mmsbm} (\cref{prop:identifiability}) and we relate the decomposition in \eqref{mmsbm} to the Tucker decomposition of the underlying tensor (\cref{prop:relationship} and \cref{lem:relationship}).
    \item We propose an algorithm to estimate the membership matrices $\mathbf{\Pi}_k$ obtained by combining the \emph{higher-order orthogonal iteration} (HOOI) algorithm with the corner-finding algorithm of \citet{gillis_fast_2014}, and we demonstrate a high-probability \emph{per-node} error bound for estimation of the membership matrices $\mathbf{\Pi}_k$ in the presence of heteroskedastic, subgaussian noise (\cref{thm:estimation}).
    \item To prove our main results, we develop a new $\ell_{2,\infty}$ perturbation bound for the HOOI algorithm in the presence of heteroskedastic, subgaussian noise (\cref{thm:twoinfty}) that may be of independent interest.  Our proof uses a novel leave-one-out construction that carefully preserves independence and spectral information at each iteration.  
    \item We apply our algorithm to global trading data, and we find that 
 global food trading can be grouped by region, with European countries grouped more closely together than other regions.  We also conduct simulations and analyze two additional datasets in the supplementary materials.
\end{itemize}
Our main technical result, \cref{thm:twoinfty}, relies only on spectral properties of the underlying tensor and holds for nearly optimal signal-to-noise ratio conditions such that a polynomial-time estimator exists. 
For ease of presentation, this paper focuses on the order-three setting. Our results and methodology naturally extend to the higher-order setting, and we give an informal statement of the extension to higher-order in  \cref{sec:discussion}.

The rest of this paper is organized as follows.  In \cref{sec:relatedwork} we review related works, and in \cref{sec:notation} we set notation and review tensor algebra.  In \cref{sec:main} we provide our main estimation algorithm and present our main theoretical results, including our per-node estimation errors \textcolor{black}{and novel $\ell_{2,\infty}$ perturbation bound}, and in \cref{sec:numericalresults} we present our data analysis results.  We provide a high-level overview of the proof of our main technical result in \cref{sec:proofoverview}, and we finish in \cref{sec:discussion} with a discussion.  The supplementary materials contain additional discussion, further data analysis and simulations,  and our full proofs.

\subsection{Related Work} \label{sec:relatedwork}


Tensors, or multidimensional arrays, arise in problems in the sciences and engineering, and there is a need to develop principled statistical theory and methodology for these data.  Tensor data analysis techniques are closely tied to spectral methods, which have myriad applications in high-dimensional statistics \citep{chen_spectral_2021}, including in principal component analysis, spectral clustering, and as initializations for nonconvex algorithms \citep{chi_nonconvex_2019}.  With the ubiquity of spectral methods, there has also been a development of both theory and methodology for fine-grained statistical inference with spectral methods, though the existing theory is limited to specific settings, and may not apply to tensors.

Algorithms for high-order clustering have relied on convex relaxations \citep{chi_provable_2020} or spectral relaxation \citep{wu_general_2016}. Perhaps the most closely related results for high-order clustering are in \citet{han_exact_2021}, which consider both statistical and computational thresholds for perfect cluster recovery. Their proposed algorithm HLloyd is a generalization of the classical Lloyd's algorithm for K-Means clustering to the tensor setting.  
Similarly, \citet{luo_tensor_2022} consider the statistical and computational limits for clustering, but they focus on expected misclustering error. Unlike these previous works, our model allows for mixed memberships, \textcolor{black}{which is a more difficult estimation problem, as the membership parameters are no longer discrete}.  \textcolor{black}{Furthermore, our main theoretical results concern the output of the HOOI algorithm that is widely used for tensor singular vector estimation and may be of independent interest, whereas these previous works have primarily focused on algorithms explicitly tailored to the tensor blockmodel setting. }

The tensor mixed-membership blockmodel is also closely related to and inspired by the mixed-membership stochastic blockmodel proposed by \citet{airoldi_mixed_2008}. Our estimation procedure is based on studying the relationship between singular vectors and the mixed-membership matrices. This relationship was first discovered in the matrix setting as described in the first version of \citet{jin_estimating_2017}. 
In that work, the authors also consider heterogeneous degree corrections, a significantly more difficult setting for which the homogeneous setting considered herein is a special case. Furthermore, the first version of \citet{jin_estimating_2017} was the first to coin the terms ``pure nodes'' and ``vertex hunting'' in this context, both terms that we also use. 
Our procedure is more similar to the work \citet{mao_estimating_2021}, which focuses on the special case with homogeneous degree corrections.

Similar to both \citet{mao_estimating_2021}  and \citet{jin_estimating_2017} \textcolor{black}{we also use a vertex hunting procedure for mixed membership estimation, and 
we obtain our main results by applying newly developed sharp $\ell_{2,\infty}$ perturbation bounds for the estimated singular vectors}. \textcolor{black}{ In both these previous works, one of the primary technical challenges is to analyze the effect of Bernoulli noise (with and without degree heterogeneity) on the $\ell_{2,\infty}$ perturbation of the empirical eigenvectors.  In contrast, the primary technical challenge of our work is to develop new arguments that yield sufficiently strong $\ell_{2,\infty}$ perturbation bounds for the HOOI algorithm, which is a nonconvex algorithm for tensor singular vector estimation. While the inferential goal in both these works and ours is to obtain estimation error rates, our major contribution is in developing new analysis tools that can provide these error rates. 
}


Considering general perturbation results for tensors, \citet{cai_nonconvex_2022} focuses on symmetric tensors of low CP rank, and they consider the performance of their noisy tensor completion algorithm obtained via vanilla gradient descent, and they prove entrywise convergence guarantees and $\ell_{2,\infty}$ perturbation bounds.  Our analysis differs in a few key ways: first, we consider tensors of low Tucker rank, which generalizes the CP rank; next, our analysis holds for \emph{asymmetric} tensors under general subgaussian noise, and, perhaps most crucially, we analyze the HOOI algorithm, which can be understood as power iteration (as opposed to gradient descent).  Therefore, while the results in \citet{cai_nonconvex_2022} may be qualitatively similar, the results are not directly comparable.  Similarly, \citet{wang_implicit_2021} consider the entrywise convergence of their noiseless tensor completion algorithm for symmetric low Tucker rank tensors; our analysis is somewhat similar, but we explicitly characterize the effect of noise, which is a primary technical challenge in the analysis.

Besides \citet{cai_nonconvex_2022} and \citet{wang_implicit_2021}, entrywise perturbation bounds for tensors are still lacking in general, though there are several generalizations of classical matrix perturbation bounds to the tensor setting.    
A sharp (deterministic) $\sin\Theta$ upper bound for tensor SVD was obtained in \citet{luo_sharp_2021}, and  \citet{auddy_perturbation_2022} consider perturbation bounds for orthogonally decomposable tensors. 
\citet{zhang_tensor_2018} established statistical and computational limits for tensor SVD with Gaussian noise; our work builds off of their analysis by analyzing the tensor SVD algorithm initialized with diagonal deletion. 

Our main $\ell_{2,\infty}$ bound is also closely related to a series of works developing fine-grained entrywise characterizations for eigenvectors and singular vectors, for which a general survey can be found in  
\citet{chen_spectral_2021}. 
Several works on entrywise singular vector analyses have also applied their results to tensor data, such as \citet{xia_sup-norm_2019,cai_subspace_2021}, though these analyses often fail to take into account the additional structure arising in tensor data.

From a technical point of view, our work uses the ``leave-one-out'' analysis technique, first pioneered for entrywise eigenvector analysis in \citet{abbe_entrywise_2020}, though the method had been used previously to analyze nonconvex algorithms \citep{chi_nonconvex_2019,ma_implicit_2020},  M-estimators \citep{el_karoui_robust_2013}, among others \citep{zhong_near-optimal_2018}.  The leave-one-out technique for singular vectors and eigenvectors have been further refined to analyze large rectangular matrices \citep{cai_subspace_2021}, kernel spectral clustering \citep{abbe_ell_p_2022}, to obtain distributional guarantees for spectral methods \citep{yan_inference_2021}, and to study the performance of spectral clustering \citep{zhang_leave-one-out_2022}.  
\textcolor{black}{Unlike these previous works, since the HOOI algorithm is not equivalent to a gradient descent procedure, our analysis requires several novel considerations that bridge the gap between analyzing both nonconvex algorithms and spectral methods.}
Finally, our proof of the spectral initialization also slightly improves upon the bound in \citet{cai_subspace_2021} (for the singular vectors of rectangular matrices) by a factor of the condition number; see \cref{thm:spectralinit_twoinfty}.

\subsection{Notation and Preliminaries} \label{sec:notation}
For two functions $f$ and $g$ viewed as functions of some increasing index $n$, we say $f(n) \lesssim g(n)$ if there exists a uniform constant $C> 0$ such that $f(n) \leq C g(n)$, and we say $f(n) \asymp g(n)$ if $f(n) \lesssim g(n)$ and $g(n) \lesssim f(n)$.  We write $f(n) \ll g(n)$ if $f(n)/g(n) \to 0$ as the index $n$ increases.  We also write $f(n) = O(g(n))$ if $f(n) \lesssim g(n)$, and we write $f(n) = \tilde O(g(n))$ if $f(n) = O(g(n) \log^c(n))$ for some value $c$ (not depending on $n$).

We use bold letters $\mathbf{M}$ to denote matrices, we let $\mathbf{M}_{i\cdot}$ and $\mathbf{M}_{\cdot j}$ denote its $i$'th row and $j$'th column, both viewed as column vectors, and we let $\mathbf{M}\t$ denote its transpose. We denote $\|\cdot\|$ as the spectral norm for matrices and the Euclidean norm for vectors, and we let $\|\cdot\|_F$ denote the Frobenius norm.  We let $e_i$ denote the $i$'th standard basis vector and $\mathbf{I}_k$ denote the $k\times k$ identity.  For a matrix $\mathbf{M}$ we let $\|\mathbf{M}\|_{2,\infty} = \max_i \| e_i\t \mathbf{M} \|$.  For two orthonormal matrices $\U$ and $\mathbf{V}$ satisfying $\U\t \U = \mathbf{V}\t \mathbf{V} = \mathbf{I}_r$, we let $\|\sin\Theta(\U,\mathbf{V})\|$ denote their $\sin\Theta$ (spectral) distance; i.e., $\|\sin\Theta(\U,\mathbf{V})\| = \| (\mathbf{I}_r - \U \U\t) \mathbf{V} \|$. For an orthonormal matrix $\U$ we let $\U_{\perp}$ denote its orthogonal complement; that is, $\U_{\perp}$ satisfies $\U_{\perp}\t \U = 0$.  We denote the $r\times r$ orthogonal matrices as $\mathbb{O}(r)$.  

For multi-indices $\br = (r_1,r_2,r_3)$ and $\bp = (p_1,p_2,p_3)$, we let $r_{-k} = \prod_{j\neq k} r_j$, and we define $p_{-k}$ similarly.  We also denote $p_{\min} = \min p_k$ and $p_{\max} = \max p_k$, with $r_{\min}$ and $r_{\max}$ defined similarly.  A tensor $\mathcal{T} \in \mathbb{R}^{p_1 \times p_2 \times p_3}$ is a multidimensional array.  We let calligraphic letters $\mathcal{T}$ denote tensors, except for the letter $\mathcal{M}$, for which $\mathcal{M}_k(\mathcal{T})$ denotes its \emph{matricization} along the $k$'th mode; i.e., $\mathcal{M}_k(\mathcal{T})$ satisfies 
\begin{align*}
    \mathcal{M}_k(\mathcal{T}) \in \mathbb{R}^{p_k \times p_{-k}}; \qquad \big( \mathcal{M}_k(\mathcal{T}) \big)_{i_k,j} = \mathcal{T}_{i_1 i_2 i_3}; \qquad j = 1+ \sum_{\substack{l=1\\l\neq k}}^d \Bigg\{ (i_l-1) \prod_{\substack{m=1\\m\neq k}} p_m \Bigg\},
\end{align*}
for $1 \leq i_l \leq p_l$, $l=1,2,3$.  See \citet{kolda_tensor_2009} for more details on matricizations.  We also reserve the calligraphic letter $\mathcal{P}$ for either permutations or projections, as will be clear from the context.  For an orthonormal matrix $\U$, we let $\mathcal{P}_{\U}$ denote its corresponding orthogonal projection $\mathcal{P}_{\U} = \U \U\t$.

We denote the multilinear rank of a tensor $\mathcal{T}$ as a tuple $\br = (r_1,r_2,r_3)$, where $r_k$ is the rank of the $k$'th matricization of $\mathcal{T}$.  A tensor $\mathcal{T}$ of rank $\br$ has a Tucker decomposition $\mathcal{T} = \mathcal{C} \times_1 \U_1 \times_2 \U_3 \times_3 \U_3,$
where $\mathcal{C} \in \mathbb{R}^{r_1 \times r_2 \times r_3}$ is the core tensor and $\U_k$ are the $p_k \times r_k$ left singular vectors of the matrix $\mathcal{M}_k(\mathcal{T})$. Here the mode $1$ product of a tensor $\mathcal{T} \in \mathbb{R}^{p_1 \times p_2 \times p_3}$ with a matrix $\U \in \mathbb{R}^{p_1 \times r_1}$ is denoted by $\mathcal{T} \times_k \U\t \in \mathbb{R}^{r_1 \times p_2 \times p_3}$ and is given by
\begin{align*}
    (\mathcal{T} \times_1 \U\t)_{j i_2 i_3} &= \sum_{i_1=1}^{p_k} \mathcal{T}_{i_1 i_2 i_3} \U_{i_1j}.
\end{align*}
The other mode-wise multiplications are defined similarly.
For two matrices $\U$ and $\mathbf{V}$, we denote $\U \otimes \mathbf{V}$ as their Kronecker product.  For a tensor $\mathcal{S} \in \mathbb{R}^{r_1 \times r_2 \times r_3}$ and matrices $\U_k$ of appropriate sizes, the following identity holds (see e.g., \citet{kolda2006multilinear}):
\begin{align*}
    \mathcal{M}_1( \mathcal{S} \times_1 \U_1 \times_2 \U_2 \times_3 \U_3) &= \U_1 \mathcal{M}_1(\mathcal{S}) \big( \U_{2}\t \otimes \U_3\t \big),
\end{align*}
with similar identities holding for the other modes.
For a matrix $\mathbf{M}$ we write $\SVD_{r}(\mathbf{M})$ to denote the leading $r$ singular vectors of $\mathbf{M}$.  Concretely, for a tensor of Tucker rank $\br = (r_1,r_2,r_3)$, it holds that $\U_k = \SVD_{r_k}(\mathcal{M}_k(\mathcal{T}))$.

For a tensor $\mathcal{T}$ with Tucker decomposition $\mathcal{T} = \mathcal{S} \times_1 \U_1 \times_2 \U_2 \times_3 \U_3$, we denote its incoherence parameter $\mu_0$ as the smallest number such that
\begin{align*}
\max_k \sqrt{\frac{p_k}{r_k}} \|\U_k \|_{2,\infty} \leq \mu_0.
\end{align*}
For a nonsquare matrix $\mathbf{M}$ of rank $r$, we let $\lambda_{\min}(\mathbf{M})$ denote its smallest nonzero singular value, and we denote its singular values as $\lambda_k(\mathbf{M})$.  For a square matrix $\mathbf{M}$, we let $\lambda_{\min}(\mathbf{M})$ denote its smallest nonzero eigenvalue and $\sigma_{\min}(\mathbf{M})$ denote its smallest nonzero singular value, with other eigenvalues and singular values defined similarly. For a tensor $\mathcal{T}$ of rank $\br = (r_1,r_2,r_3)$, we let $\lambda_{\min}(\mathcal{T})$ denote its smallest nonzero singular value along all of its matricizations; that is $\lambda_{\min}(\mathcal{T}) = \min_{k} \lambda_{\min}(\mathcal{M}_k(\mathcal{T})).$
 We let the condition number of a tensor $\mathcal{T}$ be denoted as $\kappa$, defined as $\kappa \coloneqq \max_k \frac{\|\mathcal{M}_k(\mathcal{T})\|}{\lambda_{\min}(\mathcal{M}_k(\mathcal{T}))}.$
Finally, for a random variable $X$, we let $\|X\|_{\psi_2}$ denote its subgaussian Orlicz norm; that is, $\|X\|_{\psi_2} = \inf\{t >0: \E \exp(X^2/t^2) \leq 2\}.$
See Chapter 2 of \citet{vershynin_high-dimensional_2018} for more details on Orlicz norms and subgaussian random variables. 

\section{Main Results}\label{sec:main}

We now describe our model in detail.  Assume that one observes
\begin{align*}
    \mathcal{\widehat{T}} &= \mathcal{T} + \mathcal{Z} \in \mathbb{R}^{p_1 \times p_2 \times p_3},
\end{align*}
where $\mathcal{Z}$ consists of independent mean-zero subgaussian noise satisfying $\|\mathcal{Z}_{ijk}\|_{\psi_2} \leq \sigma$ (note that $\mathcal{Z}$ is not assumed to be homoskedastic).  Assume  further that the underlying tensor $\mathcal{T}$ admits the following factorization:
\begin{align}
    \mathcal{T} &= \mathcal{S} \times_1 \mathbf{\Pi}_1 \times _2 \mathbf{\Pi}_2 \times_3 \mathbf{\Pi}_3, \label{tensormmsbm}
\end{align}
where $\mathbf{\Pi}_k \in [0,1]^{p_k \times r_k}$ is a membership matrix with rows that sum to one, and $\mathcal{S} \in \mathbb{R}^{r_1 \times r_2 \times r_3}$ is a \emph{mean tensor}.  The matrices $\mathbf{\Pi}_k$ can be interpreted as follows: $(\mathbf{\Pi}_k)_{i_kl}$  denotes how much the $i_k$'th node along the $k$'th mode belongs to community $l$. 
\textcolor{black}{It is possible to extend the model for symmetric cases by imposing symmetry constraints on $\mathcal{Z}$ and requiring that $\mathbf{\Pi}_k$'s are the same for some $k$.  However, we focus on the fully asymmetric setting for concreteness.}

For a node $i_k$ along mode $k$, we say $i_k$ is a \emph{pure node} if $(\mathbf{\Pi}_{k}\big)_{i_k\cdot} \in \{0,1\}^{r_k}$; that is, exactly one entry of the $i_k$'th row of $\mathbf{\Pi}_k$ is nonzero (and hence equal to one).  Intuitively, a pure node is a node that belongs to one and only one community.  Observe that if all nodes are pure nodes, then one recovers the tensor blockmodel.  As in the matrix setting \citep{mao_estimating_2021}, the existence of pure nodes is intimately related to the identifiability of the model \eqref{mmsbm}.  The following result establishes the identifiability of the tensor mixed-membership blockmodel when $\mathcal{S}$ is rank $r_k$ along each mode and there is a pure mode for each community along each direction. We note that it is also possible to establish identifiability in the case that $\mathcal{S}$ has some mode with a rank less than $r_k$, but this is beyond the scope of this paper. 

\begin{proposition}[Identifiability] \label{prop:identifiability}
Consider the model \eqref{tensormmsbm}. Assume that 
each matricization of $\mathcal{S}$ is rank $r_k$ respectively with $r_k \leq r_{-k}$, and 
for each mode $k$, there is at least one pure node for each community.
Then if there exists another set of parameters $\mathcal{S}', \mathbf{\Pi}_1', \mathbf{\Pi}_2',$ and $\mathbf{\Pi}_3'$ such that $\mathcal{T} = \mathcal{S}' \times_1 \mathbf{\Pi}_1'\times_2 \mathbf{\Pi}_2'\times_3 \mathbf{\Pi}_3'$ it must hold that $\mathbf{\Pi}_k = \mathbf{\Pi}_k' \mathcal{P}_k$, where $\mathcal{P}_k$ is an $r_k \times r_k$ permutation matrix and $\mathcal{S} = \mathcal{S}' \times_1 \mathcal{P}_1 \times_2 \mathcal{P}_2 \times_3 \mathcal{P}_3$.  

Next, suppose that each matricization of $\mathcal{S}$ is rank $r_k$ respectively with $r_k \leq r_{-k}$. Suppose that $\mathbf{\Pi}_k$ is identifiable up to permutation; i.e., any other $\mathbf{\Pi}_k'$ generating the same tensor $\mathcal{T}$ must satisfy $\mathbf{\Pi}_k' = \mathbf{\Pi}_k \mathcal{P}_k$ for some permutation $\mathcal{P}_k$ and $\mathcal{S}'= \mathcal{S} \times_k \mathcal{P}_k$.  Then there must be at least one pure node for each community along mode $k$. 
\end{proposition}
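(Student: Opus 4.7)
The overall plan is to prove Part 1 by matching the column spaces of the mode-$k$ matricizations of $\mathcal{T}$ and then using the pure-node structure to force the change-of-basis matrix to be a permutation, and to prove Part 2 by an explicit simplex-geometry construction of a nontrivial alternative factorization.

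For Part 1, I first matricize $\mathcal{T} = \mathcal{S}\times_1 \mathbf{\Pi}_1 \times_2 \mathbf{\Pi}_2 \times_3 \mathbf{\Pi}_3$ along mode $k$ to write $\mathcal{M}_k(\mathcal{T}) = \mathbf{\Pi}_k \mathcal{M}_k(\mathcal{S})(\mathbf{\Pi}_{k_2}\otimes\mathbf{\Pi}_{k_1})^{\top}$ for an appropriate ordering of the remaining modes. The pure-node hypothesis ensures that after a row permutation $\mathbf{\Pi}_k$ contains $\mathbf{I}_{r_k}$ as a submatrix, so it has full column rank $r_k$; combined with $\mathrm{rank}(\mathcal{M}_k(\mathcal{S})) = r_k$, this forces $\mathrm{col}(\mathcal{M}_k(\mathcal{T})) = \mathrm{col}(\mathbf{\Pi}_k)$ and $\mathrm{rank}(\mathcal{M}_k(\mathcal{T})) = r_k$. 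Applying the same reading to the alternative factorization shows $\mathbf{\Pi}_k'$ also has full column rank $r_k$ and the same column space, so there exists an invertible $\mathbf{Q}_k \in \mathbb{R}^{r_k\times r_k}$ with $\mathbf{\Pi}_k = \mathbf{\Pi}_k'\mathbf{Q}_k$.

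The heart of Part 1 is to show that $\mathbf{Q}_k$ is a permutation. Evaluating $\mathbf{\Pi}_k = \mathbf{\Pi}_k'\mathbf{Q}_k$ at a pure-node row $i_l$ of $\mathbf{\Pi}_k$ yields $(\mathbf{\Pi}_k')_{i_l\cdot} = e_l^{\top} \mathbf{Q}_k^{-1}$, so every row of $\mathbf{Q}_k^{-1}$ is a probability vector; the symmetric argument applied to the pure nodes of $\mathbf{\Pi}_k'$ shows every row of $\mathbf{Q}_k$ is also a probability vector. Two mutually inverse row-stochastic matrices are easily seen to be permutations by expanding $\mathbf{Q}_k \mathbf{Q}_k^{-1} = \mathbf{I}$ entrywise and using nonnegativity. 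Setting $\mathcal{P}_k := \mathbf{Q}_k$ and substituting into both factorizations of $\mathcal{T}$, the full column rank of each $\mathbf{\Pi}_k$ (applied to the vectorized form) lets me cancel and deduce $\mathcal{S} = \mathcal{S}'\times_1 \mathcal{P}_1 \times_2 \mathcal{P}_2 \times_3 \mathcal{P}_3$.

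For Part 2, I argue the contrapositive. If some community $l$ along mode $k$ lacks a pure node, I construct a valid $\mathbf{\Pi}_k'$ with $\mathbf{\Pi}_k \neq \mathbf{\Pi}_k'\mathcal{P}_k$ for any permutation $\mathcal{P}_k$. First, identifiability together with the rank condition on $\mathcal{M}_k(\mathcal{S})$ forces $\mathbf{\Pi}_k$ itself to have rank $r_k$, so its rows affinely span the standard simplex $\Delta^{r_k-1}\subset\mathbb{R}^{r_k}$. Since no row equals the vertex $e_l$, the closed convex hull $P$ of the rows omits a relative neighborhood of $e_l$ inside $\Delta^{r_k-1}$, so one can choose $\tilde e_l \in \Delta^{r_k-1}\setminus\{e_l\}$ close to $e_l$ such that the nondegenerate simplex $\tilde\Delta$ with vertices $e_1,\ldots,e_{l-1},\tilde e_l, e_{l+1},\ldots,e_{r_k}$ still contains $P$. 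Expressing each row of $\mathbf{\Pi}_k$ in barycentric coordinates relative to $\tilde\Delta$ defines $\mathbf{\Pi}_k'\in[0,1]^{p_k\times r_k}$ with unit row sums, and setting $\mathcal{S}' := \mathcal{S}\times_k \mathbf{Q}_k$ for the corresponding non-permutation change-of-basis $\mathbf{Q}_k$ yields another valid factorization of $\mathcal{T}$, contradicting identifiability.

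The main obstacle is establishing two-sided row-stochasticity of $\mathbf{Q}_k$ in Part 1: the side that $\mathbf{Q}_k^{-1}$ is stochastic is immediate from the pure nodes of $\mathbf{\Pi}_k$, but showing that $\mathbf{Q}_k$ itself is nonnegative requires the model constraints on $\mathbf{\Pi}_k'$ (in particular its own pure-node structure, the natural implicit assumption for the alternative factorization). This is precisely the freedom Part 2 exploits to produce counterexamples whenever pure nodes are missing. In Part 2, the remaining subtlety is verifying that $\tilde\Delta$ is a genuine simplex and that its barycentric coordinate map sends $P$ into $[0,1]^{r_k}$; both follow from the rank-$r_k$ hypothesis together with the absence of pure nodes and a short compactness argument.
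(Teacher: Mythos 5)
Your Part 1 is correct and takes a genuinely different route from the paper. The paper first invokes Proposition~\ref{prop:relationship} to pass to the singular vectors via $\U_k = \mathbf{\Pi}_k\U_k\pure$, then argues that the convex hulls of $\U_k\pure$ and $\tilde\U_k\pure$ coincide (each being contained in the other), hence the vertex sets agree up to permutation. You instead work directly with the membership matrices: column-space matching gives $\mathbf{\Pi}_k = \mathbf{\Pi}_k'\mathbf{Q}_k$, and evaluating at pure nodes of each side shows $\mathbf{Q}_k^{-1}$ and $\mathbf{Q}_k$ are each row-stochastic, whence both are permutations. This is more elementary and avoids the Tucker machinery. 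You are also right that the argument needs the alternative $\mathbf{\Pi}_k'$ to have pure nodes too; the paper makes the same (only slightly more explicit) assumption when it applies Proposition~\ref{prop:relationship} to the alternative decomposition. Both proofs share this implicit hypothesis, so this is not a flaw relative to the paper.

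Part 2, however, has a genuine gap. You require $\tilde e_l\in\Delta^{r_k-1}$, i.e., the perturbed vertex must stay inside the standard simplex, and claim the resulting simplex $\tilde\Delta$ can be made to contain $P$. This fails whenever $P$ touches a facet of $\Delta^{r_k-1}$ adjacent to $e_l$. Concretely, take $r_k=3$, $l=1$, and rows $(1-\delta,0,\delta)$, $(1-\delta,\delta,0)$, $(0,0,1)$ for small $\delta>0$; no row equals $e_1$, so community 1 has no pure node. But for $(1-\delta,0,\delta)$ to have nonnegative barycentric coordinates relative to $\tilde\Delta=\mathrm{conv}(\tilde e_1,e_2,e_3)$ one needs $(\tilde e_1)_2\le 0$, and for $(1-\delta,\delta,0)$ one needs $(\tilde e_1)_3\le 0$; together with the row-sum constraint this forces $\tilde e_1 = e_1$. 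The fix is to drop the requirement $\tilde e_l\in\Delta^{r_k-1}$ and instead allow the new vertex to lie \emph{outside} the simplex (while $\tilde\Delta$ still contains $P$). This is exactly what the paper does via its explicit change-of-basis matrix $\mathbf{H}$, which is \emph{not} row-stochastic (its first row has negative off-diagonal entries), yet $\mathbf{\Pi}_1\mathbf{H}$ is nonnegative precisely because $(\mathbf{\Pi}_1)_{i1}\le 1-\delta$; note that the paper's $\mathbf{H}$ also perturbs the remaining vertices inward. Your geometric intuition is on the right track, but as stated the construction is not valid.
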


Therefore, we see that when the underlying tensor is full rank and there is at least one pure node for each community, the model will be identifiable up to permutation of the communities. 

In order to describe our estimation procedure in the following subsection, we provide the following crucial observation relating the tensor mixed-membership blockmodel to its Tucker factorization.
\begin{proposition}
 \label{prop:relationship}
 Suppose $\mathcal{T}$ is a tensor mixed-membership blockmodel of the form in \eqref{tensormmsbm}, and suppose that each matricization of $\mathcal{S}$ is rank $r_k$ respectively with $r_k \leq r_{-k}$ for each $k$.  Suppose further that there is a pure node for each community along each mode.
Let $\mathcal{T} = \mathcal{C} \times_1 \U_1 \times_2 \U_3 \times_3 \U_3$ denote its rank $(r_1,r_2,r_3)$ Tucker factorization.  Then it holds that $\U_k = \mathbf{\Pi}_k \U_k\pure$,
where $\U_k\pure \in \mathbb{R}^{r_k \times r_k}$ is rank $r_k$ and contains the rows of $\U_k$ corresponding to pure nodes. \end{proposition}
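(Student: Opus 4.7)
The plan is to work entirely at the level of the mode-$k$ matricization and extract $\U_k$ as an orthonormal basis for the column space of $\mathbf{\Pi}_k$. First I would matricize \eqref{tensormmsbm} along mode $k$, using the standard identity to write
\begin{equation*}
\mathcal{M}_k(\mathcal{T}) \;=\; \mathbf{\Pi}_k\,\mathcal{M}_k(\mathcal{S})\,\bigl(\mathbf{\Pi}_{k+2}\otimes\mathbf{\Pi}_{k+1}\bigr)\!\t,
\end{equation*}
with indices taken mod $3$. The next step is a rank computation. Since there is at least one pure node in each community along each mode, each $\mathbf{\Pi}_j$ contains an $r_j\times r_j$ identity-like submatrix (after row selection) and thus has full column rank $r_j$. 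Using that $\mathrm{rank}(A\otimes B)=\mathrm{rank}(A)\mathrm{rank}(B)$, the factor $\mathbf{\Pi}_{k+2}\otimes\mathbf{\Pi}_{k+1}$ has rank $r_{-k}$, i.e.\ full column rank, so its transpose has full row rank. Combined with the assumption that $\mathcal{M}_k(\mathcal{S})$ has rank $r_k$ (and $r_k\leq r_{-k}$, so it has full row rank), the middle block $\mathbf{B}_k:=\mathcal{M}_k(\mathcal{S})(\mathbf{\Pi}_{k+2}\otimes\mathbf{\Pi}_{k+1})\t$ is an $r_k\times p_{-k}$ matrix of rank $r_k$.

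Consequently $\mathcal{M}_k(\mathcal{T})=\mathbf{\Pi}_k\mathbf{B}_k$ has rank $r_k$ and, because $\mathbf{B}_k$ has full row rank, its column space equals the column space of $\mathbf{\Pi}_k$. Since $\U_k$ is an orthonormal basis for the column space of $\mathcal{M}_k(\mathcal{T})$, there exists an invertible matrix $\mathbf{C}\in\mathbb{R}^{r_k\times r_k}$ with
\begin{equation*}
\U_k \;=\; \mathbf{\Pi}_k\,\mathbf{C}.
\end{equation*}
This is the key structural identity; the remaining work is to identify $\mathbf{C}$ with $\U_k\pure$.

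For this, I would pick, for each community $l\in\{1,\dots,r_k\}$, one pure node $i_l$ with $(\mathbf{\Pi}_k)_{i_l\cdot}=e_l\t$. The display above then yields $(\U_k)_{i_l\cdot}=e_l\t\mathbf{C}=\mathbf{C}_{l\cdot}$. Stacking these $r_k$ rows (in the order of their community labels) produces exactly $\U_k\pure$, so $\mathbf{C}=\U_k\pure$, and the identity $\U_k=\mathbf{\Pi}_k\U_k\pure$ follows. The rank of $\U_k\pure$ equals the rank of $\mathbf{C}$, which is $r_k$ by construction.

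The only nontrivial step in this plan is the rank computation for $\mathbf{B}_k$; once that is in place, the rest is a direct identification. The main pitfall to be careful about is the direction of the argument there: one needs $\mathcal{M}_k(\mathcal{S})$ to have \emph{full row rank} so that right-multiplying by the full-row-rank matrix $(\mathbf{\Pi}_{k+2}\otimes\mathbf{\Pi}_{k+1})\t$ preserves rank. This is precisely why the hypothesis $r_k\leq r_{-k}$ is included alongside the rank-$r_k$ condition on $\mathcal{M}_k(\mathcal{S})$, and I would flag this explicitly in the write-up.
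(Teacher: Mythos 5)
Your proof is correct, and it reaches the same identity through a cleanly organized variant of the paper's argument. The paper works directly from the SVD $\mathbf{T}_1 = \U_1 \mathbf{\Lambda}_1 \mathbf{V}_1\t$, writes $\U_1 = \mathbf{T}_1 \mathbf{V}_1 \mathbf{\Lambda}_1^{-1}$, substitutes the factorization $\mathbf{T}_1 = \mathbf{\Pi}_1 \mathbf{S}_1 (\mathbf{\Pi}_2 \otimes \mathbf{\Pi}_3)\t$, and identifies the pure rows to read off $\U_1\pure = \mathbf{S}_1 (\mathbf{\Pi}_2 \otimes \mathbf{\Pi}_3)\t \mathbf{V}_1 \mathbf{\Lambda}_1^{-1}$; it then certifies $\mathrm{rank}(\U_1\pure) = r_1$ separately via the Gram identity $\U_1\pure \mathbf{\Lambda}_1^2 (\U_1\pure)\t = \mathbf{S}_1 (\mathbf{\Pi}_2 \otimes \mathbf{\Pi}_3)\t (\mathbf{\Pi}_2 \otimes \mathbf{\Pi}_3) \mathbf{S}_1\t$. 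You instead first establish that $\mathrm{colspace}(\mathcal{M}_k(\mathcal{T})) = \mathrm{colspace}(\mathbf{\Pi}_k)$ (because the middle factor $\mathbf{B}_k$ has full row rank and admits a right inverse), conclude $\U_k = \mathbf{\Pi}_k \mathbf{C}$ with $\mathbf{C}$ invertible as a consequence of both $\U_k$ and $\mathbf{\Pi}_k$ having full column rank, and only then use the pure-node rows to identify $\mathbf{C} = \U_k\pure$. This buys you a cleaner separation: the invertibility of $\U_k\pure$ is automatic from the column-space argument rather than requiring a separate Gram-matrix computation, and along the way you explicitly verify that $\mathcal{M}_k(\mathcal{T})$ indeed has rank $r_k$ (which the paper treats as tacit in asserting a rank-$(r_1,r_2,r_3)$ Tucker factorization exists). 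One small cosmetic inconsistency: you wrote the Kronecker factor as $\mathbf{\Pi}_{k+2} \otimes \mathbf{\Pi}_{k+1}$, whereas the paper's stated matricization convention gives $\mathbf{\Pi}_{k+1} \otimes \mathbf{\Pi}_{k+2}$; this does not affect any of the rank or column-space reasoning, but it should be brought in line with the paper's notation in the write-up.
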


Consequently, \cref{prop:relationship} shows that the singular vectors $\U_k$ of the underlying tensor $\mathcal{T}$ belong to a simplex with vertices given by $\U_k\pure$, or the rows of $\U_k$ corresponding to pure nodes.  The connection between the membership matrix $\mathbf{\Pi}_k$ and the singular vectors $\U_k$ has previously been considered in the matrix setting in \citet{mao_estimating_2021}, \textcolor{black}{which is a special case of the results in  \citet{jin_estimating_2017}}.

\subsection{Estimation Procedure}
We now detail our estimation procedure. In light of \cref{prop:relationship}, the singular vectors of the tensor $\mathcal{T}$ and the matrices $\mathbf{\Pi}_k$ are intimately related via the matrix $\U_k\pure$.  Therefore, given estimated tensor singular vectors $\uhat_k$ obtained from the observed tensor $\mathcal{\hat T}$, we propose to estimate the pure nodes by applying the corner-finding algorithm of \citet{gillis_fast_2014} to the rows of $\uhat_k$ to obtain estimated  pure nodes.  Consequently, in order to run the corner-finding algorithm, we will require the estimated tensor singular vectors $\uhat_k$.

However, unlike the matrix SVD, tensor SVD is not well-defined in general.  For low Tucker rank tensors, a  common algorithm to estimate the singular vectors of tensors is via the higher-order orthogonal iteration (HOOI) algorithm \citep{de2000best}. Under the specific Gaussian additive model, this algorithm has been analyzed and minimax optimal error bounds in $\sin\Theta$ distances were established in \cite{zhang_tensor_2018}, 
which is the main impetus behind using HOOI to estimate the singular vectors. However, a major technical challenge in analyzing our estimator is in providing a fine-grained understanding of the output of HOOI for tensor SVD in order to ensure that the correct pure nodes are found.  \textcolor{black}{Therefore, as a major theoretical contribution of this paper, we analyze the row-wise error of HOOI, which is what allows us to demonstrate the statistical properties of our estimation procedure.} 
\cref{al:tensor-power-iteration} includes full pseudo-code for HOOI. 


In order to initialize HOOI, since we do not assume homoskedastic noise we propose initializing via diagonal-deletion; namely, we define  $\uhat_k^{(0)}$ as the leading $r_k$ eigenvectors of the matrix $    \Gamma\left[\mathcal{M}_k\big(\mathcal{\widehat{T}}\big) \mathcal{M}_k\big(\mathcal{\widehat{T}}\big)\t \right],$
where $\Gamma(\cdot)$ is the \emph{hollowing operator}: for a square matrix $\mathbf{M}$, $\Gamma(\mathbf{M})$ sets its diagonal entries to zero, i.e.,
$$[\Gamma(\mathbf{M})]_{ij} = \left\{\begin{array}{ll}
   [\mathbf{M}]_{ij}  &  i\neq j;\\
   0  & i=j.
\end{array}\right.$$

We now have all the pieces to our estimation procedure.  First, 
we estimate the tensor singular vectors via \cref{al:tensor-power-iteration}.  Next, given the estimates $\uhat_k$ for $k =1,2,3$, we obtain the index sets $J_k$ containing the estimated pure nodes via the algorithm proposed in \citet{gillis_fast_2014}, and we set $\uhat_k\pure :=\big(\uhat_k\big)_{J_k\cdot}$. Finally, we estimate $\mathbf{\widehat\Pi}_k$ via $\mathbf{\widehat\Pi}_k = \uhat_k \big(\uhat_k\pure \big)\inv$.  The full procedure is stated in \cref{al:spamm}. In practice we have found that  there are occasionally negative or very small values of $\mathbf{\hat \Pi}_k$; therefore, our actual implementation thresholds small values and re-normalizes the rows of $\mathbf{\hat \Pi}_k$, though the theory discussed in the following sections will be for the implementation without this additional step.

\textcolor{black}{\begin{remark}[Other Vertex Hunting Procedures]
Our procedure is not restricted to using the algorithm in \citet{gillis_fast_2014}.  For example, in the work \citet{jin_estimating_2017} the authors suggest several different vertex hunting algorithms that attempt to identify the matrix $\U_k^{(\mathrm{pure})}$.  For concreteness we have focused on the successive projection algorithm, but as can be seen from the proof of our main result, any algorithm that is sufficiently robust to row-wise deviations will suffice.  In general, the vertex hunting procedure can be treated as a ``plug-in" step. 
\end{remark}}


\begin{algorithm}[t]
	\caption{Higher-Order Orthogonal Iteration (HOOI)}
	\begin{algorithmic}[1]
		\State Input: $\mathcal{\widehat{T}}\in \mathbb{R}^{p_1 \times p_2 \times p_3}$, Tucker rank $\br = (r_1, r_2,r_3)$.
 		\State For $k \in \{2,3\},$ set $\uhat_k^{(0)}$ as the leading $r_k$ eigenvectors of the matrix 
		$\mathbf{\widehat{G}} \coloneqq 
	 \Gamma\big(\mathcal{M}_k ( \mathcal{\widehat{T}}) \mathcal{M}_k(\mathcal{\widehat{T}})\t \big)$ where $\Gamma(\cdot)$ is the \emph{hollowing operator} that sets the diagonal to zero;
		\While  {$t < t_{\max}$}
		\State  Let $t = t+1$. For {$k=1,2,3$} set $\uhat_k^{(t)} = 
\SVD_{r_k}\left(\mathcal{M}_k\left(\mathcal{\widehat{T}}\times_{k' < k} (\uhat_{k'}^{(t)})^\top \times_{k'>k} (\uhat_{k'}^{(t-1)})^\top\right)\right).$
		\EndWhile
		\State Output: $\uhat_k^{(t_{\max})}$.
	\end{algorithmic}\label{al:tensor-power-iteration}
\end{algorithm}

		

\begin{algorithm}[t]
	\caption{Successive Projection Algorithm for Tensor Mixed-Membership Estimation}
	\begin{algorithmic}[1]
		\State Input: estimated loading matrices $\{\uhat_k\}_{k=1}^{3}$ via \cref{al:tensor-power-iteration}.
		\For {$k = 1, 2, 3$}
		\State $\mathbf{R} \coloneqq \uhat_k$, $J_k = \{\}, j =1$
		 \While {$\mathbf{R} \neq 0_{n\times r_k}$ and $j \leq r_k$}
	\State Set $j^* = \argmax \| e_j\t \mathbf{R} \|^2.$ If there are ties, set $j^*$ as the smallest index.
	\State Set $\mathbf{v}_j \coloneqq e_{j^*}\t \mathbf{R}$,  $\mathbf{R} = \mathbf{R} \big( \mathbf{I}_{r_k} - \frac{\mathbf{v}_j \mathbf{v}_j\t}{\|\mathbf{v}_j\|^2} \big)$, $J_k = J_k \cup \{ j^* \}$, $j = j+1$
	\EndWhile
	\State Define $\mathbf{\widehat \Pi}_k \coloneqq \uhat_k ( \uhat_k[J_k,\cdot] )\inv$
		\EndFor
	\State Output: three membership matrices $\{\mathbf{\widehat \Pi}_k\}_{k=1}^{3}$.
	\end{algorithmic}\label{al:spamm}
\end{algorithm}

\subsection{Technical Assumptions}
To develop the theory for our estimation procedure, we will require several assumptions.  
 In light of \cref{prop:identifiability} and to induce regularity into the community memberships, we impose the following assumption.    
\begin{assumption}[Regularity and Identifiability] \label{assumption:regularity}
The community membership matrices $\mathbf{\Pi}_k$ satisfy
\begin{align*}
   \frac{p_k}{r_k} \lesssim  \lambda_{\min} \bigg( \mathbf{\Pi}_k\t \mathbf{\Pi}_k \bigg) \leq \lambda_{\max} \bigg( \mathbf{\Pi}_k\t \mathbf{\Pi}_k \bigg) \lesssim \frac{p_k}{r_k}.
\end{align*}
In addition, each matricization of $\mathcal{S}$ is rank $r_k$ respectively, and there is at least one pure node for each community for every mode.
\end{assumption}
The condition above implies that each community is approximately the same size.  When $\mathcal{T}$ is a tensor blockmodel, the matrix $\mathbf{\Pi}_k\t \mathbf{\Pi}_k$ is a diagonal matrix with diagonal entries equal to the community sizes; Assumption \ref{assumption:regularity} states then that the community sizes are each of order $p_k/r_k$, which is a widely used condition in the literature on clustering \citep{loffler_optimality_2021,han_exact_2021,hu_multiway_2022}.

Tensor SVD is feasible only with certain signal strength \citep{zhang_tensor_2018}. In order to quantify the magnitude of the signal strength, we introduce an assumption on the signal-to-noise ratio (SNR), as quantified in terms of singular values of $\mathcal{S}$ and maximum variance $\sigma$.
\begin{assumption}[Signal Strength] \label{assumption:signalstrength} The smallest singular 
value of $\mathcal{S}$,     $\Delta=\lambda_{\min}( \mathcal{S})$, satisfies 
\begin{align*}
    \frac{\Delta^2}{\sigma^2} \gtrsim \frac{\kappa^2 p_{\max}^2 \log(p_{\max}) r_1 r_2 r_3}{p_1 p_2 p_3 p_{\min}^{1/2}}.
\end{align*}
\end{assumption}
Here $\kappa$ denotes the condition number of $\mathcal{S}$. When $p_k \asymp p$ and $r_k = O(1)$,  Assumption \ref{assumption:signalstrength} is equivalent to the assumption $\frac{\Delta^2}{\sigma^2} \gtrsim \frac{\kappa^2 \log(p)}{p^{3/2}}.$ 

\begin{remark}[Comparison to Prior Works]
    In the \textcolor{black}{(subgaussian)} tensor blockmodel setting, \textcolor{black}{which assumes discrete memberships}, \citet{han_exact_2021} define the signal-strength parameter
\begin{align*}
    \widetilde\Delta^2 &\coloneqq \min_k \min_{i\neq j} \| \big(\mathcal{M}_k(\mathcal{S})\big)_{i\cdot} - \big(\mathcal{M}_k(\mathcal{S})\big)_{j\cdot} \|^2;
\end{align*}
i.e., the worst case row-wise difference between any two rows of each matricization of $\mathcal{S}$. 
If one further assumes that $\mathcal{S}$ is rank $(r_1,r_2,r_3)$, then it is straightforward to check that both $\widetilde{\Delta}$ and $\Delta$ coincide up to a factor of the condition number. 
\citet{han_exact_2021} demonstrate that the condition $\frac{\Delta^2}{\sigma^2} \gtrsim \frac{1}{p^{3/2}}$ is required to obtain perfect cluster recovery in polynomial time if the number of cluster centroids is assumed constant and $p_k \asymp p$.  \textcolor{black}{In contrast, our condition is slightly stronger by a factor of $\kappa^2 \log(p)$; however, our model permits mixed memberships, which is in general a more challenging estimation problem due to the ``continuous'' structure in the model.}

\textcolor{black}{Our signal strength condition can also be compared to similar conditions that arise in the analysis of hypergraphs and multilayer networks, which, while not covered by our results, are closely related. In \citet{ke_community_2020} who study estimation in the degree-corrected hypergraph blockmodel with sparse Bernoulli noise, assuming certain regularity conditions and a sufficiently warm initialization, the authors assume that $\mathrm{SNR} \gg \frac{\log(p)}{p}$, whereas our condition translates to the stronger assumption $\mathrm{SNR} \gg \frac{\sqrt{\log(p)}}{p^{3/4}}$.  The authors further propose an initialization that only requires the condition $\mathrm{SNR} \gg \frac{\log(p)}{p}$.  However, the existence of such an initialization is due to the underlying symmetry of the hypergraph, something that does not hold in our setting.  
}

\textcolor{black}{Finally, our signal strength condition is also related to that of \citet{jing_community_2021}, who study community detection in mixture multilayer networks, which is a form of tensor blockmodel with additional symmetry along the first mode and sparse Bernoulli noise.  Assuming that the number of layers is of order $p$, provided one has a sufficiently warm initialization, their condition translates to the assumption $\mathrm{SNR} \gg \frac{\log^2(p)}{p}$, which is again weaker than our condition.  However, in order to obtain such a warm initialization they have to impose additional assumptions that leverage the symmetry along the first mode, which does not hold in our setting.  
}

\end{remark}

Finally, our analysis relies heavily on the following lemma relating the signal strength parameter $\Delta$ to the smallest singular value of the core tensor $\mathcal{C}$ in the Tucker decomposition of $\mathcal{T}$. 
\begin{lemma} \label{lem:relationship}
Let $\mathcal{T}$ be a Tensor Mixed Membership  Blockmodel  of the form \eqref{tensormmsbm}, let $\mathcal{T} = \mathcal{C} \times_1 \U_1 \times_2 \U_2 \times_3\U_3$ denote its Tucker decomposition, and let $\lambda = \lambda_{\min}(\mathcal{C})$ denote its smallest singular value.  Suppose further that  Assumptions \ref{assumption:regularity} and \ref{assumption:signalstrength} hold with $r_k \leq r_{-k}$ for each $k$. 
Then it holds that 
\begin{align*}
    \lambda &\asymp \Delta \frac{(p_1p_2p_3)^{1/2}}{(r_1 r_2 r_3)^{1/2}}; \quad
    \mu_0 = O(1),
\end{align*}
\textcolor{black}{where $\mu_0$ is the incoherence parameter of $\mathcal{T}$.}
Furthermore, $\U_k\pure (\U_k\pure)\t = \big(\mathbf{\Pi}_k\t \mathbf{\Pi}_k\big)\inv$.  
\end{lemma}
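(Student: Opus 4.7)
The plan is to prove the three claims sequentially, starting with the identity $\U_k\pure (\U_k\pure)\t = (\mathbf{\Pi}_k\t \mathbf{\Pi}_k)\inv$, then using it to bound the incoherence $\mu_0$, and finally exploiting it to compute the singular values of the Tucker core $\mathcal{C}$. The structural input throughout is \cref{prop:relationship}, which gives $\U_k = \mathbf{\Pi}_k \U_k\pure$ with $\U_k\pure$ an invertible $r_k \times r_k$ matrix.

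Combining $\U_k = \mathbf{\Pi}_k \U_k\pure$ with orthonormality $\U_k\t \U_k = \I_{r_k}$ yields $(\U_k\pure)\t \mathbf{\Pi}_k\t \mathbf{\Pi}_k \U_k\pure = \I_{r_k}$, and rearranging (using that $\U_k\pure$ is invertible and $\mathbf{\Pi}_k$ is full column rank under \cref{assumption:regularity}) proves the third claim. Combined with the two-sided bound $\lambda_{\min}(\mathbf{\Pi}_k\t \mathbf{\Pi}_k) \asymp \lambda_{\max}(\mathbf{\Pi}_k\t \mathbf{\Pi}_k) \asymp p_k/r_k$ from \cref{assumption:regularity}, this forces every singular value of $\U_k\pure$ to be of order $\sqrt{r_k/p_k}$ and every singular value of $(\U_k\pure)\inv$ to be of order $\sqrt{p_k/r_k}$. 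The incoherence bound then follows immediately: $\|e_i\t \U_k\| = \|e_i\t \mathbf{\Pi}_k \U_k\pure\| \leq \|e_i\t \mathbf{\Pi}_k\|_2 \|\U_k\pure\| \lesssim \sqrt{r_k/p_k}$ because each row of $\mathbf{\Pi}_k$ is a probability vector with Euclidean norm at most one; multiplying by $\sqrt{p_k/r_k}$ and maximizing over $k$ yields $\mu_0 = O(1)$.

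For the singular-value claim, I express the core as a multilinear transformation of $\mathcal{S}$. Writing $\mathbf{\Pi}_k = \U_k (\U_k\pure)\inv$ and substituting into $\mathcal{T} = \mathcal{S} \times_1 \mathbf{\Pi}_1 \times_2 \mathbf{\Pi}_2 \times_3 \mathbf{\Pi}_3$, then comparing with the (unique) Tucker decomposition with orthonormal loadings, I obtain $\mathcal{C} = \mathcal{S} \times_1 (\U_1\pure)\inv \times_2 (\U_2\pure)\inv \times_3 (\U_3\pure)\inv$. Matricizing along mode $k$ gives
\begin{equation*}
\mathcal{M}_k(\mathcal{C}) = (\U_k\pure)\inv \, \mathcal{M}_k(\mathcal{S}) \, \bigl[(\U_{k'}\pure)^{-\top} \otimes (\U_{k''}\pure)^{-\top}\bigr],
\end{equation*}
where $k', k''$ denote the other two modes. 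Standard submultiplicative inequalities $\sigma_{\min}(A)\sigma_{\min}(B)\sigma_{\min}(C) \leq \sigma_{\min}(ABC) \leq \|A\|\,\sigma_{\min}(B)\,\|C\|$ (valid for invertible square $A, C$), combined with the fact that $(\U_k\pure)\inv$ has all singular values of the same order $\sqrt{p_k/r_k}$ and hence so does the Kronecker product up to constants, yield $\sigma_{\min}(\mathcal{M}_k(\mathcal{C})) \asymp \sqrt{p_1 p_2 p_3/(r_1 r_2 r_3)} \cdot \sigma_{\min}(\mathcal{M}_k(\mathcal{S}))$. Taking the min over $k$ gives $\lambda \asymp \Delta \sqrt{p_1 p_2 p_3/(r_1 r_2 r_3)}$.

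The main obstacle is the matching \emph{upper} bound in the singular-value claim: while the lower bound follows trivially, the upper bound $\sigma_{\min}(ABC) \leq \|A\|\,\sigma_{\min}(B)\,\|C\|$ gives a matching estimate \emph{only} when $\|A\| \asymp \sigma_{\min}(A)$ and $\|C\| \asymp \sigma_{\min}(C)$, i.e., when the outer factors are two-sided well-conditioned. This is exactly what the two-sided control on $\mathbf{\Pi}_k\t \mathbf{\Pi}_k$ in \cref{assumption:regularity} provides via the identity $\U_k\pure (\U_k\pure)\t = (\mathbf{\Pi}_k\t \mathbf{\Pi}_k)\inv$ established in part three; without it, $\lambda$ could a priori range anywhere in $[\Delta, \kappa\Delta] \cdot \sqrt{p_1 p_2 p_3/(r_1 r_2 r_3)}$ rather than concentrating around $\Delta \sqrt{p_1 p_2 p_3/(r_1 r_2 r_3)}$.
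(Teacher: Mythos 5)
Your proof is correct and follows essentially the same line as the paper's, resting on the identity $\U_k\pure(\U_k\pure)\t = (\mathbf{\Pi}_k\t\mathbf{\Pi}_k)\inv$, the two-sided spectral control on $\mathbf{\Pi}_k\t\mathbf{\Pi}_k$ from Assumption~\ref{assumption:regularity}, and (sub)multiplicativity of singular values. The only cosmetic difference is that you isolate the explicit core formula $\mathcal{C} = \mathcal{S}\times_1(\U_1\pure)\inv\times_2(\U_2\pure)\inv\times_3(\U_3\pure)\inv$ before sandwiching, whereas the paper bounds $\min_k\lambda_{\min}(\mathbf{T}_k\mathbf{T}_k\t)$ directly via $\mathbf{T}_k = \mathbf{\Pi}_k\mathbf{S}_k(\mathbf{\Pi}_{k+1}\otimes\mathbf{\Pi}_{k+2})\t$; the two are equivalent since the orthonormal loading matrices leave the singular values of the matricizations unchanged.
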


\textcolor{black}{
\begin{remark}[Relation to Tensor Subspace Estimation]
    \cref{lem:relationship} reveals that $\Delta$ is related to the smallest nonzero singular value of the core tensor $\lambda$ in the Tucker decomposition of $\mathcal{T}$.  Combining this lemma with Assumption \ref{assumption:signalstrength} shows that we require $\lambda/\sigma \gtrsim \kappa p^{3/4} \sqrt{\log(p)}$ when $p_k \asymp p$.  In \citet{zhang_tensor_2018}, it was shown that the condition $\lambda/\sigma \gtrsim p^{3/4}$ is a necessary and sufficient condition for minimax subspace estimation in polynomial time when $p_k \asymp p$. 
 Therefore, Assumption \ref{assumption:signalstrength} is only suboptimal relative to \citet{zhang_tensor_2018} by factors of $\kappa$ and $\sqrt{\log(p)}$.  However, our results are significantly different from theirs, and we discuss these further in \cref{sec:twoinfty}.
 \end{remark}
 }

\subsection{Estimation Errors}

The following theorem characterizes the errors in estimating $\mathbf{\Pi}_k$.  


\begin{theorem}[Uniform Estimation Error] \label{thm:estimation}
Suppose that $r_{\max} \lesssim p_{\min}^{1/2}$, that $r_{\max} \asymp r$ with $r \lesssim r_{\min}$, and that $\kappa^2 \lesssim p_{\min}^{1/4}$.  Suppose further that Assumptions \ref{assumption:regularity} and  \ref{assumption:signalstrength} hold, \textcolor{black}{and that $\Delta/\sigma \leq \exp(c p_{\max})$ for some small constant $c$}.  Let $\mathbf{\widehat \Pi}_k$ be the output of \cref{al:spamm} with  $t$ iterations for $t \asymp  \textcolor{black}{\log\bigg( \frac{\Delta/\sigma(p_1p_2p_3)^{1/2}}{C_0\kappa \sqrt{p_{\min}\log(p_{\max})}(r_1r_2r_3)^{1/2}}\bigg)}$ 
Then with probability at least $1 - p_{\max}^{-10}$ there exists three permutation matrices $\mathcal{P}_k \in \{0,1\}^{r_k \times r_k}$ such that for each $k$ 
\begin{align*}
 \max_{1\leq i\leq p_k} \| \big(\mathbf{\Pi}_k -\mathbf{\widehat \Pi}_k \mathcal{P}_k\big)_{i\cdot} \| &\lesssim \frac{\kappa \sqrt{r^3 \log(p_{\max})}}{(\Delta/\sigma) (p_{-k})^{1/2}}.
\end{align*}
Consequently, when $p_k \asymp p$, it holds that
\begin{align*}
\max_{1\leq i\leq p_k} \| \big(\mathbf{\Pi}_k -\mathbf{\widehat \Pi}_k \mathcal{P}_k\big)_{i\cdot} \| &\lesssim \frac{\kappa  \sqrt{r^3  \log(p)}}{(\Delta/\sigma) p}.
\end{align*}
\end{theorem}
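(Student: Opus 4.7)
The plan is to decompose the per-row error by writing $\mathbf{\widehat \Pi}_k - \mathbf{\Pi}_k \mathcal{P}_k = \uhat_k (\uhat_k\pure)\inv - \U_k (\U_k\pure)\inv \mathcal{P}_k$ and then applying a rotation-aligned $\ell_{2,\infty}$ bound on $\uhat_k - \U_k \mathbf{O}_k$ together with control of the corner-finding error $(\uhat_k\pure)\inv - \mathbf{O}_k\t (\U_k\pure)\inv \mathcal{P}_k$. The key inputs are \cref{prop:relationship} (which gives $\U_k = \mathbf{\Pi}_k \U_k\pure$), \cref{lem:relationship} (which controls $\lambda = \lambda_{\min}(\mathcal{C})$ and $\mu_0$), and the main $\ell_{2,\infty}$ perturbation bound \cref{thm:twoinfty} for HOOI initialized via \cref{al:dd}.

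First I would invoke \cref{thm:twoinfty} to produce, for each $k$, an orthogonal $\mathbf{O}_k \in \mathbb{O}(r_k)$ with
\begin{align*}
\| \uhat_k - \U_k \mathbf{O}_k \|_{2,\infty} \;\lesssim\; \frac{\sigma \mu_0 \sqrt{r_k \log p_{\max}}}{\lambda} \;\asymp\; \frac{\sigma}{\Delta} \sqrt{\frac{r_1 r_2 r_3 \, r_k \log p_{\max}}{p_1 p_2 p_3}},
\end{align*}
after substituting $\lambda \asymp \Delta (p_1 p_2 p_3)^{1/2}/(r_1 r_2 r_3)^{1/2}$ and $\mu_0 = O(1)$ from \cref{lem:relationship}. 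The iteration count in the theorem is chosen to match the convergence rate of HOOI, ensuring the above bound actually holds for the algorithm's output.

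Next I would analyze the corner-finding step. Because $\U_k = \mathbf{\Pi}_k \U_k\pure$ and each row of $\mathbf{\Pi}_k$ lies in the simplex with vertices given by the rows of $\U_k\pure$, the rotated matrix $\U_k \mathbf{O}_k$ is simplex-structured with vertex matrix $\U_k\pure \mathbf{O}_k$. I would then apply the robustness guarantees of \citet{gillis_fast_2014} for the successive projection algorithm: provided the input perturbation in $\ell_{2,\infty}$ is small relative to $\sigma_{\min}(\U_k\pure)$, the algorithm returns an index set $J_k$ such that, up to an $r_k \times r_k$ permutation $\mathcal{P}_k$, the rows $(\uhat_k)_{J_k \cdot}$ are componentwise within the $\ell_{2,\infty}$ error from \cref{thm:twoinfty} of the true pure rows $(\U_k\pure \mathbf{O}_k)$ reordered by $\mathcal{P}_k$. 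The identity $\U_k\pure (\U_k\pure)\t = (\mathbf{\Pi}_k\t \mathbf{\Pi}_k)\inv$ from \cref{lem:relationship}, combined with \cref{assumption:regularity}, gives $\sigma_{\min}(\U_k\pure) \asymp (r_k/p_k)^{1/2}$, so the SPA robustness threshold is comfortably met under \cref{assumption:signalstrength}. A standard matrix-inversion perturbation argument then yields
\begin{align*}
\| (\uhat_k\pure)\inv - \mathbf{O}_k\t (\U_k\pure)\inv \mathcal{P}_k \| \;\lesssim\; \sqrt{\frac{p_k}{r_k}} \cdot \| \uhat_k - \U_k \mathbf{O}_k \|_{2,\infty} \cdot \sqrt{\frac{p_k}{r_k}},
\end{align*}
where the two $\sqrt{p_k/r_k}$ factors come from $\|(\U_k\pure)\inv\| \asymp (p_k/r_k)^{1/2}$.

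Finally I would assemble the per-row bound. For each row $i$, writing $\mathbf{\widehat \Pi}_k[i,\cdot] - \mathbf{\Pi}_k[i,\cdot]\mathcal{P}_k = (\uhat_k[i,\cdot] - (\U_k \mathbf{O}_k)[i,\cdot])(\uhat_k\pure)\inv + (\U_k \mathbf{O}_k)[i,\cdot]\bigl((\uhat_k\pure)\inv - \mathbf{O}_k\t(\U_k\pure)\inv \mathcal{P}_k\bigr)$, taking norms, and using $\|(\U_k \mathbf{O}_k)[i,\cdot]\| = \|\U_k[i,\cdot]\| \leq \mu_0 (r_k/p_k)^{1/2} \lesssim (r_k/p_k)^{1/2}$ along with $\|(\uhat_k\pure)\inv\| \lesssim (p_k/r_k)^{1/2}$, the two contributions combine to
\begin{align*}
 \max_{i} \| (\mathbf{\Pi}_k - \mathbf{\widehat \Pi}_k \mathcal{P}_k)_{i\cdot} \| \;\lesssim\; \| \uhat_k - \U_k \mathbf{O}_k \|_{2,\infty} \cdot \sqrt{\frac{p_k}{r_k}} \;\lesssim\; \frac{\sigma}{\Delta} \cdot \frac{\sqrt{r^3 \log p_{\max}}}{\sqrt{p_{-k}}},
\end{align*}
which is the claimed bound. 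The main obstacle is the corner-finding step: one must verify that SPA's robustness guarantee transfers to our $\ell_{2,\infty}$ error rather than the uniform Frobenius-type errors it is usually stated for, and in particular that the identified vertex set corresponds to true pure nodes up to a single global permutation $\mathcal{P}_k$ that also produces the identity $\U_k\pure \mathbf{O}_k = \mathcal{P}_k\t \U_k\pure$ approximately; the conditions $\kappa^2 \lesssim p_{\min}^{1/4}$ and \cref{assumption:signalstrength} are exactly what make the SPA perturbation small enough to guarantee this.
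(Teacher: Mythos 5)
Your overall strategy matches the paper's: invoke \cref{thm:twoinfty} for an $\ell_{2,\infty}$ bound on $\uhat_k$, feed that into the robustness guarantee (Theorem~3) of \citet{gillis_fast_2014} for SPA to get a per-row bound on $\uhat_k\pure$ against a permuted-and-rotated $\U_k\pure$, then decompose $\mathbf{\widehat\Pi}_k - \mathbf{\Pi}_k\mathcal{P}_k$ and bound the two pieces. The paper checks the same SPA threshold condition you describe, using $\lambda_{\min}(\U_k\pure)\asymp\sqrt{r_k/p_k}$ from \cref{lem:relationship} and the SNR assumption, so that part is fine, and your worry about whether Gillis--Vavasis gives a per-row bound is a non-issue (it does).

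The one genuine gap is in your matrix-inversion perturbation step, and it costs you a factor $\sqrt{r_k}$ if done the way you wrote it. You claim
$\|(\uhat_k\pure)\inv - \mathbf{O}_k\t(\U_k\pure)\inv\mathcal{P}_k\|\lesssim (p_k/r_k)\,\|\uhat_k-\U_k\mathbf{O}_k\|_{2,\infty}$,
citing the usual identity $\|A\inv-B\inv\|\le\|A\inv\|\|B\inv\|\|A-B\|$. But $\|A-B\|$ there is a spectral norm of an $r_k\times r_k$ matrix, whereas the Gillis--Vavasis guarantee only gives $\|A-B\|_{2,\infty}\lesssim\|\uhat_k-\U_k\mathbf{O}_k\|_{2,\infty}$, and converting $\ell_{2,\infty}$ to spectral costs $\sqrt{r_k}$. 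After multiplying by $\|\U_k[i,\cdot]\|\lesssim\sqrt{r_k/p_k}$ you'd end up with $\sqrt{p_k}\cdot\|\mathrm{err}\|_{2,\infty}$ rather than the claimed $\sqrt{p_k/r_k}\cdot\|\mathrm{err}\|_{2,\infty}$, giving the final rate $r^{4}$ instead of the stated $r^{3}$ inside the square root. The paper avoids this entirely by \emph{not} bounding the inverse difference in isolation. Instead it writes $e_i\t\U_k\mathbf{W}=(\mathbf{\Pi}_k)_{i\cdot}\U_k\pure\mathbf{W}$ and uses $\|(\mathbf{\Pi}_k)_{i\cdot}\|_1=1$ (row-stochasticity) rather than $\|\U_k[i,\cdot]\|\lesssim\mu_0\sqrt{r_k/p_k}$. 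Then, letting $A=\uhat_k\pure$ and $B=\mathcal{P}_k\t\U_k\pure\mathbf{W}_k$, one has the algebraic identity $\U_k\pure\mathbf{W}_k(A\inv-B\inv)=\mathcal{P}_k(B-A)A\inv$, so
\begin{align*}
\big\|\U_k\pure\mathbf{W}_k\big(A\inv-B\inv\big)\big\|_{2,\infty}\le\|B-A\|_{2,\infty}\,\|A\inv\|\lesssim \eps\sqrt{p_k/r_k},
\end{align*}
which needs only $\|B-A\|_{2,\infty}$ (never its spectral norm) and a single factor $\|A\inv\|$. That cancellation is what saves the extra $\sqrt{r_k}$, and it is the piece your sketch is missing. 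Separately, you dropped the $\kappa$ when quoting \cref{thm:twoinfty} and in your final display; that is presumably a typo, but the $\kappa$ is present in the theorem statement and should be carried through.
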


\cref{thm:estimation} establishes a \emph{uniform} error bound for the estimated communities; that is, the estimation error for a given node $i$.  \textcolor{black}{Unlike the tensor blockmodel considered in \citet{han_exact_2021}, in the tensor mixed-membership blockmodel has continuous community memberships, and hence estimation is a more challenging problem.  Our bound exhibits a polynomial dependence on the SNR, whereas estimation of discrete community memberships often exhibits exponential dependence on the SNR (e.g. \citep{loffler_optimality_2021}).  However, our results also demonstrate that our estimation procedure is consistent \emph{uniformly for each node}, which is a stronger result than the average-case optimality often considered in discrete community estimation.
}


\begin{remark}[Relationship to Matrix Mixed-Membership Blockmodels]
\cref{thm:estimation} is related to similar bounds in the literature for the matrix setting. \citet{mao_estimating_2021,xie_entrywise_2022,jin_estimating_2017} consider estimating the membership matrix with the leading eigenvectors of the observed matrix.  
\textcolor{black}{Assuming certain regularity conditions, these results collectively imply that}
\begin{align*}
  \text{(Matrix setting)}&&    \|\mathbf{\Pi} -  \mathbf{\widehat\Pi} \mathcal{P} \|_{2,\infty}  &= \tilde O\bigg(  \frac{1}{\mathrm{SNR} \times \sqrt{p}} \bigg); && \\
     \text{(Tensor setting)}&& \|\mathbf{\Pi} -  \mathbf{\widehat\Pi} \mathcal{P} \|_{2,\infty} &=\tilde O\bigg( \frac{1}{\mathrm{SNR}\times p} \bigg), && 
\end{align*}
\textcolor{black}{where $\mathrm{SNR}$ can be  understood as a form of signal-to-noise ratio taking into account the Bernoulli noise.}
Therefore, \cref{thm:estimation} can be understood as providing an estimation improvement of order  $\sqrt{p}$ compared to the matrix setting -- one may view this extra $\sqrt{p}$ factor as stemming from the higher-order tensor structure.  However, the arguments required to prove \cref{thm:estimation} require analyzing the output of HOOI, which imposes a number of nontrivial technical challenges.
\end{remark}
\begin{remark}[Extension to Bernoulli Noise]
When the noise $\mathcal{Z}$ is Bernoulli, the SNR is governed by the sparsity of the Bernoulli noise.  However, our definition of SNR in \cref{thm:estimation} only concerns the subgaussian variance proxy $\sigma$ which is a constant for Bernoulli noise, and hence our results are only applicable to dense Bernoulli noise.  \textcolor{black}{While it is of theoretical and practical interest to extend our analysis to the sparse Bernoulli setting, such a result will require significant arguments beyond those already in this paper. Our proof is already quite long and highly novel  which we detail further in  \cref{sec:proofoverview}.  Therefore, in light of our already involved technical analysis, we leave this setting to future work. }
\end{remark}

Since the rows of $\mathbf{\Pi}_k$ can be understood as weight vectors, a natural metric to use in this setting is the average $\ell_1$ norm.  \cref{thm:estimation} then implies the following corollary.

\begin{corollary}[Average $\ell_1$ Error] \label{cor:averagecase}
In the setting of \cref{thm:estimation}, with probability at least $1 - p_{\max}^{-10}$, one has
\begin{align*}
\max_k    \inf_{\mathrm{Permutations }\ \mathcal{P}} \frac{1}{p_k}\sum_{i=1}^{p_k} \|  \big(\mathbf{\widehat \Pi}_k - \mathbf{\Pi}_k \mathcal{P} \big)_{i\cdot} \|_1 &\lesssim \frac{r^2 \kappa \sqrt{\log(p_{\max})}}{(\Delta/\sigma)( p_{-k} )^{1/2}}.
\end{align*}
\end{corollary}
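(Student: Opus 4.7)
The plan is to derive Corollary 1 directly from Theorem 1 via the elementary inequality $\|v\|_1 \leq \sqrt{r_k}\,\|v\|_2$ for $v \in \mathbb{R}^{r_k}$, together with the observation that permutations leave row norms invariant. On the event of probability at least $1-p_{\max}^{-10}$ guaranteed by Theorem 1, there exist permutation matrices $\mathcal{P}_k \in \mathbb{R}^{r_k\times r_k}$ satisfying
\[
\max_{1\leq i\leq p_k}\bigl\|(\mathbf{\Pi}_k - \mathbf{\widehat\Pi}_k\mathcal{P}_k)_{i\cdot}\bigr\|_2 \;\lesssim\; \frac{\kappa\sqrt{r^{3}\log(p_{\max})}}{(\Delta/\sigma)(p_{-k})^{1/2}}.
\]

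First, I would reconcile the ``side'' of the permutation: the infimum in Corollary 1 ranges over permutations $\mathcal{P}$ acting on $\mathbf{\Pi}_k$, whereas Theorem 1 provides a permutation acting on $\mathbf{\widehat\Pi}_k$. Taking $\mathcal{P} = \mathcal{P}_k^{-1}$ and writing $\mathbf{\widehat\Pi}_k - \mathbf{\Pi}_k\mathcal{P}_k^{-1} = -(\mathbf{\Pi}_k - \mathbf{\widehat\Pi}_k\mathcal{P}_k)\mathcal{P}_k^{-1}$, one sees that right multiplication by a permutation merely reorders columns and hence preserves the $\ell_2$ norm of each row; thus the uniform bound from Theorem 1 transfers without loss to $\max_i\|(\mathbf{\widehat\Pi}_k - \mathbf{\Pi}_k\mathcal{P})_{i\cdot}\|_2$.

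Second, I would convert from $\ell_2$ to $\ell_1$ row-by-row via Cauchy--Schwarz and then average. Since each row lives in $\mathbb{R}^{r_k}$,
\[
\frac{1}{p_k}\sum_{i=1}^{p_k}\bigl\|(\mathbf{\widehat\Pi}_k - \mathbf{\Pi}_k\mathcal{P})_{i\cdot}\bigr\|_1 \;\leq\; \sqrt{r_k}\,\max_{1\leq i\leq p_k}\bigl\|(\mathbf{\widehat\Pi}_k - \mathbf{\Pi}_k\mathcal{P})_{i\cdot}\bigr\|_2 \;\lesssim\; \sqrt{r_k}\cdot \frac{\kappa\sqrt{r^{3}\log(p_{\max})}}{(\Delta/\sigma)(p_{-k})^{1/2}}.
\]
Because $r_k \leq r_{\max}\asymp r$ by assumption, $\sqrt{r_k\cdot r^{3}}\lesssim r^{2}$, matching the bound in the statement. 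The same event of Theorem 1 controls every $k$ simultaneously, so taking the maximum over $k$ is free.

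There is essentially no technical obstacle here; the argument is a one-shot consequence of Theorem 1. The only small subtlety worth flagging is the inversion of the permutation, which is handled in a single line using the right-invariance of per-row norms under column permutations. Everything else is the standard $\|\cdot\|_1 \leq \sqrt{r_k}\,\|\cdot\|_2$ bound and absorption of the factor $\sqrt{r_k}$ into the power of $r$ under the assumption $r_{\max}\asymp r$.
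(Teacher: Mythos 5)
Your proposal is correct and takes essentially the same route as the paper: convert each row's $\ell_1$ norm to $\ell_2$ via $\|v\|_1 \le \sqrt{r_k}\,\|v\|_2$, invoke the uniform $\ell_{2,\infty}$ bound from \cref{thm:estimation}, and average over rows. The one point you flag — reconciling which side of the difference the permutation sits on — is glossed over in the paper's proof, and your explicit handling of it via $\mathbf{\widehat\Pi}_k - \mathbf{\Pi}_k\mathcal{P}_k^{-1} = -(\mathbf{\Pi}_k - \mathbf{\widehat\Pi}_k\mathcal{P}_k)\mathcal{P}_k^{-1}$ and right-invariance of row norms is a small but genuine tidying of the argument.
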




\subsection{Key Tool: $\ell_{2,\infty}$ Tensor Perturbation Bound}
\label{sec:twoinfty}

In this section we introduce the new $\ell_{2,\infty}$ tensor perturbation bound, which serves as a key tool for developing the main results of this paper. Other $\ell_{2,\infty}$ bounds for HOOI in this setting have not appeared in the literature to the best of our knowledge.  Unlike the matrix SVD, HOOI (\cref{al:tensor-power-iteration}) is an iterative algorithm that proceeds by updating the estimates at each iteration. Therefore, analyzing the output of HOOI requires carefully tracking the interplay between noise and estimation error at each iteration as a function of the spectral properties of the underlying tensor.  \textcolor{black}{We further discuss our proof techniques in \cref{sec:proofoverview}.}


In what follows, recall we define the \emph{incoherence} of a tensor $\mathcal{T}$ as the smallest number $\mu_0$ such that
\begin{align*}
\max_k \sqrt{\frac{p_k}{r_k}} \|\U_k \|_{2,\infty} \leq \mu_0.
\end{align*}
By way of example, for a $p\times p \times p$ tensor $\mathcal{T}$, observe that when $\mathcal{T}$ contains only one large nonzero entry, it holds that $\mu_0 = \sqrt{p}$, whereas when $\mathcal{T}$ is the tensor with constant entries, it holds that $\mu_0 = 1$.  Consequently, $\mu_0$ can be understood as a measure of ``spikiness'' of the underlying tensor, with larger values of $\mu_0$ corresponding to more ``spiky'' $\mathcal{T}$. 

In addition, we will present bounds for the estimation of $\U_k$ up to right multiplication of an orthogonal matrix $\mathbf{W}_k$.  
The appearance of the orthogonal matrix $\mathbf{W}_k$ occurs due to the fact that we do not assume that singular values are distinct, and hence singular vectors are only identifiable up to orthogonal transformation.  

 The following result establishes the $\ell_{2,\infty}$ perturbation bound for the estimated singular vectors from \cref{al:tensor-power-iteration} under the general tensor denoising model, which is more general than the setting considered in the previous sections.

%
%

\begin{theorem}\label{thm:twoinfty}
Suppose that $\mathcal{\widehat{T}} = \mathcal{T} + \mathcal{Z}$, where $\mathcal{Z}_{ijk}$ are independent mean-zero subgaussian random variables satisfying $ \|\mathcal{Z}_{ijk} \|_{\psi_2} \leq \sigma$.  Let $\mathcal{T}$  have Tucker decomposition $\mathcal{T} = \mathcal{C} \times_1 \U_1 \times_2 \U_2 \times_3 \U_3$, and suppose that $\mathcal{T}$ is incoherent with incoherence constant $\mu_0$. Suppose that $\lambda/\sigma \gtrsim \kappa \sqrt{\log(p_{\max})} p_{\max}/p_{\min}^{1/4}$, $\mu_0^2 r \lesssim p_{\min}^{1/2}$, that $\kappa^2 \lesssim p_{\min}^{1/4}$, and that $r \lesssim r_{\min}$, where $\lambda = \lambda_{\min}(\mathcal{C})$. \textcolor{black}{Suppose further that $\lambda/\sigma \leq \exp(c p)$ for some small constant $c$}. Let $\uhat^{(t)}_k$ denote the output of HOOI (\cref{al:tensor-power-iteration}) after $t$ iterations.  Then there exists orthogonal matrices $ \mathbf{W}_k  \in \mathbb{O}(r_k)$ for each $k$ such that after $t$ iterations with $t \asymp \log\big(\textcolor{black}{\frac{\lambda/\sigma}{C \kappa \sqrt{p_{\min} \log(p_{\max})}}}\big)$, 
with probability at least $1 - p_{\max}^{-10}:$
\begin{align*}
    \| \uhat_{k}^{(t)}  - \U_k \mathbf{W}_k\|_{2,\infty} &\lesssim \frac{ \kappa \mu_0 \sqrt{r_k \log(p_{\max})} }{\lambda/\sigma}.
\end{align*}
\end{theorem}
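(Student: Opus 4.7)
The plan is to carry out a leave-one-out analysis of HOOI by an induction on the iteration count $t$, maintaining at each step three coupled bounds: a $\sin\Theta$ (spectral) bound on $\hat{U}_k^{(t)}$, an $\ell_{2,\infty}$ bound on $\hat{U}_k^{(t)}$, and a spectral-norm bound on the deviation between $\hat{U}_k^{(t)}$ and a suitable family of auxiliary leave-one-out iterates. First I would handle the initialization by invoking an $\ell_{2,\infty}$ perturbation bound for the diagonal-deletion estimator (a slight sharpening of the result in \citet{cai_subspace_2021} alluded to in the introduction); diagonal deletion is essential since it removes the order-$\sigma^2 p_{-k}$ bias on the diagonal of $\mathcal{M}_k(\hat{\mathcal{T}})\mathcal{M}_k(\hat{\mathcal{T}})^\top$, which is exactly what allows the SNR condition to drop to $\lambda/\sigma \gtrsim \kappa \sqrt{\log p_{\max}}\, p_{\max}/p_{\min}^{1/4}$ rather than a worse scaling.

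For the inductive step, I would begin from the HOOI update $\hat{U}_k^{(t)} = \mathrm{SVD}_{r_k}(\hat{G}_k^{(t)})$ where $\hat{G}_k^{(t)} = \mathcal{M}_k(\hat{\mathcal{T}})(\hat{U}_{k+1}^{(t')} \otimes \hat{U}_{k+2}^{(t'')})$ with $t', t''$ the appropriate indices. Writing $G_k = U_k \mathcal{M}_k(\mathcal{C})(U_{k+1}^\top \otimes U_{k+2}^\top)$ for the noiseless analogue and using a standard perturbation identity for the SVD, the $i$-th row $e_i^\top(\hat{U}_k^{(t)} W_k - U_k)$ decomposes (up to lower-order terms) into a signal-times-perturbation contribution bounded by $\|\mathcal{M}_k(\mathcal{T})\|_{2,\infty}$ times the spectral error of $\hat{U}_{k'}^{(t-1)}$ (a contraction term driving the induction), a clean noise-times-signal term $e_i^\top \mathcal{M}_k(\mathcal{Z})(U_{k+1} \otimes U_{k+2})$ of order $\sigma\sqrt{r_k \log p_{\max}}$ by subgaussian concentration, and a cross-term $e_i^\top \mathcal{M}_k(\mathcal{Z})\bigl((\hat{U}_{k+1}^{(t-1)} \otimes \hat{U}_{k+2}^{(t-1)}) - (U_{k+1} \otimes U_{k+2})\bigr)$, which is the source of the main difficulty because the perturbation factor depends on the very noise row we are conditioning on.

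The hard part will be controlling the cross-term, and this is where the novel leave-one-out construction is required. For each mode $k$ and each row index $m$, I would define an auxiliary tensor $\hat{\mathcal{T}}^{(k,-m)}$ in which the $m$-th slice of the noise along mode $k$ is zeroed, and run a parallel HOOI producing iterates $\hat{U}_j^{(t,k,-m)}$ for all $j$. By construction, $\hat{U}_{k+1}^{(t-1,k,-m)}$ and $\hat{U}_{k+2}^{(t-1,k,-m)}$ are independent of $e_m^\top \mathcal{M}_k(\mathcal{Z})$, so after replacing the perturbation factor with its leave-one-out analogue we can apply a subgaussian bound conditionally. The key tensorial subtlety — and this is where the approach departs from matrix leave-one-out — is that $\hat{U}_{k+1}^{(t-1,k,-m)}$ is itself obtained from an SVD of a contracted tensor involving $\hat{U}_k^{(t-1,k,-m)}$ and $\hat{U}_{k+2}^{(t-2,k,-m)}$, so dependence on row $m$ can leak back through the other modes' iterates. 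I would address this by propagating the leave-one-out construction jointly across all three modes and using the previous-iterate $\ell_{2,\infty}$ bound (from the induction hypothesis) together with the tensor structure to show that the effect of row $m$ on the other modes' iterates is negligible at order $\sigma\sqrt{r/p_{\min}}/(\lambda/\sigma)$.

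Finally, combining the three per-iteration bounds yields a recursive inequality of the form $\mathrm{Err}^{(t)} \leq \rho\, \mathrm{Err}^{(t-1)} + C \kappa \mu_0 \sigma \sqrt{r_k \log p_{\max}}/\lambda$ with contraction factor $\rho \lesssim \kappa \sigma p_{\max}/(\lambda p_{\min}^{1/4}) \leq 1/2$ under the stated SNR condition. Iterating $t \asymp \log(\sigma \kappa p_{\max}/\lambda)$ times drives the initial error below the noise floor and yields the claimed bound $\|\hat{U}_k^{(t)} - U_k W_k\|_{2,\infty} \lesssim \kappa \mu_0 \sqrt{r_k \log p_{\max}}/(\lambda/\sigma)$. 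The choice of rotations $W_k$ will arise naturally as the orthogonal factor in the polar decomposition of $\hat{U}_k^{(t)\top} U_k$, so that the $\sin\Theta$ bound and the $\ell_{2,\infty}$ bound are compatible at every iteration.
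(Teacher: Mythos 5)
Your overall blueprint matches the paper's at the level of strategy — an induction over HOOI iterations, coupled spectral and $\ell_{2,\infty}$ bounds, a leave-one-out sequence to decouple the noise row from the projection factors, and an initialization bound for diagonal deletion. You also correctly locate the key difficulty in the cross-term $e_i^\top \mathcal{M}_k(\mathcal{Z})\bigl((\hat{U}_{k+1}^{(t-1)} \otimes \hat{U}_{k+2}^{(t-1)}) - (U_{k+1} \otimes U_{k+2})\bigr)$. However, there is a concrete gap in how you propose to resolve it, and it is precisely the gap the paper warns against in its proof overview.

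You define the leave-one-out iterates by ``running a parallel HOOI'' on the tensor with the $m$-th noise slice zeroed. If $\check{\mathbf{U}}_j^{(t)}$ denotes those iterates, then the $\sin\Theta$ distance between $\hat{\mathbf{U}}_1^{(t)}$ and $\check{\mathbf{U}}_1^{(t)}$ is governed (via Davis--Kahan on the Gram matrices, whose eigengap is of order $\lambda^2$) by a perturbation that contains the term
\begin{align*}
 \mathbf{T}_1 \Bigl( \mathcal{P}_{\hat{\mathbf{U}}_2^{(t-1)} \otimes \hat{\mathbf{U}}_3^{(t-1)}} - \mathcal{P}_{\check{\mathbf{U}}_2^{(t-1)} \otimes \check{\mathbf{U}}_3^{(t-1)}} \Bigr) \mathbf{T}_1^{\top},
\end{align*}
whose spectral norm is of order $\lambda_1^2 \cdot \eta = \kappa^2 \lambda^2 \cdot \eta$, where $\eta$ is the leave-one-out $\sin\Theta$ error from the other two modes. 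Dividing by the gap $\lambda^2$ gives a recursion $\eta_1^{(t)} \lesssim \kappa^2 \eta_{2,3}^{(t-1)} + (\text{noise terms})$, which does not contract for $\kappa \geq 1$; you cannot absorb this into the $\rho$ you wrote, because it has no small $\sigma/\lambda$ factor attached. ``Propagating jointly across modes'' and invoking the $\ell_{2,\infty}$ induction hypothesis does not suppress this term: it is a deterministic product of the low-rank signal against the leave-one-out deviation, not a noise concentration term, and the signal is large.

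The paper's fix is a different \emph{definition} of the leave-one-out iterates, not a different way of controlling the same ones. It sets $\tilde{\mathbf{U}}_k^{(t,j-m)}$ to be the left singular vectors of $\mathbf{T}_k + \mathbf{Z}_k^{j-m} \tilde{\mathcal{P}}_k^{t,j-m}$, where $\tilde{\mathcal{P}}$ is the Kronecker projection built from the \emph{leave-one-out} iterates of the other modes. The point of this somewhat unnatural choice is that, under the eigengap conditions (Lemma~\ref{lem:eigengaps}), the column span of $\tilde{\mathbf{U}}_1^{(t,j-m)}$ is simultaneously the dominant left singular space of $\bigl(\mathbf{T}_1 + \mathbf{Z}_1^{j-m} \tilde{\mathcal{P}}_1^{t,j-m}\bigr)\hat{\mathbf{U}}_2^{(t-1)} \otimes \hat{\mathbf{U}}_3^{(t-1)}$ — with the \emph{true} iterates on the right — so that both $\hat{\mathbf{U}}_1^{(t)}$ and $\tilde{\mathbf{U}}_1^{(t,j-m)}$ are SVDs of matrices that share the same right factor $\hat{\mathbf{U}}_2^{(t-1)} \otimes \hat{\mathbf{U}}_3^{(t-1)}$. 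The residual difference then lives entirely in the noise, $\bigl(\mathbf{Z}_1^{j-m}\tilde{\mathcal{P}}_1^{t,j-m} - \mathbf{Z}_1\bigr)\hat{\mathbf{U}}_2^{(t-1)} \otimes \hat{\mathbf{U}}_3^{(t-1)}$, and the $\mathbf{T}_1$-amplified term never appears (cf.\ Lemma~\ref{lem:leaveoneoutsintheta}, where $\eta$ appears only against small factors $\kappa\tau_k/\lambda$). Without this re-definition the argument does not close at the stated SNR and condition-number scaling, so this is a genuine missing ingredient in the proposal rather than a detail left to be filled in.
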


\begin{remark}[Signal Strength Condition]
   The condition $\lambda/\sigma \gtrsim \kappa \sqrt{\log(p_{\max})}p_{\max}/p_{\min}^{1/4}$ is only slightly stronger than the condition
$\lambda/\sigma \gtrsim p _{\max}\sqrt{r/p_{\min}}$ when $r \lesssim p_{\min}^{1/2}$.  It has been shown in \citet{luo_sharp_2021} that this second condition implies a bound of the form $\|\sin\Theta(\uhat_k,\U_k)\| \lesssim \frac{\sqrt{p_k}}{\lambda/\sigma}$, which  matches the minimax lower bound established in \citet{zhang_tensor_2018} when $p_k \asymp p$. Therefore, the condition $\lambda/\sigma \gtrsim \kappa \sqrt{\log(p_{\max})} p_{\max}/p_{\min}^{1/4}$ allows for different orders of $p_k$ without being too strong. \  
\textcolor{black}{When $p_k \asymp p$, our SNR condition translates to the condition $\lambda/\sigma \gtrsim   \kappa p^{3/4} \sqrt{\log(p)}$, which is optimal up to a factor of $\kappa\sqrt{\log(p)}$ for a polynomial-time estimator to exist \citep{zhang_tensor_2018}.}
\textcolor{black}{Furthermore, the condition $\lambda/\sigma \leq \exp(cp)$ may be an artifact of the proof strategy and not material. Our results also allow $r$ to grow with $p$ as long as $\mu_0^2 r \lesssim \sqrt{p}$. }
%
\end{remark}

\begin{remark}[Optimality]

It was shown in \citet{zhang_tensor_2018} that the minimax rate for tensor SVD satisfies 
\begin{align*}
    \inf_{\mathbf{\bar U}_k} \sup_{\mathcal{T} \in \mathcal{F}_{p,r}(\lambda)} \E \| \sin\Theta(\mathbf{\bar U}_k,\U_k) \| \gtrsim \frac{\sqrt{p_k}}{\lambda/\sigma},
\end{align*}
where $\mathcal{F}_{p,r}(\lambda)$ is an appropriate class of low-rank signal tensors and the infimum is over all estimators of $\U_k$.  By properties of the $\sin\Theta$ distance and the $\ell_{2,\infty}$ norm, it holds that
\begin{align*}
       \inf_{\mathbf{\bar U}_k} \sup_{\mathcal{T} \in \mathcal{F}_{p,r}(\lambda)} \E \inf_{\mathbf{W} \in \mathbb{O}(r)} \| \mathbf{\bar U}_k - \U_k \mathbf{W} \|_{2,\infty} 
       &\gtrsim \frac{1}{\sqrt{p_k}}\inf_{\mathbf{\bar U}_k} \sup_{\mathcal{T} \in \mathcal{F}_{p_k,r}(\lambda)} \E \| \sin\Theta(\mathbf{\bar U}_k,\U_k) \| \gtrsim  \frac{1}{\lambda/\sigma}.
\end{align*}
Consequently, when $\kappa,\mu_0,r \asymp 1$, \cref{thm:twoinfty} shows that HOOI attains the minimax rate for the $\ell_{2,\infty}$ norm up to a logarithmic term. Such a result is new to the best of our knowledge.

\end{remark}

\begin{remark}[Intermediate Result: New $\ell_{2, \infty}$ Error Bound of Diagonal-Deletion Intialization)] Considering again $p_k \asymp p$, under the conditions of \cref{thm:twoinfty}, we prove (see \cref{thm:spectralinit_twoinfty}) that the diagonal-deletion initialization satisfies the high-probability upper bound
\begin{align}
    \| \uhat_k^S - \U_k \mathbf{W}_k \|_{2,\infty} &\lesssim \bigg( \underbrace{\frac{\kappa \sqrt{p\log(p)}}{\lambda/\sigma}}_{\text{linear error}} + \underbrace{\frac{p^{3/2} \log(p)}{(\lambda/\sigma)^2}}_{\text{quadratic error}} + \underbrace{\kappa^2 \mu_0 \sqrt{\frac{r}{p}}}_{\text{bias term}} \bigg) \mu_0 \sqrt{\frac{r}{p}}. \label{eq:inittwoinfty} 
\end{align}
The full proof of this result is contained in \cref{sec:inittwoinftyproof}; it should be noted that this result slightly improves upon the bound of \citet{cai_subspace_2021} by a factor of $\kappa^2$. 
This quantity in \eqref{eq:inittwoinfty} consists of three terms: the first term is the ``linear error'' that appears in \cref{thm:twoinfty}, the second term is the ``quadratic error,'' and the third term is the error stemming from the bias induced by diagonal deletion.  In the high noise regime $\lambda/\sigma \asymp p^{3/4} \polylog(p)$, the quadratic error can dominate the linear error; and, moreover, the bias term does not scale with the noise of the problem.  
\textcolor{black}{Our results show that HOOI eliminates both the bias term and the quadratic error in $\ell_{2,\infty}$ norm.}
\end{remark}

\begin{remark}[Adaptivity of HOOI to Heteroskedasticity]
    The bias term in \eqref{eq:inittwoinfty}, which does not scale with the noise $\sigma$, arises naturally due to the fact that one deletes the diagonal of both the noise and the underlying low-rank matrix.  In the setting that the noise is heteroskedastic, \citet{zhang_heteroskedastic_2022} showed that a form of bias-adjustment is necessary   for many settings; moreover, they showed that their algorithm \texttt{HeteroPCA} eliminates this bias factor in $\sin\Theta$ distance. 
    The follow-on works \citet{agterberg_entrywise_2022} and \citet{yan_inference_2021} have shown that this algorithm also eliminates the bias term in $\ell_{2,\infty}$ distance, implying that it is possible to obtain a bound that scales with the noise.  In contrast, \cref{thm:twoinfty} shows that the HOOI algorithm does not require any additional bias-adjustment to combat heteroskedasticity in order to obtain a bound that scales with the noise.  In effect, this result demonstrates that HOOI is \emph{adaptive} to heteroskedasticity.  
\end{remark}


\begin{remark}[Implicit Regularization] \cref{thm:twoinfty} and its proof also reveal an implicit regularization effect in tensor SVD with subgaussian noise -- when the underlying low-rank tensor is sufficiently incoherent (e.g., $\mu_0 = O(1)$) and the signal-to-noise ratio is sufficiently strong, all of the iterations are also incoherent with parameter $\mu_0$.  Several recent works have proposed incoherence-regularized tensor SVD \citep{ke_community_2020,jing_community_2021}, and \cref{thm:twoinfty} suggests that this regularization may not be needed. Our results do not directly apply in these settings, as their models include Bernoulli noise and some form of symmetry.  
Nevertheless, it may be possible to extend our work to these settings, though the analysis will likely be significantly more involved. 
\end{remark}

\section{Application to  Global Trade Data} \label{sec:numericalresults}


We apply our algorithm to the global trade network dataset collected in \citet{de_domenico_structural_2015}
and further analyzed in \citet{jing_community_2021}, which consists of trading relationships for 364 different goods between 214 countries in the year 2010, with the weight corresponding to the amount traded.  Here each individual network corresponds to the trading relationships between countries for a single good. \textcolor{black}{We have also included simulations and additional data analysis in the supplementary materials.}

To preprocess, we first convert each network to undirected, and we keep the networks with the largest connected component of at least size 150, which results in a final tensor of dimension $59 \times 214 \times 214$.  Note that unlike \citet{jing_community_2021}, we do not delete or binarize edges, nor do we only use the largest connected component within each network. To select the ranks we use the same method as in the previous section, resulting in $\br = (5,4,4)$.  

In \cref{fig:worldmap} we plot each of the memberships associated to the ``country'' mode, where the pure nodes are found to be USA, Japan, Canada, and Germany.  For the communities corresponding to Germany and Japan, we see that the weight of the corresponding countries roughly corresponds to geographical location, with closer countries corresponding to higher membership intensity.  In particular, Germany's memberships are highly concentrated in Europe and Africa, with the memberships of all European countries being close to one.  \textcolor{black}{Due to space constraints, } we provide further discussion on the different trade products in the supplementary materials.

\begin{figure*}[ht!]
 \centering
        \subfloat{%
            \includegraphics[width=.5\linewidth, keepaspectratio]{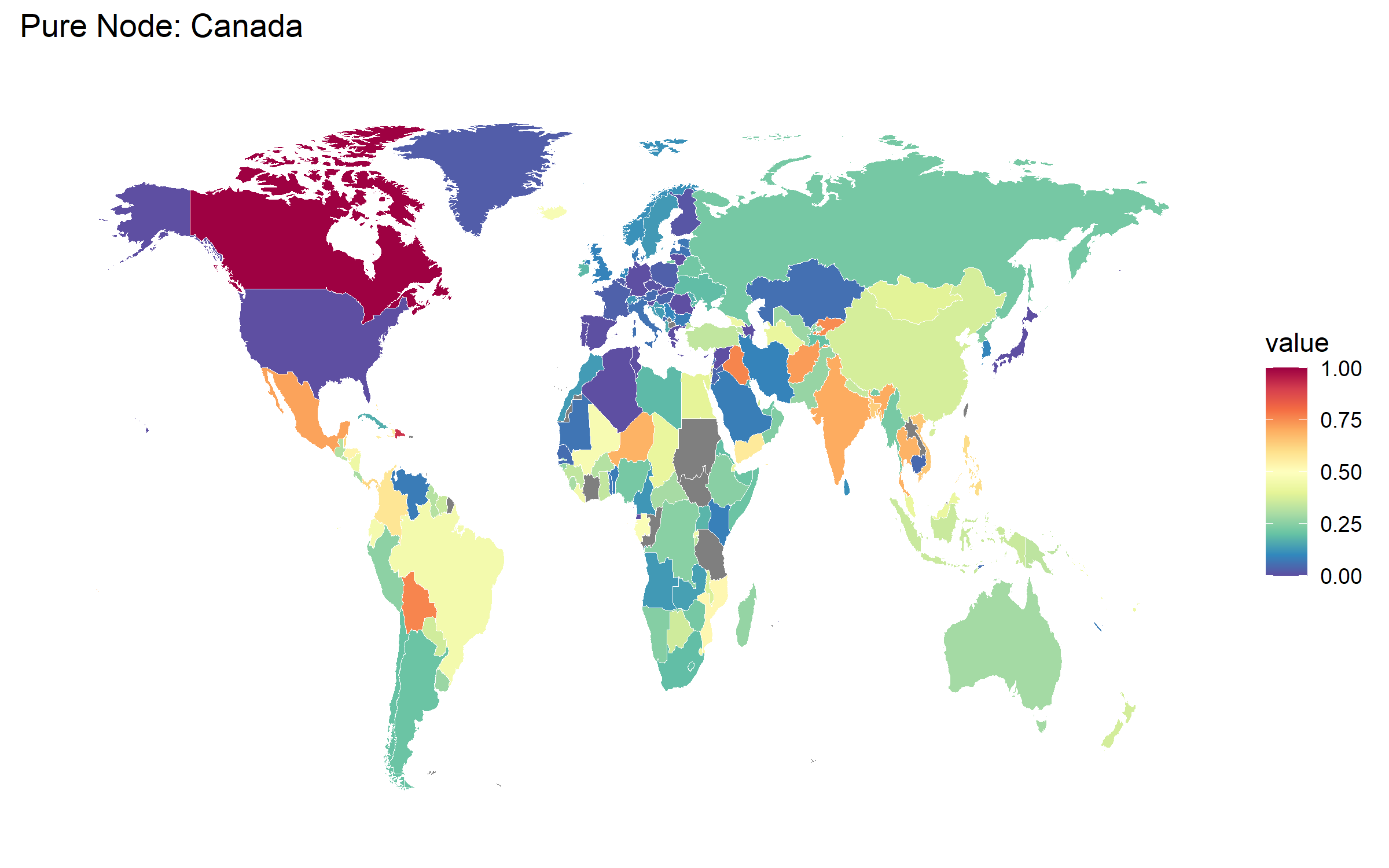}%
            \label{subfig:a}%
        }
        \subfloat{%
            \includegraphics[width=.5\linewidth, keepaspectratio]{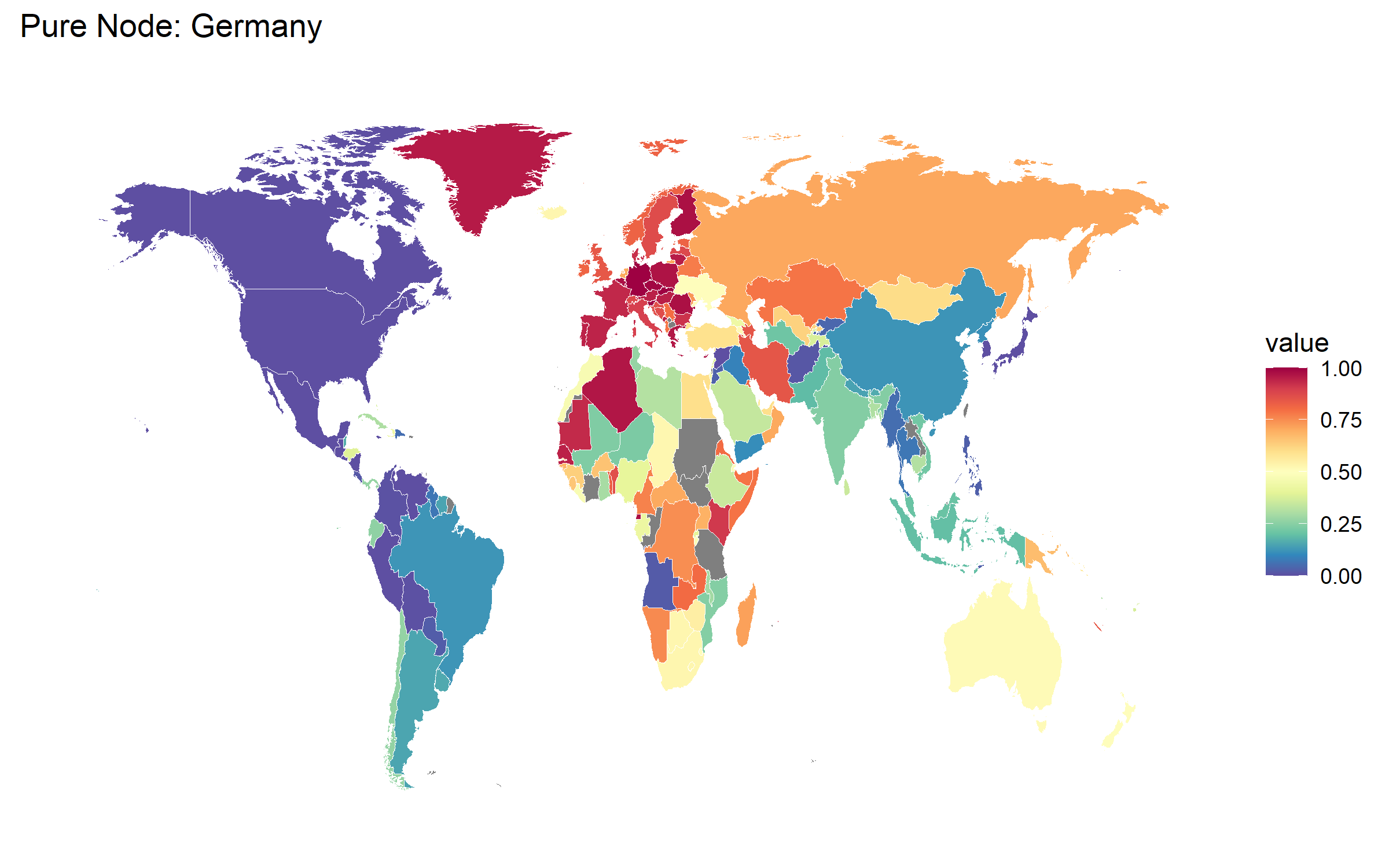}%
            \label{subfig:b}%
        } \\
      \subfloat{%
            \includegraphics[width=.5\linewidth, keepaspectratio]{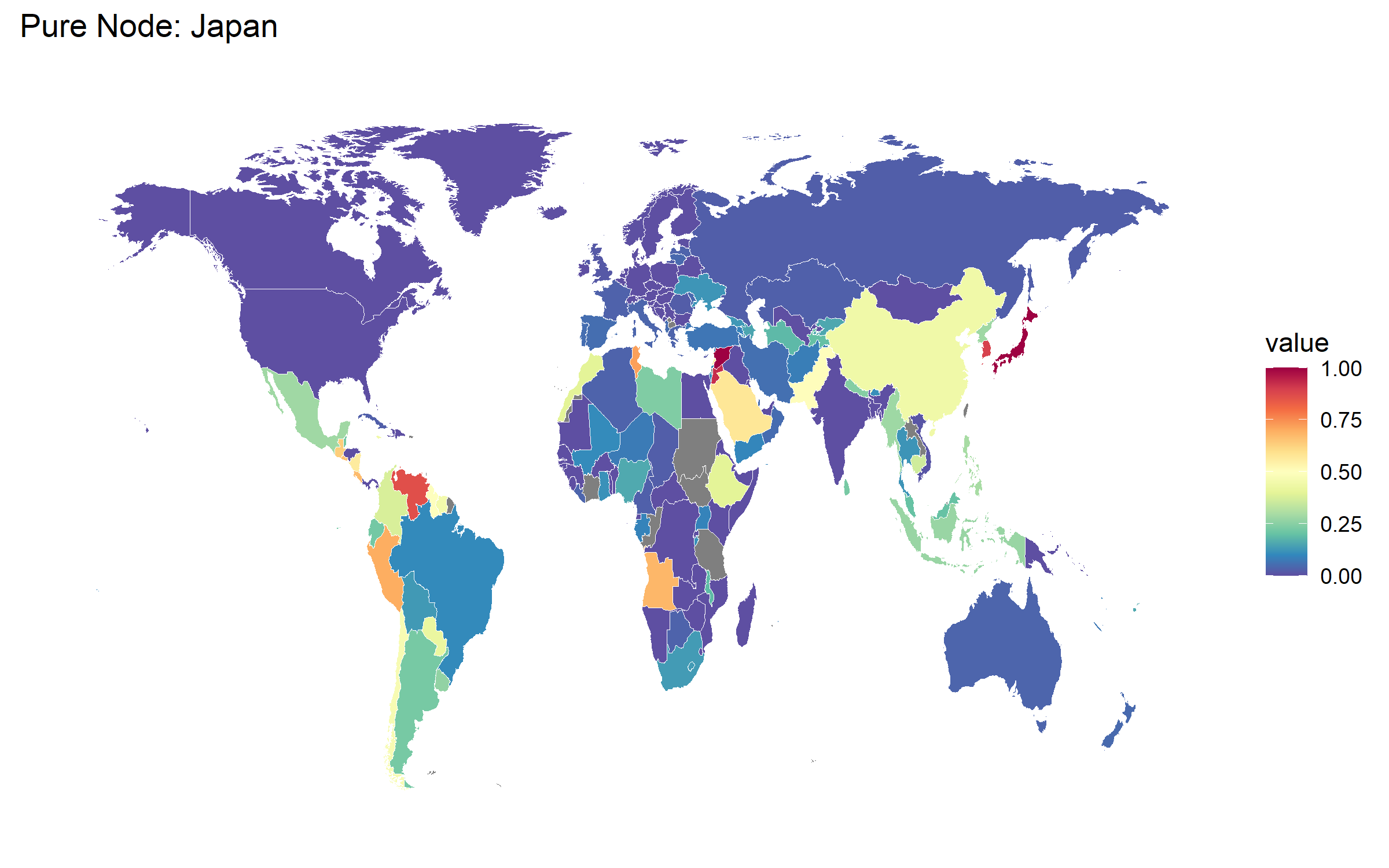}%
            \label{subfig:c}%
        }
        \subfloat{%
            \includegraphics[width=.5\linewidth, keepaspectratio]{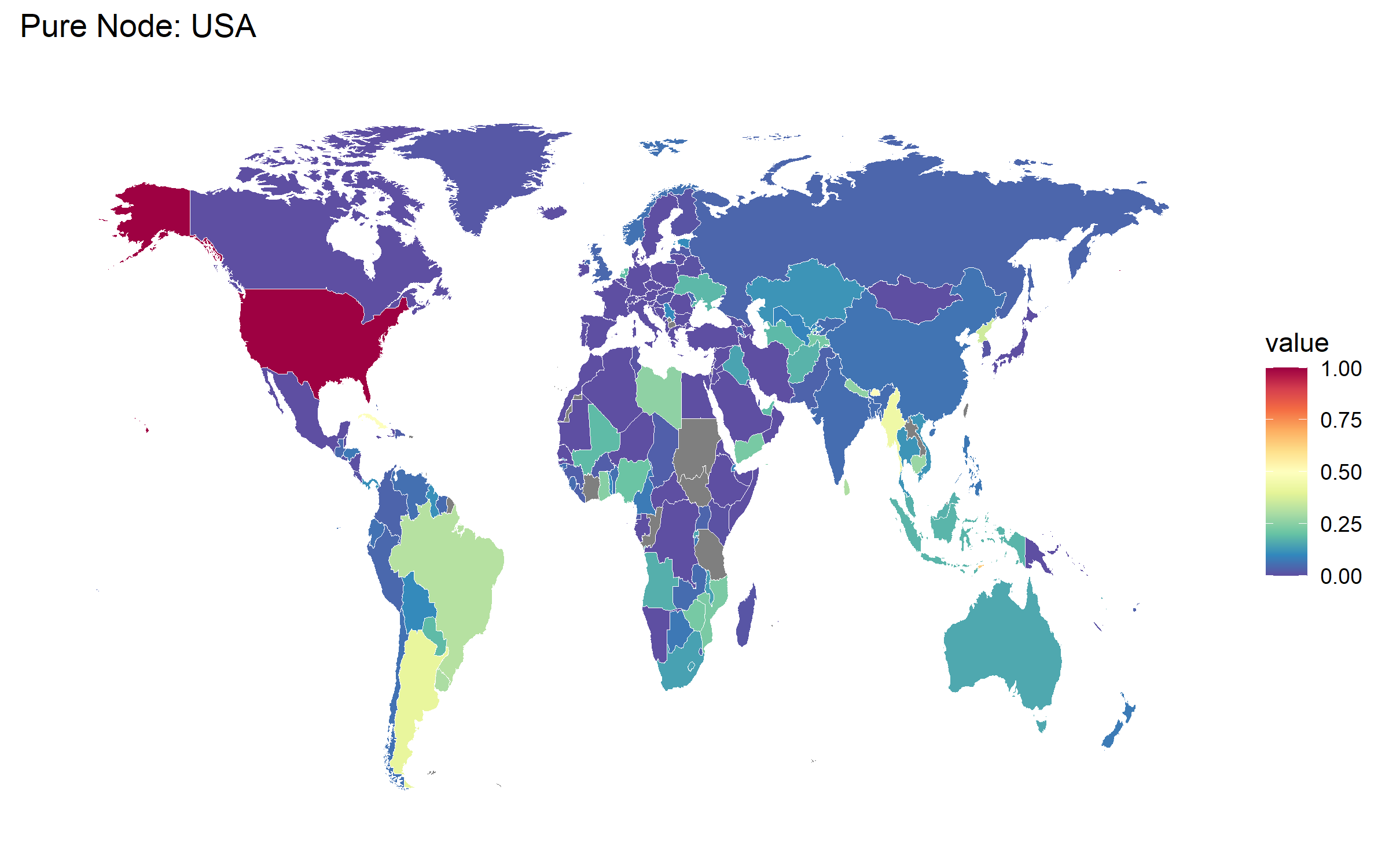}%
            \label{subfig:d}%
        }
        \caption{Pure node memberships for the countries, with red corresponding to higher membership intensity.  Grey corresponds to countries that were not included in the analysis.}
        \label{fig:worldmap}
    \end{figure*}

\section{Overview of the Proof of \cref{thm:twoinfty}} \label{sec:proofoverview}
In this section, we provide a high-level overview and highlight the novelties of the proof of \cref{thm:twoinfty}, the main \textcolor{black}{technical result} of this paper. As mentioned previously, the proof idea is based on a leave-one-out analysis. Different versions of leave-one-out analysis have been used in, for example, \cite{yan_inference_2021,abbe_ell_p_2022,abbe_entrywise_2020,chen_spectral_2021,ma_implicit_2020,cai_nonconvex_2022} \textcolor{black}{among others}.  \textcolor{black}{However, due to particular structure of tensor data, our analysis requires a number of novel considerations heretofore not used in the literature to the best of our knowledge.} 

To be concrete, without loss of generality, assume that $\sigma = 1$.  For simplicity assume that $p_k \asymp p$ throughout this section. Our proof proceeds by showing that at each iteration $t$ with probability at least $1 - 3 t p^{-15}$ one has the bound
\begin{align}
    \| \uhat_k^{(t)} - \U_k \mathbf{W}_k^{(t)} \|_{2,\infty} &\leq \bigg( \frac{\dl}{\lambda} + \frac{1}{2^{t}} \bigg)\mu_0 \sqrt{\frac{r}{p}}, \label{iterativebound1}
\end{align}
where  we define $\dl$ as the \emph{linear error} $\dl \coloneqq C_0 \kappa \sqrt{p\log(p)},$
with $C_0$ being some fixed constant. Here the matrix $\mathbf{W}_k^{(t)}$ is defined via 
$
    \mathbf{W}_k^{(t)} := \argmin_{\mathbf{W}: \mathbf{W W}\t = \mathbf{I}_{r_k}} \| \uhat_k^{(t)} - \U_k \mathbf{W} \|_F;$
i.e., it is the orthogonal matrix most closely aligning $\U_k$ and $\uhat_k^{(t)}$ in Frobenius norm. 
The matrix $\mathbf{W}_k^{(t)}$ is also known as the \emph{matrix sign function} of $\U_k\t \uhat_k^{(t)}$, denoted 
as $\mathrm{sgn}(\U_k\t \uhat_k^{(t)})$.   



We first consider a fixed $m$th row to note that
\begin{align*}
  e_m\t\bigg(  \uhat_k^{(t)} - \U_k \mathbf{W}_k^{(t)} \bigg) 
    &= e_m\t \bigg( (\mathbf{I} - \U_k \U_k\t) \uhat_k +\U_k ( \U_k\t\uhat_k^{(t)} - \mathbf{W}_k^{(t)}) \bigg).
\end{align*}
The second term is easily handled since $\mathbf{W}_k^{(t)}$ is close to $\U_k\t \uhat_k^{(t)}$ (see Lemma \ref{lem:orthogonalmatrixlemma}).  The first term requires additional analysis. 
For ease of exposition, consider the case $k = 1$. Recall that $\uhat_1^{(t)}$ are defined as the left singular vectors of the matrix  
    $\mathbf{\hat T}_1^{(t)} := \mathbf{ T}_1 \uhat_2^{(t-1)} \otimes \uhat_3^{(t-1)} + \mathbf{Z}_1 \uhat_2^{(t-1)} \otimes \uhat_3^{(t-1)},$
where $\mathbf{T}_1 = \mathcal{M}_1(\mathcal{T})$ and $\mathbf{Z}_1 = \mathcal{M}_1(\mathcal{Z})$.  
\textcolor{black}{To analyze $\mathbf{\uhat}_1^{(t)}$ further, using the fact that }
$(\mathbf{I} - \U_1 \U_1\t) \mathbf{T}_1 = 0$ \textcolor{black}{and the eigenvector-eigenvalue equation} yields the identity
\begin{align*}
  e_m\t\bigg(  \uhat_1^{(t)} - \U_1 \mathbf{W}_1^{(t)} \bigg) 
  &= e_m\t (\mathbf{I} - \U_1 \U_1\t) \mathbf{Z}_1 \mathcal{P}_{\uhat_{2}^{(t-1)}} \otimes \mathcal{P}_{\uhat_3^{(t-1)}} \mathbf{T}_1\t \uhat_k^{(t)} (\mathbf{\widehat{\Lambda}}_1^{(t)})^{-2} \\
   &\qquad + e_m\t (\mathbf{I} - \U_1 \U_1\t) \mathbf{Z}_1 \mathcal{P}_{\uhat_{2}^{(t-1)}} \otimes \mathcal{P}_{\uhat_3^{(t-1)}} \mathbf{Z}_1\t \uhat_1^{(t)} (\mathbf{\widehat{\Lambda}}_1^{(t)})^{-2}; \\
   &\qquad + e_m\t \U_1 ( \U_1\t\uhat_1^{(t)} - \mathbf{W}_1^{(t)}) \\
   &\eqqcolon e_m\t \mathbf{L}_1^{(t)} + e_m\t \mathbf{Q}_1^{(t)} + e_m\t \U_1 ( \U_1\t\uhat_1^{(t)} - \mathbf{W}_1^{(t)}),
\end{align*}
where the first two terms represent the \emph{linear error} and \emph{quadratic error} respectively, \textcolor{black}{and $\mathbf{\hat \Lambda}_1^{(t)}$ is the diagonal matrix of empirical singular values of the matrix $\mathbf{\hat T}_1^{(t)}$}. For ease of exposition we will focus on the linear error.  Observe that the linear $m$'th row of the linear error is a linear combination of the random variables in the matrix $\mathbf{Z}_1$ and the previous iterates $\mathbf{\hat U}_k^{(t)}$, which depend on $\mathbf{Z}_1$, and hence we cannot appeal to standard concentration inequalities for sums of independent random variables. 


The primary argument behind the leave-one-out analysis technique is to define a sequence that is independent from the random variables in $e_m\t \mathbf{Z}_1$. \textcolor{black}{By leveraging both the independence  of the constructed sequence and its close proximity to the true sequence, it is possible to obtain sharp $\ell_{2,\infty}$ concentration.}
\textcolor{black}{As HOOI outputs estimated tensor singular vectors, one na\"ive approach is to apply the arguments in, for example, \citet{chen_spectral_2021} or \citet{cai_subspace_2021} for studying matrix eigenvectors and singular vectors.}
For tensors, this approach suffers from two drawbacks: one is that \textcolor{black}{such an argument often relies on bounds on the} spectral norm of $\mathbf{Z}_1$, \textcolor{black}{which} may be very large in the tensor setting -- of order $\sqrt{p_2 p_3} \asymp p$.  However, this difficulty can be managed by leveraging the Kronecker structure that arises from the HOOI procedure \textcolor{black}{together with the subgaussian noise}.  
\textcolor{black}{However, similar arguments turn out to fail for Bernoullli noise}; see e.g., \citet{jing_community_2021}, \citet{ke_community_2020}, \citet{yuan_incoherent_2017} for \textcolor{black}{alternative approaches using so-called ``tensor concentration inequalities'' that bound such terms in the sparse Bernoulli noise regime. Unfortunately, it is the lack of availability of such bounds in the literature for our setting that preclude analyzing sparse Bernoulli noise, and our proof is already extremely lengthy without introducing these results.  Nonetheless, we surmise that with the advent of such results it will be possible to extend our arguments to the sparse Bernoulli setting. }

The second drawback has to do with the leave-one-out sequence definition, \textcolor{black}{and highlights the main technical novelty of our proof}.  Suppose one defines the leave-one-out sequence by setting the $m$'th row of $\mathbf{Z}_1$ to zero and running HOOI with this new noise matrix. 
It can be shown that using this na\"ive approach that the $\sin\Theta$ distance between the true sequence and the leave-one-out sequence defined in this manner will depend on a quantity that actually \emph{increases} with respect to the condition number of $\mathcal{T}$ (see the \cref{sec:extrainfo} for details).  Therefore, in order to eliminate this problem, we carefully construct a novel modified leave-one-out sequence  $\utilde_k^{(t)}$ that can eliminate the dependence on $\mathbf{T}_1$ as follows.

First, let $\mathbf{Z}_k^{1-m}$ denote the matrix $\mathbf{Z}_k = \mathcal{M}_k(\mathcal{Z})$ with the entries associated to the $m$'th row of $\mathbf{Z}_1$ set to zero (note that in this manner $\mathbf{Z}_k - \mathbf{Z}_k^{1-m}$ will consist of sparse nonzero \emph{columns}). \textcolor{black}{We then introduce corresponding leave-one-out sequences for each other mode $k$ defined by first} setting $\utilde_k^{(0,1-m)}$  as the leading $r_k$ eigenvectors of the hollowed gram matrix of $\mathbf{T}_k + \mathbf{Z}_k^{1-m}$, 
so that $\utilde_k^{(0,1-m)}$ is independent from the $m$'th row of $\mathbf{Z}_1$ (for each $k$). We now set $\utilde_k^{(t,1-m)}$ inductively via $\utilde_k^{(t,1-m)} \coloneqq \SVD_{r_k}\big( \mathbf{T}_k + \mathbf{Z}_k^{1-m}  \mathcal{\tilde P}_{k}^{t,1-m} \big)$, where $ \mathcal{\tilde P}_{k}^{t,1-m}$ is \textcolor{black}{a projection onto the subspace corresponding to the previous leave-one-out sequence iterates $\mathbf{\tilde U}_k^{(t,1-m)}$}.
\textcolor{black}{In particular, 
this additional projection matrix $\mathcal{\tilde P}_k^{t,1-m}$ serves to denoise $\mathbf{Z}_k$ and  preserves independence while doing so. }

\textcolor{black}{As an additional side benefit of this construction, the \emph{projection} matrix $\utilde_1^{(t,1-m)} (\utilde_1^{(t,1-m)})\t$} is also the projection onto the dominant left singular space of the matrix $\bigg(\mathbf{T}_1 + \mathbf{Z}_1^{1-m} \mathcal{\tilde P}_{1}^{t,1-m} \bigg) \uhat_2^{(t-1)} \otimes \uhat_3^{(t-1)}$
as long as an eigengap condition is met (\cref{lem:eigengaps}).  As a consequence, the true sequence and the leave-one-out sequence depend only on the difference matrix $\mathbf{Z}_1^{1-m} \mathcal{\tilde P}_{1}^{t,1-m}  \uhat_2^{(t-1)} \otimes \uhat_3^{(t-1)} - \mathbf{Z}_1 \uhat_2^{(t-1)} \otimes \uhat_3^{(t-1)},$
which  \textcolor{black}{can be shown to depend only on the $m$'th row random matrix $\mathbf{Z}_1$ } \textcolor{black}{and the proximity of $\utilde_k^{(t-1,1-m)}$ to $\uhat_k^{(t-1)}$}.  With this novel leave-one-out construction we can obtain good bounds on the $\sin\Theta$ distance between the leave-one-out sequence and the true sequence (c.f., Lemma \ref{lem:leaveoneoutsintheta}).  

Finally, the exposition above has focused on the case $k = 1$.  Since there are three modes and we prove the result by induction, we actually need to \textcolor{black}{repeat this argument for each mode,} and we control each of these sequences simultaneously at each iteration.  Our final proof requires careful tabulation of the probabilities of the events defined by each of these separate sequences.  To ease the analysis, we first bound each term deterministically under eigengap conditions, and then further obtain probabilistic bounds by induction using the leave-one-out sequences.

\section{Discussion} \label{sec:discussion}
In this paper, we have considered the tensor mixed-membership blockmodel, which generalizes the tensor blockmodel to settings where communities are no longer discrete.  By studying the $\ell_{2,\infty}$ perturbation of the HOOI algorithm, we \textcolor{black}{obtain an estimator with convergence guarantees uniformly across the memberships} provided there are pure nodes along each mode. By applying our proposed algorithm to real data, we have identified phenomena that are not feasible to obtain in the discrete community setting.

It is natural to consider estimating the mixed memberships of the higher-order tensors. Suppose one observes a tensor $\mathcal{\hat T} \in \mathbb{R}^{p_1 \times p_2 \times \cdots  \times p_d}$.  Our algorithm and methodology naturally extend to this setting, with the only modification being the implementation of the HOOI algorithm, which is straightforward to adapt to the higher-order setting. By adapting our main arguments, we can prove the following informal result.

\begin{theorem}[Estimation of mixed memberships for higher-order tensors; informal]
Suppose that $r_{\max} \lesssim p_{\min}^{1/(d-1)}$, that $r_{\max} \asymp r$ with $r \lesssim r_{\min}$, and that $\kappa^2 \lesssim p_{\min}^{1/(2(d-1))}$.   Suppose that the smallest singular value of $\mathcal{S}$ satisfies $\Delta^2/\sigma^2 \gtrsim \frac{\kappa^2 p_{\max}^2 r_1 \cdots r_d}{p_1 \cdots p_d \sqrt{p_{\min}^{(d-1)/(d-2)}}}$.   Let $\mathbf{\widehat \Pi}_k$ be the output of \cref{al:spamm} (with HOOI adapted to order $d$) with  $t$ iterations for $t \asymp \textcolor{black}{\log\bigg( \frac{\Delta/\sigma (p_1 \cdots p_d)^{1/2}}{C_0 \kappa \sqrt{p_{\min} \log(p_{\max})}(r_1r_2r_3)^{1/2}}\bigg)}$. 
Then with probability at least $1 - p_{\max}^{-10}$, there exist $d$ permutation matrices $\mathcal{P}_k \in \mathbb{R}^{r_k \times r_k}$ such that for each $k$ 
\begin{align*}
 \max_{1\leq i\leq p_k} \| \big(\mathbf{\Pi}_k -\mathbf{\widehat \Pi}_k \mathcal{P}_k\big)_{i\cdot} \| &\lesssim_d \frac{\kappa \sqrt{r^d\log(p_{\max})}}{(\Delta/\sigma) (p_{-k})^{1/2}}.
\end{align*}
Here $a \lesssim_d b$ means that the implicit constant depends on the number of modes $d$.
\end{theorem}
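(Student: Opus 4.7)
The plan is to mirror the proof of Theorem \ref{thm:estimation} but with a higher-order analogue of the $\ell_{2,\infty}$ perturbation bound in Theorem \ref{thm:twoinfty} as the key input. Schematically, the proof has three layers: (i) generalize Proposition \ref{prop:relationship} and Lemma \ref{lem:relationship} so that $\U_k = \mathbf{\Pi}_k \U_k^{(\mathrm{pure})}$ and $\lambda \asymp \Delta \sqrt{p_1 \cdots p_d/(r_1 \cdots r_d)}$ in the order-$d$ setting (these are immediate from the same linear-algebra argument, since nothing in their proofs uses $d=3$); (ii) prove an order-$d$ version of Theorem \ref{thm:twoinfty} giving $\|\uhat_k^{(t)} - \U_k\mathbf{W}_k\|_{2,\infty} \lesssim_d \kappa\mu_0\sqrt{r_k\log(p_{\max})}/(\lambda/\sigma)$; and (iii) feed this into the corner-finding analysis of \citet{gillis_fast_2014} exactly as in the order-$3$ case to pass from $\uhat_k$ errors to $\mathbf{\widehat\Pi}_k$ errors. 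Only step (ii) requires substantive new work; steps (i) and (iii) are bookkeeping in the number of modes.

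For step (ii), I would re-run the inductive leave-one-out construction from Section \ref{sec:proofoverview}, but now maintaining $d$ simultaneous leave-one-out sequences $\{\utilde_k^{(t,1-m)}\}_{k=1}^{d}$, one per mode, each constructed by zeroing out the appropriate row of the noise matricization and by replacing the corresponding Kronecker product of iterates $\mathcal{P}_{\uhat_{k+1}^{(t)}\otimes\cdots\otimes\uhat_{k+d-1}^{(t)}}$ with the tilded projections. The iteration identity
\begin{equation*}
\uhat_k^{(t)} = \mathbf{\widehat T}_k^{(t)} \uhat_k^{(t)} (\mathbf{\widehat{\Lambda}}_k^{(t)})^{-2}
\end{equation*}
still holds, with the signal/noise decomposition producing a linear error $\mathbf{L}_k^{(t)}$ and a quadratic error $\mathbf{Q}_k^{(t)}$, both now involving a Kronecker product of $d-1$ projection matrices. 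The key probabilistic inputs that must be upgraded are the subgaussian bounds of the form $\sup_{\|\U_j\|=1,\,\mathrm{rank}\leq 2r}\|\mathbf{Z}_k(\U_1\otimes\cdots\otimes \U_{d-1})\|\lesssim\sqrt{p_k r^{d-1}}$ obtained via $\epsilon$-net arguments as in \citet{zhang_tensor_2018}, together with row-wise concentration of $e_m^{\top}\mathbf{Z}_k(\U_1\otimes\cdots\otimes \U_{d-1})$ which now contributes a $\sqrt{r^{d-1}\log(p_{\max})}$ factor. The inductive invariant becomes
\begin{equation*}
\|\uhat_k^{(t)} - \U_k\mathbf{W}_k^{(t)}\|_{2,\infty} \leq \Bigl(\frac{\dl}{\lambda} + \frac{1}{2^t}\Bigr)\mu_0\sqrt{\frac{r_k}{p_k}},
\end{equation*}
with $\dl \asymp \kappa\sqrt{p_{\max}\log(p_{\max})}$, and the contraction constant $1/2$ is preserved provided the SNR condition $\Delta^2/\sigma^2\gtrsim \kappa^2 p_{\max}^2 r_1\cdots r_d/(p_1\cdots p_d\sqrt{p_{\min}^{(d-1)/(d-2)}})$ holds and $\kappa^2\lesssim p_{\min}^{1/(2(d-1))}$. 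Step (iii) then combines this with the SPA robustness guarantee and with $\U_k^{(\mathrm{pure})}(\U_k^{(\mathrm{pure})})^{\top} = (\mathbf{\Pi}_k^{\top}\mathbf{\Pi}_k)^{-1}$ to convert the $\uhat_k$ bound into the $\mathbf{\Pi}_k$ bound, picking up one extra factor of $\sqrt{r}$ from the rank and producing the advertised $\sqrt{r^d\log(p_{\max})}$ factor (one power of $r$ per mode from the Kronecker product of projections plus the SPA conversion).

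The main obstacle is the careful propagation of the eigengap and incoherence invariants across all $d$ leave-one-out sequences simultaneously. In the order-$3$ case, a single Kronecker product $\uhat_{k+1}^{(t)}\otimes\uhat_{k+2}^{(t)}$ appears, and the perturbation of this product is controlled by two $\sin\Theta$ distances; in the order-$d$ case, each update involves a Kronecker product of $d-1$ iterates, so the $\sin\Theta$ error compounds with an extra factor of $r^{d-1}$ inside the spectral-norm and row-wise concentration bounds. This is why the condition $r_{\max}\lesssim p_{\min}^{1/(d-1)}$ replaces $r_{\max}\lesssim p_{\min}^{1/2}$: it is exactly the threshold at which the compounded Kronecker error stays subdominant to the linear term in the induction. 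Tracking that $\mu_0$ (hence incoherence) is preserved at each iteration requires, as in the order-$3$ case, showing that the quadratic and cross terms $\mathbf{Q}_k^{(t)}$ and the bias term from diagonal deletion are $O(\mu_0\sqrt{r_k/p_k})$ in $\ell_{2,\infty}$; this is the step where the SNR exponent $(d-1)/(d-2)$ in $p_{\min}$ arises, since it is what makes the quadratic error match the linear error after $O(\log(\kappa p_{\max}/(\lambda/\sigma)))$ iterations. Once the $\ell_{2,\infty}$ bound for $\uhat_k$ is in hand, applying \citet{gillis_fast_2014} together with Assumption \ref{assumption:regularity} (which ensures $\lambda_{\min}(\mathbf{\Pi}_k^{\top}\mathbf{\Pi}_k)\gtrsim p_k/r_k$) yields the final bound by exactly the argument used for $d=3$, and the dependence on $d$ enters only through multiplicative constants and through the $r^d$ factor.
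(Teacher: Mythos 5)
Your three-layer plan is the intended one: the paper states that the informal theorem follows ``by adapting our main arguments,'' and your steps (i)--(iii) reproduce exactly the chain Proposition~\ref{prop:relationship}/Lemma~\ref{lem:relationship} $\to$ an order-$d$ analogue of Theorem~\ref{thm:twoinfty} $\to$ the SPA conversion underlying Theorem~\ref{thm:estimation}. The generalization $\lambda \asymp \Delta\sqrt{p_1\cdots p_d/(r_1\cdots r_d)}$ and $\mu_0 = O(1)$ is indeed a direct rewrite of Lemma~\ref{lem:relationship}, the target $\|\uhat_k - \U_k\mathbf{W}_k\|_{2,\infty} \lesssim \kappa\mu_0\sqrt{r_k\log p_{\max}}/(\lambda/\sigma)$ is the right one, and the SPA step is verbatim.

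One concrete misattribution is worth flagging. You assign the $r^d$ factor to ``one power of $r$ per mode from the Kronecker product of projections plus the SPA conversion,'' but that is not where it lives. Tracing the algebra in the proof of Theorem~\ref{thm:estimation}: the $\ell_{2,\infty}$ bound supplies only a $\sqrt{r_k}$; the SPA step multiplies by $\|(\uhat_k^{(\mathrm{pure})})^{-1}\| \lesssim \sqrt{p_k/r_k}$, which \emph{cancels} that $\sqrt{r_k}$ and leaves $\kappa\sqrt{p_k\log p_{\max}}/(\lambda/\sigma)$. The full $\sqrt{r_1\cdots r_d}$ then appears in a single stroke when you substitute $\lambda \asymp \Delta\sqrt{p_1\cdots p_d/(r_1\cdots r_d)}$, so it is entirely a byproduct of step (i) (the generalized Lemma~\ref{lem:relationship}), not of the Kronecker structure in step (ii) nor of the SPA bound in step (iii). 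Your account of the SNR exponent $(d-1)/(d-2)$ is likewise hand-waved; note that this exponent as stated does not even reduce to the $p_{\min}^{1/2}$ of Assumption~\ref{assumption:signalstrength} at $d=3$ (and the $\log$ factor is dropped), so the informal statement is imprecise on this point and your claimed derivation of it via ``making the quadratic error match the linear error'' is not verifiable as written. These are attribution slips rather than structural gaps: a careful writeup following your outline, with the $\tau_k$ and $\xi_k$ lemmas re-derived for Kronecker products of $d-1$ projections, would produce the theorem.
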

As in the order three setting, we see an improvement in the error rate of order $\sqrt{p}$ for each additional mode, albeit at the cost of a slightly stronger signal-strength condition and condition on $r$. In future work it may be interesting to determine the dependence of the implicit constants on the order $d$.

In other future work, it may be natural to extend the mixed-membership tensor blockmodel to allow degree corrections as in \citet{jin_estimating_2017} or \citet{hu_multiway_2022}.   
 It is also possible that our results can be extended to Bernoulli noise  or correlated noise, but the analysis will be significantly more complicated to account for the interplay between tensorial structure and noise.
Furthermore, it may be relevant to develop distributional theory for the outputs of tensor SVD, and to obtain principled confidence intervals for the outputs of HOOI. 

 \appendix

\section{Additional Information on the Na\"ive Leave-one-out Sequence} \label{sec:extrainfo}
Let $\mathbf{\check{U}}_k^{(t)}$ denote the output of the leave-one-out sequence described in \cref{sec:proofoverview}, where one simply runs HOOI on the tensor with the entries of $\mathcal{Z}$ corresponding to the $m$'th row of $\mathbf{Z}_1$ set to zero. Then the $\sin\Theta$ distance between the true sequence and the leave-one-out sequence  for mode 1 will depend on the difference matrix
\begin{align}
   \bigg( \mathbf{T}_1 &\uhat_2^{(t-1)} \otimes \uhat_3^{(t-1)} + \mathbf{Z}_1 \uhat_2^{(t-1)} \otimes \uhat_3^{(t-1)}  \bigg) -\bigg( \mathbf{T}_1 \mathbf{\check{U}}_2^{(t-1)} \otimes \mathbf{\check{U}}_3^{(t-1)}+ \mathbf{Z}_1^{1-m} \mathbf{\check{U}}_2^{(t-1)} \otimes \mathbf{\check{U}}_3^{(t-1)}\bigg) \nonumber \\
    &= \mathbf{T}_1 \bigg( \uhat_2^{(t-1)} \otimes \uhat_3^{(t-1)} - \mathbf{\check{U}}_2^{(t-1)} \otimes \mathbf{\check{U}}_3^{(t-1)} \bigg) + \mathbf{Z}_1 \bigg(\uhat_2^{(t-1)} \otimes \uhat_3^{(t-1)} - \mathbf{\check{U}}_2^{(t-1)} \otimes \mathbf{\check{U}}_3^{(t-1)} \bigg) \nonumber \\
    &\qquad + \begin{pmatrix} \cdots 0 \cdots \\ e_m\t \mathbf{Z}_1  \\ \cdots 0 \cdots
 \end{pmatrix} \mathbf{\check{U}}_2^{(t-1)} \otimes \mathbf{\check{U}}_3^{(t-1)}, \label{baddif}
\end{align}
where we define $\mathbf{Z}_1^{1-m}$ as the matrix $\mathbf{Z}_1$ with the $m$'th row 
set to zero, and the final term is only nonzero in its $m$'th row.
efThe second two terms (containing $\mathbf{Z}_1$) can be shown to be quite small by appealing to spectral norm bounds together with the Kronecker structure (e.g., Lemma \ref{lem:taubound}).  However, the first term depends on both the matrix $\mathbf{T}_1$ and the proximity of the leave-one-out sequence to the true sequence.  If one simply bounds this term in the spectral norm, the $\sin\Theta$ distance may end up  \emph{increasing} with respect to the condition number $\kappa$, and hence may be much larger than the concentration for $\mathbf{Z}_1$ (and may not shrink to zero sufficiently quickly).

\section{The Cost of Ignoring Tensorial Structure}
Perhaps the simplest tensor singular vector estimation procedure for Tucker low-rank tensors is the HOSVD algorithm, which simply takes the singular vectors of each matricization of $\mathcal{\widehat{T}}$ and outputs these as the estimated singular vectors.  
We discuss briefly why HOSVD-like procedures instead of HOOI in  \cref{al:spamm} may not yield the same estimation error as in \cref{thm:estimation}, particularly in the high-noise setting. For simplicity we focus on the regime $\kappa,r,\mu_0 \asymp 1$ and $p_k \asymp p$.  

Recall that we do not assume that the noise is homoskedastic.  It has previously been demonstrated that in the presence of heteroskedastic noise, HOSVD can yield biased estimates \citep{zhang_heteroskedastic_2022}. Therefore, in order to combat bias, one could modify the HOSVD procedure and instead use a procedure that eliminates the bias term stemming from either vanilla HOSVD or diagonal-deleted SVD. It was shown in \citet{agterberg_entrywise_2022} and \citet{yan_inference_2021} that the output of the \texttt{HeteroPCA} algorithm proposed in \citet{zhang_heteroskedastic_2022}  after sufficiently many iterations yields the high-probability upper bound
\begin{align*}
    \| \uhat_k^{(\mathrm{HeteroPCA})} - \U_k \mathbf{W}_k \|_{2,\infty} &\lesssim  \frac{\sqrt{\log(p)}}{\lambda/\sigma} + \frac{p\log(p)}{(\lambda/\sigma)^2},
\end{align*}
where $\uhat_k^{(\mathrm{HeteroPCA})}$ denotes the estimated singular vectors obtained by applying the \texttt{HeteroPCA} algorithm after sufficiently many iterations.  Note that this upper bound does not suffer from any bias; in essence, this is the sharpest $\ell_{2,\infty}$ bound in the literature  for any procedure that ignores tensorial structure.

Suppose one  uses the estimate $\uhat_k^{(\mathrm{HeteroPCA})}$ to estimate $\mathbf{\Pi}_k$ via \cref{al:spamm}, and let $\mathbf{\widehat \Pi}_k^{(\mathrm{HeteroPCA})}$ denote the output of this procedure.  Arguing as in our proof of \cref{thm:estimation}, by applying the results of \citet{gillis_fast_2014} and \cref{lem:relationship}, using this bound we will obtain that
\begin{align*}
    \| \mathbf{\widehat \Pi}_k^{(\mathrm{HeteroPCA})} - \mathbf{\Pi}_k \mathcal{P} \|_{2,\infty} &\lesssim \frac{\sqrt{\log(p)}}{(\Delta/\sigma) p} + \frac{\log(p)}{(\Delta/\sigma)^2 p^{3/2}}.
\end{align*}
In the challenging regime $\Delta/\sigma \asymp \frac{\sqrt{\log(p)}}{p^{3/4}}$ (recall that by Assumption \ref{assumption:signalstrength} we must have that $\frac{\Delta^2}{\sigma^2} \gtrsim \frac{\log(p)}{p^{3/2}}$), the above bound translates to
\begin{align*}
      \| \mathbf{\widehat \Pi}_k^{(\mathrm{HeteroPCA})} - \mathbf{\Pi}_k \mathcal{P} \|_{2,\infty} &\lesssim \frac{1}{p^{1/4}} + 1 \asymp 1,
\end{align*}
which does not tend to zero as $p \to \infty$. Therefore, in this high-noise regime, the estimates obtained via \texttt{HeteroPCA} (or any similar procedure that ignores the tensorial structure) may not even be consistent.  In contrast, \cref{thm:estimation} shows that in this regime our proposed estimation procedure yields the upper bound
\begin{align*}
      \| \mathbf{\widehat \Pi}_k - \mathbf{\Pi}_k \mathcal{P} \|_{2,\infty} &\lesssim \frac{1}{p^{1/4}},
\end{align*}
which still yields consistency, even in the high-noise regime.

\section{Additional Numerical Results}
In this section we further discuss the data analysis in \cref{sec:numericalresults} as well as provide an additional analysis of the global flight data studied in \citet{han_exact_2021}.

\subsection{Simulations} \label{sec:sims}
\begin{figure*}[t!]
\centering
        \subfloat 
                {%
            \includegraphics[width=.4\linewidth,height=50mm]{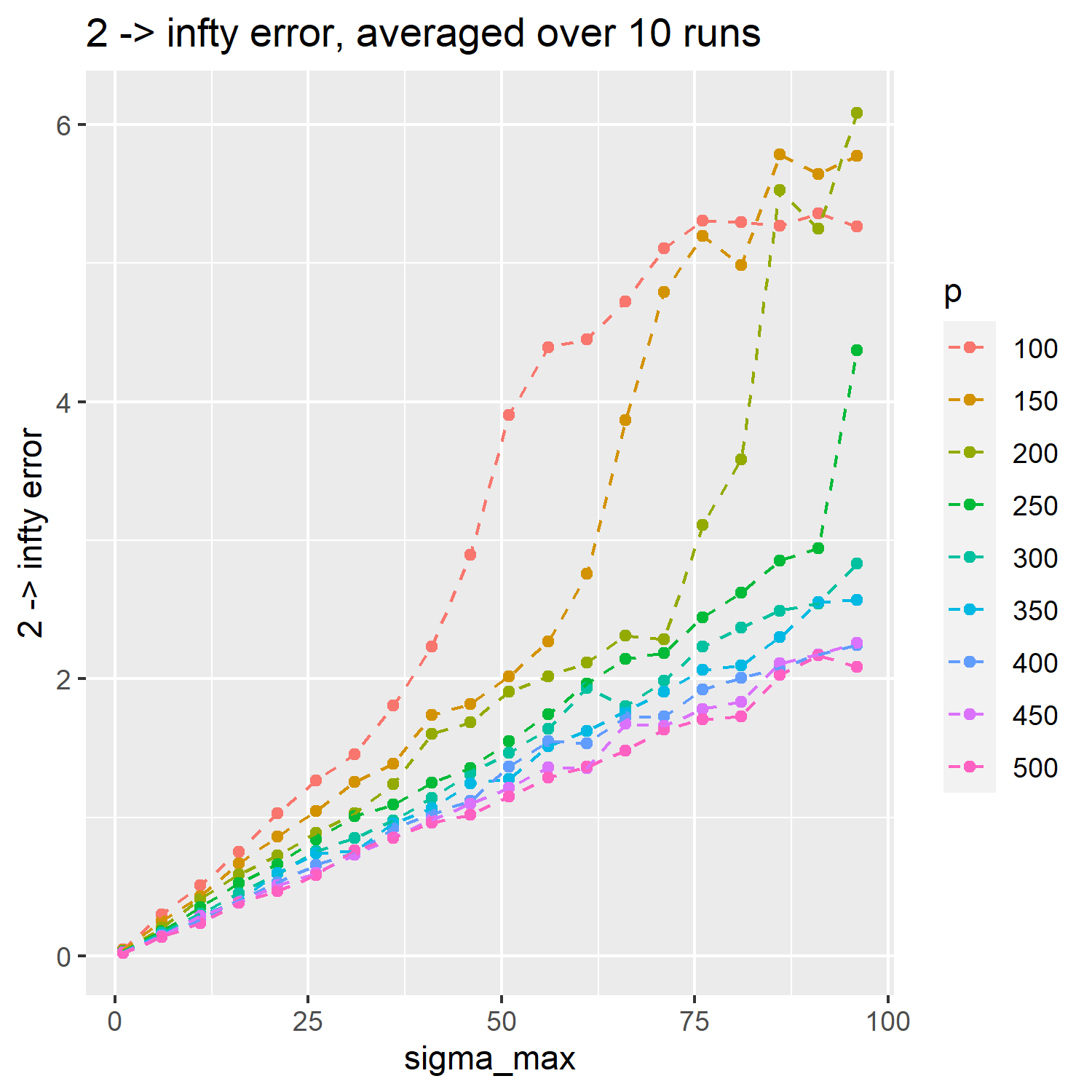}%
            \label{subfig:aveerror}%
        }\hspace{5mm}
        \subfloat
        { %
            \includegraphics[width=.4\linewidth,height=50mm]{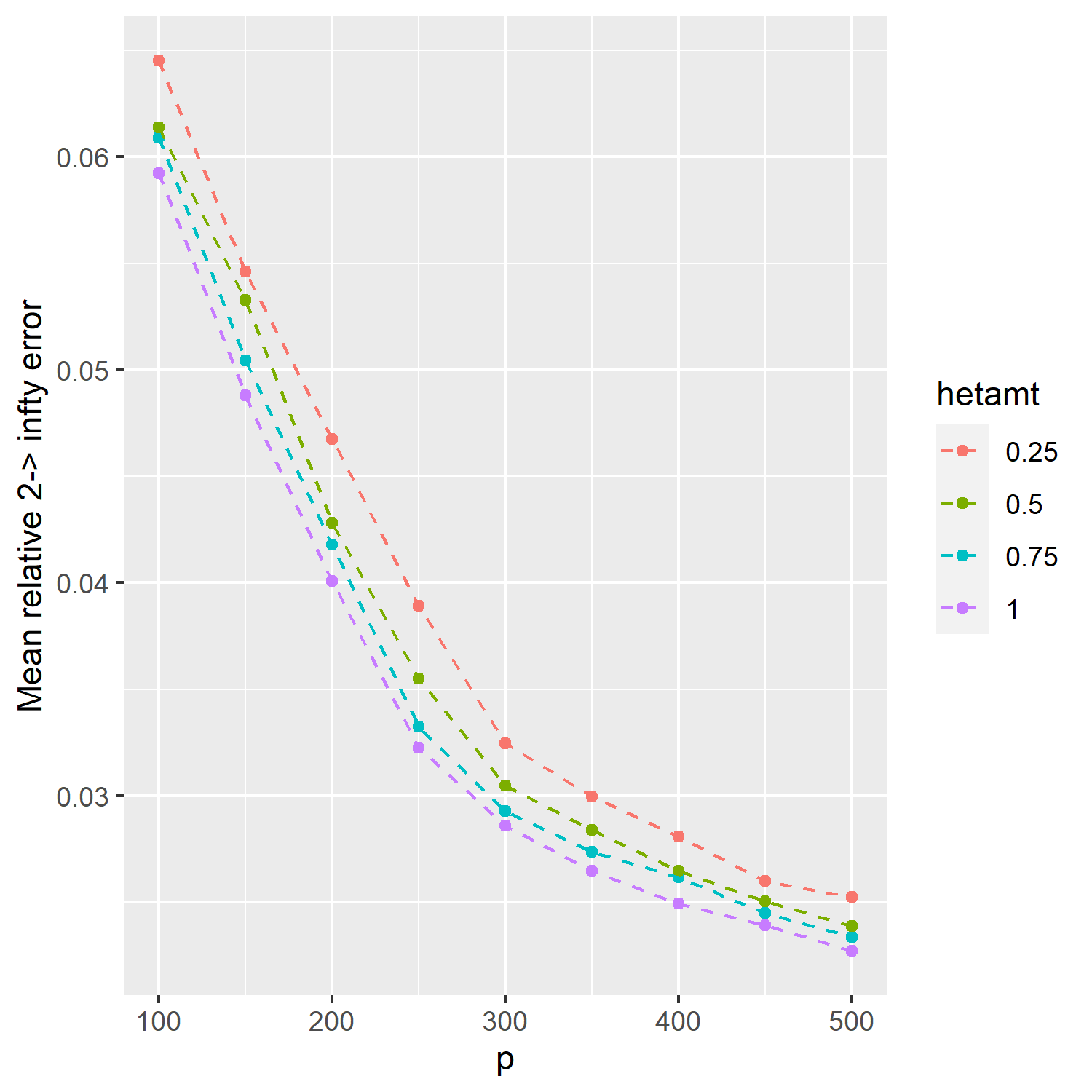}%
            \label{subfig:averror2}%
     }
        \caption{Simulated maximum node-wise errors, as described in \cref{sec:sims}. The left figure depicts relative $\ell_{2,\infty}$ estimation error with varying levels of $\sigma$ averaged over 10 runs, and the right hand figure examines relative $\ell_{2,\infty}$ estimation error averaged across $p$ with varying levels of heteroskedasticity.}
        \label{fig:usplot}
    \end{figure*}

In this section we consider the maximum row-wise estimation error for the tensor mixed membership blockmodel via \cref{al:spamm} for simulated data.  In each data setup we generate the underlying tensor by first generating the mean tensor $\mathcal{S} \in \mathbb{R}^{3\times 3 \times 3}$ with $N(0,1)$ entries and then adjusting the parameter $\Delta$ to 10.  We then draw the memberships by manually setting the first three nodes along each mode to be pure nodes, and then drawing the other vectors from a random Dirichlet distribution.  We generate the noise as follows.  First, we generate the standard deviations $\{\sigma_{ijk}\}$ via $\sigma_{ijk} \sim \sigma_{\max} \times \beta(\alpha,\alpha)$, where $\beta$ denotes a $\beta$ distribution.  The parameter $\alpha$ governs the heteroskedasticity, with $\alpha = 1$ corresponding to uniformly drawn standard deviations and $\alpha \to 0$ corresponding to ``highly heteroskedastic" standard deviations.  We then generate the noise via $\mathcal{Z}_{ijk} \sim N(0, \sigma_{ijk}^2)$.

In \cref{subfig:aveerror} we examine the $\ell_{2,\infty}$ error as a function of $\sigma = \sigma_{\max}$ for \cref{al:spamm} applied to this noisy tensor averaged over $10$ runs with $\alpha = 1$.  Here we keep the mean matrix fixed but re-draw the memberships, variances, and noise each run.  We vary $\sigma$ from $1$ to $96$ by five.  We see a clear linear relationship in the error for each value of $p$ from 100 to 500 by 50, with larger values of $\sigma_{\max}$ being significantly less accurate for smaller values of $p$.

In \cref{subfig:averror2} we consider the mean relative $\ell_{2,\infty}$ error defined as follows.  First, for each value of $\sigma_{\max}$ we obtain an estimated $\ell_{2,\infty}$ error $\mathrm{err}$ averaged over 10 runs.  We then divide this error by $\sigma_{\max}$ to put the errors on the same scale.  Finally, we average this error for all values of $\sigma_{\max}$ and plot the value as a function of $p$ for different amounts of heteroskedasticity.  We see that the error decreases in $p$ as anticipated, and slightly more heteroskedasticity results in slightly worse performance.

\subsection{Application to Global Flight Data}
\begin{figure}[t]
 \subfloat{%
            \includegraphics[width=.4\linewidth]{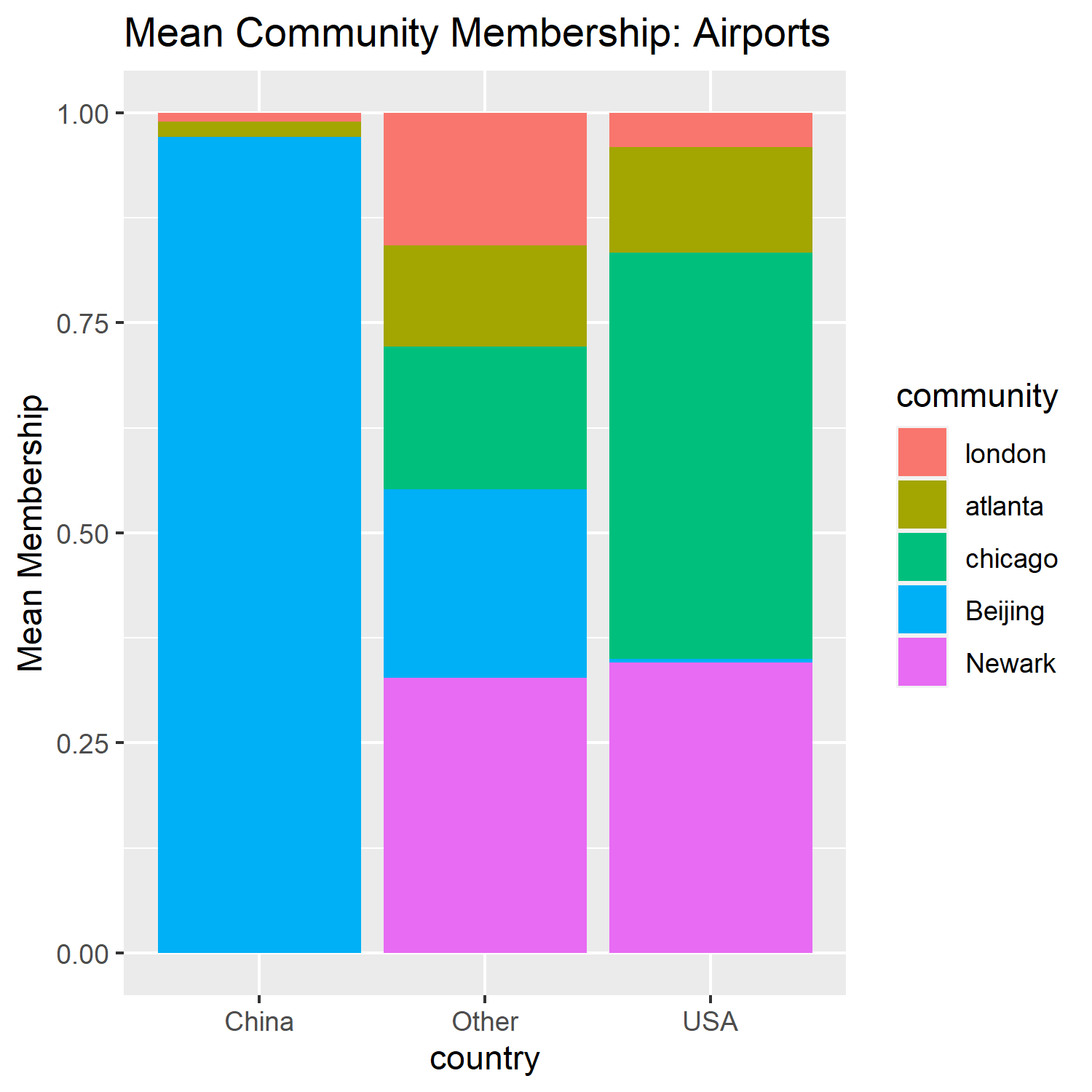}%
            \label{subfig:a}%
        }\hfill
        \subfloat{%
            \includegraphics[width=.4\linewidth]{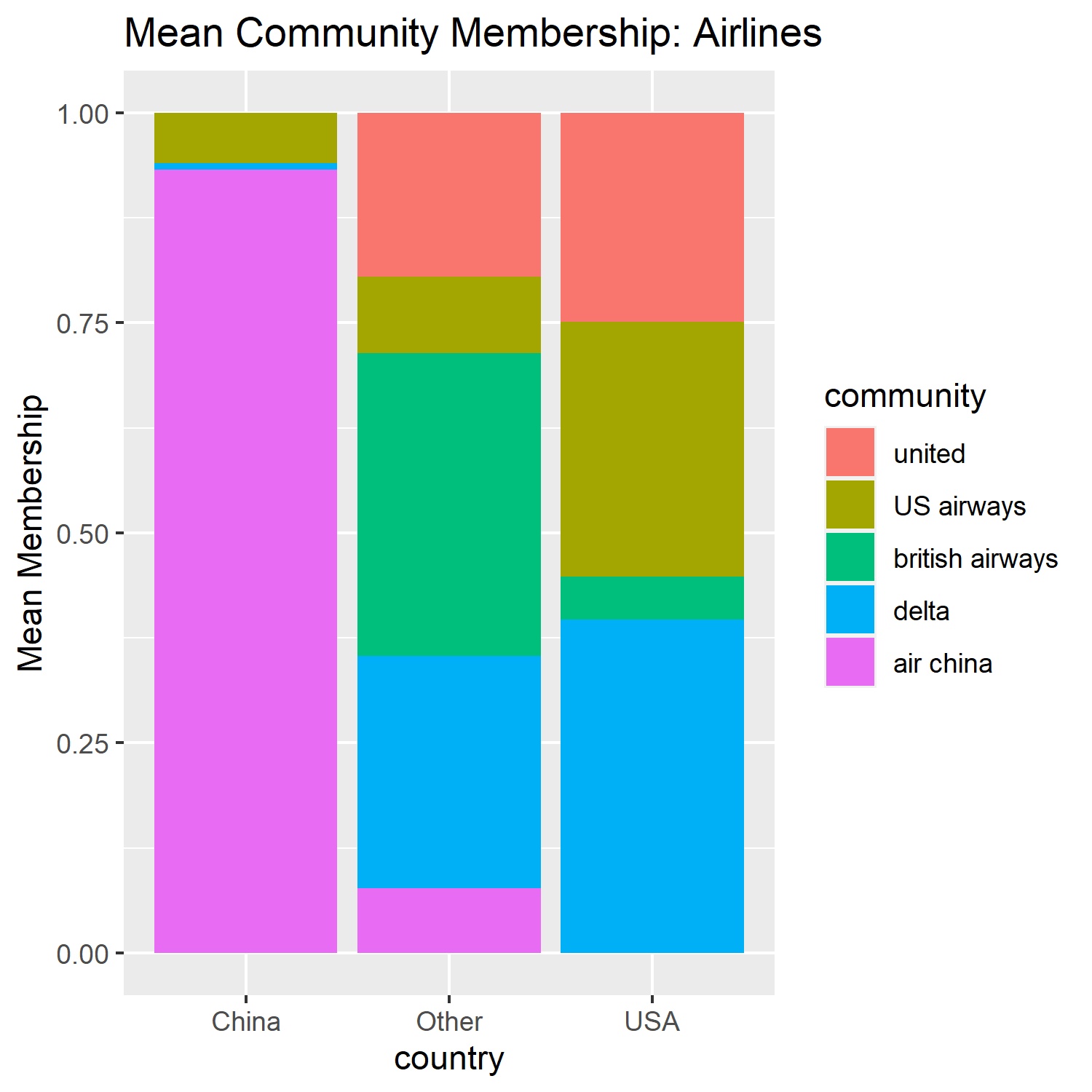}%
            \label{subfig:b}%
        }
 
    \caption{Community memberships for airports (left) and airlines (right), separated according to country to emphasize ``disconnectedness'' between Chinese airports and airlines with American airports and airlines.}   \label{fig:globalairports}
\end{figure}

We now apply our mixed-membership estimation to the flight data described in \citet{han_exact_2021}. There are initially 66,765 global flight routes from 568 airlines and 3,309 airports\footnote{\url{https://openflights.org/data.html\#route}}, 
and we preprocess similar to \citet{han_exact_2021} by considering only the top 50 airports with the highest numbers of flight routes. 
We end up with a tensor $\mathcal{\widehat{T}}$ of size $39\times 50 \times 50$, where each entry $\mathcal{\widehat{T}}_{i_1i_2i_3}$ is one if there is a flight route from airport $i_2$ to $i_3$ in airline $i_1$ and zero otherwise.  We use the same choice of $\br=\{5,5,5\}$ as in \citet{han_exact_2021}, chosen via the Bayesian information criterion for block models \citep{wang_multiway_2019} from candidate $r$ values ranging from 3 to 6.  When running our algorithm, occasionally there are negative or very small values of $\mathbf{\widehat \Pi}_k$; we therefore threshold and re-normalize in order to obtain our estimates.

First, our algorithm relies on identifying pure nodes along each  mode.  For the airports, the pure nodes are London, Atlanta, Chicago, Beijing, and Newark.  For airlines, we find the pure nodes to be United, US airways, British Airways, Delta, and Air China. When analyzing the output, we found that airlines and airports associated to the USA had extremely low membership in Chinese-associated pure nodes, and vice versa for Chinese airlines and airports.  Therefore, in \cref{fig:globalairports} we plot the average membership of each airport and airline associated to its home country, whether it is in China, the USA, or elsewhere. This figure demonstrates that the USA has less membership in the airline and airport communities based outside the USA; in particular almost no membership in Chinese communities, and China has almost entirely pure membership in Chinese airport and airline communities.  The other countries have nearly equal membership in each community.

Furthermore, we observe that  the USA airlines have zero membership in the ``Air China'' pure node, and the China airlines have primarily membership in the ``Air China'' pure node.  We find a similar phenomenon in the airports as well.   Interestingly, other airports (i.e., non-Chinese and non-American) do not  exhibit this phenomenon. In \citet{han_exact_2021} five clusters were found, including one that contains Beijing, which is a pure node here.  This analysis suggests that perhaps the Beijing cluster might be much more distinct from the USA cluster than the other clusters are from each other.  Airports and airlines in other countries do not exhibit such a trend -- they have memberships in all other clusters equally.  This observation is not identifiable in settings with discrete memberships, since either a node belongs to a community or does not, whereas in the tensor mixed membership blockmodel setting we can examine the strength of the membership.

\subsection{Application to USA Flight Data} \label{sec:usaflightdata}

We also apply our methods to USA flight data publicly available from the Bureau of Transportation Statistics\footnote{\url{https://transtats.bts.gov/}} and also analyzed in \citet{agterberg_joint_2022}. 
We focused on the largest connected component, resulting in 343 airports with counts of flights between airports for each month from January 2016 to September 2021, resulting in 69 months of data and a $343 \times 343 \times 69$ dimensional tensor. To choose the embedding dimension, we apply the ``elbow'' method of \citet{zhu_automatic_2006}.  First, we apply the elbow procedure to the square roots of the nonnegative eigenvalues of the diagonal-deleted Gram matrix, which is the matrix we use for our initialization.  This yields $r_1  = r_2 = 3$ for the airport mode, but for the mode corresponding to time, this procedure resulted in only two nonnegative eigenvalues.  Therefore, we ran the elbow method on the vanilla singular values instead, resulting in elbows at 1 and 4.  We therefore chose $r_3 = 4$ to perform our estimation.

 \begin{figure*}[t!]
 \centering
        \subfloat{%
            \includegraphics[width=.25\linewidth,keepaspectratio]{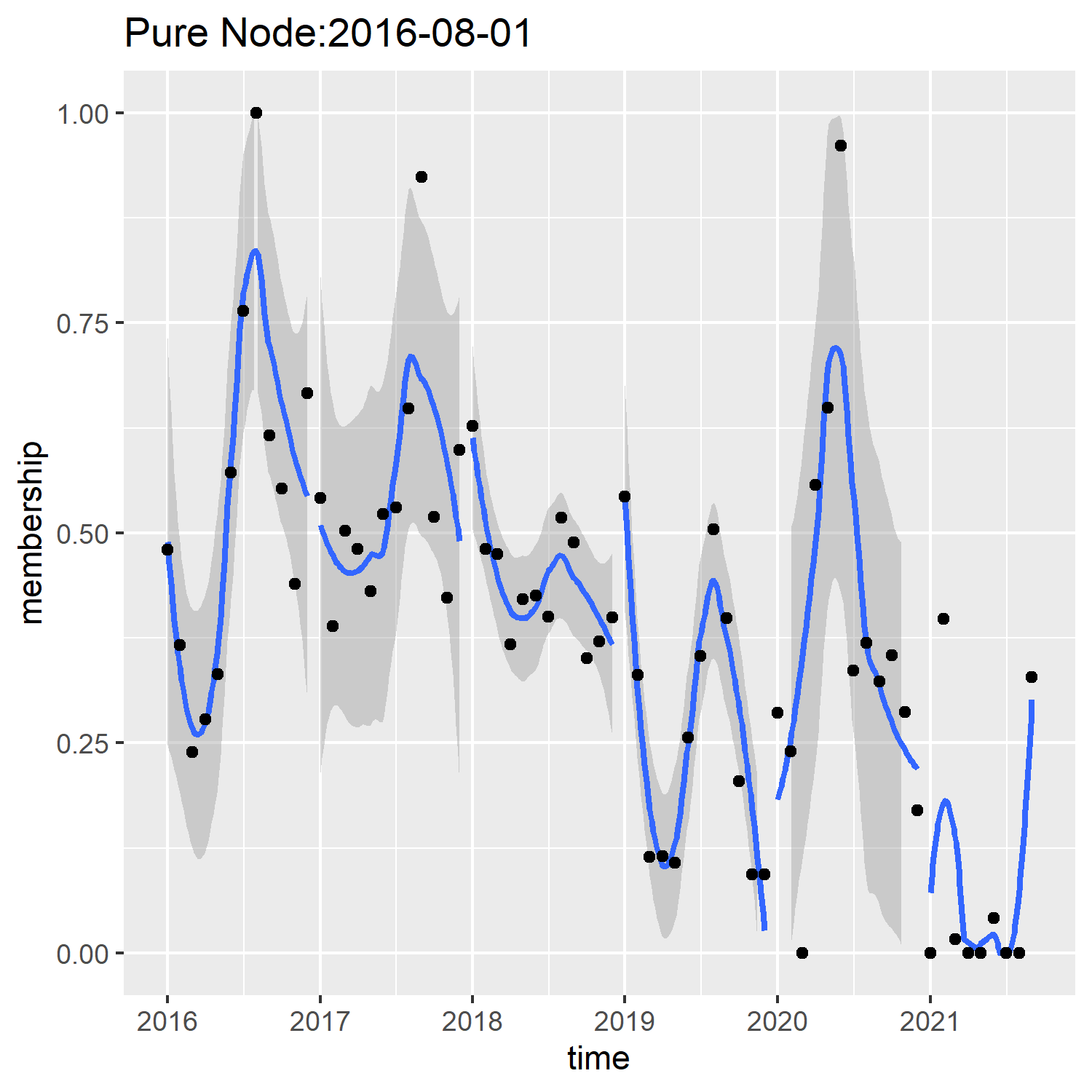}%
            \label{subfig:a}%
        }
        \subfloat{%
            \includegraphics[width=.25\linewidth,keepaspectratio]{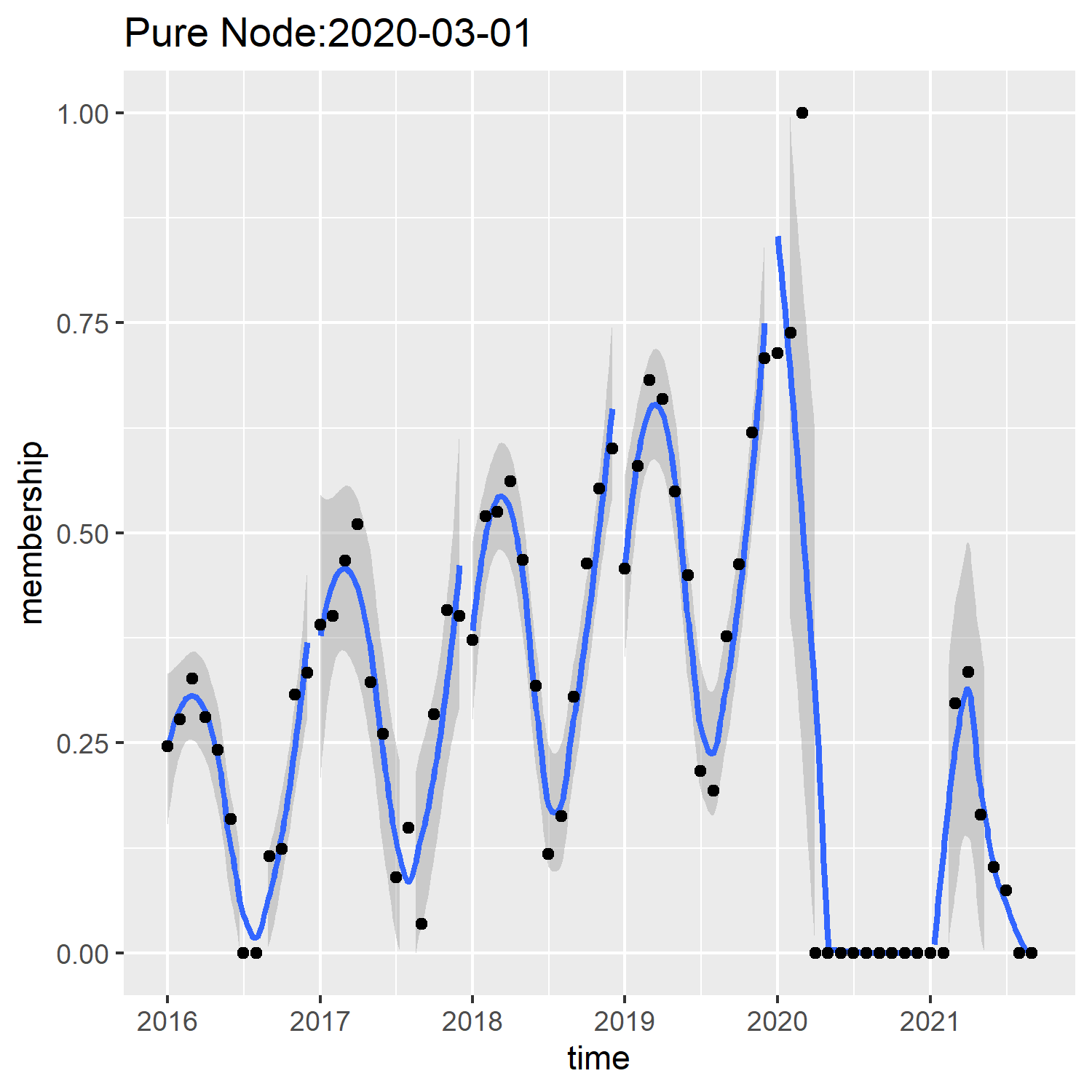}%
            \label{subfig:b}%
        }
      \subfloat{%
            \includegraphics[width=.25\linewidth,keepaspectratio]{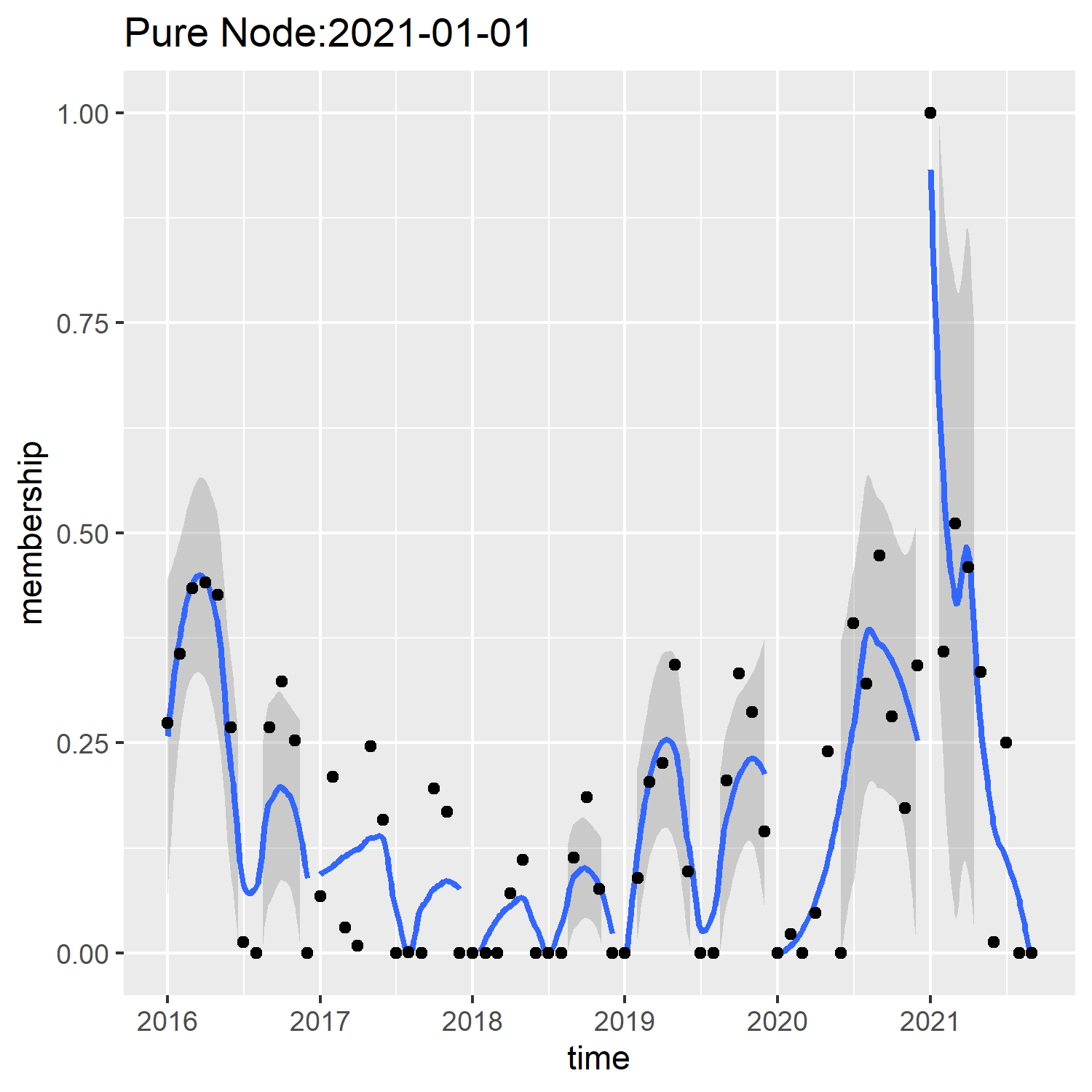}%
            \label{subfig:c}%
        }
        \subfloat{%
            \includegraphics[width=.25\linewidth,keepaspectratio]{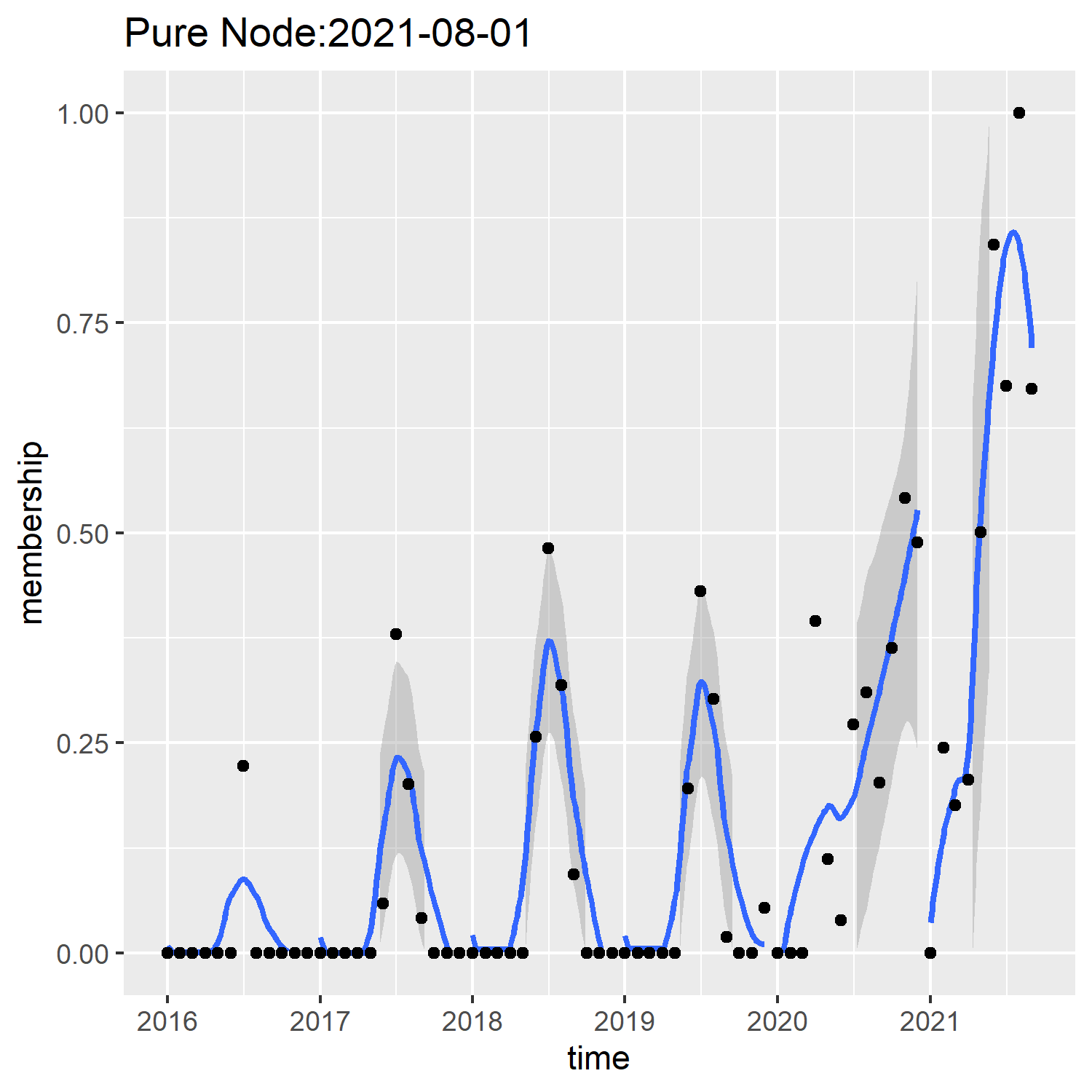}%
            \label{subfig:d}%
        }
        \caption{Pure node memberships for the time mode, with higher values corresponding to stronger membership intensity.  Data are smoothed within each year to emphasize the effect of seasonality}
        \label{fig:timeplots}
    \end{figure*}



Plotted \cref{fig:timeplots} are the memberships in each of the four time communities, where the pure nodes were found to be August 2016, March 2020, January 2021, and August 2021.  The blue lines correspond to the yearly smoothed values (using option \texttt{loess} in the \texttt{R} programming language), and the grey regions represent confidence bands.  We chose to smooth within each year in order to emphasize seasonality.  Immediately one notices the pure node associated to March 2020 yields strong seasonality (demonstrating a ``sinusoidal'' curve within each year), only for it to vanish at the onset of the COVID-19 lockdowns in the USA, which began on March 15th, 2020.  The seasonality effect seems to mildly recover in 2021, which roughly corresponds to the reopening timeline.  The pure nodes associated to August seem to demonstrate a seasonality effect, with August 2021 also including a COVID-19 effect (as the membership in 2020 increases) -- note that vaccines in the USA became available to the general public beginning in May 2021, so the community associated to August 2021 may include some of the ``normal'' seasonal effects.  We include further discussion in the supplementary materials.

\begin{figure*}[t!]
        \subfloat{%
            \includegraphics[width=.45\linewidth]{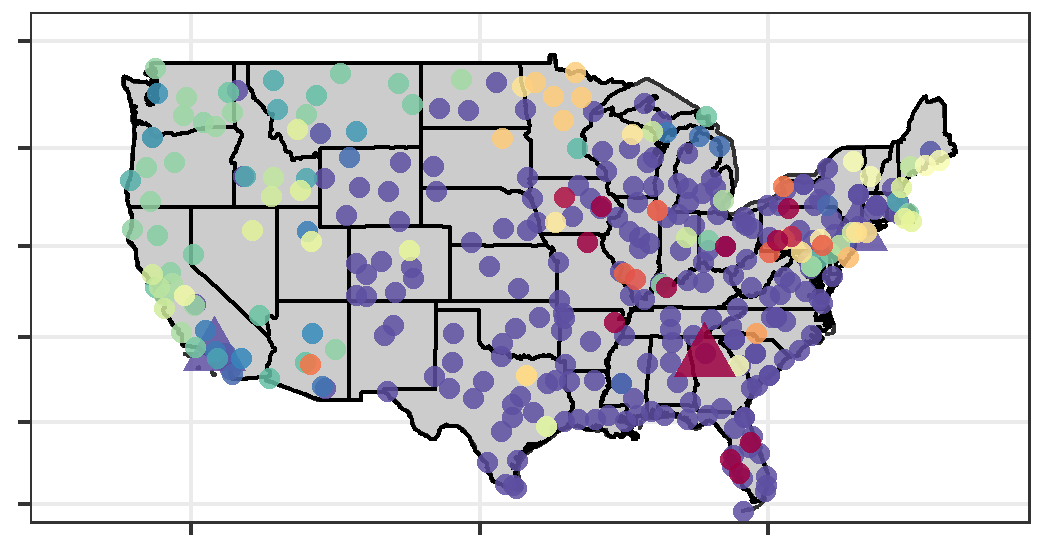}%
            \label{subfig:a}%
        }\hfill
        \subfloat{%
            \includegraphics[width=.45\linewidth]{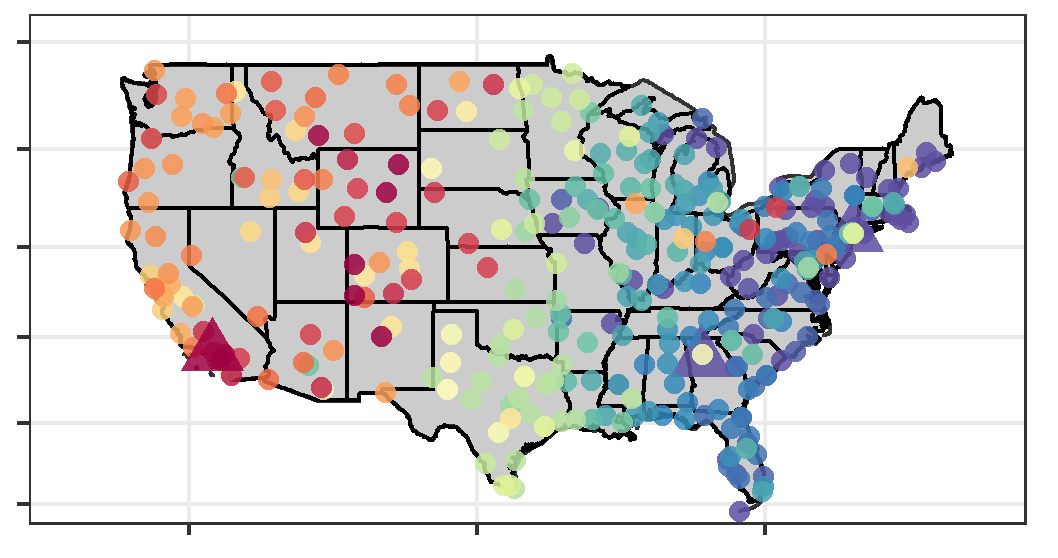}%
            \label{subfig:b}%
        }\\
        \subfloat{%
            \includegraphics[width=.45\linewidth]{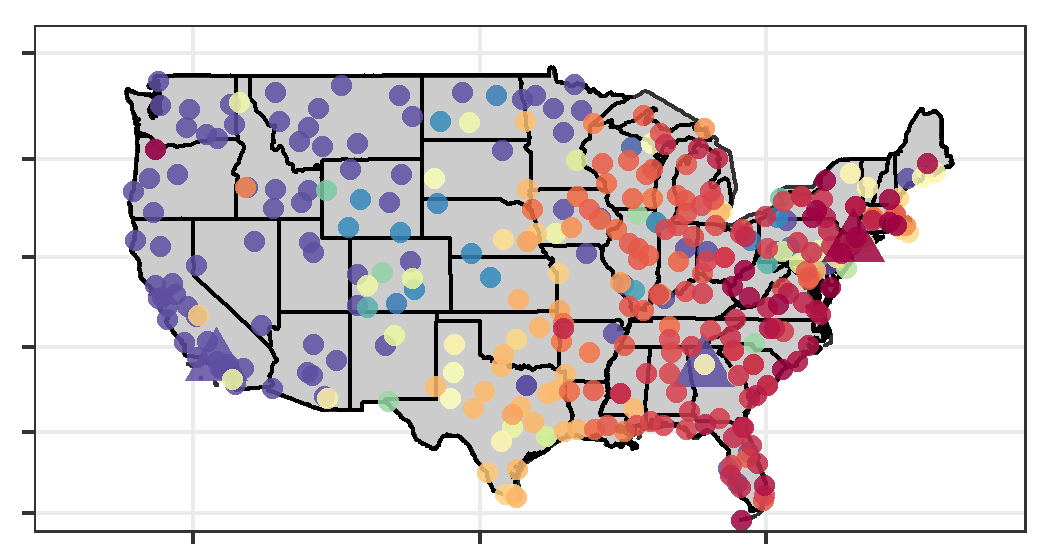}%
            \label{subfig:c}%
        }\hspace{45pt}
        \subfloat{%
            \includegraphics[width=.10\linewidth,height=40mm]{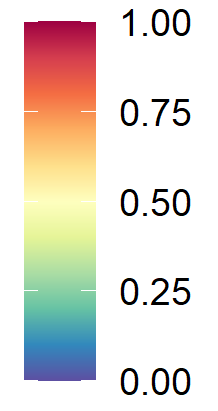}%
            \label{subfig:d}%
        }
        \caption{Pure node memberships for the airport mode, with pure nodes ATL (top left), LAX (top right), and LGA (bottom left). Red demonstrates high membership and purple demonstrates low membership within that particular community.  The pure nodes are drawn with large triangles.}
        \label{fig:usplot}
    \end{figure*}
Plotted in \cref{fig:usplot} are the membership intensities in each of the communities associated to the three different pure nodes, with red corresponding to high membership and purple corresponding to low memberships.  The three pure nodes were found to be ATL (Atlanta), LAX (Los Angeles), and LGA (New York).  From the figure it is evident that the LGA community is associated with  flights on the eastern half of the country, and LAX is associated with flights on the western half of the country.  Based on the colors, the ATL community has memberships primarily from some airports on both the east and west coasts, but less directly in central USA.  Therefore, it seems that the ATL community serves as a ``hub'' community connecting airports in the west coast to airports in the east coast -- this intuition is justified by noting that ATL has the largest number of destinations out of any airport in the USA.  

The January 2021 community seems to exhibit a combination of a form of seasonality together with COVID-19, though it is perhaps not as pronounced as the March 2020 seasonality effect, nor is it as pronounced as the August 2021 COVID-19 effect.  To emphasize these effects, we plot this mode by combining it with March 2020 (to emphasize seasonality) and August 2021 (to emphasize the COVID-19 effect) in \cref{fig:jointplot}. When combined with August 2021, the COVID-19 effect becomes more pronounced during and after 2020. When combined with March 2020, the seasonality effect becomes even more pronounced before March 2020, with larger swings within each year. Both combinations further corroborate our finding that the January 2021 community exhibits both of these effects.  
\begin{figure}[t!]
     \centering
     \begin{subfigure}[b]{0.4\textwidth}
         \centering
         \includegraphics[width=\textwidth]{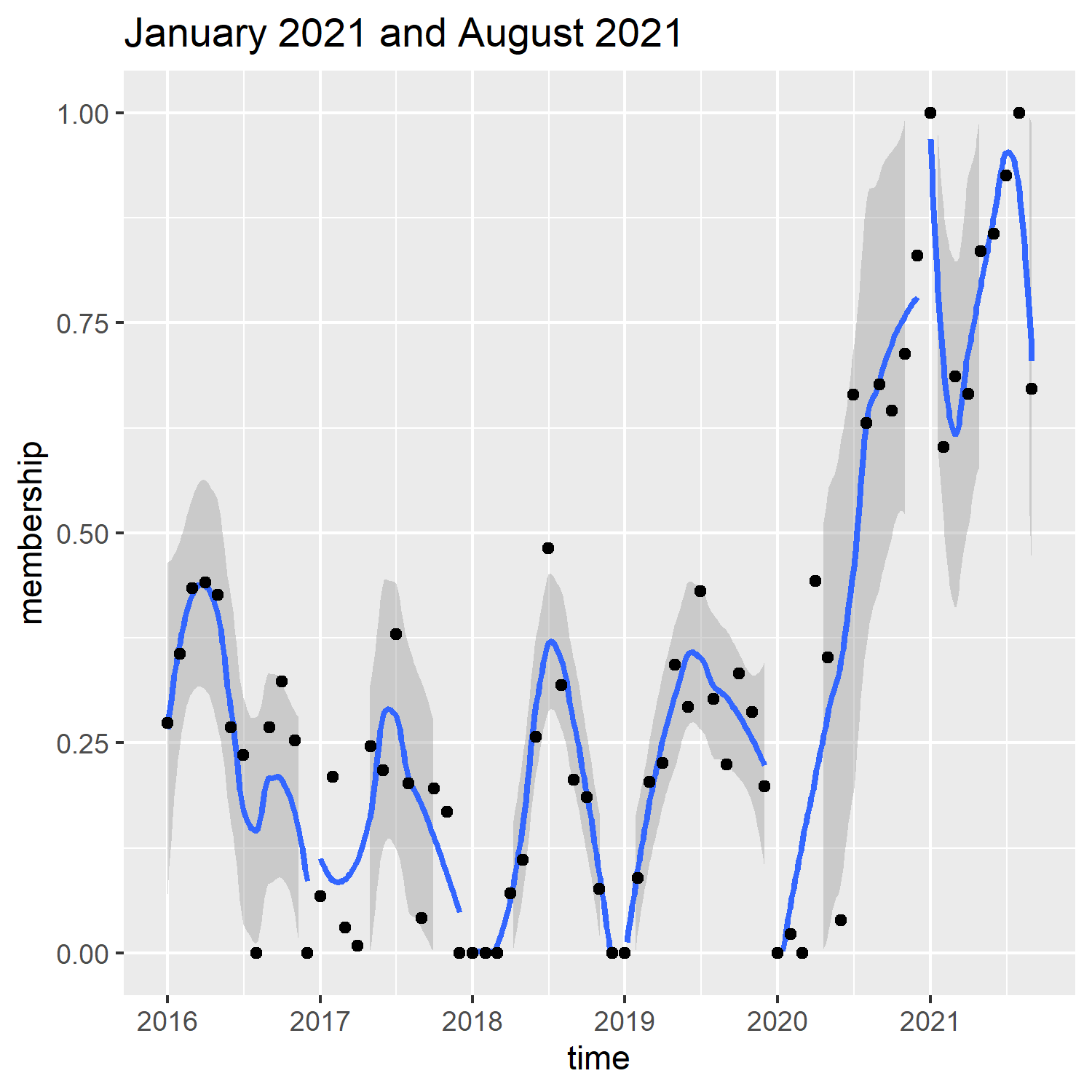}
     \end{subfigure}
     \hfill
     \begin{subfigure}[b]{0.4\textwidth}
         \centering
         \includegraphics[width=\textwidth]{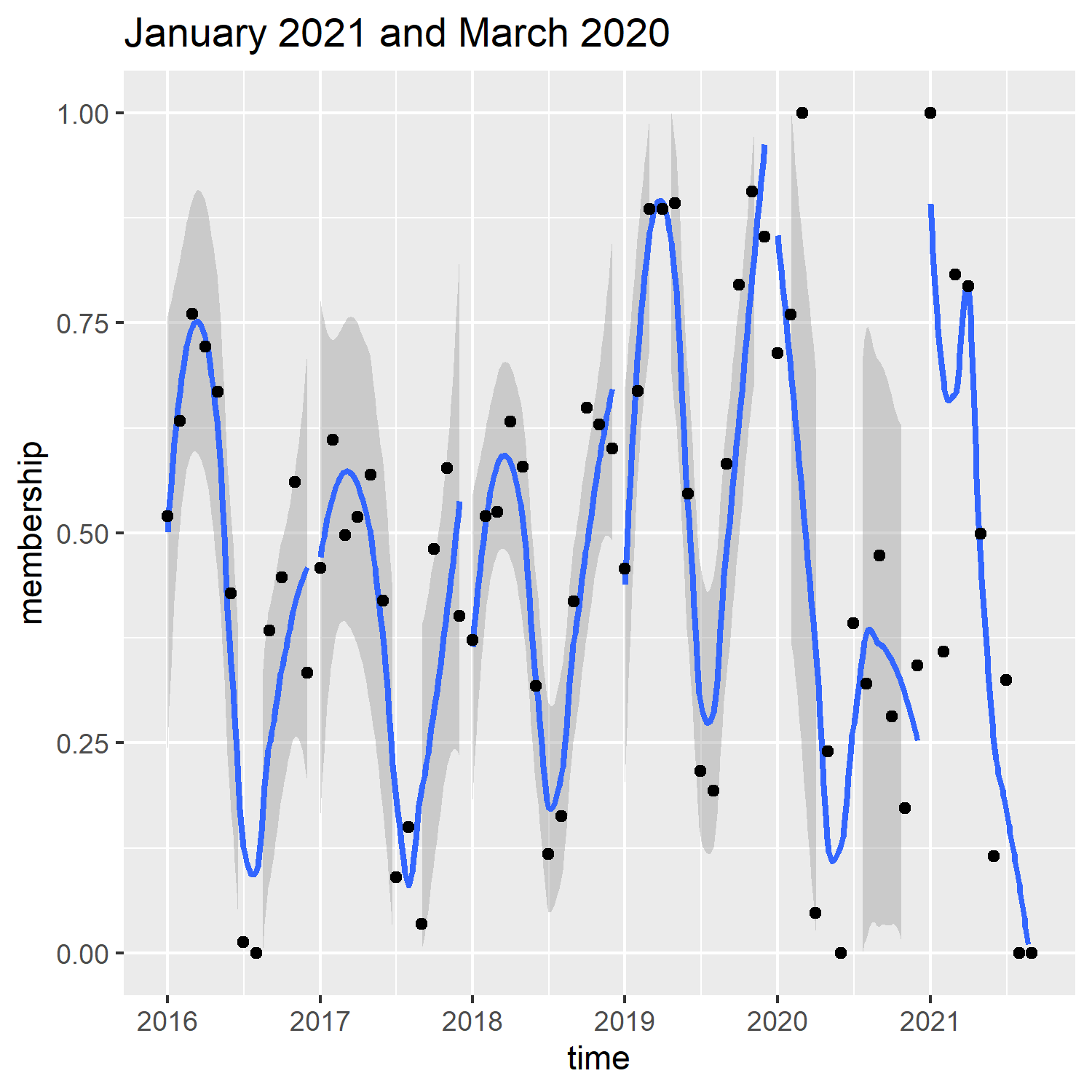}
     \end{subfigure}
     \caption{Joint plot emphasizing COVID-19 (left) and seasonality (right) effects of the January 2021 community.}
        \label{fig:jointplot}
\end{figure}

\subsection{More Discussion on the Global Trade Data Analysis}

For the pure nodes associated to the USA and Canada, we see that the membership is relatively dispersed outside of Europe, which provides evidence that European  trade communities are ``closer-knit'' than other communities.  Since the USA and Canada likely have similar trading patterns, in \cref{fig:usacanada} we combine these two values, and we see that the memberships are fairly global besides Europe, though the intensity in any one area is not as strong as the intensities for the other pure nodes.  

\begin{figure}[t!]
        \centering
        \includegraphics[width=.7\linewidth,keepaspectratio]{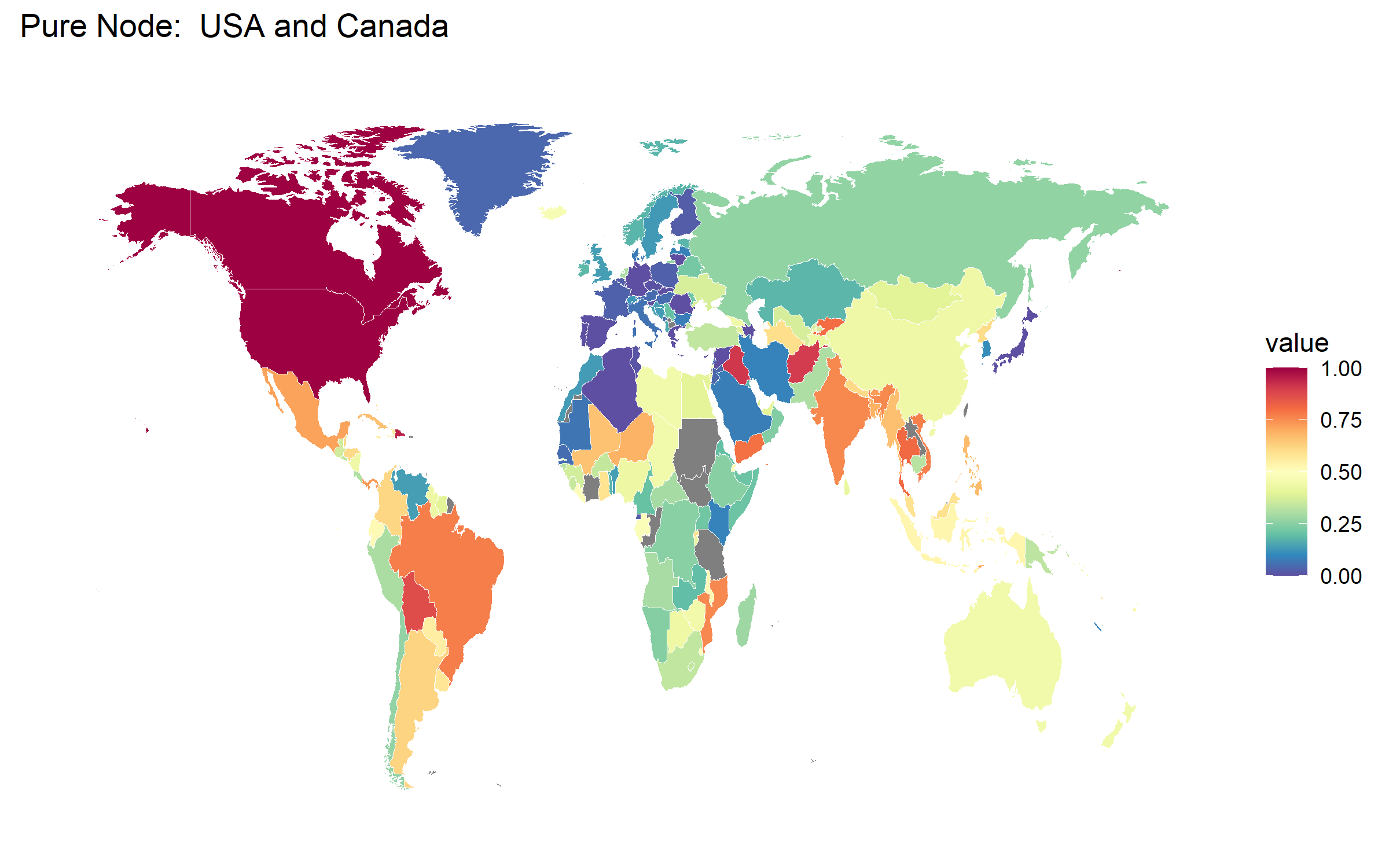}
        \caption{Combined memberships for the pure nodes associated to the USA and Canada.}
        \label{fig:usacanada}
    \end{figure}
 Next we consider the pure nodes corresponding to the different goods.  The pure nodes were found to be maize (corn), crude materials, distilled alcoholic beverages, food prep nes (not elsewhere specfied), and whole cow milk cheese.  It was found in \citet{jing_community_2021} that communities roughly correspond to either prepared or unprepared food; we also found food prep nes as one of the pure nodes, which gives further evidence to this finding. This community is also the ``largest'' community -- the mean membership in this mode is .4147.  To better understand the separation between processed and  unprocessed food, we combine the ``processed'' communities food prep nes, distilled alcoholic beverages, and whole cow milk cheese into one community and group the other two communities together. Below is a summary of the communities with greater than $.7$ membership intensity in either group, as well as those with smaller than $.7$ intensity in both communities.
    \begin{itemize}
        \item \textbf{Processed $> .7$}:  Tobacco products nes, Butter (cowmilk), Tomatoes, Milk (skimmed, dried), Tobacco (unmanufactured ), Spices (nes), Fruit (prepared nes), Cigarettes, Potatoes, non alcoholic Beverages, Vegetables (frozen), Oil (essential nes), Oil (vegetable origin nes), Nuts (prepared (exc. groundnuts)), Sugar Raw Centrifugal,  Vegetables (fresh nes), Waters (ice, etc.), Flour, wheat, Nuts nes, Tomato paste, Macaroni, Sugar refined, Food prep nes,  Cheese (whole cow milk), Chocolate products nes,  Beer of barley, Beverages (distilled alcoholic), Bread, Cereals (breakfast), Coffee extracts, Coffee (roasted),  Fruit (dried nes), Apples, Flour (maize), Pastry, Sugar confectionery, Wine, Sugar nes.
\item \textbf{Unprocessed $> .7$}: Crude materials,  Maize,  palm oil, Sesame seed, Wheat        
\item \textbf{Neither}: milled Rice, dehydrated Vegetables, Pepper (piper spp.), chicken, Infant food, Fruit (fresh nes), Tea, Beans (dry), Coffee (green), dry Chillies and peppers, orange juice (single strength), soybean oil, fruit Juice nes, Milk (whole dried), Vegetables (preserved nes), Honey (natural).
    \end{itemize}
By examining these ``communities,'' it seems that the processed foods are more similar than the unprocessed foods, since many more foods have higher memberships in communities associated to processed foods. Moreover, the ``neither'' category also contains some ``mildly processed foods" (e.g., dried milk), which shows how the mixture model here is more representative of the data.  We leave further investigations to future work.

\section{Proof of Theorem \ref{thm:twoinfty}} \label{sec:fullproof}
This section contains the full proof of Theorem \ref{thm:twoinfty}.  Without loss of generality, throughout this section we assume that $\sigma = 1$. Throughout we denote $\mathbf{T}_k = \mathcal{M}_k(\mathcal{T})$ and $\mathbf{Z}_k$ similarly. 
 We also let $p = p_{\max}$ for convenience throughout the proofs.    

Before proving our main results, we state the following results for the initialization.  The proof is contained in \cref{sec:init}. 
It is worth noting that our $\ell_{2,\infty}$ slightly sharpens the results of \citet{cai_subspace_2021} by a factor of $\kappa^2$ for the diagonal-deleted estimator; however, we do not consider missingness as they do.  In what follows, we define the leave-one-out initialization $\utilde_k^{(S,k-m)}$ as the eigenvectors of the matrix
\begin{align*}
    \Gamma\Big( \mathbf{T}_k \mathbf{T}_k\t + \mathbf{Z}_{k}^{k-m} \mathbf{T}_k\t + \mathbf{T}_k\t \mathbf{Z}_{k}^{k-m} + \mathbf{Z}_k^{k-m} (\mathbf{Z}_k^{k-m})\t \big),
\end{align*}
where $\mathbf{Z}_k^{k-m}$ denotes the matrix $\mathbf{Z}_k$ with its $m$'th row set to zero (the double appearance of the index $k$ will be useful for defining the other two leave-one-out sequences in the following subsection).  

\begin{theorem}[Initialization $\ell_{2,\infty}$ error]\label{thm:spectralinit_twoinfty}
Instate the conditions of \cref{thm:twoinfty}.  Then with probability at least $1 - O(p^{-20})$, it holds for each $k$ that
\begin{align*}
     \| \uhat_k^S  - \U_k\mathbf{W}_k^S \|_{2,\infty} &\lesssim \frac{\kappa \mu_0 \sqrt{r_1 \log(p)}}{\lambda} + \frac{\mu_0\sqrt{r _k p_{-k}}\log(p)}{\lambda^2} + \kappa^2 \mu_0^2 \frac{r_k}{p_k}; \\
     \max_m \| \uhat_k^S (\uhat_k^S)\t - \utilde_k^{(S,k-m)} (\utilde_k^{(S,k-m)})\t \| &\lesssim \frac{\kappa \mu_0 \sqrt{r_k \log(p)}}{\lambda} + \frac{\mu_0\sqrt{r_k p_{-k}} \log(p)}{\lambda^2}.
\end{align*}
\end{theorem}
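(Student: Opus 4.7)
The plan is a leave-one-out analysis of the hollowed Gram matrix $\hat G_k := \Gamma(\mathcal{M}_k(\mathcal{\widehat{T}})\mathcal{M}_k(\mathcal{\widehat{T}})\t)$. Writing $\mathbf{T}_k = \mathcal{M}_k(\mathcal{T})$ and $\mathbf{Z}_k = \mathcal{M}_k(\mathcal{Z})$, I would start from the decomposition
\begin{align*}
\hat G_k = \mathbf{T}_k \mathbf{T}_k\t - \diag(\mathbf{T}_k \mathbf{T}_k\t) + \Gamma(\mathbf{Z}_k \mathbf{T}_k\t + \mathbf{T}_k \mathbf{Z}_k\t) + \Gamma(\mathbf{Z}_k \mathbf{Z}_k\t).
\end{align*}
The hollowing of $\mathbf{T}_k \mathbf{T}_k\t$ is a perturbation with operator norm at most $\max_m \|e_m\t \mathbf{T}_k\|^2$, which by the incoherence of $\U_k$ and $\|\mathbf{T}_k\| \le \kappa \lambda$ is of order $\kappa^2 \mu_0^2 r_k \lambda^2/p_k$; after division by $\lambda^2$ this produces the claimed bias term $\kappa^2 \mu_0^2 r_k/p_k$.

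For the $\ell_{2,\infty}$ bound I would use the eigenvector identity $\uhat_k^S \hat\Lambda = \hat G_k \uhat_k^S$ and the fact that $\U_k$ spans the range of $\mathbf{T}_k \mathbf{T}_k\t$ to write
\begin{align*}
e_m\t (\uhat_k^S - \U_k \mathbf{W}_k^S) = e_m\t (I - \U_k \U_k\t) \mathbf{N}_k \uhat_k^S \hat\Lambda^{-1} + e_m\t \U_k (\U_k\t \uhat_k^S - \mathbf{W}_k^S),
\end{align*}
where $\mathbf{N}_k = \hat G_k - \mathbf{T}_k \mathbf{T}_k\t$. The second term is of order $\|\sin\Theta\|^2 \|\U_k\|_{2,\infty}$ by the standard Procrustes inequality. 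For the first term, I would split $\mathbf{N}_k$ into the bias, two linear-in-$\mathbf{Z}_k$ parts, and the quadratic part, and for each noise-dependent piece replace $\uhat_k^S$ by the leave-one-out vector $\utilde_k^{(S,k-m)}$ plus a correction. The independent piece $e_m\t \mathbf{N}_k \utilde_k^{(S,k-m)}$ is then a sum of products of independent subgaussian variables with deterministic weights, handled by Bernstein for the linear parts and by Hanson-Wright for the quadratic part, giving the linear term $\kappa\mu_0\sqrt{r_k \log p}/\lambda$ and the quadratic term $\mu_0\sqrt{r_k p_{-k}}\log p/\lambda^2$, respectively. The correction $e_m\t \mathbf{N}_k (\uhat_k^S - \utilde_k^{(S,k-m)} \mathbf{W})$ is absorbed using the $\sin\Theta$ bound between the two subspaces.

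That $\sin\Theta$ bound, which is the second bullet of the theorem, I would obtain from Wedin's theorem applied to $\hat G_k$ and its leave-one-out counterpart. Their difference is supported on row and column $m$ (up to hollowing) and has operator norm bounded by $\|e_m\t \mathbf{Z}_k \mathbf{T}_k\t\| + \|e_m\t \mathbf{Z}_k\|^2$, controllable by scalar subgaussian and $\chi^2$ concentration, while the eigengap is of order $\lambda^2$.

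The main obstacle will be handling the quadratic term $e_m\t(I-\U_k\U_k\t)\Gamma(\mathbf{Z}_k \mathbf{Z}_k\t)\uhat_k^S$ sharply enough to avoid spurious factors of $\kappa$, which is where the improvement over \citet{cai_subspace_2021} arises; this requires projecting first onto $\utilde_k^{(S,k-m)}$ so the residual Hanson-Wright quadratic form involves only independent entries of $\mathbf{Z}_k$, and then using the tensor-specific spectral bound of \citet{zhang_tensor_2018} to convert an ostensibly $p_{-k}$-sized object into an $r$-sized one whenever it is contracted with an almost-low-rank projector. Coupled with this, the incoherence of $\uhat_k^S$ (needed to estimate $e_m\t \uhat_k^S$ in the quadratic bound) must be established in tandem with the leave-one-out $\sin\Theta$ bound via a self-bootstrapping argument, and the probabilities over the $p_k$ leave-one-out events must be combined by a union bound at cost $\log p_{\max}$.
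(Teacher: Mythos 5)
Your overall framework matches the paper's: you use the eigenvector fixed-point identity for the hollowed Gram matrix, split $\mathbf{N}_k := \hat G_k - \mathbf{T}_k\mathbf{T}_k^\top$ into diagonal bias, linear, and quadratic pieces, handle the bias via $\max_m\|e_m^\top\mathbf{T}_k\|^2$, establish the second bullet via a Davis--Kahan argument on a rank-one perturbation, and bootstrap incoherence of $\uhat_k^S$ in tandem. This is essentially the paper's route, and your bias-term estimate is correct.

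The genuine gap is in the quadratic term. You assert that after projecting onto $\utilde_k^{(S,k-m)}$, the quantity $e_m^\top\Gamma(\mathbf{Z}_k\mathbf{Z}_k^\top)\utilde_k^{(S,k-m)}$ ``is a sum of products of independent subgaussian variables with deterministic weights,'' amenable to Hanson--Wright. This is not so. Writing it out,
\begin{equation*}
e_m^\top\Gamma(\mathbf{Z}_k\mathbf{Z}_k^\top)\,\utilde_k^{(S,k-m)} \;=\; \sum_{l}\, (\mathbf{Z}_k)_{ml}\,\Bigl(\underbrace{\textstyle\sum_{a\neq m}(\mathbf{Z}_k)_{al}\,\bigl(\utilde_k^{(S,k-m)}\bigr)_{a\cdot}}_{=:\ \mathbf{c}_l}\Bigr),
\end{equation*}
and while $\utilde_k^{(S,k-m)}$ is indeed independent of $e_m^\top\mathbf{Z}_k$, each coefficient $\mathbf{c}_l = e_l^\top(\mathbf{Z}_k^{k-m})^\top\utilde_k^{(S,k-m)}$ still depends on the $l$-th column of $\mathbf{Z}_k^{k-m}$ through $\utilde_k^{(S,k-m)}$. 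A naive bound $\|\mathbf{c}_l\|\le\|\mathbf{Z}_k^{k-m}\|\approx\sqrt{p_{-k}}$ destroys the claimed rate. To get the correct $\|\mathbf{c}_l\|\lesssim\sqrt{p_k\log p}\,\|\utilde_k\|_{2,\infty}$ one must introduce a \emph{leave-two-out} vector $\utilde_k^{(S,k-m,k-l)}$ (independent of both row $m$ and column $l$), compare $\utilde_k^{(S,k-m)}$ to it in $\sin\Theta$ distance, and then apply the row/column concentration to the independent piece. Your proposal does not contain this second layer of leave-one-out, and without it the argument stalls precisely at the step you flag as the main obstacle. A secondary inaccuracy: the spectral bound $\|\Gamma(\mathbf{Z}_k\mathbf{Z}_k^\top)\|\lesssim\sqrt{p_1p_2p_3}$ used in the initialization step comes from a concentration inequality for hollowed Gram matrices, not from the tensor-specific low-rank contraction bound of \citet{zhang_tensor_2018}; that bound enters only at later HOOI iterations when the noise is contracted with $\uhat_{k+1}^{(t)}\otimes\uhat_{k+2}^{(t)}$, which does not occur in the diagonal-deletion initialization.
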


In \cref{sec:loo} we describe in detail the leave-one-out sequences for the iterates of tensor SVD.   In \cref{sec:deterministicbounds} we obtain the deterministic bounds needed en route to \cref{thm:twoinfty}, and in \cref{sec:probabilisticbounds} we use these bounds to obtain high-probability guarantees on good events.  \cref{sec:twoinftyproof} contains the final proof of \cref{thm:spectralinit_twoinfty}. Throughout we rely on several self-contained probabilistic lemmas, whose statements and proofs can be found in \cref{sec:twoinftyaux}.

\subsection{The Leave-One-Out Sequence} \label{sec:loo}
In this section we formally define the leave-one-out sequence.  First, we already have defined $\uhat_k^S$ and $\utilde_k^{(S,k-m)}$ in the previous section, but we will need a few additional pieces of notation.  We define $\widehat{\U}_k^{(t)}$ as the output of tensor power iteration after $t$ iterations, with $\uhat_k^{(0)} = \uhat_k^{S}$.  It will also be useful to define
\begin{align*}
     \mathcal{\widehat P}_{k}^{(t)} &\coloneqq \begin{cases}
    \mathcal{P}_{\uhat_{k+1}^{(t-1)} \otimes \uhat_{k+2}^{(t-1)}} & k =1; \\
     \mathcal{P}_{\uhat_{k+1}^{(t-1)} \otimes \uhat_{k+2}^{(t)}} & k =2; \\
      \mathcal{P}_{\uhat_{k+1}^{(t)} \otimes \uhat_{k+2}^{(t)}} & k =3.\end{cases}
\end{align*}
The matrix $\mathcal{\widehat P}_k^{(t)}$ is simply the projection matrix corresponding to the previous two iterates.

We have already defined the matrix $\mathbf{Z}_{j}^{j-m}$ as the $j$'th matricization of $\mathcal{Z}$ with its $m$'th row set to zero.  We now define $\mathcal{Z}^{j-m}$ as the corresponding tensor $\mathcal{Z}$, where the entries corresponding to the $m$'th row of $\mathbf{Z}_j$ are set to zero.  Finally, define $\mathbf{Z}_{k}^{j-m}\coloneqq\mathcal{M}_k(\mathcal{Z}^{j-m})$.  In other words $\mathbf{Z}_k^{j-m}$ is the $k$'th matricization of the tensor $\mathcal{Z}$ with the entries corresponding to the $m$'th row of $\mathbf{Z}_j$ set to zero.  

We now define $\tilde{\U}_k^{(S,j-m)}$ as the leading $r_k$ eigenvectors of the matrix
\begin{align*}
    \Gamma\big( \mathbf{T}_k \mathbf{T}_k\t + \mathbf{Z}_k^{j-m} \mathbf{T}_k\t + \mathbf{T}_k (\mathbf{Z}_k^{j-m})\t + \mathbf{Z}_k^{j-m}(\mathbf{Z}_k^{j-m})\t \big).
\end{align*}
We now show that the other leave-one-out sequence initializations are sufficiently close to the true initialization.
\begin{lemma}[Proximity of the initialization leave-one-out sequences] \label{lem:spectralinit_leave_one_out_sintheta} Instate the conditions of \cref{thm:twoinfty}.  Then the initializations of the leave-one-out sequences satisfy for each $k$ the bound
\begin{align*}
 \max_{1\leq j \leq 3} \max_{1\leq m \leq p_j} \| \utilde_{k}^{(S,j-m)} (\utilde_{k}^{(S,j-m)})\t - \uhat_k^S (\uhat_k^S)\t \| &\lesssim \frac{\kappa \sqrt{p_k \log(p)}}{\lambda} \mu_0 \sqrt{\frac{r_1}{p_j}} + \frac{(p_1p_2p_3)^{1/2} \log(p)}{\lambda^2} \mu_0 \sqrt{\frac{r_1}{p_j}}
\end{align*}
with probability at least $1 - O(p^{-19})$.  
\end{lemma}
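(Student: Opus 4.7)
The plan is to apply a Davis--Kahan-type inequality to reduce the projection-distance bound to a spectral norm bound on the difference of the two symmetric Gram matrices whose leading eigenspaces define $\uhat_k^S$ and $\utilde_k^{(S,j-m)}$. Both matrices are diagonal deletions of expressions of the form $\mathbf{T}_k\mathbf{T}_k\t + \mathbf{A}\mathbf{T}_k\t + \mathbf{T}_k\mathbf{A}\t + \mathbf{A}\mathbf{A}\t$ with $\mathbf{A}\in\{\mathbf{Z}_k,\mathbf{Z}_k^{j-m}\}$. Reusing the concentration and eigengap estimates developed for the proof of \cref{thm:spectralinit_twoinfty}, under the signal-strength assumption both Gram matrices have a gap of order $\lambda^2$ between their $r_k$-th and $(r_k{+}1)$-th eigenvalues with high probability, so Davis--Kahan reduces the task to bounding
\[
\|\widehat{\mathbf G}_k - \widetilde{\mathbf G}_k^{j-m}\| \;\lesssim\; \kappa\lambda\sqrt{p_k \log(p)}\,\mu_0\sqrt{r_k/p_j} + (p_1p_2p_3)^{1/2}\log(p)\,\mu_0\sqrt{r_k/p_j}.
\]

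Set $\mathbf{E}_k^{j-m} := \mathbf{Z}_k - \mathbf{Z}_k^{j-m}$ and expand. The central structural observation is that $\mathbf{E}_k^{j-m}$ and $\mathbf{Z}_k^{j-m}$ have complementary supports determined by $\{i_j=m\}$ versus $\{i_j\ne m\}$: for $k\ne j$, the mode-$k$ matricization has \emph{disjoint column supports}, so $\mathbf{Z}_k^{j-m}(\mathbf{E}_k^{j-m})\t = 0$ by direct inspection; for $k=j$ one has \emph{disjoint row supports}, and the cross term $\mathbf{Z}_j^{j-m}(\mathbf{E}_j^{j-m})\t$ is supported entirely on its $m$-th column. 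Consequently the Gram difference simplifies to
\[
\widehat{\mathbf G}_k - \widetilde{\mathbf G}_k^{j-m} = \Gamma\bigl(\mathbf{E}_k^{j-m}\mathbf{T}_k\t + \mathbf{T}_k(\mathbf{E}_k^{j-m})\t + \mathbf{E}_k^{j-m}(\mathbf{E}_k^{j-m})\t\bigr) + \mathbf{1}\{k=j\}\cdot\Gamma(\text{cross term}),
\]
and each piece is bounded separately.

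For the linear piece, write $\mathbf{T}_k = \U_k\mathbf{\Lambda}_k\mathbf{V}_k\t$ so that $\|\mathbf{E}_k^{j-m}\mathbf{T}_k\t\| \le \kappa\lambda\,\|\mathbf{E}_k^{j-m}\mathbf{V}_k\|$. Since the column span of $\mathbf{V}_k$ lies in $\U_{k+1}\otimes\U_{k+2}$, restricting $\mathbf{V}_k$ to the rows in the support of $\mathbf{E}_k^{j-m}$ (those with $i_j=m$) factors out a copy of the $m$-th row of $\U_j$, which by incoherence has norm $\lesssim \mu_0\sqrt{r_j/p_j}$. A matrix Bernstein bound applied to $\mathbf{E}_k^{j-m}\mathbf{V}_k = \sum z_{i,s}\, e_i (\mathbf{V}_k)_{s,\cdot}$ over the support then yields $\|\mathbf{E}_k^{j-m}\mathbf{V}_k\| \lesssim \mu_0\sqrt{p_k r_k \log(p)/p_j}$, producing the first claimed contribution. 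The pure quadratic term $\mathbf{E}_k^{j-m}(\mathbf{E}_k^{j-m})\t$ reduces to the squared operator norm of a $p_k\times p_{-\{j,k\}}$ subgaussian block (or of a single $p_{-j}$-vector when $k=j$, in which case hollowing kills it entirely), and the lone cross term in the $k=j$ case is a column-supported matrix whose norm is Hanson--Wright-controlled by $\sqrt{p_1p_2p_3\log(p)}$. Both give contributions comfortably within the second target term after division by $\lambda^2$. Finally, a union bound over $j\in\{1,2,3\}$ and $m\in[p_j]$ — its $\log p$ cost absorbed into the existing concentration inequalities — yields the simultaneous bound with failure probability $O(p^{-19})$.

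The hard part will be the sharp control of the linear piece $\mathbf{E}_k^{j-m}\mathbf{V}_k$: a naive operator-norm bound on a subgaussian block of size $p_k\times p_{-\{j,k\}}$ loses a factor of $\sqrt{p_{-\{j,k\}}}$ and prevents matching the target. The argument must pass through the Kronecker-product incoherence of $\U_{k+1}\otimes\U_{k+2}$ to exploit the fact that the column support of $\mathbf{E}_k^{j-m}$ picks out exactly one slice, thereby recovering the essential $\mu_0\sqrt{r_j/p_j}$ savings. Once this is in hand, the remaining work reduces to assembling the pieces and invoking the eigengap already controlled in the proof of \cref{thm:spectralinit_twoinfty}.
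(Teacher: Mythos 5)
Your decomposition of the Gram difference into pieces with disjoint supports, and your treatment of the \emph{linear} piece $\mathbf{E}_k^{j-m}\mathbf{T}_k\t$ — factoring through $\mathbf{V}_k$ and its incoherence, so that the one-slice support of $\mathbf{E}_k^{j-m}$ contributes the $\mu_0\sqrt{r/p_j}$ savings — are correct and match the paper's use of Lemma~\ref{lem:matricizationrowbound}. The gap is in the \emph{quadratic} part. You bound the hollowed Gram difference by its operator norm and then apply Davis--Kahan with $\|\sin\Theta\|\lesssim\|\text{perturbation}\|/\lambda^2$. This is not sharp enough to hit the stated target $\frac{(p_1p_2p_3)^{1/2}\log(p)}{\lambda^2}\mu_0\sqrt{r_1/p_j}$. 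Concretely: in the $k=j$ case you bound the column-supported cross term $\Gamma\bigl(\mathbf{Z}_j^{j-m}(\mathbf{E}_j^{j-m})\t\bigr)$ by $\sqrt{p_1p_2p_3\log(p)}$, but the target requires this piece to contribute $\lesssim\mu_0\sqrt{r_k p_{-k}}\log(p)$; even in the balanced regime $p_k\asymp p$ these differ by a factor $\sqrt{p/(r\log p)}$, which diverges. In the $k\ne j$ case, your ``squared operator norm of a $p_k\times p_{-\{j,k\}}$ block'' estimate $p_k+p_{-\{j,k\}}$ (and even its hollowed refinement $\sqrt{p_k p_{-\{j,k\}}}+p_k$) exceeds the target once the dimensions are imbalanced, e.g., $p_k\gg p_{-\{j,k\}}\,r\,\mu_0^2\log^2 p$.

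What the paper does differently, and what you are missing, is to apply the one-sided Davis--Kahan inequality
\begin{align*}
\|\sin\Theta(\uhat_k^S,\utilde_k^{(S,j-m)})\|\;\lesssim\;\frac{\|(\widehat{\mathbf G}_k-\widetilde{\mathbf G}_k^{j-m})\,\utilde_k^{(S,j-m)}\|}{\text{eigengap}},
\end{align*}
and then exploit that $\utilde_k^{(S,j-m)}$ is \emph{independent} of the removed slice of noise and \emph{incoherent}, $\|\utilde_k^{(S,j-m)}\|_{2,\infty}\lesssim\mu_0\sqrt{r_k/p_k}$ (bootstrapped from the $\ell_{2,\infty}$ control on $\uhat_k^S$ established in Theorem~\ref{thm:spectralinit_twoinfty}). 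For the column-supported piece $\mathbf{G}_{\mathrm{col}}$ of the quadratic perturbation, $\mathbf{G}_{\mathrm{col}}\utilde$ touches only the $m$-th \emph{row} of $\utilde$, so one gets an additional factor $\|\utilde\|_{2,\infty}$ beyond the raw operator-norm bound; this $\sqrt{r_k/p_k}$ saving is precisely what closes the gap. For the row-supported piece $\mathbf{G}_{\mathrm{row}}$, a row-by-row concentration argument (in the paper, via an additional leave-\emph{two}-out construction) plus the $\ell_{2,\infty}$ bound on $\utilde$ does the same job. In short: the incoherence trick you correctly deploy for the linear piece (through $\mathbf{V}_k$) must also be applied to the quadratic piece (through $\utilde_k^{(S,j-m)}$ itself), and this requires the stronger, one-sided Davis--Kahan rather than the operator-norm version.
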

\cref{lem:spectralinit_leave_one_out_sintheta} is proven in \cref{sec:init} after the proof of \cref{thm:spectralinit_twoinfty}.  To define subsequent iterates, we set $\utilde_k^{(t,j-m)}$ as the outputs of tensor power iteration using these initializations, though with one modification. We now define $\tilde{\U}_k^{(t,j-m)}$ as the left singular vectors of the matrix
\begin{align*}
    \mathbf{T}_k + \mathbf{Z}_k^{j-m}  \mathcal{\tilde P}_{k}^{t,j-m} ,
\end{align*}
which is still independent from $e_m\t \mathbf{Z}_j$.  Here, we set $ \mathcal{\tilde P}_{k}^{t,j-m}$ inductively as the projection matrix
\begin{align*}
    \mathcal{\tilde P}_{k}^{t,j-m} &\coloneqq \begin{cases}
    \mathcal{P}_{\utilde_{k+1}^{(t-1,j-m)} \otimes \utilde_{k+2}^{(t-1,j-m)}} & k =1; \\
     \mathcal{P}_{\utilde_{k+1}^{(t-1,j-m)} \otimes \utilde_{k+2}^{(t,j-m)}} & k =2; \\
      \mathcal{P}_{\utilde_{k+1}^{(t,j-m)} \otimes \utilde_{k+2}^{(t,j-m)}} & k =3.
    \end{cases}
\end{align*}
Note that for each $k$ there are $3$ different leave-one-out sequences, one corresponding to each mode, by leaving out the $m$'th row of that mode (note that for convenience we use the index $m$ for each leave-one-out sequence, but we slightly abuse notation as $m$ as defined above must satisfy $1 \leq m \leq p_j$).

We now introduce some notation used for the remainder of our proofs.  Define
\begin{align*}
\mathbf{L}_{k}^{(t)} &\coloneqq \U_{k\perp} \U_{k\perp}\t \mathbf{Z}_k  \mathcal{\widehat P}_k^{(t)} \mathbf{T}_k\t \uhat^{(t)}_k ( \mathbf{\widehat{\Lambda}}_k^{(t)})^{-2}; \\
\mathbf{Q}_{k}^{(t)} &\coloneqq 
\U_{k\perp} \U_{k\perp}\t \mathbf{Z}_k  \mathcal{\widehat P}_k^{(t)} \mathbf{Z}_k\t \widehat{\U}_k^{(t)} (\mathbf{\widehat{\Lambda}}_k^{(t)})^{-2};  \\
    \tau_k &\coloneqq\sup_{\substack{ \| \mathbf{U}_1 \| = 1, \mathrm{rank}(\U_1) \leq 2 r_{k+1}\\ \|\mathbf{U}_2\| =1, \mathrm{rank}(\U_2) \leq 2 r_{k+2}} } \|  \mathbf{Z}_k \bigg( \mathcal{P}_{\mathbf{U}_1} \otimes \mathcal{P}_{\mathbf{U}_2} \bigg)\|; \\  
    \xi_{k}^{(t,j-m)} &\coloneqq 
    \bigg\| \bigg(\mathbf{Z}_k^{j-m} - \mathbf{Z}_k \bigg) \mathcal{\tilde P}_{k}^{t,j-m}  \bigg\| \\
    \tilde \xi_{k}^{(t,j-m)} &\coloneqq   \bigg\| \bigg(\mathbf{Z}_k^{j-m} - \mathbf{Z}_k \bigg) \mathcal{\tilde P}_{k}^{t,j-m}  \mathbf{V}_k  \bigg\| \\
     \eta_{k}^{(t,j-m)} &\coloneqq 
     \begin{cases}
         \| \sin\Theta( \utilde_{k+1}^{(t-1,j-m)}, \uhat_{k+1}^{(t-1)}) \| + \| \sin\Theta( \utilde_{k+2}^{(t-1,j-m)}, \uhat_{k+2}^{(t-1)}) \| & k = 1  \\
          \| \sin\Theta( \utilde_{k+1}^{(t-1,j-m)}, \uhat_{k+1}^{(t-1)}) \| + \| \sin\Theta( \utilde_{k+2}^{(t,j-m)}, \uhat_{k+2}^{(t)}) \| & k = 2  \\
           \| \sin\Theta( \utilde_{k+1}^{(t,j-m)}, \uhat_{k+1}^{(t)}) \| + \| \sin\Theta( \utilde_{k+2}^{(t,j-m)}, \uhat_{k+2}^{(t)}) \| & k = 3  \end{cases} \\
    \eta_k^{(t)} &\coloneqq \begin{cases}\| \sin\Theta( \U_{k+1}, \uhat_{k+1}^{(t-1)}) \| + \| \sin\Theta( \U_{k+2}, \uhat_{k+2}^{(t-1)}) \| & k = 1 \\
        \| \sin\Theta( \U_{k+1}, \uhat_{k+1}^{(t-1)}) \| + \| \sin\Theta( \U_{k+2}, \uhat_{k+2}^{(t)}) \| & k = 2 \\
        \| \sin\Theta( \U_{k+1}, \uhat_{k+1}^{(t)}) \| + \| \sin\Theta( \U_{k+2}, \uhat_{k+2}^{(t)}) \| & k = 3
    \end{cases}.
\end{align*}

First we will state results deterministically with dependence on $\tau_k$, $\xi_{k}^{(t,j-m)}$ and $\eta_{k}^{(t,j-m)}$.  Note that we already have the bound $\tilde \xi_k^{(t,j-m)} \leq \xi_k^{(t,j-m)}$ since $\|\mathbf{V}_k \| = 1$, but it will turn out to be slightly more useful to have the dependence on $\mathbf{V}_k$.

\subsection{Deterministic Bounds}
\label{sec:deterministicbounds}

In this section we collect and prove deterministic bounds that we will then combine with probabilistic induction in \cref{sec:probabilisticbounds}.  

\begin{lemma}[Closeness of the orthogonal matrix] \label{lem:orthogonalmatrixlemma}
Let $\mathbf{W}_k^{(t)} = \mathrm{sgn}(\uhat_k^{(t)}, \U_k)$ be the matrix sign of $\uhat_k^{(t)}$ and $\U_k$.  Then
\begin{align*}
    \|   \U_k\mathbf{W}_k^{(t)} - \U_k \U_k\t \uhat_k^{(t)} \|_{2,\infty} &\leq\mu_0 \sqrt{\frac{ r_k}{p_k}}  \| \sin\Theta(\uhat_k^{(t)}, \U_k) \|^2.
\end{align*}
\end{lemma}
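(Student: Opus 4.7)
The plan is to factor out $\U_k$ on the left and then reduce the problem to controlling the operator norm of $\mathbf{W}_k^{(t)} - \U_k^\top \widehat{\U}_k^{(t)}$ via the singular value decomposition of $\U_k^\top \widehat{\U}_k^{(t)}$.

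First I would write
\begin{equation*}
\U_k \mathbf{W}_k^{(t)} - \U_k \U_k^\top \widehat{\U}_k^{(t)} = \U_k \bigl(\mathbf{W}_k^{(t)} - \U_k^\top \widehat{\U}_k^{(t)}\bigr),
\end{equation*}
and then use the sub-multiplicativity of the $\ell_{2,\infty}$ norm with respect to the operator norm, namely $\|\U_k M\|_{2,\infty} \le \|\U_k\|_{2,\infty}\|M\|$ for any matrix $M$ of compatible size. By the incoherence assumption $\|\U_k\|_{2,\infty} \le \mu_0 \sqrt{r_k/p_k}$, so it suffices to show
\begin{equation*}
\bigl\|\mathbf{W}_k^{(t)} - \U_k^\top \widehat{\U}_k^{(t)}\bigr\| \le \|\sin\Theta(\widehat{\U}_k^{(t)}, \U_k)\|^2.
\end{equation*}

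For this last bound I would invoke the definition of $\mathbf{W}_k^{(t)}$ as the matrix sign function. Write the SVD $\U_k^\top \widehat{\U}_k^{(t)} = \mathbf{W}_1 \Sigma \mathbf{W}_2^\top$, where $\Sigma = \mathrm{diag}(\sigma_1,\ldots,\sigma_{r_k})$ contains the cosines of the principal angles between $\U_k$ and $\widehat{\U}_k^{(t)}$. Then $\mathbf{W}_k^{(t)} = \mathbf{W}_1 \mathbf{W}_2^\top$, so
\begin{equation*}
\mathbf{W}_k^{(t)} - \U_k^\top \widehat{\U}_k^{(t)} = \mathbf{W}_1 (\mathbf{I}_{r_k} - \Sigma) \mathbf{W}_2^\top,
\end{equation*}
and hence $\|\mathbf{W}_k^{(t)} - \U_k^\top \widehat{\U}_k^{(t)}\| = 1 - \sigma_{\min}(\Sigma) = 1 - \cos(\theta_{\max})$, where $\theta_{\max}$ is the largest principal angle.

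To conclude, I would use the elementary inequality $1 - \cos\theta \le 1 - \cos^2\theta = \sin^2\theta$, valid for $\theta \in [0,\pi/2]$ (which covers all principal angles). Since $\|\sin\Theta(\widehat{\U}_k^{(t)}, \U_k)\| = \sin(\theta_{\max})$, this gives the desired bound $\|\mathbf{W}_k^{(t)} - \U_k^\top \widehat{\U}_k^{(t)}\| \le \|\sin\Theta(\widehat{\U}_k^{(t)},\U_k)\|^2$ and combining with the incoherence bound on $\|\U_k\|_{2,\infty}$ completes the proof. There is no genuine obstacle here: the lemma is essentially a rewrite of the standard fact that the matrix sign is a second-order approximation to $\U_k^\top \widehat{\U}_k^{(t)}$ when the subspaces are close, combined with the definition of incoherence.
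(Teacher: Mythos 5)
Your proof is correct and follows essentially the same route as the paper: both arguments factor out $\U_k$, bound $\|\U_k\|_{2,\infty}$ via incoherence, and reduce to the operator-norm inequality $\|\mathbf{W}_k^{(t)} - \U_k^\top \widehat{\U}_k^{(t)}\| \le \|\sin\Theta(\widehat{\U}_k^{(t)},\U_k)\|^2$. The paper simply cites Lemma 4.6.3 of \citet{chen_spectral_2021} for this last step, whereas you supply the proof directly (SVD of $\U_k^\top\widehat{\U}_k^{(t)}$, reduction to $1-\cos\theta_{\max}\le\sin^2\theta_{\max}$); your derivation is correct and self-contained.
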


\begin{proof}[Proof of Lemma \ref{lem:orthogonalmatrixlemma}]
Observe that
\begin{align*}
     \|   \U_k\mathbf{W}_k^{(t)} - \U_k \U_k\t \uhat_k^{(t)} \|_{2,\infty} &\leq \| \U_k \|_{2,\infty} \| \mathbf{W}_k^{(t)} - \U_k\t \uhat_k^{(t)} \| \\
    &\leq \mu_0 \sqrt{\frac{r_k}{p_k}} \| \sin\Theta(\uhat_k^{(t)},\U_k ) \|^2.
\end{align*}
For details on the final inequality, see Lemma 4.6.3 of \cite{chen_spectral_2021}.  
\end{proof}

\begin{lemma}[Deterministic Bound for the Linear Term] \label{lem:linear_deterministic_bd}
Suppose $\mathbf{T}_k = \U_k \mathbf{\Lambda}_k \mathbf{V}_k\t$, and suppose that $\lambda/2 \leq  \lambda_{r_k} ( \mathbf{\hat{\Lambda}}_k^{(t)})$  Then the linear term $\mathbf{L}_k^{(t)}$ satisfies
\begin{align*}
      \| e_m\t \mathbf{L}_k^{(t)} \| &\leq  \frac{8\kappa}{\lambda} \| \U_k \|_{2,\infty} \bigg( \tau_k \eta_{k}^{(t)} +  \| \U_k\t \mathbf{Z}_k \mathbf{V}_k \| \bigg)  + \frac{8\kappa}{\lambda} \bigg( \tau_k \eta_{k}^{(t,k-m)}  \bigg)  + \frac{4 \kappa}{\lambda} \tilde\xi_{k}^{t,k-m}, 
\end{align*}
\end{lemma}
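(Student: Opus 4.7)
The plan is to decompose $e_m\t \mathbf{L}_k^{(t)}$ using the identity $\U_{k\perp}\U_{k\perp}\t = I - \U_k\U_k\t$ and then handle the two resulting pieces by two \emph{different} projection-replacement strategies. Throughout, the eigengap hypothesis gives $\|(\mathbf{\widehat\Lambda}_k^{(t)})^{-2}\| \leq 4/\lambda^2$; I write $\mathbf{T}_k = \U_k \mathbf{\Lambda}_k \mathbf{V}_k\t$ with $\|\mathbf{\Lambda}_k\| \leq \kappa \lambda$; and I introduce $\mathcal{P}_k^\star = \mathcal{P}_{\U_{k+1}} \otimes \mathcal{P}_{\U_{k+2}}$, noting that $\mathcal{P}_k^\star \mathbf{V}_k = \mathbf{V}_k$ because the row space of $\mathbf{T}_k$ lies in the range of $\U_{k+1} \otimes \U_{k+2}$. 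Applying the triangle inequality gives two quantities: (a) $e_m\t \mathbf{Z}_k \widehat{\mathcal{P}}_k^{(t)} \mathbf{T}_k\t \widehat{\U}_k^{(t)}(\mathbf{\widehat\Lambda}_k^{(t)})^{-2}$, and (b) $e_m\t \U_k \U_k\t \mathbf{Z}_k \widehat{\mathcal{P}}_k^{(t)} \mathbf{T}_k\t \widehat{\U}_k^{(t)}(\mathbf{\widehat\Lambda}_k^{(t)})^{-2}$.

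For piece (b), I replace $\widehat{\mathcal{P}}_k^{(t)}$ by the \emph{true} projection $\mathcal{P}_k^\star$. Since $\mathcal{P}_k^\star \mathbf{T}_k\t = \mathbf{T}_k\t$, the $\mathcal{P}_k^\star$-term collapses to $\U_k\t \mathbf{Z}_k \mathbf{V}_k \mathbf{\Lambda}_k \U_k\t \widehat{\U}_k^{(t)}(\mathbf{\widehat\Lambda}_k^{(t)})^{-2}$, which contributes the $\|\U_k\|_{2,\infty}\|\U_k\t \mathbf{Z}_k \mathbf{V}_k\|$ factor (scaled by $4\kappa/\lambda$). For the residual I use the Kronecker expansion
\[
\widehat{\mathcal{P}}_k^{(t)} - \mathcal{P}_k^\star = (\mathcal{P}_{\widehat\U_{k+1}} - \mathcal{P}_{\U_{k+1}}) \otimes \mathcal{P}_{\widehat\U_{k+2}} + \mathcal{P}_{\U_{k+1}} \otimes (\mathcal{P}_{\widehat\U_{k+2}} - \mathcal{P}_{\U_{k+2}}),
\]
and factor each difference of projections through the joint span of $\widehat\U_{k+j}$ and $\U_{k+j}$ (of rank $\leq 2r_{k+j}$) to expose a $\tau_k$ factor, with the remaining projection-difference norm equal to $\|\sin\Theta(\widehat\U_{k+j},\U_{k+j})\|$. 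This produces $\tau_k \eta_k^{(t)}$, and yields the first group of terms in the stated bound.

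For piece (a), I instead replace $\widehat{\mathcal{P}}_k^{(t)}$ by the leave-one-out projection $\widetilde{\mathcal{P}}_k^{t,k-m}$, which is independent of $e_m\t \mathbf{Z}_k$. The residual $\|\mathbf{Z}_k(\widehat{\mathcal{P}}_k^{(t)} - \widetilde{\mathcal{P}}_k^{t,k-m})\|$ is bounded by exactly the same Kronecker-expansion argument, now returning $\tau_k \eta_k^{(t,k-m)}$ and producing the middle term. For the leave-one-out main term, I substitute $\mathbf{T}_k\t = \mathbf{V}_k \mathbf{\Lambda}_k \U_k\t$ and exploit the defining property $e_m\t \mathbf{Z}_k^{k-m} = 0$ to write
\[
e_m\t \mathbf{Z}_k \widetilde{\mathcal{P}}_k^{t,k-m} \mathbf{V}_k = e_m\t (\mathbf{Z}_k - \mathbf{Z}_k^{k-m}) \widetilde{\mathcal{P}}_k^{t,k-m} \mathbf{V}_k.
\]
Since the matrix $(\mathbf{Z}_k - \mathbf{Z}_k^{k-m}) \widetilde{\mathcal{P}}_k^{t,k-m} \mathbf{V}_k$ has only its $m$-th row nonzero, its spectral norm equals the $\ell_2$ norm of that row, which is exactly $\tilde\xi_k^{(t,k-m)}$ by definition. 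Scaling by $\|\mathbf{\Lambda}_k\| \cdot \|(\mathbf{\widehat\Lambda}_k^{(t)})^{-2}\|$ produces the final $\frac{4\kappa}{\lambda}\tilde\xi_k^{(t,k-m)}$ term, and summing the three contributions (with slack in the constants) gives the stated inequality.

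The main obstacle---and the whole point of the construction---is orchestrating the two different projection replacements: the true projection $\mathcal{P}_k^\star$ for the $\U_k\U_k\t$ piece, so that $\mathbf{V}_k$ is recovered exactly and the signal-scale quantity $\|\U_k\t \mathbf{Z}_k \mathbf{V}_k\|$ emerges cleanly; and the leave-one-out projection $\widetilde{\mathcal{P}}_k^{t,k-m}$ for the $e_m\t$ piece, so that the $m$-th row of $\mathbf{Z}_k$ can be isolated via the ``only-one-nonzero-row'' identity producing $\tilde\xi_k^{(t,k-m)}$. The Kronecker projection-difference expansion, bounded through the $\tau_k$ supremum over rank-$2r$ Kronecker subspaces, is the shared engine that controls both residual terms.
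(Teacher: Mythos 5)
Your proposal is correct and follows essentially the same route as the paper's own proof: the paper likewise writes $\U_{k\perp}\U_{k\perp}\t = \mathbf{I} - \U_k\U_k\t$, substitutes $\mathbf{T}_k\t = \mathbf{V}_k\mathbf{\Lambda}_k\U_k\t$ to extract the $4\kappa/\lambda$ factor, and then for the $\U_k\U_k\t$ piece replaces $\widehat{\mathcal{P}}_k^{(t)}$ by $\mathcal{P}_{\U_{k+1}}\otimes\mathcal{P}_{\U_{k+2}}$ (using $\mathcal{P}_{\U_{k+1}}\otimes\mathcal{P}_{\U_{k+2}}\mathbf{V}_k=\mathbf{V}_k$), while for the other piece it replaces $\widehat{\mathcal{P}}_k^{(t)}$ by the leave-one-out $\widetilde{\mathcal{P}}_k^{t,k-m}$ and identifies the remaining term as $\tilde\xi_k^{t,k-m}$ via $e_m\t\mathbf{Z}_k^{k-m}=0$. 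The only differences are purely organizational (the order in which you substitute $\mathbf{T}_k\t$ and split $\mathbf{I}-\U_k\U_k\t$); the decomposition, the two projection-replacement targets, the Kronecker telescoping through $\tau_k$, and the single-nonzero-row identification of $\tilde\xi_k^{t,k-m}$ all coincide with the paper's argument.
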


\begin{proof}[Proof of Lemma \ref{lem:linear_deterministic_bd}]
Without loss of generality we prove the result for $k =1$; the cases for $k =2$ and $k = 3$ are similar by changing the index for $t$ using the definition of $\mathcal{\hat P}_{k}^{(t)}$.  

Recall we let $\mathbf{T}_k = \U_k \mathbf{\Lambda}_k \mathbf{V}_k\t$.  Then the $m$'th row of the linear term $\mathbf{L}_1^{(t)}$ can be written as
\begin{align*}
    e_m\t \U_{1\perp}& \U_{1\perp}\t \mathbf{Z}_1  \bigg[ \mathcal{P}_{\hat{\U}_{2}^{(t-1)}} \otimes \mathcal{P}_{\hat{\U}_{3}^{(t-1)}} \bigg] \mathbf{T}_1\t \hat{\U}_1^{(t)} (\mathbf{\hat{\Lambda}}_1^{(t)})^{-2} \\
    &=  e_m\t  \U_{1\perp} \U_{1\perp}\t \mathbf{Z}_1  \bigg[ \mathcal{P}_{\hat{\U}_{2}^{(t-1)}} \otimes \mathcal{P}_{\hat{\U}_{3}^{(t-1)}} \bigg] \mathbf{V}_1 \mathbf{\Lambda}_1 \U_1\t \hat{\U}_1^{(t)} (\mathbf{\hat{\Lambda}}_1^{(t)})^{-2}.
\end{align*}
Taking norms, we see that as long as $2\lambda\inv \geq (\hat \lambda_{r_1}^{(t)})\inv$ as in the assumptions of this lemma, we have
\begin{align}
   \bigg\|  e_m\t  &\U_{1\perp} \U_{1\perp}\t \mathbf{Z}_1  \bigg[ \mathcal{P}_{\hat{\U}_{2}^{(t-1)}} \otimes \mathcal{P}_{\hat{\U}_{3}^{(t-1)}} \bigg] \mathbf{V}_1 \mathbf{\Lambda}_1 \U_1\t \hat{\U}_1^{(t)} (\mathbf{\hat{\Lambda}}_1^{(t)})^{-2} \bigg\| \nonumber \\ 
   &\leq \frac{4 \kappa  }{\lambda} \bigg\|  e_m\t \U_{1\perp} \U_{1\perp}\t \mathbf{Z}_1  \bigg[ \mathcal{P}_{\hat{\U}_{2}^{(t-1)}} \otimes \mathcal{P}_{\hat{\U}_{3}^{(t-1)}} \bigg] \mathbf{V}_1 \bigg\| \nonumber \\
   &\leq \frac{4 \kappa }{\lambda }\bigg\|  e_m\t \mathbf{Z}_1  \bigg[ \mathcal{P}_{\hat{\U}_{2}^{(t-1)}} \otimes \mathcal{P}_{\hat{\U}_{3}^{(t-1)}} \bigg] \mathbf{V}_1  \bigg\|  + \frac{4 \kappa }{\lambda }\bigg\|  e_m\t \U_1 \U_1\t \mathbf{Z}_1   \bigg[ \mathcal{P}_{\hat{\U}_{2}^{(t-1)}} \otimes \mathcal{P}_{\hat{\U}_{3}^{(t-1)}} \bigg] \mathbf{V}_1  \bigg\|. \label{eq:t1t2initbounds}
   \end{align}
Thus, it suffices to analyze the two terms
\begin{align*}
T_1 &\coloneqq \bigg\|  e_m\t \mathbf{Z}_1  \bigg[ \mathcal{P}_{\hat{\U}_{2}^{(t-1)}} \otimes \mathcal{P}_{\hat{\U}_{3}^{(t-1)}} \bigg] \mathbf{V}_1 \bigg\|; \\
T_2 &\coloneqq \bigg\|  e_m\t \U_1 \U_1\t \mathbf{Z}_1   \bigg[ \mathcal{P}_{\hat{\U}_{2}^{(t-1)}} \otimes \mathcal{P}_{\hat{\U}_{3}^{(t-1)}} \bigg]\mathbf{V}_k \bigg\|; 
\end{align*}
for fixed $m$.  For the term $T_1$, we introduce the leave-one-out sequence to observe that
\begin{align}
     \bigg\| e_m\t  \mathbf{Z}_1   \bigg[ \mathcal{P}_{\hat{\U}_{2}^{(t-1)}} \otimes \mathcal{P}_{\hat{\U}_{3}^{(t-1)}} \bigg] \mathbf{V}_1 \bigg\| 
     &\leq   \bigg\| e_m\t  \mathbf{Z}_1  \bigg[ \bigg( \mathcal{P}_{\hat{\U}_{2}^{(t-1)}} - \mathcal{P}_{\utilde_{2}^{(t-1,1-m)}} \bigg)\otimes \mathcal{P}_{\hat{\U}_{3}^{(t-1)}}\bigg] \mathbf{V}_1  \bigg\| \nonumber \\
     &\qquad +   \bigg\| e_m\t \mathbf{Z}_1  \bigg[  \mathcal{P}_{\utilde_{2}^{(t-1,1-m)}}\otimes \mathcal{P}_{\hat{\U}_{3}^{(t-1)}} \bigg] \mathbf{V}_1 \bigg\| \nonumber\\
     &\leq \bigg\| e_m\t  \mathbf{Z}_1  \bigg[ \bigg(\mathcal{P}_{\hat{\U}_{2}^{(t-1)}} - \mathcal{P}_{\utilde_{2}^{(t-1,1-m)}} \bigg)\otimes \mathcal{P}_{\hat{\U}_{3}^{(t-1)}} \bigg] \mathbf{V}_1  \bigg\| \nonumber\\
     &\qquad +   \bigg\| e_m\t  \mathbf{Z}_1  \bigg[  \mathcal{P}_{\utilde_{2}^{(t-1,1-m)}}\otimes \bigg( \mathcal{P}_{\hat{\U}_{3}^{(t-1)}} - \mathcal{P}_{\utilde_{3}^{(t-1,1-m)}}\bigg)\bigg] \mathbf{V}_1 \bigg\|\nonumber \\
     &\qquad + \bigg\| e_m\t\mathbf{Z}_1  \bigg[  \mathcal{P}_{\utilde_{2}^{(t-1,1-m)}}\otimes  \mathcal{P}_{\utilde_{3}^{(t-1,1-m)}}\bigg] \mathbf{V}_1 \bigg\|\nonumber \\
     &\leq 2\tau_1 \| \sin\Theta( \uhat_{2}^{(t-1)}, \utilde_{2}^{(t-1,1-m)}) \| + 2\tau_1 \| \sin\Theta( \uhat_{3}^{(t-1)}, \utilde_{3}^{(t-1,1-m)}) \| \nonumber\\
     &\qquad + \bigg\| e_m\t\mathbf{Z}_1  \bigg[  \mathcal{P}_{\utilde_{2}^{(t-1,1-m)}}\otimes  \mathcal{P}_{\utilde_{3}^{(t-1,1-m)}}\bigg] \mathbf{V}_1 \bigg\|. \label{eq:t1bound}
\end{align}
As for $T_2$, we note that
\begin{align}
    & \bigg\| e_m\t \U_1 \U_1\t \mathbf{Z}_1 \bigg[ \mathcal{P}_{\uhat_{2}^{(t-1)}} \otimes \mathcal{P}_{\uhat_{3}^{(t-1)}} \bigg] \mathbf{V}_1 \bigg\| \\
    &\leq \| \U_1 \|_{2,\infty} \| \U_1\t \mathbf{Z}_1 \bigg[ \mathcal{P}_{\uhat_{2}^{(t-1)}} \otimes \mathcal{P}_{\uhat_{3}^{(t-1)}} \bigg] \mathbf{V}_1 \bigg\|\nonumber  \\
    &\leq \| \U_1 \|_{2,\infty} \| \U_1\t \mathbf{Z}_1\bigg[ \bigg( \mathcal{P}_{\uhat_{2}^{(t-1)}}  - \mathcal{P}_{\U_{2}} \bigg) \otimes \mathcal{P}_{\uhat_{3}^{(t-1)}} \bigg] \mathbf{V}_1 \bigg\| \nonumber \\
    &\qquad + \| \U_1 \|_{2,\infty} \| \U_1\t \mathbf{Z}_1 \bigg[ \mathcal{P}_{\U_{2}}  \otimes \bigg( \mathcal{P}_{\U_{3}} - \mathcal{P}_{\uhat_{3}^{(t-1)}} \bigg) \bigg] \mathbf{V}_1 \bigg\|\nonumber  \\
    &\qquad + \| \U_1 \|_{2,\infty} \| \U_1\t \mathbf{Z}_1 \bigg[  \mathcal{P}_{\U_{2}}  \otimes \mathcal{P}_{\U_{3}} \bigg] \mathbf{V}_1 \bigg\|\nonumber  \\
    &\leq 2\| \U_1 \|_{2,\infty} \tau_1 \bigg( \| \sin\Theta( \U_{2}, \uhat_{2}^{(t-1)} ) \| + \| \sin\Theta( \U_{3}, \uhat_{3}^{(t-1)} ) \| \bigg)\nonumber \\
    &\qquad + \| \U_1 \|_{2,\infty} \| \U_1\t \mathbf{Z}_1 \mathbf{V}_1 \|, \label{eq:t2bound} 
\end{align}
where the final line used the fact that $\mathcal{P}_{\U_{2}}  \otimes \mathcal{P}_{\U_{3}} \mathbf{V}_1 = \mathbf{V}_1$ by definition.

We now plug in the bound for $T_1$ in \eqref{eq:t1bound} and $T_2$ in \eqref{eq:t2bound} to the initial bound in \eqref{eq:t1t2initbounds} to obtain that
\begin{align*}
      \bigg\|  e_m\t  &\U_{1\perp} \U_{1\perp}\t \mathbf{Z}_1  \bigg[ \mathcal{P}_{\hat{\U}_{2}^{(t-1)}} \otimes \mathcal{P}_{\hat{\U}_{3}^{(t-1)}} \bigg] \mathbf{V}_1 \mathbf{\Lambda}_1 \U_1\t \hat{\U}_1^{(t)} (\mathbf{\hat{\Lambda}}_1^{(t)})^{-2} \bigg\| \\
     &\leq \frac{8 \kappa}{\lambda} \| \U_1 \|_{2,\infty} \tau_1 \bigg( \| \sin\Theta( \U_{2}, \uhat_{2}^{(t-1)} ) \| + \| \sin\Theta( \U_{3}, \uhat_{3}^{(t-1)} ) \| \bigg) \\
    &\qquad + \frac{4 \kappa}{\lambda} \| \U_1 \|_{2,\infty} \| \U_1\t \mathbf{Z}_1 \mathbf{V}_1 \| \\
    &\qquad + \frac{8 \kappa}{\lambda}  \tau_1 \| \sin\Theta( \uhat_{2}^{(t-1)}, \utilde_{2}^{(t-1,1-m)}) \| + \frac{8 \kappa}{\lambda}  \tau_1 \| \sin\Theta( \uhat_{3}^{(t-1)}, \utilde_{3}^{(t-1,1-m)}) \| \nonumber\\
     &\qquad + \frac{4 \kappa}{\lambda}  \bigg\| e_m\t\mathbf{Z}_1  \bigg[  \mathcal{P}_{\utilde_{2}^{(t-1,1-m)}}\otimes  \mathcal{P}_{\utilde_{3}^{(t-1,1-m)}}\bigg] \mathbf{V}_1 \bigg\| \\
     &\leq \frac{8\kappa}{\lambda} \| \U_1 \|_{2,\infty} \bigg(  \tau_1 \eta_{1}^{(t)}  +  \| \U_1\t \mathbf{Z}_1 \mathbf{V}_1\| \bigg) \\
     &\qquad + \frac{8\kappa}{\lambda} \bigg(  \tau_1 \eta_{1}^{(t,1-m)}   \bigg)  + \frac{4 \kappa}{\lambda} \tilde \xi_{1}^{t,1-m}
\end{align*}
as desired.
\end{proof}



\begin{lemma}[Deterministic Bound for the Quadratic Term] \label{lem:quadratic_deterministic_bd}
Suppose $\lambda/2 \leq  \lambda_{r_k} ( \mathbf{\hat{\Lambda}}_k^{(t)}$).  Then the quadratic term $\mathbf{Q}_k^{(t)}$ satisfies
\begin{align*}
\| e_m\t \mathbf{Q}_k^{(t)} \| &\leq  \frac{4}{\lambda^2} \|\U_k\|_{2,\infty} \bigg( \tau_k \eta_k^{(t)} + \bigg\| \U_k\t \mathbf{Z}_k \bigg[ \mathcal{P}_{\U_{k+1}} \otimes  \mathcal{P}_{\U_{k+1}}  \bigg] \bigg\| \bigg)  + \frac{16}{\lambda^2}\tau_k^2\bigg( \eta_{k}^{(t,k-m)}  \bigg)  \\
&+ \frac{4}{\lambda^2}  \xi_k^{t,k-m} \bigg( \tau_k \| \sin\Theta(\uhat_k^{(t)},\U_k) \| + \tau_k \eta^{(t-1)}_k +  \bigg\|\U_k \U_k\t \mathbf{Z}_k \mathcal{P}_{\U_{k+1}}  \otimes   \mathcal{P}_{\U_{k+2}}  \bigg\| \bigg).
%
%
\end{align*}
\end{lemma}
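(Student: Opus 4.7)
The proof should closely mirror the strategy of Lemma~\ref{lem:linear_deterministic_bd}, modified to handle the extra $\mathbf{Z}_k\t\widehat{\U}_k^{(t)}$ factor (in place of $\mathbf{T}_k\t\widehat{\U}_k^{(t)}$) that produces the characteristic ``quadratic in noise'' behavior. The first step is to invoke the hypothesis $\lambda/2 \le \lambda_{r_k}(\mathbf{\widehat{\Lambda}}_k^{(t)})$ to extract the universal factor $\|(\mathbf{\widehat{\Lambda}}_k^{(t)})^{-2}\| \le 4/\lambda^2$. After this, use $\U_{k\perp}\U_{k\perp}\t = \mathbf{I} - \U_k\U_k\t$ to split a typical row of the numerator into
\begin{align*}
A_1 := e_m\t \mathbf{Z}_k\, \widehat{\mathcal{P}}_k^{(t)}\, \mathbf{Z}_k\t \widehat{\U}_k^{(t)}, \qquad A_2 := e_m\t \U_k\U_k\t \mathbf{Z}_k\, \widehat{\mathcal{P}}_k^{(t)}\, \mathbf{Z}_k\t \widehat{\U}_k^{(t)}.
\end{align*}

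For the ``diagonal'' term $A_2$, I would pull $\|\U_k\|_{2,\infty}$ out of $\|e_m\t \U_k\|$ and then handle $\|\U_k\t \mathbf{Z}_k\, \widehat{\mathcal{P}}_k^{(t)}\, \mathbf{Z}_k\t \widehat{\U}_k^{(t)}\|$ by exploiting the idempotence $\widehat{\mathcal{P}}_k^{(t)} = (\widehat{\mathcal{P}}_k^{(t)})^2$ together with $\|\widehat{\mathcal{P}}_k^{(t)}\mathbf{Z}_k\t\widehat{\U}_k^{(t)}\|\le\|\mathbf{Z}_k\widehat{\mathcal{P}}_k^{(t)}\|\le \tau_k$; this uses that $\widehat{\mathcal{P}}_k^{(t)}$ is a Kronecker product of rank-$r_{k+1}$ and rank-$r_{k+2}$ projections, which is covered by the definition of $\tau_k$. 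The remaining $\|\U_k\t \mathbf{Z}_k\, \widehat{\mathcal{P}}_k^{(t)}\|$ is then telescoped against $\mathcal{P}_{\U_{k+1}}\otimes\mathcal{P}_{\U_{k+2}}$ exactly as in the linear proof, yielding the $\tau_k \eta_k^{(t)}$ perturbation together with the clean term $\|\U_k\t\mathbf{Z}_k[\mathcal{P}_{\U_{k+1}}\otimes\mathcal{P}_{\U_{k+2}}]\|$. This produces the first bracketed group in the claimed bound.

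The ``off-diagonal'' term $A_1$ is the harder piece and is where the leave-one-out construction is essential. Since $\widehat{\U}_k^{(t)}$ and $\widehat{\mathcal{P}}_k^{(t)}$ both inherit dependence on $e_m\t\mathbf{Z}_k$, I would use $e_m\t\mathbf{Z}_k^{k-m} = 0$ to rewrite $e_m\t\mathbf{Z}_k = e_m\t(\mathbf{Z}_k - \mathbf{Z}_k^{k-m})$ and then telescope $\widehat{\mathcal{P}}_k^{(t)} = (\widehat{\mathcal{P}}_k^{(t)} - \widetilde{\mathcal{P}}_k^{t,k-m}) + \widetilde{\mathcal{P}}_k^{t,k-m}$. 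The piece involving $\widehat{\mathcal{P}}_k^{(t)} - \widetilde{\mathcal{P}}_k^{t,k-m}$ gives the $\tau_k^2 \eta_k^{(t,k-m)}$ contribution (one $\tau_k$ from each side, $\eta_k^{(t,k-m)}$ from the standard $\sin\Theta$ bound on the difference of projections, and a factor of $4$ from two telescopings combined with the constant $4$). The piece involving $\widetilde{\mathcal{P}}_k^{t,k-m}$ yields, by the definition of $\xi_k^{t,k-m}$, a leading factor $\xi_k^{t,k-m}$ multiplied by $\|\mathbf{Z}_k\t\widehat{\U}_k^{(t)}\|$, which I would further split as $\mathbf{Z}_k\t\U_k\mathbf{W}_k^{(t)} + \mathbf{Z}_k\t(\widehat{\U}_k^{(t)} - \U_k\mathbf{W}_k^{(t)})$. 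The second summand contributes $\tau_k\|\sin\Theta(\widehat{\U}_k^{(t)},\U_k)\|$ by pairing $\mathbf{Z}_k\t$ with a rank-$r_k$ projection in the range of $\widehat{\mathcal{P}}_k^{(t-1)}$, while the first, upon one more telescoping of the implicit projection in $\U_k\t\mathbf{Z}_k$ against $\mathcal{P}_{\U_{k+1}}\otimes\mathcal{P}_{\U_{k+2}}$, yields the $\tau_k\eta_k^{(t-1)}$ and $\|\U_k\U_k\t\mathbf{Z}_k\mathcal{P}_{\U_{k+1}}\otimes\mathcal{P}_{\U_{k+2}}\|$ summands.

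The main obstacle is the careful bookkeeping forced by the quadratic-in-noise structure: each of the two appearances of $\mathbf{Z}_k$ must be decoupled from its adjacent iterate-dependent factor by a separate telescoping step, and the leave-one-out construction must be used not only on the middle projection $\widehat{\mathcal{P}}_k^{(t)}$ but also, indirectly, on the right factor $\widehat{\U}_k^{(t)}$ (via the $\U_k\mathbf{W}_k^{(t)}$ approximation). A secondary subtlety is that $\widehat{\U}_k^{(t)}$ has rank $r_k$ and is not directly covered by the $2r$-rank envelope in the definition of $\tau_k$; this is circumvented by always pairing $\widehat{\U}_k^{(t)}$ either with $\widehat{\mathcal{P}}_k^{(t)}$ (via idempotence) or with the decomposition into $\U_k\mathbf{W}_k^{(t)}$ plus a $\sin\Theta$ remainder, so that on one side the Kronecker structure is exploited and on the other a sine-theta loss is paid.
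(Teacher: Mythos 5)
Your proposal is correct and follows essentially the same path as the paper's proof: split via $\U_{k\perp}\U_{k\perp}\t = \mathbf{I}-\U_k\U_k\t$, exploit idempotence of $\widehat{\mathcal{P}}_k^{(t)}$ to produce $\tau_k$ factors, telescope the Kronecker projection against the leave-one-out projection $\widetilde{\mathcal{P}}_k^{t,k-m}$ to extract $\tau_k^2\eta_k^{(t,k-m)}$, and use $e_m\t\mathbf{Z}_k^{k-m}=0$ to bound the residual by $\xi_k^{t,k-m}$ times the telescoped right factor. One small caution on wording: in the $\xi_k^{t,k-m}$ branch the quantity you control is $\|\widehat{\mathcal{P}}_k^{(t)}\mathbf{Z}_k\t\widehat{\U}_k^{(t)}\|$, not the much larger $\|\mathbf{Z}_k\t\widehat{\U}_k^{(t)}\|$ — the second idempotence copy of $\widehat{\mathcal{P}}_k^{(t)}$ must stay attached to $\mathbf{Z}_k\t$ so that $\tau_k$ applies; you allude to this ("pairing $\mathbf{Z}_k\t$ with a projection in the range of $\widehat{\mathcal{P}}$"), and the paper handles it by writing the factor as $\|\widehat{\U}_k^{(t)}(\widehat{\U}_k^{(t)})\t\mathbf{Z}_k\widehat{\mathcal{P}}_k^{(t)}\|$ and telescoping the projector $\widehat{\U}_k^{(t)}(\widehat{\U}_k^{(t)})\t$ against $\U_k\U_k\t$, whereas you phrase it as $\widehat{\U}_k^{(t)}=\U_k\mathbf{W}_k^{(t)}+\text{error}$; both give the same $\tau_k\|\sin\Theta(\widehat{\U}_k^{(t)},\U_k)\|$ up to a $\sqrt{2}$ absorbed in the constant.
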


\begin{proof}[Proof of Lemma \ref{lem:quadratic_deterministic_bd}]
Similar to Lemma \ref{lem:linear_deterministic_bd} we prove for $k =1$; the case for $k =2$ or $k =3$ follows by modifying the index of $t$ according to the definition of $\mathcal{\hat P}_{k}^{(t)}$.  

Recall that 
\begin{align*}
\mathbf{Q}_1^{(t)} &=\U_{1\perp} \U_{1\perp}\t \mathbf{Z}_1 \bigg[ \mathcal{P}_{\uhat_{2}^{(t-1)}} \otimes \mathcal{P}_{\uhat_{3}^{(t-1)}}\bigg] \mathbf{Z}_1\t \uhat_{1}^{(t)} (\mathbf{\hat{\Lambda}}_1^{(t)})^{-2}.
\end{align*}
Observe that $\mathcal{P}_{\uhat_{2}^{(t-1)}} \otimes \mathcal{P}_{\uhat_{3}^{(t-1)}}$ is a projection matrix and hence equals its square.  Therefore, we simply decompose by noting that under the condition that $\hat \lambda_{r_k}(\mathbf{\hat{\Lambda}}_k^{(t)}) \geq \lambda/2$
\begin{align*}
\bigg\| e_m\t \mathbf{Q}_1^{(t)} \bigg\| &= \bigg\| \U_{1\perp} \U_{1\perp}\t \mathbf{Z}_1\bigg[  \mathcal{P}_{\uhat_{2}^{(t-1)}} \otimes \mathcal{P}_{\uhat_{3}^{(t-1)}} \bigg]\mathbf{Z}_1\t \uhat_{1}^{(t)}(\mathbf{\hat{\Lambda}}_1^{(t)})^{-2} \bigg\| \\
&\leq \frac{4}{\lambda^2} \bigg\| e_m\t \U_1 \U_1\t \mathbf{Z}_1 \bigg[ \mathcal{P}_{\uhat_{2}^{(t-1)}} \otimes \mathcal{P}_{\uhat_{3}^{(t-1)}} \bigg] \mathbf{Z}_1\t \uhat_{1}^{(t)} \bigg\| + \frac{4}{\lambda^2}\bigg\| e_m\t \mathbf{Z}_1 \bigg[ \mathcal{P}_{\uhat_{2}^{(t-1)}} \otimes \mathcal{P}_{\uhat_{3}^{(t-1)}}\bigg] \mathbf{Z}_1\t \uhat_{1}^{(t)} \bigg\|  \\
&\leq \frac{4}{\lambda^2} \| \U_1\|_{2,\infty}\bigg\| \U_1\t \mathbf{Z}_1 \bigg[ \mathcal{P}_{\uhat_{2}^{(t-1)}} \otimes \mathcal{P}_{\uhat_{3}^{(t-1)}}\bigg] \bigg[ \mathcal{P}_{\uhat_{2}^{(t-1)}} \otimes \mathcal{P}_{\uhat_{3}^{(t-1)}} \bigg]\mathbf{Z}_1\t \uhat_{1}^{(t)} \bigg\| \\
&\qquad + \frac{4}{\lambda^2}\bigg\| e_m\t \mathbf{Z}_1 \bigg[  \mathcal{P}_{\uhat_{2}^{(t-1)}} \otimes \mathcal{P}_{\uhat_{3}^{(t-1)}}\bigg]\bigg[  \mathcal{P}_{\uhat_{2}^{(t-1)}} \otimes \mathcal{P}_{\uhat_{3}^{(t-1)}}\bigg]\mathbf{Z}_1\t \uhat_{1}^{(t)} \bigg\|  \\
&\leq \frac{4}{\lambda^2} \|\U_1\|_{2,\infty} \tau_1 \bigg\| \U_1\t \mathbf{Z}_1 \bigg[  \mathcal{P}_{\uhat_{2}^{(t-1)}} \otimes \mathcal{P}_{\uhat_{3}^{(t-1)}}\bigg] \bigg\| \\
&\qquad + \frac{4}{\lambda^2}\bigg\| e_m\t \mathbf{Z}_1 \bigg[ \mathcal{P}_{\uhat_{2}^{(t-1)}} \otimes \mathcal{P}_{\uhat_{3}^{(t-1)}} \bigg]\bigg[ \mathcal{P}_{\uhat_{2}^{(t-1)}} \otimes \mathcal{P}_{\uhat_{3}^{(t-1)}}\bigg]\mathbf{Z}_1\t \uhat_1^{(t)} \bigg\| \\
&\leq \frac{4}{\lambda^2} \|\U_1\|_{2,\infty} \tau_1 \bigg\| \U_1\t \mathbf{Z}_1 \bigg[  \mathcal{P}_{\uhat_{2}^{(t-1)}} \otimes \mathcal{P}_{\uhat_{3}^{(t-1)}}\bigg] \bigg\|  \\
&\qquad + \frac{4}{\lambda^2}\bigg\| e_m\t \mathbf{Z}_1\bigg[ \mathcal{P}_{\utilde_{2}^{1-m,t-1} \otimes \uhat_{3}^{(t-1)}} -  \mathcal{P}_{\uhat_{2}^{(t-1)} \otimes \uhat_{3}^{(t-1)}} \bigg]\bigg[ \mathcal{P}_{\uhat_{2}^{(t-1)}} \otimes \mathcal{P}_{\uhat_{3}^{(t-1)}}\bigg]\mathbf{Z}_1\t \uhat_1^{(t)} \bigg\|\\
&\qquad + \frac{4}{\lambda^2}\bigg\| e_m\t \mathbf{Z}_1\bigg[ \mathcal{P}_{\utilde_{2}^{1-m,t-1} \otimes \uhat_{3}^{(t-1)}}  \bigg]\bigg[ \mathcal{P}_{\uhat_{2}^{(t-1)}} \otimes \mathcal{P}_{\uhat_{3}^{(t-1)}}\bigg]\mathbf{Z}_1\t \uhat_1^{(t)} \bigg\| \\
&\leq \frac{4}{\lambda^2} \|\U_1\|_{2,\infty} \tau_1 \bigg\| \U_1\t \mathbf{Z}_1 \bigg[  \mathcal{P}_{\uhat_{2}^{(t-1)}} \otimes \mathcal{P}_{\uhat_{3}^{(t-1)}}\bigg] \bigg\|  + \frac{8}{\lambda^2}\tau_1^2 \| \sin\Theta( \utilde_{2}^{1-m,t-1},\uhat_{2}^{(t-1)}) \|  \\
&\qquad + \frac{4}{\lambda^2}\bigg\| e_m\t \mathbf{Z}_1\bigg[ \mathcal{P}_{\utilde_{2}^{1-m,t-1}}\otimes \bigg( \mathcal{P}_{\uhat_{3}^{(t-1)}} - \mathcal{P}_{\utilde_{3}^{1-m,t-1}}\bigg) \bigg]\bigg[ \mathcal{P}_{\uhat_{2}^{(t-1)}} \otimes \mathcal{P}_{\uhat_{3}^{(t-1)}}\bigg]\mathbf{Z}_1\t \uhat_1^{(t)} \bigg\| \\
&\qquad + \frac{4}{\lambda^2}\bigg\| e_m\t \mathbf{Z}_1\bigg[ \mathcal{P}_{\utilde_{2}^{1-m,t-1}}\otimes \mathcal{P}_{\utilde_{3}^{1-m,t-1}} \bigg]\bigg[ \mathcal{P}_{\uhat_{2}^{(t-1)}} \otimes \mathcal{P}_{\uhat_{3}^{(t-1)}}\bigg]\mathbf{Z}_1\t \uhat_1^{(t)} \bigg\| \\
&\leq \frac{4}{\lambda^2} \|\U_1\|_{2,\infty} \tau_1 \bigg\| \U_1\t \mathbf{Z}_1 \bigg[  \mathcal{P}_{\uhat_{2}^{(t-1)}} \otimes \mathcal{P}_{\uhat_{3}^{(t-1)}}\bigg] \bigg\| \\
&\qquad + \frac{8}{\lambda^2}\tau_1^2 \bigg( \| \sin\Theta( \utilde_{2}^{1-m,t-1},\uhat_{2}^{(t-1)}) \| + \| \sin\Theta(\utilde_{3}^{1-m,t-1},\uhat_{3}^{(t-1)}) \|\bigg) \\
&\qquad + \frac{4}{\lambda^2}\bigg\| e_m\t \mathbf{Z}_1\bigg[ \mathcal{P}_{\utilde_{2}^{1-m,t-1}}\otimes \mathcal{P}_{\utilde_{3}^{1-m,t-1}} \bigg]\bigg[  \mathcal{P}_{\uhat_{2}^{(t-1)}}  \otimes \mathcal{P}_{\uhat_{3}^{(t-1)}}\bigg]\mathbf{Z}_1\t \uhat_1^{(t)} \bigg\| \\ 
&\leq \frac{4}{\lambda^2} \|\U_1\|_{2,\infty} \tau_1 \bigg\| \U_1\t \mathbf{Z}_1 \bigg[  \mathcal{P}_{\uhat_{2}^{(t-1)}} \otimes \mathcal{P}_{\uhat_{3}^{(t-1)}}\bigg] \bigg\|   \\
&\qquad + \frac{8}{\lambda^2}\tau_1^2\bigg(  \| \sin\Theta( \utilde_{2}^{1-m,t-1},\uhat_{2}^{(t-1)}) \| +\| \sin\Theta(\utilde_{3}^{1-m,t-1},\uhat_{3}^{(t-1)}) \| \bigg) \\
&\qquad + \frac{4}{\lambda^2} \bigg\| e_m\t \mathbf{Z}_1\bigg[ \mathcal{P}_{\utilde_{2}^{1-m,t-1}}\otimes \mathcal{P}_{\utilde_{3}^{1-m,t-1}} \bigg] \bigg\| \bigg\|(\uhat_1^{(t)})\t \mathbf{Z}_1 \mathcal{P}_{\uhat_2^{(t-1)}} \otimes \mathcal{P}_{\uhat_3^{1-m,t-1}} \bigg\| \\ 
&\leq \frac{4}{\lambda^2} \|\U_1\|_{2,\infty} \tau_1 \bigg\| \U_1\t \mathbf{Z}_1 \bigg[  \mathcal{P}_{\uhat_{2}^{(t-1)}} \otimes \mathcal{P}_{\uhat_{3}^{(t-1)}}\bigg] \bigg\| \\
&\qquad + \frac{8}{\lambda^2} \tau_1^2 \bigg( \eta_{1}^{(t,1-m)} \bigg) + \frac{4}{\lambda^2}  \xi_1^{t,1-m} \bigg\| (\uhat_1^{(t)})\t \mathbf{Z}_1 \mathcal{P}_{\uhat_2^{(t-1)}} \otimes \mathcal{P}_{\uhat_3^{(t-1)}} \bigg\|.
\end{align*}
Finally, we note that
\begin{align*}
    \bigg\|& (\uhat_1^{(t)})\t \mathbf{Z}_1 \mathcal{P}_{\uhat_2^{(t-1)}} \otimes \mathcal{P}_{\uhat_3^{(t-1)}} \bigg\| \\&=  \bigg\|\uhat_1^{(t)} (\uhat_1^{(t)})\t \mathbf{Z}_1 \mathcal{P}_{\uhat_2^{(t-1)}} \otimes \mathcal{P}_{\uhat_3^{(t-1)}} \bigg\| \\
    &\leq  \bigg\|\bigg( \uhat_1^{(t)} (\uhat_1^{(t)})\t - \U_1 \U_1\t \bigg) \mathbf{Z}_1 \mathcal{P}_{\uhat_2^{(t-1)}} \otimes \mathcal{P}_{\uhat_3^{(t-1)}} \bigg\| \\
    &\qquad + \bigg\|\U_1 \U_1\t \mathbf{Z}_1 \mathcal{P}_{\uhat_2^{(t-1)}} \otimes \mathcal{P}_{\uhat_3^{(t-1)}} \bigg\| \\
    &\leq \| \sin\Theta(\uhat_1^{(t)},\U_1) \| \tau_1 + \bigg\|\U_1 \U_1\t \mathbf{Z}_1 \mathcal{P}_{\uhat_2^{(t-1)}} \otimes \mathcal{P}_{\uhat_3^{(t-1)}} \bigg\| \\
    &\leq  \| \sin\Theta(\uhat_1^{(t)},\U_1) \| \tau_1 + \bigg\|\U_1 \U_1\t \mathbf{Z}_1 \bigg(\mathcal{P}_{\uhat_2^{(t-1)}} - \mathcal{P}_{\U_2} \bigg) \otimes \mathcal{P}_{\uhat_3^{(t-1)}} \bigg\| \\
    &\qquad + \bigg\|\U_1 \U_1\t \mathbf{Z}_1 \mathcal{P}_{\U_2}  \otimes  \bigg( \mathcal{P}_{\uhat_3^{(t-1)}} - \mathcal{P}_{\U_3} \bigg) \bigg\| \\
    &\qquad + \bigg\|\U_1 \U_1\t \mathbf{Z}_1 \mathcal{P}_{\U_2}  \otimes   \mathcal{P}_{\U_3} ) \bigg\| \\
    &\leq \tau_1 \| \sin\Theta(\uhat_1^{(t)},\U_1) \| + \tau_1 \| \sin\Theta(\uhat_2^{(t-1)},\U_2) \| + \tau_1 \| \sin\Theta(\uhat_3^{(t-1)},\U_2) \| \\
    &\qquad + \bigg\|\U_1 \U_1\t \mathbf{Z}_1 \mathcal{P}_{\U_2}  \otimes   \mathcal{P}_{\U_3}  \bigg\| \\
    &\leq \tau_1 \| \sin\Theta(\uhat_1^{(t)},\U_1) \| + \tau_1 \eta^{(t)}_1 +  \bigg\|\U_1 \U_1\t \mathbf{Z}_1 \mathcal{P}_{\U_2}  \otimes   \mathcal{P}_{\U_3}  \bigg\|,
\end{align*}
and, similarly, 
\begin{align*}
    \bigg\| \U_1 \t \mathbf{Z}_1 \bigg[  \mathcal{P}_{\uhat_{2}^{(t-1)}} \otimes \mathcal{P}_{\uhat_{3}^{(t-1)}}\bigg] \bigg\| &\leq \bigg\| \U_1\t \mathbf{Z}_1 \bigg[ \bigg( \mathcal{P}_{\uhat_{2}^{(t-1)}} - \mathcal{P}_{\U_2} \bigg) \otimes \mathcal{P}_{\uhat_{3}^{(t-1)}}\bigg] \bigg\| \\
    &\qquad + \bigg\| \U_1\t \mathbf{Z}_1 \bigg[ \mathcal{P}_{\U_2} \otimes \bigg( \mathcal{P}_{\uhat_{3}^{(t-1)}} - \mathcal{P}_{\U_3}  \bigg)\bigg] \bigg\| \\
    &\qquad + \bigg\| \U_1\t \mathbf{Z}_1 \bigg[ \mathcal{P}_{\U_2} \otimes  \mathcal{P}_{\U_3}  \bigg] \bigg\| \\
    &\leq \tau_1 \eta_1^{(t)} + \bigg\| \U_1\t \mathbf{Z}_1 \bigg[ \mathcal{P}_{\U_2} \otimes  \mathcal{P}_{\U_3}  \bigg] \bigg\|.
\end{align*}
Plugging in these bounds to our initial bound completes the proof.
\end{proof}

\begin{lemma}[Eigengaps]\label{lem:eigengaps}
Suppose that $\tau_k \leq \lambda/4$ and that 
\begin{align*}
   \eta_k^{(t)} \leq \frac{1}{4}.
\end{align*}
Then the following bounds hold:
\begin{align*}
    \lambda_{r_k}\bigg( \mathbf{T}_k \mathcal{\hat P}_k^{(t)} + \mathbf{Z}_k^{j-m}  \mathcal{\tilde P}_{k}^{t,j-m} \mathcal{\hat P}_k^{(t)}\bigg) &\geq \frac{3\lambda}{4}; \\
    \lambda_{r_k+1} \bigg( \mathbf{T}_k \mathcal{\hat P}_k^{(t)} + \mathbf{Z}_k \mathcal{\hat P}_k^{(t)}\bigg) &\leq \frac{\lambda}{4}.
\end{align*}
\end{lemma}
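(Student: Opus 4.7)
My approach is to handle the two inequalities separately, both via Weyl's inequality, isolating the signal $\T_k\mathcal{\widehat P}_k^{(t)}$ from the noise perturbation and exploiting the Kronecker structure of the projections $\mathcal{\widehat P}_k^{(t)}$ and $\mathcal{\widetilde P}_k^{t,j-m}$ to invoke the definition of $\tau_k$.

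For the upper bound on $\lambda_{r_k+1}$, I would first observe that $\T_k$ (and hence $\T_k\mathcal{\widehat P}_k^{(t)}$) has rank at most $r_k$, so $\lambda_{r_k+1}(\T_k\mathcal{\widehat P}_k^{(t)})=0$. Weyl's inequality then gives $\lambda_{r_k+1}(\T_k\mathcal{\widehat P}_k^{(t)}+\Z_k\mathcal{\widehat P}_k^{(t)})\leq \|\Z_k\mathcal{\widehat P}_k^{(t)}\|$. Since $\mathcal{\widehat P}_k^{(t)}$ factors as a Kronecker projection $\mathcal{P}_{\uhat_{k+1}^{(\cdot)}}\otimes\mathcal{P}_{\uhat_{k+2}^{(\cdot)}}$ onto a subspace that falls inside the rank class used in the supremum defining $\tau_k$, one immediately gets $\|\Z_k\mathcal{\widehat P}_k^{(t)}\|\leq\tau_k\leq\lambda/4$.

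For the lower bound on $\lambda_{r_k}$, I would use the SVD $\T_k=\U_k\mathbf{\Lambda}_k\V_k\t$ with $\sigma_{\min}(\mathbf{\Lambda}_k)\geq\lambda$. Since $\U_k$ has orthonormal columns, $\lambda_{r_k}(\T_k\mathcal{\widehat P}_k^{(t)})\geq\lambda\cdot\sigma_{\min}(\V_k\t\mathcal{\widehat P}_k^{(t)})$, and the identity $(\V_k\t\mathcal{\widehat P}_k^{(t)})(\V_k\t\mathcal{\widehat P}_k^{(t)})\t=\I_{r_k}-\V_k\t(\I-\mathcal{\widehat P}_k^{(t)})\V_k$ gives $\sigma_{\min}(\V_k\t\mathcal{\widehat P}_k^{(t)})=\sqrt{1-\|(\I-\mathcal{\widehat P}_k^{(t)})\V_k\|^2}$. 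To control the deviation term, I would exploit that the row span of $\T_k$ lies in $\mathrm{range}(\U_{k+1}\otimes\U_{k+2})$, so $\V_k=(\U_{k+1}\otimes\U_{k+2})Q$ for some $Q$ with orthonormal columns; the Kronecker identity $\I-\mathcal{P}_A\otimes\mathcal{P}_B=(\I-\mathcal{P}_A)\otimes\I+\mathcal{P}_A\otimes(\I-\mathcal{P}_B)$ together with the triangle inequality reduces the bound to the mode-wise $\sin\Theta$ distances, yielding $\|(\I-\mathcal{\widehat P}_k^{(t)})\V_k\|\leq\eta_k^{(t)}$.

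Combining via Weyl, the target lower bound reduces to showing $\lambda\sqrt{1-(\eta_k^{(t)})^2}-\|\Z_k^{j-m}\mathcal{\widetilde P}_k^{t,j-m}\mathcal{\widehat P}_k^{(t)}\|\geq 3\lambda/4$. Since $\mathcal{\widehat P}_k^{(t)}$ is a contraction and $\mathcal{\widetilde P}_k^{t,j-m}$ is again a Kronecker projection onto the leave-one-out iterates of matching rank, the perturbation norm is bounded by $\|\Z_k^{j-m}\mathcal{\widetilde P}_k^{t,j-m}\|$. When $j=k$, $\Z_k^{k-m}$ is $\Z_k$ with one row zeroed, so this is $\leq\|\Z_k\mathcal{\widetilde P}_k^{t,k-m}\|\leq\tau_k$; when $j\neq k$, writing $\Z_k^{j-m}=\Z_k-(\Z_k-\Z_k^{j-m})$ reduces the question to absorbing the sparse mode-$j$-slice correction into the same $\tau_k$-type supremum. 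I expect the main obstacle to be precisely this last step in the $j\neq k$ case, since the zeroed entries do not correspond to a row deletion in the $k$-th matricization and one must exploit the particular block-sparse pattern of $\Z_k-\Z_k^{j-m}$ rather than appeal to a simple row-deletion argument; the remaining ingredients (Weyl, the SVD, and the Kronecker $\sin\Theta$ expansion) are entirely standard.
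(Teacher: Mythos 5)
Your handling of the $\lambda_{r_k+1}$ bound matches the paper's argument exactly and is correct. Your lower bound $\lambda_{r_k}(\mathbf{T}_k\mathcal{\widehat P}_k^{(t)})\geq\lambda\sqrt{1-(\eta_k^{(t)})^2}$ via the factorization $\mathbf{V}_k=(\mathbf{U}_{k+1}\otimes\mathbf{U}_{k+2})\mathbf{Q}$ and the Kronecker identity for $\mathbf{I}-\mathcal{P}_{\mathbf{A}}\otimes\mathcal{P}_{\mathbf{B}}$ is a valid variant of the paper's route, which instead computes $\lambda_{\min}((\mathbf{U}_{k+1}\otimes\mathbf{U}_{k+2})\t\mathcal{\widehat P}_k^{(t)})=\lambda_{\min}(\mathbf{U}_{k+1}\t\widehat{\mathbf{U}}_{k+1})\lambda_{\min}(\mathbf{U}_{k+2}\t\widehat{\mathbf{U}}_{k+2})$ directly; your bound is marginally looser but both deliver what is needed.

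The gap you flag for $j\neq k$ in bounding $\|\mathbf{Z}_k^{j-m}\mathcal{\widetilde P}_k^{t,j-m}\|\leq\tau_k$ is genuine, and your diagnosis is exactly right: for $j\neq k$ the zeroing operator $\mathbf{D}$ (so that $\mathbf{Z}_k^{j-m}=\mathbf{Z}_k\mathbf{D}$) sits to the \emph{right} of $\mathbf{Z}_k$ and does not commute with $\mathcal{\widetilde P}_k^{t,j-m}$, so the left-contraction argument valid for $j=k$ fails. In fact the paper's own proof does not correctly resolve this case either: it invokes the operator inequality $(\mathbf{Z}_k^{j-m})\t\mathbf{Z}_k^{j-m}\preccurlyeq\mathbf{Z}_k\t\mathbf{Z}_k$, which holds for row removal ($j=k$) but is \emph{false} for column removal ($j\neq k$); for example, with $\mathbf{Z}_k=(1,-1)$ and the second column zeroed, the difference of Gram matrices is $\left(\begin{smallmatrix}0 & -1 \\ -1 & 1\end{smallmatrix}\right)$, which is not PSD. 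The missing idea is that $\mathbf{D}$ is itself a Kronecker projection: the zeroed columns of $\mathbf{Z}_k$ are precisely those with $i_j=m$, so $\mathbf{D}=\bar{\mathbf{D}}\otimes\mathbf{I}$ or $\mathbf{I}\otimes\bar{\mathbf{D}}$ (depending on which free mode is $j$), with $\bar{\mathbf{D}}$ a $p_j\times p_j$ diagonal $0/1$ projection. Writing $\mathcal{\widetilde P}_k^{t,j-m}=\mathcal{P}_{\mathbf{A}}\otimes\mathcal{P}_{\mathbf{B}}$, one has $\mathbf{D}\mathcal{\widetilde P}_k^{t,j-m}=(\bar{\mathbf{D}}\mathcal{P}_{\mathbf{A}})\otimes\mathcal{P}_{\mathbf{B}}$; since $\bar{\mathbf{D}}\mathcal{P}_{\mathbf{A}}$ has rank at most $\mathrm{rank}(\mathbf{A})\leq r_{k+1}$, letting $\mathcal{P}_{\mathbf{W}}$ be the orthogonal projection onto its range gives $\mathbf{D}\mathcal{\widetilde P}_k^{t,j-m}=(\mathcal{P}_{\mathbf{W}}\otimes\mathcal{P}_{\mathbf{B}})(\bar{\mathbf{D}}\mathcal{P}_{\mathbf{A}}\otimes\mathcal{P}_{\mathbf{B}})$, whence $\|\mathbf{Z}_k\mathbf{D}\mathcal{\widetilde P}_k^{t,j-m}\|\leq\|\mathbf{Z}_k(\mathcal{P}_{\mathbf{W}}\otimes\mathcal{P}_{\mathbf{B}})\|\cdot\|\bar{\mathbf{D}}\mathcal{P}_{\mathbf{A}}\otimes\mathcal{P}_{\mathbf{B}}\|\leq\tau_k$ since $\mathrm{rank}(\mathbf{W})\leq r_{k+1}\leq 2r_{k+1}$. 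One further minor point, affecting both your proof and the paper's: under $\tau_k\leq\lambda/4$ and $\eta_k^{(t)}\leq 1/4$, the signal term is only bounded below by roughly $\sqrt{15}\lambda/4\approx 0.97\lambda$, so after subtracting $\lambda/4$ the constant comes out near $0.72\lambda$, just shy of the stated $3\lambda/4$; this is immaterial downstream (only an eigengap $\gtrsim\lambda^2$ is ever used) but worth noting if you wish to match the stated constant.
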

\begin{proof}   Note that since $\mathbf{Z}_k^{j-m}$ is $\mathbf{Z}_k$ with columns (or rows if $k = j$) removed, it holds that
\begin{align*}
  \mathbf{Z}_k\t \mathbf{Z}_k   \succcurlyeq  (\mathbf{Z}_k^{j-m} )\t \mathbf{Z}_k^{j-m}
\end{align*}
and hence that
\begin{align*}
   \mathcal{\tilde P}_{k}^{t,j-m} \mathbf{Z}_k\t \mathbf{Z}_k    \mathcal{\tilde P}_{k}^{t,j-m} \succcurlyeq \mathcal{\tilde P}_{k}^{t,j-m} (\mathbf{Z}_k^{j-m} )\t \mathbf{Z}_k^{j-m} \mathcal{\tilde P}_{k}^{t,j-m}.
\end{align*}
Taking norms, it holds that
\begin{align*}
\bigg\|  \mathbf{Z}_k^{j-m} \mathcal{\tilde P}_{k}^{t,j-m} \bigg\|^2 &=
 \bigg\| \mathcal{\tilde P}_{k}^{t,j-m} (\mathbf{Z}_k^{j-m} )\t \mathbf{Z}_k^{j-m} \mathcal{\tilde P}_{k}^{t,j-m} \bigg\|  \\
 &\leq  \bigg\| \mathcal{\tilde P}_{k}^{t,j-m}  \mathbf{Z}_k\t \mathbf{Z}_k    \mathcal{\tilde P}_{k}^{t,j-m} \bigg\|^2 \\
 &= \bigg\| \mathbf{Z}_k   \mathcal{\tilde P}_{k}^{t,j-m} \bigg\|\\
 &\leq  \tau_k^2,
\end{align*}
where we took the supremum in the final inequality.  Therefore, $\| \mathbf{Z}_k^{j-m}\mathcal{\tilde P}_{k}^{t,j-m} \| \leq \tau_k$.  Therefore, by Weyl's inequality, it holds that
\begin{align}
\bigg|    \lambda_{r_k} \bigg(\mathbf{T}_k\mathcal{\hat P}_k^{(t)}+ \mathbf{Z}_k^{j-m}  \mathcal{\tilde P}_{k}^{t,j-m}\mathcal{\hat P}_k^{(t)} \bigg) - \lambda_{r_k}(\mathbf{T}_k \mathcal{\hat P}_k^{(t)})\bigg| &\leq \| \mathbf{Z}_k^{j-m}\mathcal{\tilde P}_{k}^{t,j-m}\mathcal{\hat P}_k^{(t)} \| \nonumber\\
&\leq \| \mathbf{Z}_k^{j-m}\mathcal{\tilde P}_{k}^{t,j-m} \| \nonumber\\
&\leq \tau_k \nonumber\\
&\leq \frac{\lambda}{4}.  \label{eq:lambda4}
\end{align}
Next, when $\eta_k^{(t)} \leq \frac{1}{4}$, this implies that
\begin{align*}
    \max \bigg( \| \sin\Theta(\uhat_{k+1}^{(t-1)}, \U_{k+1} ) \|, \| \sin\Theta(\uhat_{k+2}^{(t-1)}, \U_{k+2} ) \| \bigg) \leq \frac{1}{4},
\end{align*}
and hence that
\begin{align}
    \lambda_{\min}( \mathbf{T}_k \mathcal{\hat P}_k^{(t)} ) &= \lambda_{\min} \bigg( \mathbf{T}_k \U_{k+1} \otimes \U_{k+2} ( \U_{k+1} \otimes \U_{k+2} )\t \mathcal{\hat P}_k^{(t)} \bigg) \nonumber \\
    &\geq \lambda \lambda_{\min} \bigg( ( \U_{k+1} \otimes \U_{k+2} )\t \mathcal{\hat P}_k^{(t)} \bigg) \nonumber\\
    &\geq \lambda \lambda_{\min} ( \U_{k+1}\t \uhat_{k+1}^{(t-1)}) \lambda_{\min} (\U_{k+2}\t \uhat_{k+2}^{(t-1)} ) \nonumber\\
    &\geq \lambda (1 - \frac{1}{16} )  \nonumber \\
    &\geq \frac{15}{16} \lambda. \label{eq:1516}
\end{align}
Combining \eqref{eq:1516} and \eqref{eq:lambda4} gives the first claim.  

For the second claim, we simply note that by Weyl's inequality,
\begin{align*}
  \bigg|  \lambda_{r_k+1}\bigg( \mathbf{T}_k  \mathcal{\hat P}_k^{(t)}+ \mathbf{Z}_k  \mathcal{\hat P}_k^{(t)} \bigg) - \lambda_{r_k+1} \bigg( \mathbf{T}_k  \mathcal{\hat P}_k^{(t)}\bigg) \bigg| &\leq \|  \mathbf{Z}_k  \mathcal{\hat P}_k^{(t)} \| \\
    &\leq \tau_k \leq \frac{\lambda}{4}.
\end{align*}
Since $\mathbf{T}_k$ is rank $r_k$, it holds that
\begin{align*}
    \lambda_{r_k+1}\bigg( \mathbf{T}_k  \mathcal{\hat P}_k^{(t)} \bigg) = 0,
\end{align*}
which proves the second assertion.  This completes the proof.
\end{proof}

\begin{lemma}[Deterministic Bound for Leave-One-Out Sequence]
\label{lem:leaveoneoutsintheta} Suppose that $\tau_k \leq \frac{\lambda}{4}$ and that $\eta_k^{(t)} \leq \frac{1}{4}$.  Then it holds that
\begin{align*}
 \| \sin\Theta(\uhat_{k}^{(t)}, \utilde_k^{t,j-m} )\| 
 &\leq \frac{16 \kappa}{\lambda} \tau_k \bigg( \eta_{k}^{(t,j-m)} \bigg) + \frac{16 \kappa}{\lambda} \xi_k^{t,j-m} \bigg( \eta_{k}^{(t,j-m)} \bigg) \\
   &\qquad + \frac{8 \kappa}{\lambda} \tilde\xi_k^{t,j-m} + \frac{16}{\lambda^2} \tau_k^2 \bigg( \eta_{k}^{(t,j-m)}  \bigg) + \frac{8}{\lambda^2} \tau_k \xi_k^{t,j-m} + \frac{4}{\lambda^2} ( \xi_k^{t,j-m})^2,
 %
\end{align*}
\end{lemma}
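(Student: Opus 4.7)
The plan is to apply a Davis--Kahan/Wedin-type $\sin\Theta$ perturbation theorem, after recasting $\uhat_k^{(t)}$ and $\utilde_k^{(t,j-m)}$ as the top $r_k$ left singular vectors of
\[
\mathbf{A} \coloneqq (\mathbf{T}_k + \mathbf{Z}_k)\mathcal{\hat P}_k^{(t)}, \qquad \mathbf{B} \coloneqq \mathbf{T}_k + \mathbf{Z}_k^{j-m}\mathcal{\tilde P}_k^{t,j-m},
\]
respectively, and then carefully decomposing $\mathbf{A}-\mathbf{B}$ so that only noise-driven pieces enter the bound. The spectral gap of order $\lambda$ needed to invert the theorem is supplied directly by \cref{lem:eigengaps}: under the hypotheses $\tau_k\le\lambda/4$ and $\eta_k^{(t)}\le 1/4$, the $r_k$-th singular value of the relevant leave-one-out matrix is at least $3\lambda/4$ while the $(r_k{+}1)$-st singular value of $\mathbf{A}$ is at most $\lambda/4$, leaving a gap of at least $\lambda/2$.

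A one-step Wedin bound yields
\[
\|\sin\Theta(\uhat_k^{(t)},\utilde_k^{(t,j-m)})\| \lesssim \tfrac{1}{\lambda}\bigl\|\U_{k\perp}\U_{k\perp}^\top(\mathbf{A}-\mathbf{B})\tilde{\mathbf{V}}\bigr\|,
\]
where $\tilde{\mathbf{V}}$ denotes the top right singular vectors of $\mathbf{B}$. The crucial cancellation, in direct parallel to the $\U_{k\perp}\U_{k\perp}^\top$ projections in \cref{lem:linear_deterministic_bd,lem:quadratic_deterministic_bd}, is that the structural piece $\mathbf{T}_k(\mathcal{\hat P}_k^{(t)}-\mathbf{I})$ of $\mathbf{A}-\mathbf{B}$ has image entirely in $\U_k$ and is annihilated by $\U_{k\perp}\U_{k\perp}^\top$, leaving only
\[
\U_{k\perp}\U_{k\perp}^\top\bigl[\mathbf{Z}_k(\mathcal{\hat P}_k^{(t)}-\mathcal{\tilde P}_k^{t,j-m}) + (\mathbf{Z}_k-\mathbf{Z}_k^{j-m})\mathcal{\tilde P}_k^{t,j-m}\bigr]\tilde{\mathbf{V}}.
\]
The first summand has norm at most $\tau_k\eta_k^{(t,j-m)}$: the difference of the two Kronecker projectors has spectral norm at most $\eta_k^{(t,j-m)}$, and its range embeds into a Kronecker subspace of rank at most $2r_{k+1}r_{k+2}$, against which $\mathbf{Z}_k$ is controlled by $\tau_k$. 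To extract the sharper $\tilde\xi_k^{(t,j-m)}$ rather than $\xi_k^{(t,j-m)}$ for the second summand, split $\tilde{\mathbf{V}} = \mathbf{V}_k\mathbf{V}_k^\top\tilde{\mathbf{V}} + (\mathbf{I}-\mathbf{V}_k\mathbf{V}_k^\top)\tilde{\mathbf{V}}$: the first piece produces exactly $\tilde\xi_k^{(t,j-m)}$, while $\|(\mathbf{I}-\mathbf{V}_k\mathbf{V}_k^\top)\tilde{\mathbf{V}}\|\le \eta_k^{(t,j-m)}$ because $\tilde{\mathbf{V}}$ lies essentially in $\mathrm{range}(\mathcal{\tilde P}_k^{t,j-m})$, which is within $\eta_k^{(t,j-m)}$ of $\mathbf{V}_k$ in $\sin\Theta$ distance. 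Together these yield the three $\kappa/\lambda$-prefactored terms $\tau_k\eta_k^{(t,j-m)}$, $\tilde\xi_k^{(t,j-m)}$, and $\xi_k^{(t,j-m)}\eta_k^{(t,j-m)}$.

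The $1/\lambda^2$ contributions arise from a second-order expansion of the spectral projector difference $\uhat_k^{(t)}(\uhat_k^{(t)})^\top - \utilde_k^{(t,j-m)}(\utilde_k^{(t,j-m)})^\top$, obtainable either by iterating Wedin or via the Kato resolvent contour-integral representation. After again projecting off the signal subspace so that the $\mathbf{T}_k\mathbf{T}_k^\top$-type blocks of the two Gram matrices cancel, the remaining noise-times-noise difference telescopes into three quadratic pieces: a $\mathbf{Z}_k$-against-$(\mathcal{\hat P}_k^{(t)}-\mathcal{\tilde P}_k^{t,j-m})$ cross-term bounded by $\tau_k^2\eta_k^{(t,j-m)}$; a $\mathbf{Z}_k$-against-$(\mathbf{Z}_k-\mathbf{Z}_k^{j-m})\mathcal{\tilde P}_k^{t,j-m}$ cross-term bounded by $\tau_k\xi_k^{(t,j-m)}$; and the squared leave-one-out residual bounded by $(\xi_k^{(t,j-m)})^2$.

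The main obstacle throughout is preventing the $\mathbf{T}_k$-driven structural perturbation from entering the final bound: a naive spectral-norm Wedin application would produce a stubborn $\kappa\eta_k^{(t)}$ term from $\|\mathbf{T}_k(\mathcal{\hat P}_k^{(t)}-\mathbf{I})\|$ that does not decay under the hypotheses of the lemma. The resolution is the $\U_{k\perp}\U_{k\perp}^\top$ projection intrinsic to Davis--Kahan/Wedin combined with the leave-one-out construction of \cref{sec:loo}: both $\mathbf{A}$ and $\mathbf{B}$ share the same signal skeleton $\mathbf{T}_k$, so the structural difference lies entirely in $\U_k$ and is removed at the projection step, reducing the whole analysis to bounding the noise-indexed quantities $\tau_k$, $\xi_k^{(t,j-m)}$, $\tilde\xi_k^{(t,j-m)}$, and the proximity measure $\eta_k^{(t,j-m)}$.
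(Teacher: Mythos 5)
There is a genuine gap at the heart of your argument. You apply a one-step Wedin bound of the form
\begin{align*}
\|\sin\Theta(\uhat_{k}^{(t)}, \utilde_k^{t,j-m} )\| \lesssim \tfrac{1}{\lambda}\big\|\U_{k\perp}\U_{k\perp}\t(\mathbf{A}-\mathbf{B})\tilde{\mathbf{V}}\big\|,
\end{align*}
but this is not a valid perturbation theorem. The projection that appears in Davis--Kahan/Wedin when comparing the top left singular spaces of $\mathbf{A}$ and $\mathbf{B}$ is onto the orthogonal complement of one of \emph{those two} subspaces, not onto the orthogonal complement of a third subspace $\U_k$. The way the paper obtains a $\U_{k\perp}\U_{k\perp}\t$ prefactor in \cref{lem:linear_deterministic_bd,lem:quadratic_deterministic_bd} is by representing $(\mathbf{I}-\U_k\U_k\t)\uhat_k^{(t)}$ through the identity $\uhat_k^{(t)} = \mathbf{\hat T}_k^{(t)}\uhat_k^{(t)}(\mathbf{\hat\Lambda}_k^{(t)})^{-2}$, and that trick addresses $\|\sin\Theta(\uhat_k^{(t)},\U_k)\|$, not $\|\sin\Theta(\uhat_k^{(t)},\utilde_k^{t,j-m})\|$. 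Without a justified $\U_{k\perp}$ projection, the structural piece $\mathbf{T}_k(\mathcal{\hat P}_k^{(t)}-\mathbf{I})$ remains in $\mathbf{A}-\mathbf{B}$, contributes on the order of $\lambda_1\eta_k^{(t)}$, and your bound does not close. Splitting by triangle inequality through $\U_k$ is also too lossy, since each piece scales like $\tau_k/\lambda$ rather than like $\eta_k^{(t,j-m)}$.

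The idea you are missing is the re-expression step the paper takes before invoking any perturbation theorem: because $\utilde_k^{t,j-m}$ are by construction the top-$r_k$ left singular vectors of $\mathbf{T}_k + \mathbf{Z}_k^{j-m}\mathcal{\tilde P}_k^{t,j-m}$, the projection $\utilde_k^{t,j-m}(\utilde_k^{t,j-m})\t$ is \emph{also} the projection onto the dominant left singular space of $\big(\mathbf{T}_k + \mathbf{Z}_k^{j-m}\mathcal{\tilde P}_k^{t,j-m}\big)\uhat_{k+1}^{(t-1)}\otimes\uhat_{k+2}^{(t-1)}$, provided the eigengap of \cref{lem:eigengaps} holds. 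This lets the paper work with the pair of Gram matrices $\mathbf{\hat A}$ and $\mathbf{\tilde A}$, both of which contain the \emph{same} block $\mathbf{T}_k\mathcal{\hat P}_k^{(t)}\mathbf{T}_k\t$; that block cancels \emph{exactly} in $\mathbf{\hat A}-\mathbf{\tilde A}$, with no projection argument required. A single Davis--Kahan application to these Gram matrices, whose eigengap is of order $\lambda^2$ by \cref{lem:eigengaps}, then produces all six terms at once: the $\kappa/\lambda$-prefactored terms come from the linear-in-noise blocks $\mathbf{P}_1,\mathbf{P}_2$ (of size $\lambda_1\tau_k\eta,\ \lambda_1\tilde\xi,\ \lambda_1\xi\eta$, divided by $\lambda^2$), and the $1/\lambda^2$-prefactored terms come from the quadratic block $\mathbf{P}_3$. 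Your proposal defers the quadratic terms to an unspecified ``iterated Wedin or resolvent'' second-order expansion, but in the Gram-matrix formulation there is nothing to iterate: the $1/\lambda^2$ scale is the eigengap and the quadratic terms are already present in the first-order perturbation. You correctly diagnose the obstacle (the $\mathbf{T}_k$-driven structural perturbation), but the paper's cure is the shared-signal-block Gram-matrix cancellation, not a $\U_{k\perp}$ projection.
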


\begin{proof}
We prove the result for $k = 1$; the result for $k = 2$ and $k = 3$ are similar by modifying the index on $t$.  

Recall that $\uhat_1^{(t)}$ are the singular vectors of the matrix
\begin{align*}
    \mathbf{T}_1 \uhat_{2}^{(t-1)} \otimes \uhat_{3}^{(t-1)} + \mathbf{Z}_1 \uhat_{2}^{(t-1)} \otimes \uhat_{3}^{(t-1)}
\end{align*}
and $\utilde_1^{t,j-m}$ are the singular vectors of the matrix
\begin{align*}
    \mathbf{T}_1 + \mathbf{Z}_1^{j-m} \mathcal{\tilde P}_{1}^{t,j-m}
\end{align*}
Consequently, the projection $\utilde_{1}^{t,j-m}(\utilde_1^{t,j-m})\t$ is also the projection onto the dominant left singular space of the matrix
\begin{align*}
    \bigg(\mathbf{T}_1 + \mathbf{Z}_1^{j-m}\mathcal{\tilde P}_{1}^{t,j-m} \bigg) \uhat_{2}^{(t-1)} \otimes \uhat_{3}^{(t-1)},
\end{align*}
Therefore, both projections are projections onto the dominant eigenspaces of the matrices defined via
\begin{align*}
\mathbf{\hat A} &\coloneqq     \mathbf{T}_1 \mathcal{P}_{\uhat_{2}^{(t-1)} \otimes \uhat_{3}^{(t-1)}} \mathbf{T}_1\t + \bigg[ \mathbf{Z}_1\mathcal{P}_{\uhat_{2}^{(t-1)} \otimes \uhat_{3}^{(t-1)}}  \mathbf{T}_1\t + \mathbf{T}_1 \mathcal{P}_{\uhat_{2}^{(t-1)} \otimes \uhat_{3}^{(t-1)}}  \mathbf{Z}_1\t + \mathbf{Z}_1 \mathcal{P}_{\uhat_{2}^{(t-1)} \otimes \uhat_{3}^{(t-1)}} \mathbf{Z}_1\t  \bigg]; \\
\mathbf{\tilde A} &\coloneqq \mathbf{T}_1 \mathcal{P}_{\uhat_{2}^{(t-1)} \otimes \uhat_{3}^{(t-1)}} \mathbf{T}_1\t + \bigg[ \mathbf{Z}_1^{j-m} \mathcal{P}_{\utilde_{2}^{t-1,j-m}} \otimes \mathcal{P}_{\utilde_{3}^{t-1,j-m}} \mathcal{P}_{\uhat_{2}^{(t-1)} \otimes \uhat_{3}^{(t-1)}}  \mathbf{T}_1\t \\
&\qquad \qquad \qquad \qquad \qquad + \mathbf{T}_1 \mathcal{P}_{\uhat_{2}^{(t-1)} \otimes \uhat_{3}^{(t-1)}} \mathcal{P}_{\utilde_{2}^{t-1,j-m}} \otimes \mathcal{P}_{\utilde_{3}^{t-1,j-m}} (\mathbf{Z}_1^{j-m})\t \\
&\qquad \qquad \qquad \qquad \qquad + \mathbf{Z}_1^{j-m} \mathcal{P}_{\utilde_{2}^{t-1,j-m}} \otimes \mathcal{P}_{\utilde_{3}^{t-1,j-m}} \mathcal{P}_{\uhat_{2}^{(t-1)} \otimes \uhat_{3}^{(t-1)}} \mathcal{P}_{\utilde_{2}^{t-1,j-m}} \otimes \mathcal{P}_{\utilde_{3}^{t-1,j-m}} \mathbf{Z}_1^{j-m})\t  \bigg].
\end{align*}
Therefore, the perturbation $\mathbf{\hat A - \tilde A}$ is equal to the sum of three terms, defined via
\begin{align*}
    \mathbf{P}_1 &\coloneqq \mathbf{Z}_1 \mathcal{P}_{\uhat_{2}^{(t-1)} \otimes \uhat_{3}^{(t-1)}}  \mathbf{T}_1\t - \mathbf{Z}_1^{j-m} \mathcal{P}_{\utilde_{2}^{t-1,j-m}} \otimes \mathcal{P}_{\utilde_{3}^{t-1,j-m}} \mathcal{P}_{\uhat_{2}^{(t-1)} \otimes \uhat_{3}^{(t-1)}}  \mathbf{T}_1\t \\
    &= \bigg[\mathbf{Z}_1 \mathcal{P}_{\uhat_{2}^{(t-1)} \otimes \uhat_{3}^{(t-1)}} - \mathbf{Z}_1^{j-m} \mathcal{P}_{\utilde_{2}^{t-1,j-m}} \otimes \mathcal{P}_{\utilde_{3}^{t-1,j-m}} \bigg] \mathcal{P}_{\uhat_{2}^{(t-1)} \otimes \uhat_{3}^{(t-1)}}  \mathbf{T}_1\t \\
    \mathbf{P}_2 &\coloneqq \mathbf{T}_1 \mathcal{P}_{\uhat_{2}^{(t-1)} \otimes \uhat_{3}^{(t-1)}}  \mathbf{Z}_1\t - \mathbf{T}_1 \mathcal{P}_{\uhat_{2}^{(t-1)} \otimes \uhat_{3}^{(t-1)}} \mathcal{P}_{\utilde_{2}^{t-1,j-m}} \otimes \mathcal{P}_{\utilde_{3}^{t-1,j-m}} (\mathbf{Z}_1^{j-m})\t \\
    &= \mathbf{T}_1 \mathcal{P}_{\uhat_{2}^{(t-1)} \otimes \uhat_{3}^{(t-1)}}  \bigg(\mathcal{P}_{\uhat_{2}^{(t-1)} \otimes \uhat_{3}^{(t-1)}}  \mathbf{Z}_1\t -  \mathcal{P}_{\utilde_{2}^{t-1,j-m}} \otimes \mathcal{P}_{\utilde_{3}^{t-1,j-m}} (\mathbf{Z}_1^{j-m})\t \bigg)   \\ 
    \mathbf{P}_3 &\coloneqq\mathbf{Z}_1 \mathcal{P}_{\uhat_{2}^{(t-1)} \otimes \uhat_{3}^{(t-1)}} \mathbf{Z}_1\t   - \mathbf{Z}_1^{j-m} \mathcal{P}_{\utilde_{2}^{t-1,j-m}} \otimes \mathcal{P}_{\utilde_{3}^{t-1,j-m}} \mathcal{P}_{\uhat_{2}^{(t-1)} \otimes \uhat_{3}^{(t-1)}} \mathcal{P}_{\utilde_{2}^{t-1,j-m}} \otimes \mathcal{P}_{\utilde_{3}^{t-1,j-m}} \mathbf{Z}_1^{j-m})\t 
\end{align*}
where we have used the fact that $\mathcal{P}_{\uhat_{2}^{(t-1)} \otimes \uhat_{3}^{(t-1)}}$ is a projection matrix and hence equal to its square.  We now bound each term successively.

\textbf{The term $\|\mathbf{P}_1 \|$:} Observe that
\begin{align*}
   \mathbf{P}_1 &= \bigg[\mathbf{Z}_1 \mathcal{P}_{\uhat_{2}^{(t-1)}} \otimes \mathcal{P}_{\uhat_{3}^{(t-1)}} - \mathbf{Z}_1^{j-m} \mathcal{P}_{\utilde_{2}^{t-1,j-m}} \otimes \mathcal{P}_{\utilde_{3}^{t-1,j-m}} \bigg] \mathcal{P}_{\uhat_{2}^{(t-1)}} \otimes \mathcal{P}_{\uhat_{3}^{(t-1)}} \mathbf{T}_1\t  \\
   &=  \left[ \mathbf{Z}_1 \left( \mathcal{P}_{\uhat_{2}^{(t-1)}} - \mathcal{P}_{\utilde_{2}^{t-1,j-m}} \right) \otimes \mathcal{P}_{\uhat_{3}^{(t-1)}} \right] \mathcal{P}_{\uhat_{2}^{(t-1)}} \otimes \mathcal{P}_{\uhat_{3}^{(t-1)}} \mathbf{T}_1\t \\
   &\qquad + \left[\mathbf{Z}_1 \mathcal{P}_{\utilde_{2}^{t-1,j-m}} \otimes \left( \mathcal{P}_{\uhat_{3}^{(t-1)}} - \mathcal{P}_{\utilde_{3}^{t-1,j-m}} \right) \right] \mathcal{P}_{\uhat_{2}^{(t-1)}} \otimes \mathcal{P}_{\uhat_{3}^{(t-1)}} \mathbf{T}_1\t \\
   &\qquad + \left[ \mathbf{Z}_1 - \mathbf{Z}_1^{j-m} \right] \bigg( \mathcal{P}_{\utilde_{2}^{t-1,j-m}} \otimes \mathcal{P}_{\utilde_{3}^{t-1,j-m}} \bigg) \left[ \mathcal{P}_{\uhat_{2}^{(t-1)}} - \mathcal{P}_{\utilde_{2}^{t-1,j-m}} \right] \otimes \mathcal{P}_{\uhat_{3}^{(t-1)}} \mathbf{T}_1\t \\
      &\qquad + \left[ \mathbf{Z}_1 - \mathbf{Z}_1^{j-m} \right] \bigg( \mathcal{P}_{\utilde_{2}^{t-1,j-m}} \otimes \mathcal{P}_{\utilde_{3}^{t-1,j-m}} \bigg) \mathcal{P}_{\utilde_{2}^{t-1,j-m}}  \otimes \left[ \mathcal{P}_{\uhat_{3}^{(t-1)}} - \mathcal{P}_{\utilde_{3}^{t-1,j-m}} \right]  \mathbf{T}_1\t \\
         &\qquad + \left[ \mathbf{Z}_1 - \mathbf{Z}_1^{j-m} \right] \bigg( \mathcal{P}_{\utilde_{2}^{t-1,j-m}} \otimes \mathcal{P}_{\utilde_{3}^{t-1,j-m}} \bigg) \mathbf{T}_1\t .
\end{align*}
Taking norms yields
\begin{align*}
    \| \mathbf{P}_1  \| &\leq 2\lambda_1 \tau_1 \bigg(\| \sin\Theta(\uhat_{2}^{(t-1)}, \utilde_{2}^{t-1,j-m} ) \| + \| \sin\Theta(\uhat_{3}^{(t-1)}, \utilde_{3}^{t-1,j-m} ) \| \bigg) \\
    &\qquad + 2\lambda_1\| \left[\mathbf{Z}_1 - \mathbf{Z}_1^{j-m} \right] \mathcal{P}_{\utilde_{2}^{t-1,j-m}} \otimes \mathcal{P}_{\utilde_{3}^{t-1,j-m}} \|  \| \sin\Theta(\uhat_{2}^{(t-1)}, \utilde_{2}^{t-1,j-m} ) \|  \\
    &\qquad +  2\lambda_1\| \left[\mathbf{Z}_1 - \mathbf{Z}_1^{j-m} \right] \mathcal{P}_{\utilde_{2}^{t-1,j-m}} \otimes \mathcal{P}_{\utilde_{3}^{t-1,j-m}} \|\| \sin\Theta(\uhat_{3}^{(t-1)}, \utilde_{3}^{t-1,j-m} ) \| \\
    &\qquad + \lambda_1 \|  \left[\mathbf{Z}_1 - \mathbf{Z}_1^{j-m} \right] \mathcal{P}_{\utilde_{2}^{t-1,j-m}} \otimes \mathcal{P}_{\utilde_{3}^{t-1,j-m}} \mathbf{V}_1 \| \\
    &\leq 2\lambda_1 \tau_1 \bigg( \eta_{1}^{(t,j-m)} \bigg)  + 2\lambda_1 \xi_{1}^{t,j-m} \bigg( \eta_{1}^{(t,j-m)}  \bigg) + \lambda_1 \tilde \xi_1^{t,j-m}.
\end{align*}
For $\mathbf{P}_2$ we proceed similarly.  It holds that
\begin{align*}
    \mathbf{P}_2  &= \mathbf{T}_1 \mathcal{P}_{\uhat_{2}^{(t-1)}} \otimes \mathcal{P}_{\uhat_{3}^{(t-1)}} \bigg( \mathcal{P}_{\uhat_{2}^{(t-1)}} \otimes \mathcal{P}_{\uhat_{3}^{(t-1)}} \mathbf{Z}_1 \t - \mathcal{P}_{\utilde_{2}^{t-1,j-m}} \otimes \mathcal{P}_{\utilde_{3}^{t-1,j-m}} (\mathbf{Z}_1^{j-m})\t \bigg)  \\
    &= \mathbf{T}_1 \mathcal{P}_{\uhat_{2}^{(t-1)}} \otimes \mathcal{P}_{\uhat_{3}^{(t-1)}} \bigg( \left[ \mathcal{P}_{\uhat^{(t)}_{k+1}} - \mathcal{P}_{\utilde_{2}^{t-1,j-m}} \right] \otimes \mathcal{P}_{\uhat_{3}^{(t-1)}} \mathbf{Z}_1 \t \bigg) \\
    &\qquad + \mathbf{T}_1 \mathcal{P}_{\uhat_{2}^{(t-1)}} \otimes \mathcal{P}_{\uhat_{3}^{(t-1)}} \bigg(\mathcal{P}_{\utilde_{2}^{t-1,j-m}} \otimes \left[ \mathcal{P}_{\uhat_{3}^{(t-1)}} - \mathcal{P}_{\utilde_{3}^{t-1,j-m}} \right] \mathbf{Z}_1\t \bigg) \\
    &\qquad +   \mathbf{T}_1 \bigg( \mathcal{P}_{\uhat_{2}^{(t-1)}} - \mathcal{P}_{\utilde_{2}^{t-1,j-m}} \bigg) \otimes \mathcal{P}_{\uhat_{3}^{(t-1)}} \bigg(   \mathcal{P}_{\utilde_{2}^{t-1,j-m}} \otimes \mathcal{P}_{\utilde_{3}^{t-1,j-m}} ( \mathbf{Z}_1 -  \mathbf{Z}_1^{j-m})\t \bigg)  \\
     &\qquad +   \mathbf{T}_1 \mathcal{P}_{\utilde_{2}^{t-1,j-m}}  \otimes\bigg( \mathcal{P}_{\utilde_{3}^{t-1,j-m}} -  \mathcal{P}_{\uhat_{3}^{(t-1)}} \bigg) \bigg(   \mathcal{P}_{\utilde_{2}^{t-1,j-m}} \otimes \mathcal{P}_{\utilde_{3}^{t-1,j-m}} ( \mathbf{Z}_1 -  \mathbf{Z}_1^{j-m})\t \bigg)  \\
      &\qquad +   \mathbf{T}_1   \mathcal{P}_{\utilde_{2}^{t-1,j-m}} \otimes \mathcal{P}_{\utilde_{3}^{t-1,j-m}} ( \mathbf{Z}_1 -  \mathbf{Z}_1^{j-m})\t .
\end{align*}
Taking norms yields the same upper bound as for $\|\mathbf{P}_1  \|$.  

For the the term $\mathbf{P}_3 $, we note that since $\mathcal{P}_{\uhat_{2}^{(t-1)} \otimes \uhat_{3}^{(t-1)}}$ is a projection matrix and hence equal to its cube, it holds that
\begin{align*}
    \| \mathbf{P}_3  \| &\leq \bigg\| \mathbf{Z}_1 \left[ \mathcal{P}_{\uhat_{2}^{(t-1)}} - \mathcal{P}_{\utilde_{2}^{t-1,j-m}} \right] \otimes \mathcal{P}_{\uhat_{3}^{(t-1)}} \mathcal{P}_{\uhat_{2}^{(t-1)} \otimes \uhat_{3}^{(t-1)}} \mathcal{P}_{\uhat_{2}^{(t-1)} \otimes \uhat_{3}^{(t-1)}} \mathbf{Z}_1  \bigg\| \\
    &\qquad + \bigg\|  \mathbf{Z}_1  \mathcal{P}_{\utilde_{2}^{t-1,j-m}} \otimes \left[ \mathcal{P}_{\utilde_{3}^{t-1,j-m}} -  \mathcal{P}_{\uhat_{3}^{(t-1)}} \right] \mathcal{P}_{\uhat_{2}^{(t-1)} \otimes \uhat_{3}^{(t-1)}} \mathcal{P}_{\uhat_{2}^{(t-1)} \otimes \uhat_{3}^{(t-1)}} \mathbf{Z}_1  \bigg\| \\
      &\qquad + \bigg\| \mathbf{Z}_1  \mathcal{P}_{\utilde_{2}^{t-1,j-m}}  \otimes \mathcal{P}_{\utilde_{3}^{t-1,j-m}} \mathcal{P}_{\uhat_{2}^{(t-1)} \otimes \uhat_{3}^{(t-1)}} \left[ \mathcal{P}_{\uhat_{2}^{(t-1)}} - \mathcal{P}_{\utilde_{2}^{t-1,j-m}} \right]  \otimes  \mathcal{P}_{\uhat_{3}^{(t-1)}} \mathbf{Z}_1  \bigg\| \\
     &\qquad + \bigg\| \mathbf{Z}_1  \mathcal{P}_{\utilde_{2}^{t-1,j-m}}  \otimes \mathcal{P}_{\utilde_{3}^{t-1,j-m}} \mathcal{P}_{\uhat_{2}^{(t-1)} \otimes \uhat_{3}^{(t-1)}}  \mathcal{P}_{\utilde_{2}^{t-1,j-m}}  \otimes  \left[ \mathcal{P}_{\utilde_{3}^{t-1,j-m}} - \mathcal{P}_{\uhat_{3}^{(t-1)}} \right] \mathbf{Z}_1  \bigg\| \\
     &\qquad + \bigg\| \mathbf{Z}_1 \mathcal{P}_{\utilde_{2}^{t-1,j-m}}  \otimes \mathcal{P}_{\utilde_{3}^{t-1,j-m}} \mathcal{P}_{\uhat_{2}^{(t-1)} \otimes \uhat_{3}^{(t-1)}} \mathcal{P}_{\utilde_{2}^{t-1,j-m}}  \otimes \mathcal{P}_{\utilde_{3}^{t-1,j-m}} \left[ \mathbf{Z}_1 - \mathbf{Z}_1^{j-m} \right]\t  \bigg\| \\
     &\qquad + \bigg\| \left[ \mathbf{Z}_1 - \mathbf{Z}_1^{j-m} \right] \mathcal{P}_{\utilde_{2}^{t-1,j-m}}  \otimes \mathcal{P}_{\utilde_{3}^{t-1,j-m}} \mathcal{P}_{\uhat_{2}^{(t-1)} \otimes \uhat_{3}^{(t-1)}} \mathcal{P}_{\utilde_{2}^{t-1,j-m}}  \otimes \mathcal{P}_{\utilde_{3}^{t-1,j-m}} \mathbf{Z}_1^{j-m} \bigg\| \\
     &\leq 2 \tau_1^2 \bigg( \| \sin\Theta(\uhat_{2}^{(t-1)}, \utilde_{2}^{t-1,j-m} ) \| + \| \sin\Theta(\uhat_{3}^{(t-1)}, \utilde_{3}^{t-1,j-m} ) \| \bigg) \\
     &\qquad + \tau_1 \bigg\| \left[ \mathbf{Z}_1 - \mathbf{Z}_1^{j-m} \right] \mathcal{P}_{\utilde_{2}^{t-1,j-m}}  \otimes \mathcal{P}_{\utilde_{3}^{t-1,j-m}}  \bigg\| \\
     &\qquad + \bigg\| \mathbf{Z}_1^{j-m} \mathcal{P}_{\utilde_{2}^{t-1,j-m}}  \otimes \mathcal{P}_{\utilde_{3}^{t-1,j-m}}  \bigg\| \bigg\| \left[ \mathbf{Z}_1 - \mathbf{Z}_1^{j-m} \right] \mathcal{P}_{\utilde_{2}^{t-1,j-m}}  \otimes \mathcal{P}_{\utilde_{3}^{t-1,j-m}}  \bigg\| \\
     &\leq 4 \tau_1^2 \bigg( \eta_{1}^{(t,j-m)}  \bigg) + \tau_1 \xi_{1}^{t,j-m} + \bigg\| \mathbf{Z}_1^{j-m} \mathcal{P}_{\utilde_{2}^{t-1,j-m}}  \otimes \mathcal{P}_{\utilde_{3}^{t-1,j-m}}  \bigg\| \xi_1^{t,j-m} \\
     &\leq 4 \tau_1^2 \bigg( \eta_{1}^{(t,j-m)}  \bigg) + 2\tau_1 \xi_{1}^{t,j-m} + \bigg\| \bigg(\mathbf{Z}_1 - \mathbf{Z}_1^{j-m}\bigg) \mathcal{P}_{\utilde_{2}^{t-1,j-m}}  \otimes \mathcal{P}_{\utilde_{3}^{t-1,j-m}}  \bigg\| \xi_1^{t,j-m} \\
     &\leq 4 \tau_1^2 \bigg( \eta_{1}^{(t,j-m)}  \bigg) + 2\tau_1 \xi_{1}^{t,j-m} + \big( \xi_1^{t,j-m}\big)^2.
\end{align*}
We note that by Lemma \ref{lem:eigengaps}, it holds that
\begin{align*}
    \lambda_{r_1} \big( \mathbf{\tilde A} \big) - \lambda_{r_1 + 1}\big( \mathbf{\hat A}\big) &=  \lambda_{r_1}^2 \bigg( \mathbf{T}_1 \mathcal{\hat P}_1^{(t)} + \mathbf{Z}_1^{j-m}  \mathcal{\tilde P}_1^{t,j-m}  \bigg) - \lambda_{r_1 + 1}^2 \bigg( \mathbf{T}_1 \mathcal{\hat P}_1^{(t)} + \mathbf{Z}_k \mathcal{\hat P}_1^{(t)} \bigg) \\
    &\geq \bigg( \frac{3}{4} \lambda \bigg)^2 - \bigg( \frac{\lambda}{4} \bigg)^2 \\
    &\geq \frac{\lambda^2}{4}.
\end{align*}
Consequently, by the Davis-Kahan Theorem, it holds that 
\begin{align*}
    \| \sin\Theta(\uhat_{1}^{(t)},  \utilde_1^{t,j-m}) \| &\leq \frac{4}{\lambda^2} \bigg( \| \mathbf{P}_1  \| +\| \mathbf{P}_2  \|  + \| \mathbf{P}_3  \|  \bigg),
\end{align*}
which holds under the eigengap condition by Lemma \ref{lem:eigengaps} and the assumption $\tau_k \leq \frac{\lambda}{4}$.  Therefore,
\begin{align*}
    \| \sin\Theta(\uhat_1^{(t)}, \utilde_1^{t,j-m} )\| 
   &\leq \frac{4}{\lambda^2} \bigg( \| \mathbf{P}_1  \| +\| \mathbf{P}_2  \|  + \| \mathbf{P}_3  \|  \bigg) \\
   &\leq \frac{8}{\lambda^2} \bigg( 2\lambda_1 \tau_1 \bigg( \eta_{1}^{(t,j-m)} \bigg)  + 2\lambda_1 \xi_{1}^{t,j-m} \bigg( \eta_{1}^{(t,j-m)} \bigg) + \lambda_1 \tilde \xi_1^{t,j-m} \bigg) \\
   &\qquad + \frac{4}{\lambda^2} \bigg( 4 \tau_1^2 \bigg( \eta_{1}^{(t,j-m)}  \bigg) + 2\tau_1 \xi_{1}^{t,j-m} + \big( \xi_1^{t,j-m}\big)^2 \bigg) \\
   &\leq \frac{16 \kappa}{\lambda} \tau_1 \bigg( \eta_{1}^{(t,j-m)}\bigg) + \frac{16 \kappa}{\lambda} \xi_1^{t,j-m} \bigg( \eta_{1}^{(t,j-m)} \bigg) \\
   &\qquad + \frac{8 \kappa}{\lambda} \tilde\xi_1^{t,j-m} + \frac{16}{\lambda^2} \tau_1^2 \bigg( \eta_1^{(t,j-m)}  \bigg) + \frac{8}{\lambda^2} \tau_1 \xi_1^{t,j-m} + \frac{4}{\lambda^2} ( \xi_1^{t,j-m})^2
\end{align*}
as desired.
\end{proof}



\subsection{Probabilistic Bounds on Good Events} \label{sec:probabilisticbounds}
This section contains high-probability bounds for the terms considered in the previous subsection.  Let $r = \max r_k$, $p = \max p_k$.  In what follows, we denote
\begin{align*}
    \delta_{\mathrm{L}}\ku &\coloneqq C_0 \kappa \sqrt{p_k\log(p)},
\end{align*}
where $C_0$ is taken to be some fixed constant.  
  
We will also recall the notation from the previous section:
\begin{align*}
      \mathcal{\hat P}_{k}^{(t)} &\coloneqq \begin{cases}
    \mathcal{P}_{\uhat_{k+1}^{(t-1)} \otimes \uhat_{k+2}^{(t-1)}} & k =1; \\
     \mathcal{P}_{\uhat_{k+1}^{(t-1)} \otimes \uhat_{k+2}^{(t)}} & k =2; \\
      \mathcal{P}_{\uhat_{k+1}^{(t)} \otimes \uhat_{k+2}^{(t)}} & k =3.\end{cases} \\
       \mathcal{\tilde P}_{k}^{t,j-m} &\coloneqq \begin{cases}
    \mathcal{P}_{\utilde_{k+1}^{(t-1,j-m)} \otimes \utilde_{k+2}^{(t-1,j-m)}} & k =1; \\
     \mathcal{P}_{\utilde_{k+1}^{(t-1,j-m)} \otimes \utilde_{k+2}^{(t,j-m)}} & k =2; \\
      \mathcal{P}_{\utilde_{k+1}^{(t,j-m)} \otimes \utilde_{k+2}^{(t,j-m)}} & k =3.
    \end{cases} \\
\mathbf{L}_{k}^{(t)} &\coloneqq \U_{k\perp} \U_{k\perp}\t \mathbf{Z}_k  \mathcal{\hat P}_k^{(t)} \mathbf{T}_k\t \uhat^{(t-1)}_k ( \mathbf{\hat{\Lambda}}_k^{(t-1)})^{-2}; \\
\mathbf{Q}_{k}^{(t)} &\coloneqq 
\U_{k\perp} \U_{k\perp}\t \mathbf{Z}_k  \mathcal{\hat P}_k^{(t)} \mathbf{Z}_k\t \hat{\U}_k^{(t-1)} (\mathbf{\hat{\Lambda}}_k^{(t-1)})^{-2}  \\
    \tau_k &\coloneqq\sup_{\substack{ \| \mathbf{U}_1 \| = 1, \mathrm{rank}(\U_1) \leq 2 r_{k+1}\\ \|\mathbf{U}_2\| =1, \mathrm{rank}(\U_2) \leq 2 r_{k+2}} } \|  \mathbf{Z}_k \bigg( \mathcal{P}_{\mathbf{U}_1} \otimes \mathcal{P}_{\mathbf{U}_2} \bigg)\|; \\  
    \xi_{k}^{(t,j-m)} &\coloneqq 
    \bigg\| \bigg(\mathbf{Z}_k^{j-m} - \mathbf{Z}_k \bigg) \mathcal{\tilde P}_{k}^{t,j-m}  \bigg\| \\
    \tilde \xi_{k}^{(t,j-m)} &\coloneqq   \bigg\| \bigg(\mathbf{Z}_k^{j-m} - \mathbf{Z}_k \bigg) \mathcal{\tilde P}_{k}^{t,j-m}  \mathbf{V}_k  \bigg\| \\
     \eta_{k}^{(t,j-m)} &\coloneqq 
     \begin{cases}
         \| \sin\Theta( \utilde_{k+1}^{(t-1,j-m)}, \uhat_{k+1}^{(t-1)}) \| + \| \sin\Theta( \utilde_{k+2}^{(t-1,j-m)}, \uhat_{k+2}^{(t-1)}) \| & k = 1  \\
          \| \sin\Theta( \utilde_{k+1}^{(t-1,j-m)}, \uhat_{k+1}^{(t-1)}) \| + \| \sin\Theta( \utilde_{k+2}^{(t,j-m)}, \uhat_{k+2}^{(t)}) \| & k = 2  \\
           \| \sin\Theta( \utilde_{k+1}^{(t,j-m)}, \uhat_{k+1}^{(t)}) \| + \| \sin\Theta( \utilde_{k+2}^{(t,j-m)}, \uhat_{k+2}^{(t)}) \| & k = 3  \end{cases} \\
    \eta_k^{(t)} &\coloneqq \begin{cases}\| \sin\Theta( \U_{k+1}, \uhat_{k+1}^{(t-1)}) \| + \| \sin\Theta( \U_{k+2}, \uhat_{k+2}^{(t-1)}) \| & k = 1 \\
        \| \sin\Theta( \U_{k+1}, \uhat_{k+1}^{(t-1)}) \| + \| \sin\Theta( \U_{k+2}, \uhat_{k+2}^{(t)}) \| & k = 2 \\
        \| \sin\Theta( \U_{k+1}, \uhat_{k+1}^{(t)}) \| + \| \sin\Theta( \U_{k+2}, \uhat_{k+2}^{(t)}) \| & k = 3
    \end{cases}.
\end{align*}

We will also need to define several probabilistic events.
The first event $\mathcal{E}_{\mathrm{Good}}$ collects several probabilistic bounds that hold independently of $t$, provided $t_{\max} \leq \textcolor{black}{c p}$ for some constant $c$:
\begin{align*}
    \mathcal{E}_{\mathrm{Good}} &\coloneqq \bigg\{ \max_k \tau_k \leq C \sqrt{pr} \bigg\} \bigcap \bigg\{ \| \sin\Theta(\uhat_k^{(t)}, \U_k ) \| \leq \frac{\dl\ku}{\lambda} + \frac{1}{2^{t}} \text{ for all $t \leq t_{\max}$ and $1\leq k \leq 3$ } \bigg\} \\
    &\qquad \bigcap \bigg\{ \max_k \bigg\| \U_k\t \mathbf{Z}_k \mathbf{V}_k \bigg\| \leq C \left( \sqrt{r} + \sqrt{\log(p)} \right) \bigg\}; \\
    &\qquad \bigcap \bigg\{ \max_k  \bigg\| \U_k\t \mathbf{Z}_k \mathcal{P}_{\U_{k+1}}  \otimes   \mathcal{P}_{\U_{k+2}}  \bigg\| \leq C \left( r + \sqrt{\log(p)} \right) \bigg\}; \\
    &\qquad \bigcap \bigg\{ \max_k \bigg\| \mathbf{Z}_k \mathbf{V}_k \bigg\| \leq C \sqrt{p_k} \bigg\}. \numberthis \label{Egood}
    \end{align*}
\cref{lem:Egood}  demonstrates that the event $\mathcal{E}_{{\mathrm{Good}}}$ holds with probability at least $1 - O(p^{-30})$. We now define several events we use in our induction argument.  Set
\begin{align*}
    \mathcal{E}_{2,\infty}^{t,k} &\coloneqq \bigg\{  \| \uhat_k^{(t)} - \U_k \mathbf{W}_k^{(t)} \|_{2,\infty} \leq \bigg( \frac{\dl\ku}{\lambda} + \frac{1}{2^{t}} \bigg) \mu_0 \sqrt{\frac{r_k}{p_k}} \bigg\}; \\
    \mathcal{E}_{j-m}^{t,k} &\coloneqq \bigg\{ \| \sin\Theta (\utilde_k^{t,j-m}, \uhat_k^{(t)}) \| \leq  \bigg( \frac{\dl\ku}{\lambda} + \frac{1}{2^{t}} \bigg) \mu_0 \sqrt{\frac{r_k}{p_j}} \bigg\}; \\
  \mathcal{E}_{\mathrm{main}}^{t_0-1,1} &\coloneqq \bigcap_{t=1}^{t_0-1} \Bigg\{ \bigcap_{k=1}^{3}  \mathcal{E}^{t,k}_{2,\infty} \cap  \bigcap_{j=1}^{3} \bigcap_{m=1}^{p_j} \mathcal{E}_{k-m}^{t,j} \Bigg\}; \\
       \mathcal{E}_{\mathrm{main}}^{t_0-1,2} &\coloneqq\mathcal{E}_{\mathrm{main}}^{t_0-1,1} \cap \bigg\{ \bigcap_{k=1}^{3} \bigcap_{m = 1}^{p_k} \mathcal{E}_{k-m}^{t_0,1} \bigg\} \cap \mathcal{E}_{2,\infty}^{t_0,1} \\
        \mathcal{E}_{\mathrm{main}}^{t_0-1,3} &\coloneqq\mathcal{E}_{\mathrm{main}}^{t_0-1,2} \cap \bigg\{ \bigcap_{k=1}^{3} \bigcap_{m = 1}^{p_k} \mathcal{E}_{k-m}^{t_0,2} \bigg\} \cap \mathcal{E}_{2,\infty}^{t_0,2}.
  \end{align*}
  The event $\mathcal{E}^{t,k}_{2,\infty}$ concerns the desired bound, the event $\mathcal{E}^{t,k}_{j-m}$ controls the leave one out sequences, and the other events $\mathcal{E}_{\mathrm{main}}^{t_0-1,k}$ are simply the intersection of these events, mainly introduced for convenience.
  
  Finally, the following event concerns the incoherence of our leave-one-out sequences:
  \begin{align*}
        \mathcal{\tilde E}_{j-m}^{t,k} &\coloneqq \Bigg\{ \|\mathcal{\tilde P}_k^{t_0,j-m}  \mathbf{V}_k \|_{2,\infty} \leq c \mu_0^2 \frac{\sqrt{r_{-k}}}{p_j} \bigg( \frac{\dl^{(k+1)}}{\lambda} + \frac{1}{2^{t_0-1}}\bigg)\bigg( \frac{\dl^{(k+2)}}{\lambda} + \frac{1}{2^{t_0-1}}\bigg) \\
     &\quad + c \mu_0^2 \frac{\sqrt{r_{-k}}}{\sqrt{p_jp_{k+2}}} \bigg( \frac{\dl^{(k+1)}}{\lambda} + \frac{1}{2^{t_0-1}} \bigg) + c \mu_0^2 \frac{\sqrt{r_{-k}}}{\sqrt{p_j p_{k+1}}} \bigg( \frac{\dl^{(k+2)}}{\lambda} + \frac{1}{2^{t_0-1}} \bigg)\\
     &\quad + c \mu_0^2 \frac{\sqrt{r_{-k}}}{\sqrt{p_{-k}}} \bigg( \frac{\dl^{(k+1)}}{\lambda} + \frac{1}{2^{t_0-1}} \bigg) + c\mu_0^2 \frac{\sqrt{r_{-k}}}{\sqrt{p_{-k}}} \bigg( \frac{\dl^{(k+2)}}{\lambda} + \frac{1}{2^{t_0-1}} \bigg) + c \mu_0 \sqrt{\frac{r_k}{p_{-k}}}.\Bigg\} \\
     &\bigcap \Bigg\{ \|\mathcal{\tilde P}_k^{t_0,j-m}   \|_{2,\infty} \leq c \mu_0^2 \frac{\sqrt{r_{-k}}}{p_j}  \bigg( \frac{\dl^{(k+1)}}{\lambda} + \frac{1}{2^{t_0-1}} \bigg)  \bigg( \frac{\dl^{(k+2)}}{\lambda} + \frac{1}{2^{t_0-1}} \bigg) \\
    &\quad +  c\mu_0^2 \frac{\sqrt{r_{-k}}}{\sqrt{p_jp_{k+2}}} \bigg( \frac{\dl^{(k+1)}}{\lambda} + \frac{1}{2^{t_0-1}} \bigg) + c\mu_0^2 \frac{\sqrt{r_{-k}}}{\sqrt{p_jp_{k+1}}} \bigg( \frac{\dl^{(k+2)}}{\lambda} + \frac{1}{2^{t_0-1}} \bigg) + c\mu_0^2 \frac{\sqrt{r_{-k}}}{\sqrt{p_{-k}}}.\Bigg\},
  \end{align*}
  where $c$ is some deterministic constant.  
  While the precise definition of the event $\mathcal{\tilde E}_{j-m}^{t,k}$ is complicated, it is useful to keep in mind that the event will be used as an event independent of the nonzero elements in the matrices $\mathbf{Z}_k - \mathbf{Z}_k^{j-m}$, and it simply controls the incoherence of the leave-one-out sequences.  
  
  The following lemma shows that the leave-one-out sequences are incoherent whenever there are bounds on the previous iterates in $\ell_{2,\infty}$ norm.
\begin{lemma}\label{lem:eventintersection}
For any fixed $t_0$, $j$, $k$, and $m$ with $1 \leq t_0 \leq t_{\max}$, $1 \leq j \leq 3$, $1 \leq k \leq 3$, and $1 \leq m \leq p_{j}$, it holds that the set
\begin{align*}
   \mathcal{E}_{\mathrm{Good}} \cap \mathcal{E}_{\mathrm{main}}^{t_0-1,k} \cap \left( \mathcal{\tilde E}_{j-m}^{t_0,k} \right)^c
\end{align*}
is empty.
\end{lemma}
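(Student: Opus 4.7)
The plan is to show that the incoherence bounds defining $\mathcal{\tilde E}_{j-m}^{t_0,k}$ hold deterministically on $\mathcal{E}_{\mathrm{Good}} \cap \mathcal{E}_{\mathrm{main}}^{t_0-1,k}$, so that the triple intersection in the statement is vacuous. The starting observation is that $\mathcal{\tilde P}_k^{t_0,j-m} = (\utilde_{k+1}^{(\cdot,j-m)} \otimes \utilde_{k+2}^{(\cdot,j-m)})(\utilde_{k+1}^{(\cdot,j-m)} \otimes \utilde_{k+2}^{(\cdot,j-m)})^{\top}$, where the iteration superscripts are determined by $k$ via the definition. Since $\|\mathbf{B}\mathbf{B}^{\top}\|_{2,\infty} \leq \|\mathbf{B}\|_{2,\infty}$ for $\mathbf{B}$ with orthonormal columns, and since $\|\mathbf{A}\otimes\mathbf{C}\|_{2,\infty} = \|\mathbf{A}\|_{2,\infty}\|\mathbf{C}\|_{2,\infty}$, both bounds in $\mathcal{\tilde E}_{j-m}^{t_0,k}$ reduce to controlling the product $\|\utilde_{k+1}^{(\cdot,j-m)}\|_{2,\infty} \cdot \|\utilde_{k+2}^{(\cdot,j-m)}\|_{2,\infty}$. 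The bound on $\|\mathcal{\tilde P}_k^{t_0,j-m}\mathbf{V}_k\|_{2,\infty}$ differs from the one on $\|\mathcal{\tilde P}_k^{t_0,j-m}\|_{2,\infty}$ only in its ``identity'' contribution: since $\mathbf{V}_k$ lies in the range of $\U_{k+1}\otimes\U_{k+2}$, the identity term $\mathcal{P}_{\U_{k+1}}\otimes\mathcal{P}_{\U_{k+2}}\mathbf{V}_k = \mathbf{V}_k$ gives $\|\mathbf{V}_k\|_{2,\infty}\leq c\mu_0\sqrt{r_k/p_{-k}}$, which replaces the larger $\mu_0^2\sqrt{r_{-k}/p_{-k}}$ coming from $\|\mathcal{P}_{\U_{k+1}}\otimes\mathcal{P}_{\U_{k+2}}\|_{2,\infty}$ that would otherwise appear.

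For each $l\in\{k+1,k+2\}$, I then bound $\|\utilde_l^{(\cdot,j-m)}\|_{2,\infty}$ via the two-step triangle inequality $\|\utilde_l\|_{2,\infty} \leq \|\U_l\|_{2,\infty} + \|\uhat_l - \U_l\mathbf{W}\|_{2,\infty} + \|\utilde_l - \uhat_l\mathbf{W}'\|_{2,\infty}$ with $\mathbf{W}'=\mathrm{sgn}(\uhat_l^{\top}\utilde_l)$. The first term contributes $\mu_0\sqrt{r_l/p_l}$ by definition of incoherence; the second is directly controlled on the event $\mathcal{E}_{2,\infty}^{\cdot,l}$, which is included in $\mathcal{E}_{\mathrm{main}}^{t_0-1,k}$ at the relevant iteration index, giving a contribution of order $(\dl^{(l)}/\lambda + 2^{-t})\mu_0\sqrt{r_l/p_l}$; and the third is converted from the $\sin\Theta$ bound on $\mathcal{E}_{j-m}^{\cdot,l}$ via the decomposition $\utilde_l - \uhat_l\mathbf{W}' = \mathcal{P}_{\uhat_{l\perp}}\utilde_l + \uhat_l(\uhat_l^{\top}\utilde_l - \mathbf{W}')$ together with the standard estimates $\|\mathcal{P}_{\uhat_{l\perp}}\utilde_l\|_{2,\infty}\leq \|\sin\Theta(\utilde_l,\uhat_l)\|$ and $\|\uhat_l^{\top}\utilde_l - \mathbf{W}'\|\lesssim \|\sin\Theta\|^2$, yielding a contribution of order $(\dl^{(l)}/\lambda + 2^{-(t_0-1)})\mu_0\sqrt{r_l/p_j}$. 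The crucial feature of this third contribution is the $\sqrt{p_j}$ denominator rather than $\sqrt{p_l}$, reflecting the leave-one-out structure and producing the cross-mode scalings in $\mathcal{\tilde E}_{j-m}^{t_0,k}$.

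Multiplying these three-term bounds for $l=k+1$ and $l=k+2$ and expanding produces nine summands, four of which are strictly dominated by the others under the mild assumption $\dl^{(l)}/\lambda + 2^{-(t_0-1)}\lesssim 1$ and are therefore absorbed; these dominated terms are the two mixed iteration/leave-one-out cross-products and the two products involving iteration-type factors on both sides. The five surviving perturbation terms, together with the identity contribution, match precisely the six summands in $\mathcal{\tilde E}_{j-m}^{t_0,k}$: the incoherence-squared product gives the stand-alone term, the two incoherence-times-leave-one-out products give the $1/\sqrt{p_jp_{k+1}}$ and $1/\sqrt{p_jp_{k+2}}$ terms, the two iteration-times-incoherence products give the two $1/\sqrt{p_{-k}}$ perturbation terms, and the doubly leave-one-out product gives the $1/p_j$ quadratic term.

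The main technical obstacle is the $\sin\Theta$-to-$\ell_{2,\infty}$ conversion in the third step: the estimate $\|\mathcal{P}_{\uhat_{l\perp}}\utilde_l\|_{2,\infty}\leq \|\sin\Theta(\utilde_l,\uhat_l)\|$ is lossy by a factor of $\|\uhat_{l\perp}\|_{2,\infty}$, but it is acceptable here precisely because the $\sin\Theta$ bound recorded in $\mathcal{E}_{j-m}^{\cdot,l}$ already carries the desired incoherent scaling $\mu_0\sqrt{r_l/p_j}$. A secondary bookkeeping burden is that the iteration indices on $\utilde_{k+1}^{(\cdot,j-m)}$ and $\utilde_{k+2}^{(\cdot,j-m)}$ depend on $k$ through the Gauss--Seidel structure of HOOI; this requires verifying in each of the cases $k=1,2,3$ that the invoked events are contained in $\mathcal{E}_{\mathrm{main}}^{t_0-1,k}$, but the nested definitions of $\mathcal{E}_{\mathrm{main}}^{t_0-1,k}$ across $k$ are arranged precisely to make the relevant iterates controlled.
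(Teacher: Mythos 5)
Your proposal is correct and matches the paper's argument: both proofs telescope the leave-one-out projection through the true iterates $\uhat_l$ and then through $\U_l$, control each difference via the $\ell_{2,\infty}$ and leave-one-out $\sin\Theta$ bounds recorded in $\mathcal{E}_{\mathrm{main}}^{t_0-1,k}$, and isolate the identity contribution $\mathcal{P}_{\U_{k+1}}\otimes\mathcal{P}_{\U_{k+2}}\mathbf{V}_k = \mathbf{V}_k$. The only cosmetic difference is that you telescope on the factor matrices $\utilde_l$ and multiply $\ell_{2,\infty}$ bounds, whereas the paper telescopes directly on the Kronecker products of projection matrices; these produce the same six summands up to absorbable lower-order cross terms.
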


\begin{proof}
Without loss of generality, we prove the result for $k = 1$; the cases  $ k =2$ and $k =3$ are similar (in fact, the result can be made slightly sharper, but this is not needed for our purposes).  

Note that when $t_0 \geq 1$, it holds that on the event $\mathcal{E}_{\mathrm{main}}^{t_0-1,1}$.
\begin{align*}
    \| \uhat_1^{(t_0-1)} \|_{2,\infty} &\leq \| \uhat_{1}^{(t_0-1)} - \U_1 \mathbf{W}_1^{(t_0-1)} \|_{2,\infty} + \| \U_1 \|_{2,\infty} \leq 2 \mu_0 \sqrt{\frac{r_1}{p_1}}.  
\end{align*}
Similarly,
\begin{align*}
\| \uhat_2^{(t_0-1)} \|_{2,\infty} &\leq 2 \mu_0 \sqrt{\frac{r_2}{p_2}}; \\
    \| \uhat_3^{(t_0-1)} \|_{2,\infty} &\leq 2 \mu_0 \sqrt{\frac{r_3}{p_3}}.
\end{align*}
In addition, on this event it holds that
\begin{align*}
    \| \mathcal{P}_{\utilde_2^{(t_0-1,j-m)}} - \mathcal{P}_{\uhat_2^{(t_0-1)}} \| &\leq 2\bigg( \frac{\dl^{(2)}}{\lambda} + \frac{1}{2^{t_0-1}} \bigg) \mu_0 \sqrt{\frac{r_2}{p_j}}; \\
    \| \mathcal{P}_{\utilde_3^{(t_0-1,j-m)}} - \mathcal{P}_{\uhat_3^{(t_0-1)}} \| &\leq2 \bigg( \frac{\dl^{(3)}}{\lambda} + \frac{1}{2^{t_0-1}} \bigg) \mu_0 \sqrt{\frac{r_3}{p_j}}. \numberthis \label{loobnd}
\end{align*}
Next, observe that on the events listed,
\begin{align*}
    \| \mathcal{P}_{\utilde_{2}^{(t_0-1,j-m)}}&  \otimes \mathcal{P}_{ \utilde_{3}^{(t_0-1,j-m)}} \mathbf{V}_1 \|_{2,\infty} \\
    &\leq \bigg\|\bigg[ \mathcal{P}_{\utilde_{2}^{(t_0-1,j-m)}} - \mathcal{P}_{\uhat_{2}^{(t_0-1)}} \bigg] \otimes \mathcal{P}_{ \utilde_{3}^{(t_0-1,j-m)}} \mathbf{V}_1 \bigg\|_{2,\infty} \\
    &+ \bigg\| \mathcal{P}_{\uhat_{2}^{(t_0-1)}} \otimes \mathcal{P}_{ \utilde_{3}^{(t_0-1,j-m)}} \mathbf{V}_1 \bigg\|_{2,\infty} \\
    &\leq \bigg\|\bigg[ \mathcal{P}_{\utilde_{2}^{(t_0-1,j-m)}} - \mathcal{P}_{\uhat_{2}^{(t_0-1)}} \bigg] \otimes \bigg[ \mathcal{P}_{ \utilde_{3}^{(t_0-1,j-m)}} - \mathcal{P}_{\uhat_{3}^{(t_0-1)}} \bigg]\mathbf{V}_1 \bigg\|_{2,\infty} \\
    &+ \bigg\|\bigg[ \mathcal{P}_{\utilde_{2}^{(t_0-1,j-m)}} - \mathcal{P}_{\uhat_{2}^{(t_0-1)}} \bigg] \otimes \mathcal{P}_{\uhat_{3}^{(t_0-1)}}\mathbf{V}_1 \bigg\|_{2,\infty} \\
    &+ \bigg\| \mathcal{P}_{\uhat_{2}^{(t_0-1)}} \otimes \bigg[  \mathcal{P}_{ \utilde_{3}^{(t_0-1,j-m)}} - \mathcal{P}_{\uhat_{3}^{(t_0-1)}} \bigg] \mathbf{V}_1 \bigg\|_{2,\infty} \\
    &+ \bigg\| \mathcal{P}_{\uhat_{2}^{(t_0-1)}} \otimes  \mathcal{P}_{\uhat_{3}^{(t_0-1)}}  \mathbf{V}_1 \bigg\|_{2,\infty} \\
    &\leq \bigg\|\bigg[ \mathcal{P}_{\utilde_{2}^{(t_0-1,j-m)}} - \mathcal{P}_{\uhat_{2}^{(t_0-1)}} \bigg] \otimes \bigg[ \mathcal{P}_{ \utilde_{3}^{(t_0-1,j-m)}} - \mathcal{P}_{\uhat_{3}^{(t_0-1)}} \bigg]\mathbf{V}_1 \bigg\|_{2,\infty} \\
    &+ \bigg\|\bigg[ \mathcal{P}_{\utilde_{2}^{(t_0-1,j-m)}} - \mathcal{P}_{\uhat_{2}^{(t_0-1)}} \bigg] \otimes \mathcal{P}_{\uhat_{2}^{(t_0-1)}}\mathbf{V}_1 \bigg\|_{2,\infty} \\
    &+ \bigg\| \mathcal{P}_{\uhat_{2}^{(t_0-1)}} \otimes \bigg[  \mathcal{P}_{ \utilde_{3}^{(t_0-1,j-m)}} - \mathcal{P}_{\uhat_{3}^{(t_0-1)}} \bigg] \mathbf{V}_1 \bigg\|_{2,\infty} \\
    &+ \bigg\| \bigg[ \mathcal{P}_{\uhat_{2}^{(t_0-1)}} - \mathcal{P}_{\U_{2}} \bigg] \otimes  \mathcal{P}_{\uhat_{3}^{(t_0-1)}}  \mathbf{V}_1 \bigg\|_{2,\infty} \\
    &+  \bigg\| \mathcal{P}_{\U_{2}}  \otimes \bigg[ \mathcal{P}_{\uhat_{3}^{(t_0-1)}} - \mathcal{P}_{\U_{3}} \bigg] \mathbf{V}_1 \bigg\|_{2,\infty} \\
    &+ \bigg\| \mathcal{P}_{\U_{2}} \otimes \mathcal{P}_{\U_{3}} \mathbf{V}_1 \bigg\|_{2,\infty} \\
    &\eqqcolon (I) + (II) + (III) + (IV) + (V) + (VI),
    \end{align*}
    where
    \begin{align*}
        (I) &\coloneqq \bigg\|\bigg[ \mathcal{P}_{\utilde_{2}^{(t_0-1,j-m)}} - \mathcal{P}_{\uhat_{2}^{(t_0-1)}} \bigg] \otimes \bigg[ \mathcal{P}_{ \utilde_{3}^{(t_0-1,j-m)}} - \mathcal{P}_{\uhat_{3}^{(t_0-1)}} \bigg]\mathbf{V}_1 \bigg\|_{2,\infty}; \\
     (II) : &= \bigg\|\bigg[ \mathcal{P}_{\utilde_{2}^{(t_0-1,j-m)}} - \mathcal{P}_{\uhat_{2}^{(t_0-1)}} \bigg] \otimes \mathcal{P}_{\uhat_{3}^{(t_0-1)}}\mathbf{V}_1 \bigg\|_{2,\infty};  \\
         (III) &\coloneqq \bigg\| \mathcal{P}_{\uhat_{2}^{(t_0-1)}} \otimes \bigg[  \mathcal{P}_{ \utilde_{3}^{(t_0-1,j-m)}} - \mathcal{P}_{\uhat_{3}^{(t_0-1)}} \bigg] \mathbf{V}_1 \bigg\|_{2,\infty};  \\
         (IV)  &\coloneqq \bigg\| \bigg[ \mathcal{P}_{\uhat_{2}^{(t_0-1)}} - \mathcal{P}_{\U_{2}} \bigg] \otimes  \mathcal{P}_{\uhat_{3}^{(t_0-1)}}  \mathbf{V}_1 \bigg\|_{2,\infty}; \\
        (V)&\coloneqq\bigg\| \mathcal{P}_{\U_{2}}  \otimes \bigg[ \mathcal{P}_{\uhat_{3}^{(t_0-1)}} - \mathcal{P}_{\U_{3}} \bigg] \mathbf{V}_1 \bigg\|_{2,\infty}; \\
        (VI) &\coloneqq \bigg\| \mathcal{P}_{\U_{2}} \otimes \mathcal{P}_{\U_{3}} \mathbf{V}_1 \bigg\|_{2,\infty}.
    \end{align*}
We now bound each term in turn, where we will use \eqref{loobnd} repeatedly.  We have that
\begin{align*}
    (I) &\leq  \bigg\|\bigg[ \mathcal{P}_{\utilde_{2}^{(t_0-1,j-m)}} - \mathcal{P}_{\uhat_{2}^{(t_0-1)}} \bigg] \otimes \bigg[ \mathcal{P}_{ \utilde_{3}^{(t_0-1,j-m)}} - \mathcal{P}_{\uhat_{3}^{(t_0-1)}} \bigg] \bigg\|_{2,\infty} \\
    &\leq  \bigg\| \mathcal{P}_{\utilde_{2}^{(t_0-1,j-m)}} - \mathcal{P}_{\uhat_{2}^{(t_0-1)}} \bigg\| \bigg\| \mathcal{P}_{ \utilde_{3}^{(t_0-1,j-m)}} - \mathcal{P}_{\uhat_{3}^{(t_0-1)}}  \bigg\|\\ 
    &\leq 4 \mu_0^2 \frac{\sqrt{r_2r_3}}{p_j} \bigg( \frac{\dl^{(2)}}{\lambda} + \frac{1}{2^{t_0-1}} \bigg) \bigg( \frac{\dl^{(3)}}{\lambda} + \frac{1}{2^{t_0-1}} \bigg). \numberthis \label{onebound}
\end{align*}
Similarly,
\begin{align*}
    (II) &\leq  \bigg\|\bigg[ \mathcal{P}_{\utilde_{2}^{(t_0-1,j-m)}} - \mathcal{P}_{\uhat_{2}^{(t_0-1)}} \bigg] \otimes \mathcal{P}_{\uhat_{3}^{(t_0-1)}} \bigg\|_{2,\infty} \\
    &\leq   \bigg\| \mathcal{P}_{\utilde_{2}^{(t_0-1,j-m)}} - \mathcal{P}_{\uhat_{2}^{(t_0-1)}} \bigg\| \bigg\| \mathcal{P}_{\uhat_{3}^{(t_0-1)}} \bigg\|_{2,\infty} \\
    &\leq 4\bigg( \frac{\dl^{(2)}}{\lambda} + \frac{1}{2^{t_0-1}} \bigg) \mu_0^2 \sqrt{\frac{r_2}{p_j}}  \sqrt{\frac{r_3}{p_3}}.
\end{align*}
Next,
\begin{align*}
    (III) &\leq  \bigg\| \mathcal{P}_{\uhat_{2}^{(t_0-1)}} \otimes \bigg[  \mathcal{P}_{ \utilde_{3}^{(t_0-1,j-m)}} - \mathcal{P}_{\uhat_{3}^{(t_0-1)}} \bigg]  \bigg\|_{2,\infty} \\
    &\leq4 \bigg( \frac{\dl^{(3)}}{\lambda} + \frac{1}{2^{t_0-1}} \bigg) \mu_0^2 \sqrt{\frac{r_3}{p_j}} \sqrt{\frac{r_2}{p_2}}.
\end{align*}
For the next two terms, we note that
for any orthogonal matrix $\mathbf{W} \in \mathbb{O}(r_k) $,
\begin{align*}
    \| \uhat_{2}^{(t_0-1)} \uhat_{2}^{t_0-1\top} - \U_{2} \U_{2}\t \|_{2,\infty} &= \| \uhat_{2}^{(t_0-1)} \mathbf{W} \mathbf{W}\t \uhat_{2}^{t_0-1\top} - \U_{2} \U_{2}\t \|_{2,\infty}  \\
    &\leq \| (\uhat_{2}^{(t_0-1)} \mathbf{W} - \U_{2} ) (\uhat_{2}^{(t_0-1)} \mathbf{W})\t \|_{2,\infty} \\
    &\qquad + \| \U_{2} ( \uhat_{2}^{(t_0-1)} \mathbf{W} - \U_{2} ) \|_{2,\infty} \\
    &\leq \| \uhat_{2}^{(t_0-1)} \mathbf{W} - \U_{2} \|_{2,\infty}  \\
    &\qquad + \| \U_{2} \|_{2,\infty} \| \uhat_{2}^{(t_0-1)} \mathbf{W} - \U_{2} \|. \end{align*}
By taking the infimum over $\mathbb{O}(r_{2})$, we note that by Proposition 1 of \cite{cai_rate-optimal_2018}
\begin{align*}
\inf_{\mathbf{W} \in \mathbb{O}(r_{2})} \| \uhat_{2}^{(t_0-1)} \mathbf{W} - \U_{2} \| &\leq \sqrt{2} \| \sin\Theta( \uhat_{2}^{(t_0-1)}, \U_{2}) \| \\
&\leq \sqrt{2} \bigg( \frac{\dl^{(2)}}{\lambda} + \frac{1}{2^{t_0-1}} \bigg),
\end{align*}
where the final inequality is on the event $\mathcal{E}_{\mathrm{Good}}$ since $t_0\leq t_{\max}$.  We also note that on the event $\mathcal{E}_{\mathrm{main}}^{t_0-1,1}$ and the right-invariance of $\|\cdot\|_{2,\infty}$ to orthogonal matrices,
\begin{align*}
    \inf_{\mathbf{W} \in \mathbb{O}(r_{2})} \| \uhat_{2}^{(t_0-1)}\mathbf{W} - \U_{2} \|_{2,\infty} &\leq \inf_{\mathbf{W} \in \mathbb{O}(r_{2})} \| \uhat_{2}^{(t_0-1)} - \U_{2}\mathbf{W} \|_{2,\infty} \\
    &\leq \| \uhat_{2}^{(t_0-1)} - \U_{2} \mathbf{W}_2^{(t_0-1)} \|_{2,\infty} \\
    &\leq \mu_0 \sqrt{\frac{r_2}{p_2}} \bigg( \frac{\dl^{((2)}}{\lambda} + \frac{1}{2^{t_0-1}} \bigg).
\end{align*}
Therefore,
\begin{align*}
   \| \mathcal{P}_{\uhat_{2}^{(t_0-1)}} - \mathcal{P}_{\U_{2}} \|_{2,\infty} &\leq 3 \mu_0 \sqrt{\frac{r_2}{p_2}} \bigg( \frac{\dl^{(2)}}{\lambda} + \frac{1}{2^{t_0-1}} \bigg).
\end{align*}
Similarly,
\begin{align*}
     \| \mathcal{P}_{\uhat_{3}^{(t_0-1)}} - \mathcal{P}_{\U_{3}} \|_{2,\infty} &\leq 3 \mu_0 \sqrt{\frac{r_3}{p_3}} \bigg( \frac{\dl^{(3)}}{\lambda} + \frac{1}{2^{t_0-1}} \bigg).
\end{align*}
Therefore,
\begin{align*}
         (IV)  &\leq \bigg\| \mathcal{P}_{\uhat_{2}^{(t_0-1)}} - \mathcal{P}_{\U_{2}} \bigg\|_{2,\infty} \|  \mathcal{P}_{\uhat_{3}^{(t_0-1)}}  \|_{2,\infty}; \\
         &\leq 6 \mu_0^2 \sqrt{\frac{r_3}{p_3}}\sqrt{\frac{r_2}{p_2}}\bigg( \frac{\dl^{(2)}}{\lambda} + \frac{1}{2^{t_0-1}} \bigg) ; \\
         (V) &\leq\bigg\| \mathcal{P}_{\U_{2}}  \otimes \bigg[ \mathcal{P}_{\uhat_{3}^{(t_0-1)}} - \mathcal{P}_{\U_{3}} \bigg]  \bigg\|_{2,\infty} \\
         &\leq 3\mu_0^2 \sqrt{\frac{r_2}{p_2}}  \sqrt{\frac{r_3}{p_3}} \bigg( \frac{\dl^{(3)}}{\lambda} + \frac{1}{2^{t_0-1}} \bigg).
\end{align*}
Finally,
\begin{align*}
        (VI) &\coloneqq \bigg\| \mathcal{P}_{\U_{2}} \otimes \mathcal{P}_{\U_{3}} \mathbf{V}_1 \bigg\|_{2,\infty} \\
        &= \| \mathbf{V}_1 \|_{2,\infty} \\
        &\leq \mu_0 \sqrt{\frac{r_1}{p_{-1}}}.
\end{align*}
Plugging all of these bounds in we obtain (with $c = 6$)
\begin{align*}
     \| \mathcal{P}_{\utilde_{2}^{(t_0-1,j-m)}}  \otimes \mathcal{P}_{ \utilde_{3}^{(t_0-1,j-m)}} \mathbf{V}_1 \|_{2,\infty} &\leq c \Bigg\{ \mu_0^2 \frac{\sqrt{r_2r_3}}{p_j} \bigg( \frac{\dl^{(2)}}{\lambda} + \frac{1}{2^{t_0-1}}\bigg)\bigg( \frac{\dl^{(3)}}{\lambda} + \frac{1}{2^{t_0-1}}\bigg); \\
     &\quad + \mu_0^2 \frac{\sqrt{r_2r_3}}{\sqrt{p_jp_3}} \bigg( \frac{\dl^{(2)}}{\lambda} + \frac{1}{2^{t_0-1}} \bigg) + \mu_0^2 \frac{\sqrt{r_2 r_3}}{\sqrt{p_j p_2}} \bigg( \frac{\dl^{(3)}}{\lambda} + \frac{1}{2^{t_0-1}} \bigg)\\
     &\quad +  \mu_0^2 \frac{\sqrt{r_2r_3}}{\sqrt{p_2p_3}} \bigg( \frac{\dl^{(2)}}{\lambda} + \frac{1}{2^{t_0-1}} \bigg) + \mu_0^2 \frac{\sqrt{r_2r_3}}{\sqrt{p_2p_3}} \bigg( \frac{\dl^{(3)}}{\lambda} + \frac{1}{2^{t_0-1}} \bigg) \\
     &\quad + \mu_0 \sqrt{\frac{r_1}{p_{-1}}} \Bigg\}.
\end{align*}
This shows that the first part of the event in $ \mathcal{\tilde E}_{j-m}^{t_0,1}$ must hold.  For the second part of the event, we note that
\begin{align*}
    \| \mathcal{P}_{\utilde_{2}^{(t_0-1,j-m)} \otimes \utilde_{3}^{(t_0-1,j-m)}} \|_{2,\infty} &\leq \|  \bigg( \mathcal{P}_{\utilde_{2}^{(t_0-1,j-m)}} - \mathcal{P}_{\uhat_{2}^{(t_0-1)}} \bigg) \otimes \mathcal{P}_{\utilde_{3}^{(t_0-1,j-m)}} \|_{2,\infty} \\
    &\qquad + \| \mathcal{P}_{\uhat_{2}^{(t_0-1)}}  \otimes \mathcal{P}_{\utilde_{3}^{(t_0-1,j-m)}} \|_{2,\infty} \\
    &\leq  \|  \bigg( \mathcal{P}_{\utilde_{2}^{(t_0-1,j-m)}} - \mathcal{P}_{\uhat_{2}^{(t_0-1)}} \bigg) \otimes \bigg(  \mathcal{P}_{\utilde_{3}^{(t_0-1,j-m)}} - \mathcal{P}_{\uhat_{3}^{(t_0-1)}} \bigg) \|_{2,\infty} \\
    &\qquad + \|  \bigg( \mathcal{P}_{\utilde_{2}^{(t_0-1,j-m)}} - \mathcal{P}_{\uhat_{2}^{(t_0-1)}} \bigg) \otimes  \mathcal{P}_{\uhat_{3}^{(t_0-1)}} \|_{2,\infty} \\
    &\qquad + \| \mathcal{P}_{\uhat_{2}^{(t_0-1)}}  \otimes \bigg( \mathcal{P}_{\utilde_{3}^{(t_0-1,j-m)}} - \mathcal{P}_{\uhat_{3}^{(t_0-1)}}\bigg)  \|_{2,\infty} \\
    &\qquad + \| \mathcal{P}_{\uhat_{2}^{(t_0-1)}}  \otimes \mathcal{P}_{\uhat_{3}^{(t_0-1)}} \|_{2,\infty} \\
    &\leq 4\mu_0^2 \frac{\sqrt{r_2 r_3}}{p_j}  \bigg( \frac{\dl^{(2)}}{\lambda} + \frac{1}{2^{t_0-1}} \bigg)  \bigg( \frac{\dl^{(3)}}{\lambda} + \frac{1}{2^{t_0-1}} \bigg) \\
    &\quad +  4\mu_0^2 \frac{\sqrt{r_2 r_3}}{\sqrt{p_jp_3}} \bigg( \frac{\dl^{(2)}}{\lambda} + \frac{1}{2^{t_0-1}} \bigg) + 4\mu_0^2 \frac{\sqrt{r_2r_3}}{\sqrt{p_jp_2}} \bigg( \frac{\dl^{(3)}}{\lambda} + \frac{1}{2^{t_0-1}} \bigg) \\
    &\quad + 4\mu_0^2 \frac{\sqrt{r_2 r_3}}{\sqrt{p_2p_3}}.
\end{align*}
where we used the fact that $\|\uhat_{3}^{(t_0-1)} \|_{2,\infty} \leq 2 \mu_0 \sqrt{\frac{r_3}{p_3}}$ on the events in question, and similarly for $\uhat_2^{(t_0-1)}$.  This  shows the second part of the event must hold, which completes the proof.
\end{proof}


\begin{lemma}[Proximity of the Leave-one-out Sequence on a good event] \label{lem:leaveoneout_goodevent}
Let $1 \leq j \leq 3$ and $1 \leq m \leq p_j$ be fixed.  Then
\begin{align*}
    \p\Bigg\{  \bigg\{ \|  \sin\Theta(\uhat_{k}^{t_0}, \utilde_k^{t_0,j-m})\| \geq \bigg(\frac{\dl\ku}{\lambda} + \frac{1}{2^{t_0}}\bigg) \mu_0 \sqrt{\frac{r_k}{p_j}} \bigg\} \bigcap \mathcal{E}_{\mathrm{Good}} \bigcap \mathcal{E}_{\mathrm{main}}^{t_0-1,k} \Bigg\} &\leq p^{-29}.
\end{align*}
\end{lemma}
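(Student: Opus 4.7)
The plan is to combine the deterministic bound of Lemma \ref{lem:leaveoneoutsintheta} with probabilistic estimates on each of its six terms, valid on the intersection of good events. By Lemma \ref{lem:eventintersection}, on $\mathcal{E}_{\mathrm{Good}} \cap \mathcal{E}_{\mathrm{main}}^{t_0-1,k}$ the incoherence event $\mathcal{\tilde E}_{j-m}^{t_0,k}$ holds automatically, so I may treat $\mathcal{\tilde P}_k^{t_0,j-m}$ and $\mathcal{\tilde P}_k^{t_0,j-m}\mathbf{V}_k$ as incoherent in the precise sense given there. Moreover, on $\mathcal{E}_{\mathrm{Good}}$ one has $\tau_k \lesssim \sqrt{pr}$, and summing the two leave-one-out $\sin\Theta$ bounds embedded in $\mathcal{E}_{\mathrm{main}}^{t_0-1,k}$ gives the control
\[
\eta_k^{(t_0,j-m)} \lesssim \Bigl(\tfrac{\dl}{\lambda} + \tfrac{1}{2^{t_0-1}}\Bigr)\mu_0 \sqrt{\tfrac{r}{p_j}},
\]
so these quantities are ``free'' once the good events are intersected in.

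The first, deterministic, step is to handle the terms in the bound of Lemma \ref{lem:leaveoneoutsintheta} that involve only $\tau_k$ and $\eta_k^{(t_0,j-m)}$, namely $\tfrac{\kappa \tau_k}{\lambda}\eta_k^{(t_0,j-m)}$ and $\tfrac{\tau_k^2}{\lambda^2}\eta_k^{(t_0,j-m)}$. Using the SNR assumption $\lambda \gtrsim \kappa\sqrt{\log(p)}\,p_{\max}/p_{\min}^{1/4}$ together with $r \lesssim p_{\min}^{1/2}$ and $\kappa^2 \lesssim p_{\min}^{1/4}$, both prefactors $\kappa\tau_k/\lambda$ and $\tau_k^2/\lambda^2$ are strictly less than $1/4$. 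Thus these two terms contribute a geometrically contracting multiple of $(\dl/\lambda + 2^{-(t_0-1)})\mu_0\sqrt{r/p_j}$, which collapses into the target bound $(\dl^{(k)}/\lambda + 2^{-t_0})\mu_0\sqrt{r_k/p_j}$ and produces the halving at each iteration.

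The genuinely probabilistic part is the control of $\xi_k^{t_0,j-m} = \|(\mathbf{Z}_k - \mathbf{Z}_k^{j-m})\mathcal{\tilde P}_k^{t_0,j-m}\|$ and $\tilde\xi_k^{t_0,j-m} = \|(\mathbf{Z}_k - \mathbf{Z}_k^{j-m})\mathcal{\tilde P}_k^{t_0,j-m}\mathbf{V}_k\|$. The key observation is that the nonzero entries of $\mathbf{Z}_k - \mathbf{Z}_k^{j-m}$ are exactly the $p_{-j}$ independent subgaussians comprising the $m$th row of $\mathcal{M}_j(\mathcal{Z})$, and by the construction of the leave-one-out sequence these entries are independent of $\mathcal{\tilde P}_k^{t_0,j-m}$. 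Conditional on $\mathcal{\tilde P}_k^{t_0,j-m}$, each of $\xi_k^{t_0,j-m}$ and $\tilde\xi_k^{t_0,j-m}$ is the spectral norm of a matrix with a single nontrivial row/column whose entries are a subgaussian linear image of an incoherent projection matrix; a standard chaining/$\varepsilon$-net argument in the spirit of Lemma \ref{lem:taubound}, together with the row norm bounds supplied by $\mathcal{\tilde E}_{j-m}^{t_0,k}$, yields with probability at least $1 - p^{-30}$ the bounds $\xi_k^{t_0,j-m} \lesssim \mu_0\sqrt{r \log(p)/p_j}\cdot\sqrt{pr}$ and $\tilde\xi_k^{t_0,j-m} \lesssim \mu_0\sqrt{r\log(p)/p_j}\cdot\sqrt{\log(p)}$ (up to lower-order terms proportional to the nested $\dl/\lambda$ summands in $\mathcal{\tilde E}_{j-m}^{t_0,k}$).

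Finally, I assemble: plugging the above into the bound of Lemma \ref{lem:leaveoneoutsintheta}, the $\tfrac{\kappa}{\lambda}\tilde\xi_k^{t_0,j-m}$ term produces the $\tfrac{\dl^{(k)}}{\lambda}\mu_0\sqrt{r_k/p_j}$ part of the target (since $\tfrac{\kappa\sqrt{\log p}}{\lambda} \asymp \tfrac{1}{\dl^{(k)}}\cdot\tfrac{\dl^{(k)}}{\lambda}$ and $\dl^{(k)} = C_0 \kappa\sqrt{p_k\log p}$), while all terms mixing $\xi$ with $\tau$ or $\eta$ are controlled by the SNR and incoherence conditions to be lower-order. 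Collecting the six contributions and taking a union bound over the concentration events (at most a polynomial-in-$p$ number of such events are invoked) gives the overall failure probability $\leq p^{-29}$. The main obstacle is the bookkeeping: the event $\mathcal{\tilde E}_{j-m}^{t_0,k}$ contains several additive incoherence terms, each with its own $(\dl/\lambda + 2^{-t_0})$ factor, and one must check that after multiplication by $\tau_k$, $\sqrt{\log p}$, and the relevant $\kappa/\lambda$ prefactor, every such term is dominated by $(\dl^{(k)}/\lambda + 2^{-t_0})\mu_0\sqrt{r_k/p_j}$ under the standing SNR and rank hypotheses; this is where the slight slack in the SNR condition (the extra $\sqrt{\log p}$ and $\kappa$) is actually consumed.
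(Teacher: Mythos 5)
Your proposal is essentially the paper's argument: start from the deterministic bound of Lemma~\ref{lem:leaveoneoutsintheta}, absorb the $\tau_k$- and $\eta$-only terms using the SNR condition and the bounds available on $\mathcal{E}_{\mathrm{Good}}\cap\mathcal{E}_{\mathrm{main}}^{t_0-1,k}$, use Lemma~\ref{lem:eventintersection} to drop into the independence-friendly event $\mathcal{\tilde E}_{j-m}^{t_0,k}$, concentrate $\xi$ and $\tilde\xi$ conditionally, and union bound. One small correction: the concentration of $\xi_k^{t_0,j-m}$ and $\tilde\xi_k^{t_0,j-m}$ should not go through an $\varepsilon$-net \`a la Lemma~\ref{lem:taubound} (which would only yield a spectral-norm $\|\mathcal{\tilde P}_k^{t_0,j-m}\|=1$ dependence and hence a bound of order $\sqrt{p_{-j}}$, far too large), but through the row-sparse matrix-Bernstein argument of Lemma~\ref{lem:matricizationrowbound}, which produces the crucial factor $\|\mathcal{\tilde P}_k^{t_0,j-m}\|_{2,\infty}$ (respectively $\|\mathcal{\tilde P}_k^{t_0,j-m}\mathbf{V}_k\|_{2,\infty}$) that you then control via $\mathcal{\tilde E}_{j-m}^{t_0,k}$; you do invoke the right row-norm bounds, so this is a misattribution of the tool rather than a gap, and the rest of the assembly and probability bookkeeping matches the paper.
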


\begin{proof}
On the event $\mathcal{E}_{\mathrm{Good}}$ it holds that $\tau_k \leq C \sqrt{pr} \ll \lambda$ by assumption, since $r \leq C p_{\min}^{1/2}$ and $\lambda \gtrsim \kappa p/p_{\min}^{1/4} \sqrt{\log(p)}$.  Therefore, the eigengap assumption in Lemma \ref{lem:leaveoneoutsintheta} is met, so  on the event $\mathcal{E}_{\mathrm{Good}}$ it holds that
\begin{align*}
    \| \sin\Theta(\uhat_{k}^{(t)}, \utilde_k^{t,j-m}) \| 
 &\leq \frac{16 \kappa}{\lambda} \tau_k \bigg( \eta_{k}^{(t_0,j-m)} \bigg) + \frac{16 \kappa}{\lambda} \xi_k^{t_0,j-m} \bigg( \eta_{k}^{(t_0,j-m)} \bigg) \\
   &\qquad + \frac{8 \kappa}{\lambda} \tilde\xi_k^{t_0,j-m} + \frac{16}{\lambda^2} \tau_k^2 \bigg( \eta_{k}^{(t_0,j-m)}  \bigg) + \frac{8}{\lambda^2} \tau_k \xi_k^{t_0,j-m} + \frac{4}{\lambda^2} ( \xi_k^{t_0,j-m})^2,
\end{align*}
where we recall the notation
\begin{align*}
    \eta_{k}^{(t, j-m)}&\coloneqq \begin{cases}\left\|\sin \Theta\left(\tilde{\mathbf{U}}_{k+1}^{t-1, j-m}, \hat{\mathbf{U}}_{k+1}^{(t-1)}\right)\right\|+\left\|\sin \Theta\left(\tilde{\mathbf{U}}_{k+2}^{t-1, j-m}, \hat{\mathbf{U}}_{k+2}^{(t-1)}\right)\right\| & k=1; \\ \left\|\sin \Theta\left(\tilde{\mathbf{U}}_{k+1}^{t-1, j-m}, \hat{\mathbf{U}}_{k+1}^{(t-1)}\right)\right\|+\left\|\sin \Theta\left(\tilde{\mathbf{U}}_{k+2}^{t, j-m}, \hat{\mathbf{U}}_{k+2}^{(t)}\right)\right\| & k=2 \\ \left\|\sin \Theta\left(\tilde{\mathbf{U}}_{k+1}^{t, j-m}, \hat{\mathbf{U}}_{k+1}^{(t)}\right)\right\|+\left\|\sin \Theta\left(\tilde{\mathbf{U}}_{k+2}^{t, j-m}, \hat{\mathbf{U}}_{k+2}^{(t)}\right)\right\| & k=3;\end{cases}; \\
    \xi_{k}^{(t, j-m)}&\coloneqq\left\|\left(\mathbf{Z}_{k}^{j-m}-\mathbf{Z}_{k}\right) \tilde{\mathcal{P}}_{k}^{t, j-m}\right\|; \\
    \tilde{\xi}_{k}^{(t, j-m)}&\coloneqq\left\|\left(\mathbf{Z}_{k}^{j-m}-\mathbf{Z}_{k}\right) \tilde{\mathcal{P}}_{k}^{t, j-m} \mathbf{V}_{k}\right\|.
\end{align*}
Similar to Lemma \ref{lem:eventintersection} we now complete the proof for $k = 1$ without loss of generality (if $k = 2$ or $3$,  the proof is similar since slightly stronger bounds hold, but this again is not needed for our analysis).  On the event $\mathcal{E}_{\mathrm{Good}} \cap \mathcal{E}_{\mathrm{main}}^{t_0-1,1}$, we have the additional bounds 
 \begin{align*}
     \tau_1 &\leq C_1 \sqrt{pr}; \\
     \eta_{1}^{(t_0,j-m)} &\leq  \mu_0 \sqrt{\frac{r_2}{p_j}} \bigg( \frac{\dl^{(2)}}{\lambda} + \frac{1}{2^{t_0-1}} \bigg)+ \mu_0 \sqrt{\frac{r_3}{p_j}} \bigg( \frac{\dl^{(3)}}{\lambda} + \frac{1}{2^{t_0-1}} \bigg).
 \end{align*}
 Plugging this in to the deterministic bound for $\|\sin\Theta(\uhat_k^{(t)}, \utilde_k^{t,j-m})\|$ above yields 
 \begin{align*}
    \| &\sin\Theta(\uhat_{1}^{(t)}, \utilde_1^{t,j-m}) \|\\ 
 &\leq \frac{16 C_1 \kappa}{\lambda} \sqrt{pr} \bigg(\mu_0 \sqrt{\frac{r_2}{p_j}} \bigg( \frac{\dl^{(2)}}{\lambda} + \frac{1}{2^{t_0-1}} \bigg)+ \mu_0 \sqrt{\frac{r_3}{p_j}} \bigg( \frac{\dl^{(3)}}{\lambda} + \frac{1}{2^{t_0-1}} \bigg) \bigg) \\
 &\quad + \frac{16 \kappa}{\lambda} \xi_k^{t_0,j-m} \bigg( \mu_0 \sqrt{\frac{r_2}{p_j}} \bigg( \frac{\dl^{(2)}}{\lambda} + \frac{1}{2^{t_0-1}} \bigg)+ \mu_0 \sqrt{\frac{r_3}{p_j}} \bigg( \frac{\dl^{(3)}}{\lambda} + \frac{1}{2^{t_0-1}} \bigg) \bigg) \\
   &\qquad + \frac{8 \kappa}{\lambda} \tilde\xi_k^{t_0,j-m} + \frac{16 C_1^2 pr }{\lambda^2}  \bigg( \mu_0 \sqrt{\frac{r_2}{p_j}} \bigg( \frac{\dl^{(2)}}{\lambda} + \frac{1}{2^{t_0-1}} \bigg)+ \mu_0 \sqrt{\frac{r_3}{p_j}} \bigg( \frac{\dl^{(3)}}{\lambda} + \frac{1}{2^{t_0-1}} \bigg)  \bigg) \\
   &\quad + \frac{8C_1 \sqrt{pr}}{\lambda^2}  \xi_k^{t_0,j-m} +\frac{4}{\lambda^2} ( \xi_k^{t_0,j-m})^2.
 \end{align*}
Observe that 
 \begin{align*}
     \frac{16 C_1 \kappa}{\lambda} &\sqrt{pr} \bigg(\mu_0 \sqrt{\frac{r_2}{p_j}} \bigg( \frac{\dl^{(2)}}{\lambda} + \frac{1}{2^{t_0-1}} \bigg)+ \mu_0 \sqrt{\frac{r_3}{p_j}} \bigg( \frac{\dl^{(3)}}{\lambda} + \frac{1}{2^{t_0-1}} \bigg) \bigg) \\
     &= \mu_0 \sqrt{\frac{r_2}{p_j}}\frac{16 C_1 \kappa \sqrt{pr}}{\lambda}  \frac{C_0 \kappa \sqrt{p_2 \log(p)}}{\lambda} + \mu_0 \sqrt{\frac{r_3}{p_j}}\frac{16 C_1 \kappa \sqrt{pr}}{\lambda} \frac{C_0 \kappa \sqrt{p_3 \log(p)}}{\lambda} \\
     &\quad + \frac{16C_1 \kappa \sqrt{pr}}{\lambda} \bigg(\mu_0 \sqrt{\frac{r_3}{p_j}} + \mu_0 \sqrt{\frac{r_2}{p_j}}\bigg) \frac{1}{2^{t_0-1}} \\
     &= \mu_0 \sqrt{\frac{r_1}{p_j}} \frac{C_0 \kappa \sqrt{p_1 \log(p)}}{\lambda} \bigg( \frac{16 C_1\kappa \sqrt{pr} \sqrt{\frac{p_2}{p_1}} \sqrt{\frac{r_2}{r_1}}}{\lambda } + \frac{16C_1 \kappa \sqrt{pr} \sqrt{\frac{p_3}{p_1}} \sqrt{\frac{r_3}{r_1}}}{\lambda} \bigg) \\
     &\quad + \mu_0 \sqrt{\frac{r_1}{p_j}} \frac{1}{2^{t_0-1}} \bigg( \frac{16 C_1 \kappa \sqrt{pr} \sqrt{\frac{r_3}{r_1}}}{\lambda} + \frac{16 C_1 \kappa \sqrt{pr} \sqrt{\frac{r_2}{r_1}}}{\lambda} \bigg) \\
     &\leq \mu_0 \sqrt{\frac{r_1}{p_j}} \frac{\dl^{(1)}}{\lambda} \bigg( \frac{32 C_1  \kappa \sqrt{pr} \sqrt{\frac{p}{p_{\min}}}}{\lambda} \max\bigg\{\sqrt{\frac{r_3}{r_1}},\sqrt{\frac{r_2}{r_1}} \bigg\} \bigg) \\
     &\quad + \mu_0 \sqrt{\frac{r_1}{p_j}} \frac{1}{2^{t_0-1}} \bigg( \frac{32 C_1 \kappa \sqrt{pr} }{\lambda}\max\bigg\{\sqrt{\frac{r_3}{r_1}},\sqrt{\frac{r_2}{r_1}} \bigg\}  \bigg) \\
     &\leq \mu_0 \sqrt{\frac{r_1}{p_j}} \frac{\dl^{(1)}}{\lambda} \bigg( \frac{32 C_1 C_2  \kappa p/p_{\min}^{1/4}}{\lambda} \bigg\} \bigg) 
      + \mu_0 \sqrt{\frac{r_1}{p_j}} \frac{1}{2^{t_0-1}} \bigg( \frac{32 C_1 C_2 \kappa \sqrt{pr} }{\lambda}\bigg) \\
      &\leq \frac{1}{8} \mu_0 \sqrt{\frac{r_1}{p_j}} \bigg( \frac{\dl^{(1)}}{\lambda} + \frac{1}{2^{t_0}}\bigg),
 \end{align*}
 which holds under the assumption $\lambda \gtrsim \kappa \sqrt{\log(p)} p/p_{\min}^{1/4}$, $r_k/r_j \leq C$, and $\mu_0^2 r \lesssim p_{\min}^{1/2}$.  By a similar argument,
 \begin{align*}
     \frac{16 C_1^2 pr }{\lambda^2}  \bigg( \mu_0 \sqrt{\frac{r_2}{p_j}} \bigg( \frac{\dl^{(2)}}{\lambda} + \frac{1}{2^{t_0-1}} \bigg) &\leq \frac{1}{8} \mu_0 \sqrt{\frac{r_1}{p_j}} \bigg( \frac{\dl^{(1)}}{\lambda} + \frac{1}{2^{t_0}}\bigg).
 \end{align*}
 Therefore,
 \begin{align*}
       \| \sin\Theta(\uhat_{1}^{(t)}, \utilde_1^{t,j-m}) \| 
 &\leq \frac{1}{8} \mu_0 \sqrt{\frac{r_1}{p_j}} \bigg( \frac{\dl^{(1)}}{\lambda} + \frac{1}{2^{t_0}}\bigg) + \frac{8 \kappa}{\lambda} \tilde\xi_1^{t_0,j-m} \\
 &\quad + \xi_1^{t_0,j-m} \frac{16 \kappa}{\lambda}  \bigg( \mu_0 \sqrt{\frac{r_2}{p_j}} \bigg( \frac{\dl^{(2)}}{\lambda} + \frac{1}{2^{t_0-1}} \bigg)+ \mu_0 \sqrt{\frac{r_3}{p_j}} \bigg( \frac{\dl^{(3)}}{\lambda} + \frac{1}{2^{t_0-1}} \bigg) \bigg) \\
   &\quad + \frac{8C_1 \sqrt{pr}}{\lambda^2}  \xi_1^{t_0,j-m} +\frac{4}{\lambda^2} ( \xi_1^{t_0,j-m})^2.
 \end{align*}
 The bound above depends only on $\xi_1^{t_0,j-m}$ and $\tilde \xi_1^{t_0,j-m}$. Define
 \begin{align*}
     (I) &\coloneqq  \xi_1^{t_0,j-m} \bigg\{ \frac{16 \kappa}{\lambda}  \bigg( \mu_0 \sqrt{\frac{r_2}{p_j}} \bigg( \frac{\dl^{(2)}}{\lambda} + \frac{1}{2^{t_0-1}} \bigg)+ \mu_0 \sqrt{\frac{r_3}{p_j}} \bigg( \frac{\dl^{(3)}}{\lambda} + \frac{1}{2^{t_0-1}} \bigg) \bigg) + \frac{8C_1 \sqrt{pr}}{\lambda^2}  \bigg\}; \\
     (II) &\coloneqq   \frac{4}{\lambda^2} ( \xi_1^{t_0,j-m})^2; \\
     (III)&\coloneqq \frac{8 \kappa}{\lambda} \tilde\xi_1^{t_0,j-m}.
 \end{align*}
 Then
 \begin{align*}
     \p\Bigg\{  \bigg\{ \| &\sin\Theta(\uhat_1^{t_0}, \utilde_1^{t_0,j-m}) \| \geq \bigg( \frac{\dl^{(1)}}{\lambda} + \frac{1}{2^{t_0}}\bigg) \mu_0 \sqrt{\frac{r_1}{p_j}} \bigg\} \bigcap \mathcal{E}_{\mathrm{Good}} \bigcap \mathcal{E}_{\mathrm{main}}^{t_0-1,1} \Bigg\} \\
     &\leq \p\Bigg\{ \bigg\{ (I) \geq \frac{1}{4} \bigg( \frac{\dl^{(1)}}{\lambda} + \frac{1}{2^{t_0}}\bigg) \mu_0 \sqrt{\frac{r_1}{p_j}} \bigg\} \bigcap \mathcal{E}_{\mathrm{Good}} \bigcap \mathcal{E}_{\mathrm{main}}^{t_0-1,1} \Bigg\} \\
     &\quad + \p\Bigg\{ \bigg\{ (II) \geq \frac{1}{4} \bigg( \frac{\dl^{(1)}}{\lambda} + \frac{1}{2^{t_0}}\bigg) \mu_0 \sqrt{\frac{r_1}{p_j}} \bigg\} \bigcap \mathcal{E}_{\mathrm{Good}} \bigcap \mathcal{E}_{\mathrm{main}}^{t_0-1,1} \Bigg\} \\
     &\quad + \p\Bigg\{ \bigg\{ (III) \geq \frac{1}{4} \bigg( \frac{\dl^{(1)}}{\lambda} + \frac{1}{2^{t_0}}\bigg) \mu_0 \sqrt{\frac{r_1}{p_j}} \bigg\} \bigcap \mathcal{E}_{\mathrm{Good}} \bigcap \mathcal{E}_{\mathrm{main}}^{t_0-1,1} \Bigg\}.
 \end{align*}

 We now will derive probabilistic bounds for each of the terms above on the event $\mathcal{E}_{\mathrm{Good}} \cap \mathcal{E}_{\mathrm{main}}^{t_0-1,1}$.  We will consider each term separately, though the strategy for each will remain the same: since there is nontrivial dependence between the events above and the random variable $\xi_1^{j-m}$, we use the auxiliary event $\mathcal{\tilde E}_{j-m}^{t_0,1}$, which is independent of the nonzero entries in the random matrix $\mathbf{Z}_1^{j-m} - \mathbf{Z}_1$.  We then use Lemma \ref{lem:eventintersection} to show that the intersection of this event with other events is empty.
 \\ \ \\ \noindent
\textbf{The term} $(I)$:
We note that 
\begin{align*}
\p \Bigg\{ &\bigg\{ (I) \geq \frac{1}{4} \bigg( \frac{\dl^{(1)}}{\lambda} + \frac{1}{2^{t_0}} \bigg) \mu_0\sqrt{\frac{r_1}{p_j
    }} \bigg\} \bigcap \mathcal{E}_{\mathrm{Good}} \bigcap \mathcal{E}_{\mathrm{main}}^{t_0-1,1} \Bigg\} \\
&\leq \p \Bigg\{ \bigg\{ (I) \geq \frac{1}{4} \bigg( \frac{\dl^{(1)}}{\lambda} + \frac{1}{2^{t_0}} \bigg) \mu_0\sqrt{\frac{r_1}{p_j
    }}  \bigg\} \bigcap \mathcal{E}_{\mathrm{Good}} \bigcap \mathcal{E}_{\mathrm{main}}^{t_0-1,1} \bigcap \mathcal{\tilde E}_{j-m}^{t_0,1} \Bigg\} \\
    &\qquad + \p \Bigg\{ \mathcal{E}_{\mathrm{Good}} \bigcap \mathcal{E}_{\mathrm{main}}^{t_0-1,1} \bigcap \big(\mathcal{\tilde E }_{j-m}^{t_0,1}\big)^c \Bigg\} \\
    &\leq \p \Bigg\{ \bigg\{ (I) \geq \frac{1}{4} \bigg( \frac{\dl^{(1)}}{\lambda} + \frac{1}{2^{t_0}} \bigg) \mu_0\sqrt{\frac{r_1}{p_j
    }}  \bigg\} \bigcap  \mathcal{\tilde E}_{j-m}^{t_0,1} \Bigg\}, 
\end{align*}
where we have used Lemma \ref{lem:eventintersection} to show that the intesection of the complement $\mathcal{\tilde E}_{j-m}^{t_0,1}$ with the other events is zero. 

Now we simply observe that $\mathcal{\tilde E}_{j-m}^{t_0,k}$ does not depend on any of the random variables in the matrix $\mathbf{Z}_k^{j-m} - \mathbf{Z}_k$, so we are free to condition on this event.  Recall that 
\begin{align*}
    \xi_k^{t_0,j-m} &= \bigg\| \bigg(\mathbf{Z}_k - \mathbf{Z}_k^{j-m} \bigg) \mathcal{\tilde P}_{1}^{t_0,j-m} \bigg\|.
    \end{align*}
By Lemma \ref{lem:matricizationrowbound}, it holds that
\begin{align*}
    \xi_1^{t_0,j-m} &\leq C \sqrt{p_{-j}\log(p)} \bigg\| \mathcal{\tilde P}_{1}^{t_0,j-m} \bigg\|_{2,\infty}
\end{align*}
with probability at least $1 - O(p^{-30})$.  On the event $\mathcal{\tilde E}_{j-m}^{t_0,k}$ we have that
\begin{align*}
   \bigg\| \mathcal{\tilde P}_{1}^{t_0,j-m} \bigg\|_{2,\infty} &\leq c\Bigg\{ \mu_0^2 \frac{\sqrt{r_2 r_3}}{p_j}  \bigg( \frac{\dl^{(2)}}{\lambda} + \frac{1}{2^{t_0-1}} \bigg)  \bigg( \frac{\dl^{(3)}}{\lambda} + \frac{1}{2^{t_0-1}} \bigg) \\
    &\quad +  \mu_0^2 \frac{\sqrt{r_2 r_3}}{\sqrt{p_jp_3}} \bigg( \frac{\dl^{(2)}}{\lambda} + \frac{1}{2^{t_0-1}} \bigg) + \mu_0^2 \frac{\sqrt{r_2r_3}}{\sqrt{p_jp_2}} \bigg( \frac{\dl^{(3)}}{\lambda} + \frac{1}{2^{t_0-1}} \bigg) \\
    &\quad + \mu_0^2 \frac{\sqrt{r_2 r_3}}{\sqrt{p_2p_3}} \bigg\}.
\end{align*}
Therefore,
\begin{align*}
    &\xi_1^{t_0,j-m} \\
    &\leq C' \sqrt{p_{-j} \log(p)} \bigg( \frac{\mu_0^2 \sqrt{r_2r_3}}{p_j} \frac{\dl^{(2)}}{\lambda} + \frac{\mu_0^2 \sqrt{r_2r_3}}{p_j} \frac{\dl^{(3)}}{\lambda} + 2 \mu_0^2 \frac{\sqrt{r_2 r_3}}{\sqrt{p_jp_3}}  \frac{\dl^{(2)}}{\lambda} +   \mu_0^2 \frac{\sqrt{r_2 r_3}}{\sqrt{p_jp_2}}  \frac{\dl^{(3)}}{\lambda} +  \mu_0^2 \frac{\sqrt{r_2 r_3}}{\sqrt{p_2p_3}}\bigg) \\
    &\quad + C' \sqrt{p_{-j} \log(p)} \bigg( \mu_0^2 \frac{\sqrt{r_2 r_3}}{p_j} \frac{1}{2^{t_0-1}} \frac{1}{2^{t_0-1}} +  \mu_0^2 \frac{\sqrt{r_2 r_3}}{\sqrt{p_jp_3}} \frac{1}{2^{t_0-1}} + \mu_0^2 \frac{\sqrt{r_2r_3}}{\sqrt{p_j p_2}} \frac{1}{2^{t_0-1}} \bigg) \\
    &\leq C'' \mu_0 \sqrt{\frac{r_1}{p_j}} \sqrt{p_1 \log(p)}   \sqrt{p_{-1}}\\
    &\quad \times \Bigg\{ \frac{\mu_0 \sqrt{r_2 r_3}}{p_j \sqrt{r_1}} \bigg(\frac{\dl^{(2)}}{\lambda} + \frac{\dl^{(3)}}{\lambda} \bigg) +  \mu_0 \frac{\sqrt{r_2 r_3}}{\sqrt{p_jp_{3} r_1}} \frac{\dl^{(2)}}{\lambda} +  \mu_0 \frac{\sqrt{r_2 r_3}}{\sqrt{p_jp_{2} r_1}} \frac{\dl^{(3)}}{\lambda} + \mu_0 \frac{\sqrt{r_2r_3}}{\sqrt{p_2p_3}} \Bigg\} \\
    &\quad +  C''\mu_0 \sqrt{\frac{r_1}{p_j}} \sqrt{p_1 \log(p)}   \sqrt{p_{-1}}  \Bigg\{ \frac{\mu_0 \sqrt{r_2 r_3}}{p_j \sqrt{r_1}} \frac{1}{2^{t_0-1}} + 2 \mu_0 \frac{\sqrt{r_2 r_3}}{\sqrt{p_jp_{3} r_1}} \frac{1}{2^{t_0-1}} +  \mu_0 \frac{\sqrt{r_2 r_3}}{\sqrt{p_jp_{2} r_1}} \frac{1}{2^{t_0-1}}\Bigg\},
\end{align*}
where we have absorbed the constants in each term.  Therefore, with probability at least $1 - O(p^{-30})$ it holds that
\begin{align*}
    (I) &\leq  C'' \mu_0 \sqrt{\frac{r_1}{p_j}} \sqrt{p_1 \log(p)}   \sqrt{p_{-1}} \\
    &\quad \times \Bigg\{ \frac{\mu_0 \sqrt{r_2 r_3}}{p_j \sqrt{r_1}} \bigg(\frac{\dl^{(2)}}{\lambda} + \frac{\dl^{(3)}}{\lambda} \bigg) +  \mu_0 \frac{\sqrt{r_2 r_3}}{\sqrt{p_jp_{3} r_1}} \frac{\dl^{(2)}}{\lambda} +  \mu_0 \frac{\sqrt{r_2 r_3}}{\sqrt{p_jp_{2} r_1}} \frac{\dl^{(3)}}{\lambda} + \mu_0 \frac{\sqrt{r_2r_3}}{\sqrt{p_2p_3}} \Bigg\} \\
    &\quad \quad\times \bigg\{ \frac{16 \kappa}{\lambda}  \bigg( \mu_0 \sqrt{\frac{r_2}{p_j}} \bigg( \frac{\dl^{(2)}}{\lambda} + \frac{1}{2^{t_0-1}} \bigg)+ \mu_0 \sqrt{\frac{r_3}{p_j}} \bigg( \frac{\dl^{(3)}}{\lambda} + \frac{1}{2^{t_0-1}} \bigg) \bigg) + \frac{8C_1 \sqrt{pr}}{\lambda^2}  \bigg\} \\
    &\quad + C''\mu_0 \sqrt{\frac{r_1}{p_j}} \sqrt{p_1 \log(p)}   \sqrt{p_{-1}}  \Bigg\{ \frac{\mu_0 \sqrt{r_2 r_3}}{p_j \sqrt{r_1}} \frac{1}{2^{t_0-1}} + 2 \mu_0 \frac{\sqrt{r_2 r_3}}{\sqrt{p_jp_{3} r_1}} \frac{1}{2^{t_0-1}} +  \mu_0 \frac{\sqrt{r_2 r_3}}{\sqrt{p_jp_{2} r_1}} \frac{1}{2^{t_0-1}}\Bigg\} \\
    &\quad \quad \times \bigg\{ \frac{16 \kappa}{\lambda}  \bigg( \mu_0 \sqrt{\frac{r_2}{p_j}} \bigg( \frac{\dl^{(2)}}{\lambda} + \frac{1}{2^{t_0-1}} \bigg)+ \mu_0 \sqrt{\frac{r_3}{p_j}} \bigg( \frac{\dl^{(3)}}{\lambda} + \frac{1}{2^{t_0-1}} \bigg) \bigg) + \frac{8C_1 \sqrt{pr}}{\lambda^2}  \bigg\}.
\end{align*}
We now show the first term is less than $\frac{1}{8} \frac{\dl^{(1)}}{\lambda} \mu_0 \sqrt{\frac{r_1}{p_j}}$ and the second term is less than $\frac{1}{8} \frac{1}{2^{t_0}} \mu_0 \sqrt{\frac{r_1}{p_j}}$.  The first term will be less than this provided that 
\begin{align*}
    \frac{8}{C_0 C''} \sqrt{p_{-1}} &\Bigg\{ \frac{\mu_0 \sqrt{r_2 r_3}}{p_j \sqrt{r_1}} \bigg(\frac{\dl^{(2)}}{\lambda} + \frac{\dl^{(3)}}{\lambda} \bigg) +  \mu_0 \frac{\sqrt{r_2 r_3}}{\sqrt{p_jp_{3} r_1}} \frac{\dl^{(2)}}{\lambda} +  \mu_0 \frac{\sqrt{r_2 r_3}}{\sqrt{p_jp_{2} r_1}} \frac{\dl^{(3)}}{\lambda} + \mu_0 \frac{\sqrt{r_2r_3}}{\sqrt{p_2p_3}} \Bigg\} \\
    &\quad \quad\times \bigg\{ 16 \bigg( \mu_0 \sqrt{\frac{r_2}{p_j}} \bigg( \frac{\dl^{(2)}}{\lambda} + \frac{1}{2^{t_0-1}} \bigg)+ \mu_0 \sqrt{\frac{r_3}{p_j}} \bigg( \frac{\dl^{(3)}}{\lambda} + \frac{1}{2^{t_0-1}} \bigg) \bigg) + \frac{8C_1 \sqrt{pr}}{\lambda \kappa }  \bigg\}
\end{align*}
is less than one.  This follows from basic algebra and the assumptions $\lambda \gtrsim \kappa  \sqrt{\log(p)} p/p_{\min}^{1/4} $, that $\mu_0^2 r \lesssim p_{\min}^{1/2}$, and that $r_k \asymp r$.  A similar argument shows that the second term is smaller than $\frac{1}{8} \frac{1}{2^{t_0}} \mu_0 \sqrt{\frac{r_1}{p_j}}$. Therefore, on the event $\mathcal{\tilde E}_{j-m}^{t_0,1}$, with probability at least $1 - O(p^{-30})$ it holds that
\begin{align*}
    (I) &\leq \frac{1}{8} \bigg( \frac{\dl^{(1)}}{\lambda} + \frac{1}{2^{t_0}} \bigg) \mu_0 \sqrt{\frac{r_1}{p_j}}.
\end{align*}
\noindent
\textbf{The term (II):}  By a similar argument, we note that
\begin{align*}
    \mathbb{P}&\Bigg\{\left\{(I I) \geq \frac{1}{4}\left(\frac{\delta_{\mathrm{L}}^{(1)}}{\lambda}+\frac{1}{2^{t_{0}}}\right) \mu_{0} \sqrt{\frac{r_k}{p_j}}\right\} \bigcap \mathcal{E}_{\text {Good }} \bigcap \mathcal{E}_{\text {main }}^{t_{0}-1,1}\Bigg\} \\
    &\leq \p \Bigg\{ \bigg\{ (II) \geq \frac{1}{4}\left(\frac{\delta_{\mathrm{L}}^{(1)}}{\lambda}+\frac{1}{2^{t_{0}}}\right) \mu_{0} \sqrt{\frac{r_k}{p_j}} \bigg\} \bigcap \mathcal{E}_{\text {Good }} \bigcap \mathcal{E}_{\text {main }}^{t_{0}-1,1} \bigcap \mathcal{\tilde E}_{j-m}^{t_0,1}\Bigg\} \\
    &\qquad + \p\bigg\{ \mathcal{E}_{\text {Good }} \bigcap \mathcal{E}_{\text {main }}^{t_{0}-1,1} \bigcap (\mathcal{\tilde E}_{j-m}^{t_0,1} )^c\bigg\} \\
    &\leq \p \Bigg\{ \bigg\{ (II) \geq \frac{1}{4}\left(\frac{\delta_{\mathrm{L}}^{(1)}}{\lambda}+\frac{1}{2^{t_{0}}}\right) \mu_{0} \sqrt{\frac{r_k}{p_j}} \bigg\} \bigcap \mathcal{\tilde E}_{j-m}^{t_0,1}\Bigg\},
\end{align*}
where again we used Lemma \ref{lem:eventintersection}. Conditioning on the event $\mathcal{\tilde E}_{j-m}^{t_0,1}$, by the same argument as in Term $(I)$,  with probability at least $1 - O(p^{-30})$ one has
\begin{align*}
    \xi_1^{j-m} &\leq  \tilde C \mu_0 \sqrt{\frac{r_1}{p_j}} \sqrt{p_1 \log(p)}   \sqrt{p_{-1}} \\
    &\quad \times  \Bigg\{ \frac{\mu_0 \sqrt{r_2 r_3}}{p_j \sqrt{r_1}} \bigg(\frac{\dl^{(2)}}{\lambda} + \frac{\dl^{(3)}}{\lambda} \bigg) +  \mu_0 \frac{\sqrt{r_2 r_3}}{\sqrt{p_jp_{3} r_1}} \frac{\dl^{(2)}}{\lambda} +  \mu_0 \frac{\sqrt{r_2 r_3}}{\sqrt{p_jp_{2} r_1}} \frac{\dl^{(3)}}{\lambda} + \mu_0 \frac{\sqrt{r_2r_3}}{\sqrt{p_2p_3}} \Bigg\} \\
    &\quad + \tilde C\mu_0 \sqrt{\frac{r_1}{p_j}} \sqrt{p_1 \log(p)}   \sqrt{p_{-1}}  \Bigg\{ \frac{\mu_0 \sqrt{r_2 r_3}}{p_j \sqrt{r_1}} \frac{1}{2^{t_0-1}} + 2 \mu_0 \frac{\sqrt{r_2 r_3}}{\sqrt{p_jp_{3} r_1}} \frac{1}{2^{t_0-1}} +  \mu_0 \frac{\sqrt{r_2 r_3}}{\sqrt{p_jp_{2} r_1}} \frac{1}{2^{t_0-1}}\Bigg\},
\end{align*}
where we have once again absorbed the constant.  Therefore, with probability at least $ 1 - O(p^{-30})$, 
\begin{align*}
(II) &= \frac{4}{\lambda^2} \big( \xi_1^{j-m}\big)^2 \\
&\leq \frac{4}{\lambda^2} \tilde C \mu_0 \sqrt{\frac{r_1}{p_j}} \sqrt{p_1 \log(p)}   \sqrt{p_{-1}} \\
&\quad \times \Bigg\{ \frac{\mu_0 \sqrt{r_2 r_3}}{p_j \sqrt{r_1}} \bigg(\frac{\dl^{(2)}}{\lambda} + \frac{\dl^{(3)}}{\lambda} \bigg) +  \mu_0 \frac{\sqrt{r_2 r_3}}{\sqrt{p_jp_{3} r_1}} \frac{\dl^{(2)}}{\lambda} +  \mu_0 \frac{\sqrt{r_2 r_3}}{\sqrt{p_jp_{2} r_1}} \frac{\dl^{(3)}}{\lambda} + \mu_0 \frac{\sqrt{r_2r_3}}{\sqrt{p_2p_3}} \Bigg\} \\
    &\quad + \frac{4}{\lambda^2}  \tilde C\mu_0 \sqrt{\frac{r_1}{p_j}} \sqrt{p_1 \log(p)}   \sqrt{p_{-1}}  \Bigg\{ \frac{\mu_0 \sqrt{r_2 r_3}}{p_j \sqrt{r_1}} \frac{1}{2^{t_0-1}} + 2 \mu_0 \frac{\sqrt{r_2 r_3}}{\sqrt{p_jp_{3} r_1}} \frac{1}{2^{t_0-1}} +  \mu_0 \frac{\sqrt{r_2 r_3}}{\sqrt{p_jp_{2} r_1}} \frac{1}{2^{t_0-1}}\Bigg\}, \\
    &= \frac{1}{8} \frac{\dl^{(1)}}{\lambda} \mu_0 \sqrt{\frac{r_1}{p_j}} \bigg( \frac{4 \tilde C \sqrt{p_{-1}}}{C_0 \lambda} \bigg) \\
    &\quad \times \Bigg\{ \frac{\mu_0 \sqrt{r_2 r_3}}{p_j \sqrt{r_1}} \bigg(\frac{\dl^{(2)}}{\lambda} + \frac{\dl^{(3)}}{\lambda} \bigg) +  \mu_0 \frac{\sqrt{r_2 r_3}}{\sqrt{p_jp_{3} r_1}} \frac{\dl^{(2)}}{\lambda} +  \mu_0 \frac{\sqrt{r_2 r_3}}{\sqrt{p_jp_{2} r_1}} \frac{\dl^{(3)}}{\lambda} + \mu_0 \frac{\sqrt{r_2r_3}}{\sqrt{p_2p_3}} \Bigg\} \\
    &\quad + \frac{1}{8} \frac{1}{2^{t_0}} \mu_0 \sqrt{\frac{r_1}{p_j}} \bigg( \frac{4 \tilde C \sqrt{p_{-1}}}{C_0 \lambda} \bigg) \\
    &\quad \times \Bigg\{ \frac{\mu_0 \sqrt{r_2 r_3}}{p_j \sqrt{r_1}} \bigg(\frac{\dl^{(2)}}{\lambda} + \frac{\dl^{(3)}}{\lambda} \bigg) +  \mu_0 \frac{\sqrt{r_2 r_3}}{\sqrt{p_jp_{3} r_1}} \frac{\dl^{(2)}}{\lambda} +  \mu_0 \frac{\sqrt{r_2 r_3}}{\sqrt{p_jp_{2} r_1}} \frac{\dl^{(3)}}{\lambda} + \mu_0 \frac{\sqrt{r_2r_3}}{\sqrt{p_2p_3}} \Bigg\} \\
&\leq \frac{1}{8}\bigg( \frac{\dl}{\lambda} + \frac{1}{2^{t_0}}\bigg) \mu_0 \sqrt{\frac{r_1}{p_j}},
\end{align*}
where the final inequality holds  when the additional terms are smaller than one, which holds via basic algebra  as long as $C_0 \geq 4 C \tilde C$, $\lambda \gtrsim \kappa \sqrt{\log(p)} p/p_{\min}^{1/4}$, $r_k \asymp r$ and $\mu_0^2 r \lesssim p_{\min}^{1/2}$.
\\ \ \\ 
\textbf{The term (III):} Proceeding similarly again,
  \begin{align*}
    \mathbb{P}&\Bigg\{\left\{(I II) \geq \frac{1}{4}\left(\frac{\delta_{\mathrm{L}}^{(1)}}{\lambda}+\frac{1}{2^{t_{0}}}\right) \mu_{0} \sqrt{\frac{r_1}{p_j}}\right\} \bigcap \mathcal{E}_{\text {Good }} \bigcap \mathcal{E}_{\text {main }}^{t_{0}-1,1}\Bigg\} \\
    &\leq \p \Bigg\{ \bigg\{ (III) \geq \frac{1}{4}\left(\frac{\delta_{\mathrm{L}}^{(1)}}{\lambda}+\frac{1}{2^{t_{0}}}\right) \mu_{0} \sqrt{\frac{r_1}{p_j}} \bigg\} \bigcap \mathcal{E}_{\text {Good }} \bigcap \mathcal{E}_{\text {main }}^{t_{0}-1,1} \bigcap \mathcal{\tilde E}_{j-m}^{t_0,1}\Bigg\} \\
    &\qquad + \p\bigg\{ \mathcal{E}_{\text {Good }} \bigcap \mathcal{E}_{\text {main }}^{t_{0}-1,1} \bigcap (\mathcal{\tilde E}_{j-m}^{t_0,1} )^c\bigg\} \\
    &\leq \p \Bigg\{ \bigg\{ (III) \geq \frac{1}{4}\left(\frac{\delta_{\mathrm{L}}^{(1)}}{\lambda}+\frac{1}{2^{t_{0}}}\right) \mu_{0} \sqrt{\frac{r_1}{p_j}} \bigg\} \bigcap \mathcal{\tilde E}_{j-m}^{t_0,1}\Bigg\},
\end{align*}
where again we used Lemma \ref{lem:eventintersection}. Conditioning on the event $\mathcal{\tilde E}_{j-m}^{t_0,1}$,  by Lemma \ref{lem:matricizationrowbound}, with probability at least $1 - O(p^{-30})$ one has
\begin{align*}
    \tilde\xi_1^{j-m} &= \bigg\| \bigg( \mathbf{Z}_1^{j-m} - \mathbf{Z}_1 \bigg) \mathcal{\tilde P}_{1}^{t_0,j-m} \mathbf{V}_1 \bigg\| \\
    &\leq C  \sqrt{p_{-j}\log(p)} \bigg\| \mathcal{\tilde P}_{1}^{t_0,j-m} \mathbf{V}_k \bigg\|_{2,\infty}. 
\end{align*}
On the event $\mathcal{\tilde E}_{j-m}^{t_0,1}$ it holds that
\begin{align*}
    \| \mathcal{\tilde P}_{k}^{t_0,j-m} \mathbf{V}_1 \|_{2,\infty} &\leq c \Bigg\{ \mu_0^2 \frac{\sqrt{r_2r_3}}{p_j} \bigg( \frac{\dl^{(2)}}{\lambda} + \frac{1}{2^{t_0-1}}\bigg)\bigg( \frac{\dl^{(3)}}{\lambda} + \frac{1}{2^{t_0-1}}\bigg); \\
     &\quad + \mu_0^2 \frac{\sqrt{r_2r_3}}{\sqrt{p_jp_3}} \bigg( \frac{\dl^{(2)}}{\lambda} + \frac{1}{2^{t_0-1}} \bigg) + \mu_0^2 \frac{\sqrt{r_2 r_3}}{\sqrt{p_j p_2}} \bigg( \frac{\dl^{(3)}}{\lambda} + \frac{1}{2^{t_0-1}} \bigg)\\
     &\quad +  \mu_0^2 \frac{\sqrt{r_2r_3}}{\sqrt{p_2p_3}} \bigg( \frac{\dl^{(2)}}{\lambda} + \frac{1}{2^{t_0-1}} \bigg) + \mu_0^2 \frac{\sqrt{r_2r_3}}{\sqrt{p_2p_3}} \bigg( \frac{\dl^{(3)}}{\lambda} + \frac{1}{2^{t_0-1}} \bigg) \\
     &\quad + \mu_0 \sqrt{\frac{r_1}{p_{-1}}} \Bigg\}.
\end{align*}
Therefore, with probability at least $1 - O(p^{-30})$,
\begin{align*}
    (III) &\coloneqq \frac{8 \kappa}{\lambda} \tilde \xi_1^{t_0,j-m} \\
    &\leq \frac{8 C \kappa}{\lambda}  \sqrt{p_{-j}\log(p)} \\
    &\quad \quad \times c \Bigg\{ \mu_0^2 \frac{\sqrt{r_2r_3}}{p_j} \bigg( \frac{\dl^{(2)}}{\lambda} + \frac{1}{2^{t_0-1}}\bigg)\bigg( \frac{\dl^{(3)}}{\lambda} + \frac{1}{2^{t_0-1}}\bigg) \\
     &\quad\quad\quad + \mu_0^2 \frac{\sqrt{r_2r_3}}{\sqrt{p_jp_3}} \bigg( \frac{\dl^{(2)}}{\lambda} + \frac{1}{2^{t_0-1}} \bigg) + \mu_0^2 \frac{\sqrt{r_2 r_3}}{\sqrt{p_j p_2}} \bigg( \frac{\dl^{(3)}}{\lambda} + \frac{1}{2^{t_0-1}} \bigg)\\
     &\quad\quad\quad +  \mu_0^2 \frac{\sqrt{r_2r_3}}{\sqrt{p_2p_3}} \bigg( \frac{\dl^{(2)}}{\lambda} + \frac{1}{2^{t_0-1}} \bigg) + \mu_0^2 \frac{\sqrt{r_2r_3}}{\sqrt{p_2p_3}} \bigg( \frac{\dl^{(3)}}{\lambda} + \frac{1}{2^{t_0-1}} \bigg)  + \mu_0 \sqrt{\frac{r_1}{p_{-1}}} \Bigg\} \\
     &=\frac{1}{8} \frac{\dl^{(1)}}{\lambda} \mu_0 \sqrt{\frac{r_1}{p_j}} \bigg( \frac{64 C'}{C_0} \sqrt{p_{-1}} \bigg) \\
      &\quad \quad \times \Bigg\{ \mu_0 \frac{\sqrt{r_2r_3}}{p_j\sqrt{r_1}} \bigg( \frac{\dl^{(2)}}{\lambda} + \frac{1}{2^{t_0-1}}\bigg)\bigg( \frac{\dl^{(3)}}{\lambda} + \frac{1}{2^{t_0-1}}\bigg) \\
     &\quad\quad\quad + \mu_0 \frac{\sqrt{r_2r_3}}{\sqrt{p_jp_3r_1}} \bigg( \frac{\dl^{(2)}}{\lambda} + \frac{1}{2^{t_0-1}} \bigg) + \mu_0 \frac{\sqrt{r_2 r_3}}{\sqrt{p_j p_2r_1}} \bigg( \frac{\dl^{(3)}}{\lambda} + \frac{1}{2^{t_0-1}} \bigg)\\
     &\quad\quad\quad +  \mu_0 \frac{\sqrt{r_2r_3}}{\sqrt{p_2p_3r_1}} \bigg( \frac{\dl^{(2)}}{\lambda} + \frac{1}{2^{t_0-1}} \bigg) + \mu_0 \frac{\sqrt{r_2r_3}}{\sqrt{p_2p_3r_1}} \bigg( \frac{\dl^{(3)}}{\lambda} + \frac{1}{2^{t_0-1}} \bigg)  +  \sqrt{\frac{1}{p_{-1}}} \Bigg\} \\
     &\leq \frac{1}{8} \bigg( \frac{\dl^{(1)}}{\lambda} + \frac{1}{2^{t_0}} \bigg) \mu_0 \sqrt{\frac{r_1}{p_j}},
\end{align*}
where the final inequality holds by basic algebra as long as $C_0 \geq 64 C C'$ for some other constant $C'$, as well as the assumptions $\lambda \gtrsim \kappa \sqrt{\log(p)} p/p_{\min}^{1/4}$, $r_k \asymp r$ and $\mu_0^2 r \lesssim p_{\min}^{1/2}$.

Consequently, we have shown that the desired bounds on the terms $(I)$, $(II)$, and $(III)$ hold with probability at most $O(p^{-30}) \leq p^{-29}$ as desired.
\end{proof}

\begin{lemma}[Bounding the linear term on a good event] \label{lem:linearterm_goodevent}
Let $t_0$ and $k$ be fixed, and let $m$ be such that $1 \leq m \leq p_k$.  Then 
\begin{align*}
    \p\Bigg\{ \bigg\{ \| e_m\t \mathbf{L}_k^{t_0} \| \geq \frac{1}{4} \bigg( \frac{\dl\ku}{\lambda} + \frac{1}{2^{t_0}} \bigg) \mu_0 \sqrt{\frac{r_k}{p_k}} \bigg\} \bigcap \mathcal{E}_{\mathrm{Good}} \bigcap \mathcal{E}_{\mathrm{main}}^{t_0-1,k}\Bigg\} \leq p^{-29}.\end{align*}
\end{lemma}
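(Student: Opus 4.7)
The plan is to reduce the lemma to the deterministic bound from \cref{lem:linear_deterministic_bd} and then control each of its three summands separately on the good event. First I would verify that the eigengap hypothesis $\lambda/2 \leq \lambda_{r_k}(\mathbf{\widehat\Lambda}_k^{(t_0)})$ is automatic on $\mathcal{E}_{\mathrm{Good}}$: indeed $\tau_k \lesssim \sqrt{pr} \ll \lambda$ by the SNR assumption together with $r \lesssim p_{\min}^{1/2}$, and the $\sin\Theta$ bound on the iterates implies $\eta_k^{(t_0)} \leq 1/4$, so \cref{lem:eigengaps} applies. With the eigengap in hand, \cref{lem:linear_deterministic_bd} yields
\begin{align*}
\| e_m\t \mathbf{L}_k^{(t_0)} \| &\leq \underbrace{\tfrac{8\kappa}{\lambda} \|\U_k\|_{2,\infty}\bigl(\tau_k \eta_k^{(t_0)} + \|\U_k\t \mathbf{Z}_k \mathbf{V}_k\|\bigr)}_{=:\,(\mathrm{I})} + \underbrace{\tfrac{8\kappa}{\lambda} \tau_k \eta_k^{(t_0,k-m)}}_{=:\,(\mathrm{II})} + \underbrace{\tfrac{4\kappa}{\lambda} \tilde\xi_k^{t_0,k-m}}_{=:\,(\mathrm{III})},
\end{align*}
so it suffices to show each of $(\mathrm{I}), (\mathrm{II}), (\mathrm{III})$ is at most $\tfrac{1}{12}\bigl(\tfrac{\dl\ku}{\lambda}+\tfrac{1}{2^{t_0}}\bigr)\mu_0\sqrt{r_k/p_k}$.

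Term $(\mathrm{I})$ is handled purely on $\mathcal{E}_{\mathrm{Good}}$: substitute $\|\U_k\|_{2,\infty} \leq \mu_0\sqrt{r_k/p_k}$, $\tau_k \lesssim \sqrt{pr}$, $\|\U_k\t \mathbf{Z}_k \mathbf{V}_k\| \lesssim \sqrt{r}+\sqrt{\log p}$, and $\eta_k^{(t_0)} \leq \dl/\lambda + 2^{-(t_0-1)}$, then invoke $\lambda \gtrsim \kappa\sqrt{\log p}\,p/p_{\min}^{1/4}$, $r \asymp r_{\min}$, and $\mu_0^2 r \lesssim p_{\min}^{1/2}$ to absorb all extra factors, choosing $C_0$ large. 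Term $(\mathrm{II})$ uses the bound on the leave-one-out proximity provided by $\mathcal{E}_{\mathrm{main}}^{t_0-1,k}$: by definition of $\eta_k^{(t_0,k-m)}$ and the events $\mathcal{E}_{k-m}^{t-1,k+1}$ and $\mathcal{E}_{k-m}^{t-1,k+2}$ (or the $t_0$ analogues for $k=2,3$), we have $\eta_k^{(t_0,k-m)} \lesssim \mu_0\sqrt{r/p_k}\bigl(\dl/\lambda + 2^{-(t_0-1)}\bigr)$. Combined with $\tau_k \lesssim \sqrt{pr}$, the SNR and coherence hypotheses again deliver the needed estimate.

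The main obstacle is $(\mathrm{III})$, since $\tilde\xi_k^{t_0,k-m} = \|(\mathbf{Z}_k^{k-m}-\mathbf{Z}_k)\mathcal{\tilde P}_k^{t_0,k-m} \mathbf{V}_k\|$ involves the random projection $\mathcal{\tilde P}_k^{t_0,k-m}$, which depends on tensor noise (although by construction it is independent of the rows of $\mathbf{Z}_k^{k-m}-\mathbf{Z}_k$, i.e.\ the $m$th row of $\mathbf{Z}_k$). I would mimic the conditioning strategy used in \cref{lem:leaveoneout_goodevent}: by \cref{lem:eventintersection} the intersection $\mathcal{E}_{\mathrm{Good}}\cap\mathcal{E}_{\mathrm{main}}^{t_0-1,k}\cap(\mathcal{\tilde E}_{k-m}^{t_0,k})^c$ is empty, so
\begin{align*}
\mathbb{P}\bigl\{\{(\mathrm{III})\geq\ldots\}\cap\mathcal{E}_{\mathrm{Good}}\cap\mathcal{E}_{\mathrm{main}}^{t_0-1,k}\bigr\} \leq \mathbb{P}\bigl\{\{(\mathrm{III})\geq\ldots\}\cap\mathcal{\tilde E}_{k-m}^{t_0,k}\bigr\}.
\end{align*}
On $\mathcal{\tilde E}_{k-m}^{t_0,k}$ the quantity $\|\mathcal{\tilde P}_k^{t_0,k-m}\mathbf{V}_k\|_{2,\infty}$ satisfies an explicit deterministic upper bound, and crucially $\mathcal{\tilde E}_{k-m}^{t_0,k}$ is independent of the (now nontrivial) entries of $\mathbf{Z}_k-\mathbf{Z}_k^{k-m}$, which consist of a single row. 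I would then apply \cref{lem:matricizationrowbound} to conclude that, with probability at least $1-O(p^{-30})$, $\tilde\xi_k^{t_0,k-m} \lesssim \sqrt{p_{-k}\log p}\,\|\mathcal{\tilde P}_k^{t_0,k-m}\mathbf{V}_k\|_{2,\infty}$. Substituting the bound from $\mathcal{\tilde E}_{k-m}^{t_0,k}$ produces an expression in which each summand factors as $\tfrac{\dl\ku}{\lambda}\mu_0\sqrt{r_k/p_k}$ or $\tfrac{1}{2^{t_0}}\mu_0\sqrt{r_k/p_k}$ multiplied by quantities like $\tfrac{\kappa\sqrt{p_{-k}\log p}}{\lambda}\cdot\mu_0\sqrt{r/p_{\min}}$ or smaller; the SNR and coherence hypotheses, with $C_0$ chosen sufficiently large relative to the absolute constants, make these prefactors at most $1/12$.

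Finally, summing the three bounds and taking a union bound over the $O(1)$ auxiliary probabilistic events (each of probability at least $1-O(p^{-30})$) yields an overall exceptional probability of at most $p^{-29}$. Essentially the entire novelty is already contained in the leave-one-out construction and the auxiliary incoherence event $\mathcal{\tilde E}_{k-m}^{t_0,k}$; the proof here is a direct adaptation of the arithmetic used to bound term $(\mathrm{III})$ in \cref{lem:leaveoneout_goodevent}, with the deterministic input replaced by \cref{lem:linear_deterministic_bd}.
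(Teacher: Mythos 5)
Your proposal follows the same route as the paper: verify the eigengap on $\mathcal{E}_{\mathrm{Good}}$, invoke \cref{lem:linear_deterministic_bd} for the three-term deterministic decomposition, absorb the first two terms deterministically on $\mathcal{E}_{\mathrm{Good}} \cap \mathcal{E}_{\mathrm{main}}^{t_0-1,k}$ using the SNR and coherence hypotheses, and handle $\tilde\xi_k^{t_0,k-m}$ by invoking \cref{lem:eventintersection} to pass to the noise-independent event $\mathcal{\tilde E}_{k-m}^{t_0,k}$ followed by the row-concentration bound. The only cosmetic difference is the budget split ($\tfrac{1}{12}+\tfrac{1}{12}+\tfrac{1}{12}$ versus the paper's $\tfrac{1}{8}+\tfrac{1}{8}$, grouping the two deterministic terms together), and the fact that for $j=k$ the invocation of \cref{lem:matricizationrowbound} reduces to \cref{lem:rowbound}, which is what the paper cites directly; neither affects correctness.
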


\begin{proof}[Proof of Lemma \ref{lem:linearterm_goodevent}]
The proof of this is similar to the proof of Lemma \ref{lem:leaveoneout_goodevent}, only using the deterministic bound in Lemma \ref{lem:linear_deterministic_bd} instead of the deterministic bound in Lemma \ref{lem:leaveoneoutsintheta}.  Once again without loss of generality we prove the result for $k = 1$; the bounds for $k = 2$ and $k = 3$ are similar.

First, on the event $\mathcal{E}_{\mathrm{Good}}$, it holds that $\lambda/2 \leq \lambda_{r_1}(\mathbf{\hat{\Lambda}}_k^{(t_0-1)})$ for $t_0 \geq 1$.  By Lemma \ref{lem:linear_deterministic_bd}, it holds that
\begin{align*}
    \| e_m\t \mathbf{L}_k^{t_0} \| &\leq \frac{8 \kappa}{\lambda} \| \U_1\|_{2,\infty} \bigg( \tau_1 \eta_1^{(t)} + \| \U_1\t \mathbf{Z}_k \mathbf{V}_1 \| \bigg) + \frac{8\kappa}{\lambda} \bigg( \tau_k \eta_k^{(t,k-m)} \bigg) + \frac{4\kappa}{\lambda} \tilde \xi_k^{t_0,k-m}.
\end{align*}
On the event $\mathcal{E}_{\mathrm{main}}^{t_0-1,k} \cap \mathcal{E}_{\mathrm{Good}}$, we have the following bounds: \begin{align*}
    \tau_1 &\leq C_1 \sqrt{pr}; \\
    \|\U_1\t\mathbf{Z}_1 \mathbf{V}_1 \| &\leq C_1 \big( \sqrt{r} + \sqrt{\log(p)} \big);\\
    \eta_1^{(t)} &\leq \frac{\dl^{(2)}}{\lambda} + \frac{\dl^{(3)}}{\lambda} + \frac{2}{2^{t_0-1}}; \\
    \eta_1^{(t,1-m)} &\leq \bigg( \frac{\dl^{(2)}}{\lambda} + \frac{1}{2^{t_0-1}} \bigg) \mu_0 \sqrt{\frac{r_2}{p_1}} + \bigg( \frac{\dl^{(3)}}{\lambda} + \frac{1}{2^{t_0-1}} \bigg) \mu_0 \sqrt{\frac{r_3}{p_1}}.
\end{align*}
 Plugging in these bounds yields
\begin{align*}
      \| e_m\t \mathbf{L}_k^{t_0} \| &\leq  \frac{8C \kappa}{\lambda} \mu_0 \sqrt{\frac{r_1}{p_1}} \bigg(  \sqrt{pr}  \bigg(  \frac{\dl^{(2)}}{\lambda} + \frac{\dl^{(3)}}{\lambda} + \frac{2}{2^{t_0-1}} \bigg) + \sqrt{r} + \sqrt{\log(p)} \bigg) \\
      &\quad + \frac{8C \sqrt{pr} \kappa}{\lambda} \bigg(\bigg( \frac{\dl^{(2)}}{\lambda} + \frac{1}{2^{t_0-1}} \bigg) \mu_0 \sqrt{\frac{r_2}{p_1}} + \bigg( \frac{\dl^{(3)}}{\lambda} + \frac{1}{2^{t_0-1}} \bigg) \mu_0 \sqrt{\frac{r_3}{p_1}}  \bigg) + \frac{4\kappa}{\lambda} \tilde \xi_k^{t_0,k-m} \\
      &\leq \mu_0 \sqrt{\frac{r_1}{p_1}} \frac{8C \kappa}{\lambda} \Bigg[\sqrt{pr} \bigg( \frac{\dl^{(2)}}{\lambda} + \frac{\dl^{(3)}}{\lambda} \bigg) + \sqrt{r} + \sqrt{\log(p)} \Bigg] + \mu_0 \sqrt{\frac{r_1}{p_1}} \frac{8 C \kappa \sqrt{pr}}{\lambda} \frac{2}{2^{t_0-1}} \\
      &\quad + \mu_0 \sqrt{\frac{r_1}{p_1}} \frac{8 C \sqrt{pr} \kappa}{\lambda} \sqrt{\frac{r_2}{r_1}} \bigg( \frac{\dl^{(2)}}{\lambda} + \frac{1}{2^{t_0-1}} \bigg) + \mu_0 \sqrt{\frac{r_1}{p_1}} \frac{8 C \sqrt{pr} \kappa}{\lambda} \sqrt{\frac{r_3}{r_1}} \bigg( \frac{\dl^{(3)}}{\lambda} + \frac{1}{2^{t_0-1}} \bigg) \\
      &\quad + \frac{4 \kappa}{\lambda} \tilde \xi_k^{t_0,k-m} \\
      &\leq \mu_0 \sqrt{\frac{r_1}{p_1}} \frac{\dl^{(1)}}{\lambda} \bigg( \frac{8 C}{C_0 \sqrt{p_1\log(p)}} \bigg) \\
      &\quad \times \Bigg[\sqrt{pr} \bigg( \frac{\dl^{(2)}}{\lambda} + \frac{\dl^{(3)}}{\lambda} \bigg) + \sqrt{r} + \sqrt{\log(p)} + \sqrt{\frac{r_2}{r_1}} \frac{\dl^{(2)}}{\lambda} + \sqrt{\frac{r_3}{r_1}} \frac{\dl^{(3)}}{\lambda} \Bigg] \\
      &\quad + \mu_0 \sqrt{\frac{r_1}{p_1}} \frac{1}{2^{t_0}} \frac{32 C \kappa \sqrt{pr}}{\lambda} + \mu_0 \sqrt{\frac{r_1}{p_1}} \frac{1}{2^{t_0}}\frac{32 C \sqrt{pr}{\kappa}}{\lambda} \sqrt{\frac{r}{r_1}} \\
      &\quad + \frac{4 \kappa}{\lambda} \tilde \xi_1^{t_0,1-m} \\\
    &\leq \frac{1}{8} \bigg( \frac{\dl^{(1)}}{\lambda} + \frac{1}{2^{t_0}} \bigg) + \frac{4\kappa}{\lambda} \tilde \xi_1^{t_0,1-m},
\end{align*}
where the final inequality holds as long as
\begin{align*}
   \bigg( \frac{8 C}{C_0 \sqrt{p_1\log(p)}} \bigg) \Bigg[\sqrt{pr} \bigg( \frac{\dl^{(2)}}{\lambda} + \frac{\dl^{(3)}}{\lambda} \bigg) + \sqrt{r} + \sqrt{\log(p)} + \sqrt{\frac{r_2}{r_1}} \frac{\dl^{(2)}}{\lambda} + \sqrt{\frac{r_3}{r_1}} \frac{\dl^{(3)}}{\lambda} \Bigg] \leq \frac{1}{8}
\end{align*}
and
\begin{align*}
     \frac{32 C \kappa \sqrt{pr}}{\lambda} + \frac{32 C \sqrt{pr}{\kappa}}{\lambda} \sqrt{\frac{r}{r_1}} \leq \frac{1}{8}.
\end{align*}
These two inequalities hold as long as $C_0$ is larger than some fixed constant  and the assumptions $\lambda \gtrsim \kappa \sqrt{\log(p)} p/p_{\min}^{1/4}$, $r_k \asymp r$ and $\mu_0^2 r \lesssim p_{\min}^{1/2}$.
Consequently, 
\begin{align*}
    \p\Bigg\{ &\bigg\{ \| e_m\t \mathbf{L}_1^{t_0} \| \geq \frac{1}{4} \bigg( \frac{\dl^{(1)}}{\lambda} + \frac{1}{2^{t_0}} \bigg) \mu_0 \sqrt{\frac{r_1}{p_1}}\bigg\} \bigcap \mathcal{E}_{\mathrm{Good}} \bigcap \mathcal{E}_{\mathrm{main}}^{t_0,1} \Bigg\} \\
    &\leq \p\Bigg\{ \bigg\{\frac{4\kappa}{\lambda} \tilde \xi_1^{t_0,1-m} \geq \frac{1}{8} \bigg( \frac{\dl^{(1)}}{\lambda} + \frac{1}{2^{t_0}} \bigg) \mu_0 \sqrt{\frac{r_1}{p_1}} \bigg\} \bigcap \mathcal{E}_{\mathrm{Good}} \bigcap \mathcal{E}_{\mathrm{main}}^{t_0,1} \Bigg\} \\
    &\leq \p\Bigg\{ \bigg\{\frac{4\kappa}{\lambda} \tilde \xi_1^{t_0,1-m} \geq \frac{1}{8} \bigg( \frac{\dl^{(1)}}{\lambda} + \frac{1}{2^{t_0}} \bigg) \mu_0 \sqrt{\frac{r_1}{p_1}}\bigg\} \bigcap \mathcal{E}_{\mathrm{Good}} \bigcap \mathcal{E}_{\mathrm{main}}^{t_0,1} \bigcap \mathcal{\tilde E}_{1-m}^{t_0,1} \Bigg\} \\
    &\qquad + \p\bigg\{ \bigcap \mathcal{E}_{\mathrm{Good}} \bigcap \mathcal{E}_{\mathrm{main}}^{t_0,1} \bigcap (\mathcal{\tilde E}_{1-m}^{t_0,1})^c \bigg\} \\
    &\leq \p\Bigg\{ \bigg\{\frac{4\kappa}{\lambda} \tilde \xi_1^{t_0,1-m} \geq \frac{1}{8} \bigg( \frac{\dl^{(1)}}{\lambda} + \frac{1}{2^{t_0}} \bigg)\mu_0 \sqrt{\frac{r_1}{p_1}} \bigg\} \bigcap \mathcal{\tilde E}_{1-m}^{t_0,1} \Bigg\} 
    \end{align*}
where we have used Lemma \ref{lem:eventintersection} to conclude that the event in the penultimate line is empty.  Therefore, it suffices to bound $\tilde \xi_1^{t_0,1-m}$ on the event $\mathcal{\tilde E}_{1-m}^{t_0,1}$.  Since this event is independent from the random variables belonging to $e_m\t \mathbf{Z}_1$, by Lemma \ref{lem:rowbound}, it holds that with probability at least $1 - O(p^{-30})$ that 
\begin{align*}
    \tilde\xi_1^{t_0,1-m} &= \bigg\| \bigg( \mathbf{Z}_1^{1-m} - \mathbf{Z}_1 \bigg) \mathcal{\tilde P}_{1}^{t_0,1-m} \mathbf{V}_1 \bigg\| \\
    &\leq C  \sqrt{p_{-1}\log(p)} \bigg\| \mathcal{\tilde P}_{1}^{t_0,j-m} \mathbf{V}_1 \bigg\|_{2,\infty}. 
\end{align*}
On the event $\mathcal{\tilde E}_{1-m}^{t_0,1}$ it holds that
\begin{align*}
    \| \mathcal{\tilde P}_{1}^{t_0,1-m} \mathbf{V}_1 \|_{2,\infty} &\leq  c \Bigg\{ \mu_0^2 \frac{\sqrt{r_2r_3}}{p_1} \bigg( \frac{\dl^{(2)}}{\lambda} + \frac{1}{2^{t_0-1}}\bigg)\bigg( \frac{\dl^{(3)}}{\lambda} + \frac{1}{2^{t_0-1}}\bigg); \\
     &\quad + \mu_0^2 \frac{\sqrt{r_2r_3}}{\sqrt{p_1p_3}} \bigg( \frac{\dl^{(2)}}{\lambda} + \frac{1}{2^{t_0-1}} \bigg) + \mu_0^2 \frac{\sqrt{r_2 r_3}}{\sqrt{p_1 p_2}} \bigg( \frac{\dl^{(3)}}{\lambda} + \frac{1}{2^{t_0-1}} \bigg)\\
     &\quad +  \mu_0^2 \frac{\sqrt{r_2r_3}}{\sqrt{p_2p_3}} \bigg( \frac{\dl^{(2)}}{\lambda} + \frac{1}{2^{t_0-1}} \bigg) + \mu_0^2 \frac{\sqrt{r_2r_3}}{\sqrt{p_2p_3}} \bigg( \frac{\dl^{(3)}}{\lambda} + \frac{1}{2^{t_0-1}} \bigg) \\
     &\quad + \mu_0 \sqrt{\frac{r_1}{p_{-1}}} \Bigg\}.
\end{align*}
Therefore with probability at least $1 - O(p^{-30})$, one has
\begin{align*}
    \frac{4\kappa}{\lambda} \tilde\xi_1^{t_0,1-m} &\leq \frac{4 C \kappa \sqrt{\log(p)}}{\lambda} \sqrt{p_{-1}} \\
    &\times \Bigg\{ \mu_0^2 \frac{\sqrt{r_2r_3}}{p_1} \bigg( \frac{\dl^{(2)}}{\lambda} + \frac{1}{2^{t_0-1}}\bigg)\bigg( \frac{\dl^{(3)}}{\lambda} + \frac{1}{2^{t_0-1}}\bigg); \\
     &\quad + \mu_0^2 \frac{\sqrt{r_2r_3}}{\sqrt{p_1p_3}} \bigg( \frac{\dl^{(2)}}{\lambda} + \frac{1}{2^{t_0-1}} \bigg) + \mu_0^2 \frac{\sqrt{r_2 r_3}}{\sqrt{p_1 p_2}} \bigg( \frac{\dl^{(3)}}{\lambda} + \frac{1}{2^{t_0-1}} \bigg)\\
     &\quad +  \mu_0^2 \frac{\sqrt{r_2r_3}}{\sqrt{p_2p_3}} \bigg( \frac{\dl^{(2)}}{\lambda} + \frac{1}{2^{t_0-1}} \bigg) + \mu_0^2 \frac{\sqrt{r_2r_3}}{\sqrt{p_2p_3}} \bigg( \frac{\dl^{(3)}}{\lambda} + \frac{1}{2^{t_0-1}} \bigg) + \mu_0 \sqrt{\frac{r_1}{p_{-1}}} \Bigg\} \\
     &\leq \frac{C_0 \kappa \sqrt{p_1\log(p)}}{\lambda} \mu_0 \sqrt{\frac{r_1}{p_1}} \bigg( \frac{4C\sqrt{p_{-1}}}{C_0} \bigg) \\
    &\times \Bigg\{ \mu_0 \frac{\sqrt{r_2r_3}}{\sqrt{r_1}p_1} \bigg( \frac{\dl^{(2)}}{\lambda} + \frac{1}{2^{t_0-1}}\bigg)\bigg( \frac{\dl^{(3)}}{\lambda} + \frac{1}{2^{t_0-1}}\bigg); \\
     &\quad + \mu_0 \frac{\sqrt{r_2r_3}}{\sqrt{r_1p_1p_3}} \bigg( \frac{\dl^{(2)}}{\lambda} + \frac{1}{2^{t_0-1}} \bigg) + \mu_0 \frac{\sqrt{r_2 r_3}}{\sqrt{r_1p_1 p_2}} \bigg( \frac{\dl^{(3)}}{\lambda} + \frac{1}{2^{t_0-1}} \bigg)\\
     &\quad +  \mu_0 \frac{\sqrt{r_2r_3}}{\sqrt{r_1p_2p_3}} \bigg( \frac{\dl^{(2)}}{\lambda} + \frac{1}{2^{t_0-1}} \bigg) + \mu_0 \frac{\sqrt{r_2r_3}}{\sqrt{r_1p_2p_3}} \bigg( \frac{\dl^{(3)}}{\lambda} + \frac{1}{2^{t_0-1}} \bigg) +  \sqrt{\frac{1}{p_{-1}}} \Bigg\} \\
     &\leq \frac{1}{8}\bigg( \frac{\dl^{(1)}}{\lambda} + \frac{1}{2^{t_0}} \bigg) \mu_0 \sqrt{\frac{r_1}{p_1}},
\end{align*}
where the final inequality holds by similar algebraic manipulations as in the previous part of this proof provided that $C_0$ is larger than some fixed constant  together with the assumptions $\lambda \gtrsim \kappa \sqrt{\log(p)} p/p_{\min}^{1/4}$, $r_k \asymp r$ and $\mu_0^2 r \lesssim p_{\min}^{1/2}$.
This completes the proof.
\end{proof}

\begin{lemma}[Bounding the quadratic term on a good event] \label{lem:quadraticterm_goodevent}
The quadratic term satisfies
\begin{align*}
    \p\Bigg\{ \bigg\{ \| e_m\t \mathbf{Q}_k^{(t)} \| \geq \frac{1}{4} \bigg( \frac{\dl\ku}{\lambda} + \frac{1}{2^{t}} \bigg) \mu_0 \sqrt{\frac{r_k}{p_k}} \bigg\}\bigcap \mathcal{E}_{\mathrm{main}}^{t_0-1,1} \bigcap \mathcal{E}_{\mathrm{Good}}\Bigg\} \leq p^{-29}.\end{align*}
\end{lemma}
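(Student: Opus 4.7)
The plan is to mimic the strategy used for the analogous linear-term bound in Lemma \ref{lem:linearterm_goodevent} and for the leave-one-out $\sin\Theta$ bound in Lemma \ref{lem:leaveoneout_goodevent}. Without loss of generality fix $k=1$ (the cases $k=2,3$ differ only in the indices appearing in the projection $\mathcal{\hat P}_k^{(t)}$). On $\mathcal{E}_{\mathrm{Good}}$ the eigengap $\lambda/2 \leq \lambda_{r_1}(\mathbf{\hat\Lambda}_1^{(t)})$ required by Lemma \ref{lem:quadratic_deterministic_bd} is satisfied, so that lemma yields the pointwise bound
\[
\|e_m\t \mathbf{Q}_1^{(t_0)}\| \leq \underbrace{\tfrac{4}{\lambda^2}\|\U_1\|_{2,\infty}\bigl(\tau_1\eta_1^{(t_0)}+\|\U_1\t \mathbf{Z}_1(\mathcal{P}_{\U_2}\otimes\mathcal{P}_{\U_3})\|\bigr)}_{(A)}+\underbrace{\tfrac{16}{\lambda^2}\tau_1^2\eta_1^{(t_0,1-m)}}_{(B)}+\underbrace{\tfrac{4}{\lambda^2}\xi_1^{t_0,1-m}\,G}_{(C)},
\]
where $G := \tau_1\|\sin\Theta(\uhat_1^{(t_0)},\U_1)\|+\tau_1\eta_1^{(t_0-1)}+\|\U_1\U_1\t\mathbf{Z}_1(\mathcal{P}_{\U_2}\otimes\mathcal{P}_{\U_3})\|$.

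On $\mathcal{E}_{\mathrm{Good}}\cap\mathcal{E}_{\mathrm{main}}^{t_0-1,1}$ one may insert the deterministic substitutions $\tau_1\lesssim\sqrt{pr}$, $\eta_1^{(t_0)}\leq\frac{\dl^{(2)}+\dl^{(3)}}{\lambda}+\frac{2}{2^{t_0-1}}$, $\eta_1^{(t_0,1-m)}\leq\mu_0\sqrt{r_2/p_1}(\dl^{(2)}/\lambda+2^{-(t_0-1)})+\mu_0\sqrt{r_3/p_1}(\dl^{(3)}/\lambda+2^{-(t_0-1)})$, $\|\U_1\t \mathbf{Z}_1(\mathcal{P}_{\U_2}\otimes\mathcal{P}_{\U_3})\|\lesssim r+\sqrt{\log p}$, and $\|\sin\Theta(\uhat_1^{(t_0)},\U_1)\|\leq \dl^{(1)}/\lambda+2^{-t_0}$. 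Treating $(A)$ and $(B)$ as entirely deterministic, one bundles each contribution against the target $\tfrac{1}{16}(\dl^{(1)}/\lambda+2^{-t_0})\mu_0\sqrt{r_1/p_1}$ and checks closure by direct algebra; every offending ratio is of the form $\sqrt{pr}\,\kappa/\lambda$, $\sqrt{\log p}/\sqrt{p_1\log p}$, or $pr/\lambda^2$, and each is absorbed into a small constant under the assumptions $\lambda\gtrsim\kappa\sqrt{\log p}\,p/p_{\min}^{1/4}$, $r_k\asymp r$, and $\mu_0^2 r\lesssim p_{\min}^{1/2}$, provided $C_0$ in the definition of $\dl$ is taken larger than a fixed constant. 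In particular $(A)$ and $(B)$ can be shown to be at most $\tfrac{1}{8}(\dl^{(1)}/\lambda+2^{-t_0})\mu_0\sqrt{r_1/p_1}$ deterministically on the good events.

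The remaining task is to control $(C)$. Because $\xi_1^{t_0,1-m}$ and the previous-iterate-dependent factor $G$ are both correlated with $e_m\t\mathbf{Z}_1$, we apply the decoupling strategy of Lemmas \ref{lem:leaveoneout_goodevent} and \ref{lem:linearterm_goodevent}: bound $G$ deterministically on $\mathcal{E}_{\mathrm{Good}}\cap\mathcal{E}_{\mathrm{main}}^{t_0-1,1}$ by $C(\sqrt{pr}(\dl/\lambda+2^{-t_0-1})+r+\sqrt{\log p})$, intersect with the auxiliary incoherence event $\mathcal{\tilde E}_{1-m}^{t_0,1}$, and invoke Lemma \ref{lem:eventintersection} to discard the complement of $\mathcal{\tilde E}_{1-m}^{t_0,1}$ as an empty intersection. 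Conditional on $\mathcal{\tilde E}_{1-m}^{t_0,1}$, the projection $\mathcal{\tilde P}_1^{t_0,1-m}$ is independent of the nonzero entries of $\mathbf{Z}_1-\mathbf{Z}_1^{1-m}$, so Lemma \ref{lem:matricizationrowbound} gives $\xi_1^{t_0,1-m}\lesssim\sqrt{p_{-1}\log p}\,\|\mathcal{\tilde P}_1^{t_0,1-m}\|_{2,\infty}$ with probability $\geq 1-O(p^{-30})$, and $\|\mathcal{\tilde P}_1^{t_0,1-m}\|_{2,\infty}$ is itself controlled by the explicit bound contained in $\mathcal{\tilde E}_{1-m}^{t_0,1}$.

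The main obstacle, and the only step requiring nontrivial work, is the algebraic verification that
\[
\tfrac{4}{\lambda^2}\,\sqrt{p_{-1}\log p}\,\|\mathcal{\tilde P}_1^{t_0,1-m}\|_{2,\infty}\,G \;\leq\; \tfrac{1}{8}\bigl(\dl^{(1)}/\lambda+2^{-t_0}\bigr)\mu_0\sqrt{r_1/p_1}.
\]
This is analogous to — but somewhat more delicate than — the bound for $(II)$ in Lemma \ref{lem:leaveoneout_goodevent}, because $G$ carries an extra $\sqrt{pr}$ that compounds with the $\sqrt{p_{-1}\log p}$ from concentration and the $\mu_0^2\sqrt{r_{-1}}/\sqrt{p_2p_3}$ bias-type term inside $\|\mathcal{\tilde P}_1^{t_0,1-m}\|_{2,\infty}$. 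Splitting $\|\mathcal{\tilde P}_1^{t_0,1-m}\|_{2,\infty}$ into its six summands from $\mathcal{\tilde E}_{1-m}^{t_0,1}$ and matching each against the three summands of $G$ produces eighteen terms, each of which must be shown to be $O(\mu_0\sqrt{r_1/p_1}(\dl^{(1)}/\lambda+2^{-t_0}))$ with a sufficiently small constant. Every term reduces to some product of the small parameters $\kappa\sqrt{pr}/\lambda$, $\mu_0\sqrt{r/p_{\min}}$, $\sqrt{\log p}/\sqrt{p_1\log p}$, and $1/C_0$, and the worst ratio saturates precisely at $\lambda\asymp\kappa\sqrt{\log p}\,p/p_{\min}^{1/4}$ with $\mu_0^2 r\lesssim p_{\min}^{1/2}$, which is exactly the working regime. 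Taking a union bound over the three probability-$O(p^{-30})$ events produced by the three calls to Lemma \ref{lem:matricizationrowbound} and absorbing into $p^{-29}$ completes the argument.
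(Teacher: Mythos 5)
Your proposal is correct and follows essentially the same route as the paper's proof: apply the deterministic bound of Lemma \ref{lem:quadratic_deterministic_bd} under the eigengap guaranteed by $\mathcal{E}_{\mathrm{Good}}$, substitute the bounds available on $\mathcal{E}_{\mathrm{Good}}\cap\mathcal{E}_{\mathrm{main}}^{t_0-1,1}$ for $\tau_1$, $\eta_1^{(t_0)}$, $\eta_1^{(t_0,1-m)}$, $\|\sin\Theta(\uhat_1^{(t_0)},\U_1)\|$ and $\|\U_1\t\mathbf{Z}_1(\mathcal{P}_{\U_2}\otimes\mathcal{P}_{\U_3})\|$, absorb the purely deterministic contributions into $\tfrac{1}{8}(\dl^{(1)}/\lambda+2^{-t_0})\mu_0\sqrt{r_1/p_1}$, and then decouple the remaining $\xi_1^{t_0,1-m}$-dependent term via the auxiliary incoherence event $\mathcal{\tilde E}_{1-m}^{t_0,1}$ and Lemma \ref{lem:eventintersection}, followed by the row-wise concentration bound and a final algebra check under $\lambda\gtrsim\kappa\sqrt{\log p}\,p/p_{\min}^{1/4}$, $r_k\asymp r$, $\mu_0^2 r\lesssim p_{\min}^{1/2}$. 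The only cosmetic differences are that the paper invokes Lemma \ref{lem:rowbound} directly (which is the $j=k$ specialization of the Lemma \ref{lem:matricizationrowbound} you cite), and the paper makes a single call to that concentration bound rather than three — the ``eighteen terms'' you mention are just the expansion of one algebraic verification, not separate probabilistic events.
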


\begin{proof}
Again without loss of generality we prove the result for $k = 1$; the bounds for $k =2$ and $3$ are similar.  First, on the event $\mathcal{E}_{\mathrm{Good}}$ it holds that $\lambda/2 \leq \lambda_{r_1} ( \mathbf{\hat{\Lambda}}^{(t_0-1)})$, and hence by Lemma \ref{lem:quadratic_deterministic_bd} it holds that
\begin{align*}
    \| e_m\t \mathbf{Q}_1^{t_0} \| &\leq \frac{4}{\lambda^2} \|\U_1\|_{2,\infty} \bigg( \tau_1 \eta_1^{(t)} + \bigg\| \U_1\t \mathbf{Z}_1 \bigg[ \mathcal{P}_{\U_{2}} \otimes  \mathcal{P}_{\U_{3}}  \bigg] \bigg\| \bigg)  + \frac{16}{\lambda^2}\tau_1^2\bigg( \eta_{1}^{(t,1-m)}  \bigg)  \\
&+ \frac{4}{\lambda^2}  \xi_1^{t,1-m} \bigg( \tau_1 \| \sin\Theta(\uhat_1^{(t)},\U_1) \| + \tau_1 \eta^{(t-1)}_1 +  \bigg\|\U_1 \U_1\t \mathbf{Z}_1 \mathcal{P}_{\U_{2}}  \otimes   \mathcal{P}_{\U_{3}}  \bigg\| \bigg).
\end{align*}
On the event $\mathcal{E}_{\mathrm{main}}^{t_0-1,1} \cap \mathcal{E}_{\mathrm{Good}}$, one has the following bounds:
\begin{align*}
    \tau_1 &\leq C \sqrt{pr}; \\
    \eta_1^{(t_0,1-m)} &\leq \mu_0 \sqrt{\frac{r_2}{p_1}}\bigg( \frac{\dl^{(2)}}{\lambda} + \frac{1}{2^{t_0-1}} \bigg) + \mu_0 \sqrt{\frac{r_3}{p_1}} \bigg( \frac{\dl^{(3)}}{\lambda} + \frac{1}{2^{t_0-1}} \bigg) ; \\
    \eta_{1}^{(t_0)} &\leq \frac{\dl^{(2)}}{\lambda} + \frac{\dl^{(3)}}{\lambda} + \frac{2}{2^{t_0-1}}; \\
    \|\sin\Theta(\uhat_{1}^{(t_0-1)}, \U_1^{(t_0-1)}) \| &\leq \frac{\dl^{(1)}}{\lambda} + \frac{1}{2^{t_0-1}}; \\
    \| \U_1\t \mathbf{Z}_1 \mathcal{P}_{\U_{2}}  \otimes   \mathcal{P}_{\U_{3}} \| &\leq C( r + \sqrt{\log(p)}). 
\end{align*}
Plugging these in yields
\begin{align*}
     \| e_m\t \mathbf{Q}_1^{t_0} \| &\leq  \frac{4C }{\lambda^2} \mu_0 \sqrt{\frac{r_1}{p_1}} \bigg[  \sqrt{pr} \bigg(  \frac{\dl^{(2)}}{\lambda} + \frac{\dl^{(3)}}{\lambda} + \frac{2}{2^{t_0-1}} \bigg) + r + \sqrt{\log(p)} \bigg] \\
     &\quad + \frac{16C^2 pr }{\lambda^2}\bigg[ \mu_0 \sqrt{\frac{r_2}{p_1}}\bigg( \frac{\dl^{(2)}}{\lambda} + \frac{1}{2^{t_0-1}} \bigg) + \mu_0 \sqrt{\frac{r_3}{p_1}} \bigg( \frac{\dl^{(3)}}{\lambda} + \frac{1}{2^{t_0-1}} \bigg) \bigg]  \\
     &+ \frac{4 C }{\lambda^2}  \xi_1^{t,1-m} \bigg[ \sqrt{pr} \bigg( \frac{\dl^{(1)}}{\lambda}+\frac{\dl^{(2)}}{\lambda} +\frac{\dl^{(3)}}{\lambda}  + \frac{3}{2^{t_0-1}} \bigg) + r + \sqrt{\log(p)}  \bigg] \\
     &\leq \frac{C_0\sqrt{p_1\log(p)}}{\lambda} \mu_0 \sqrt{\frac{r_1}{p_1}}\bigg( \frac{4C}{C_0} \frac{1}{\lambda \sqrt{p_1 \log(p)}} \bigg)  \bigg[ \sqrt{pr} \bigg( \frac{\dl^{(2)}}{\lambda} + \frac{\dl^{(3)}}{\lambda} \bigg) + r + \sqrt{\log(p)} \bigg] \\
     &\quad + \frac{C_0 \sqrt{p_1 \log(p)}}{\lambda}\mu_0 \sqrt{\frac{r_1}{p_1}} \bigg( \frac{16 C^2 pr}{C_0\lambda\sqrt{p_1\log(p)}} \bigg) \bigg[ \sqrt{\frac{r_2}{r_1}}  \frac{\dl^{(2)}}{\lambda} + \sqrt{\frac{r_3}{r_1}}  \frac{\dl^{(3)}}{\lambda} \bigg] \\
     &\quad + \frac{1}{2^{t_0}}\mu_0 \sqrt{\frac{r_1}{p_1}} \bigg[ \frac{32 C \sqrt{pr}}{\lambda} + \sqrt{\frac{r_2}{r_1}} \frac{32 C^2 pr}{\lambda^2} + \sqrt{\frac{r_3}{r_1}} \frac{32 C^2 pr}{\lambda^2} \bigg] \\
     &\quad + \xi_1^{t_0,1-m} \frac{4 C}{\lambda^2} \bigg[ \sqrt{pr} \bigg( \frac{\dl^{(1)}}{\lambda}+\frac{\dl^{(2)}}{\lambda} +\frac{\dl^{(3)}}{\lambda}  + \frac{3}{2^{t_0-1}} \bigg) + r + \sqrt{\log(p)}  \bigg] \\
     &\leq \frac{1}{8} \bigg( \frac{\dl^{(1)}}{\lambda} + \frac{1}{2^{t_0}} \bigg) \mu_0 \sqrt{\frac{r_1}{p_1}} \\
     &\quad + \xi_1^{t_0,1-m} \frac{4 C}{\lambda^2} \bigg[ \sqrt{pr} \bigg( \frac{\dl^{(1)}}{\lambda}+\frac{\dl^{(2)}}{\lambda} +\frac{\dl^{(3)}}{\lambda}  + \frac{3}{2^{t_0-1}} \bigg) + r + \sqrt{\log(p)}  \bigg],
\end{align*}
where the final inequality holds as long as
\begin{align*}
    \bigg( \frac{4C}{C_0} \frac{1}{\lambda \sqrt{p_1 \log(p)}} \bigg)  \bigg[ \sqrt{pr} &\bigg( \frac{\dl^{(2)}}{\lambda} + \frac{\dl^{(3)}}{\lambda} \bigg) + r + \sqrt{\log(p)} \bigg] \\
    &\quad + \bigg( \frac{16 C^2 pr}{C_0\lambda\sqrt{p_1\log(p)}} \bigg) \bigg[ \sqrt{\frac{r_2}{r_1}}  \frac{\dl^{(2)}}{\lambda} + \sqrt{\frac{r_3}{r_1}}  \frac{\dl^{(3)}}{\lambda} \bigg] \leq \frac{1}{8}
\end{align*}
and
\begin{align*}
    \frac{32 C \sqrt{pr}}{\lambda} + \sqrt{\frac{r_2}{r_1}} \frac{32 C^2 pr}{\lambda^2} + \sqrt{\frac{r_3}{r_1}} \frac{32 C^2 pr}{\lambda^2} \leq \frac{1}{8}.
\end{align*}
Both of these conditions hold when $\lambda \gtrsim \kappa \sqrt{\log(p)} p/p_{\min}^{1/4}$, $r_k \asymp r$ and $r \leq C p_{\min}^{1/2}$ provided the constant $C_0$ is larger than some fixed constant. Finally, we note that
\begin{align*}
    \xi_1^{t_0,1-m} &\frac{4 C}{\lambda^2} \bigg[ \sqrt{pr} \bigg( \frac{\dl^{(1)}}{\lambda}+\frac{\dl^{(2)}}{\lambda} +\frac{\dl^{(3)}}{\lambda}  + \frac{3}{2^{t_0-1}} \bigg) + r + \sqrt{\log(p)}  \bigg] \\&\leq \frac{\xi_1^{t_0,1-m}}{\lambda} \bigg( \frac{4 C}{\lambda} \bigg)\bigg[ \sqrt{pr} \bigg( \frac{\dl^{(1)}}{\lambda}+\frac{\dl^{(2)}}{\lambda} +\frac{\dl^{(3)}}{\lambda}  + \frac{3}{2^{t_0-1}} \bigg) + r + \sqrt{\log(p)}  \bigg] \\
    &\leq \frac{\xi_1^{t_0,1-m}}{\lambda} \bigg( \frac{\tilde C\big( \sqrt{pr} + r + \sqrt{\log(p)}\big)} {\lambda} \bigg).
\end{align*}
Define
\begin{align*}
    B_1 &\coloneqq \frac{\xi_1^{t_0,1-m}}{\lambda} \bigg( \frac{\tilde C\big( \sqrt{pr} + r + \sqrt{\log(p)}\big)} {\lambda} \bigg).
\end{align*}
Then
\begin{align*}
    \p\Bigg\{ &\bigg\{ \| e_m\t \mathbf{Q}_1^{t_0} \| \geq \frac{1}{4} \bigg( \frac{\dl^{(1)}}{\lambda} + \frac{1}{2^{t_0}} \bigg) \mu_0 \sqrt{\frac{r_1}{p_1}} \bigg\} \bigcap \mathcal{E}_{\mathrm{main}}^{t_0-1,1} \bigcap \mathcal{E}_{\mathrm{Good}} \Bigg\} \\
    &\leq \p\Bigg\{ \bigg\{ B_1 \geq \frac{1}{8}\bigg( \frac{\dl^{(1)}}{\lambda} + \frac{1}{2^{t_0}} \bigg) \mu_0 \sqrt{\frac{r_1}{p_1}} \bigg\} \mathcal{E}_{\mathrm{main}}^{t_0-1,1} \bigcap \mathcal{E}_{\mathrm{Good}} \Bigg\} \\
    &\leq \p\Bigg\{ \bigg\{ B_1\geq \frac{1}{8} \bigg( \frac{\dl^{(1)}}{\lambda} + \frac{1}{2^{t_0}} \bigg) \mu_0 \sqrt{\frac{r_1}{p_1}}\bigg\} \mathcal{E}_{\mathrm{main}}^{t_0-1,1} \bigcap \mathcal{E}_{\mathrm{Good}} \bigcap \mathcal{\tilde E}_{1-m}^{t_0,1} \Bigg\} \\
    &\qquad + \p\bigg\{ \mathcal{E}_{\mathrm{main}}^{t_0-1,1} \bigcap \mathcal{E}_{\mathrm{Good}}\bigcap (\mathcal{\tilde E}_{1-m}^{t_0,1})^c \bigg\} \\
    &\leq \p\Bigg\{ \bigg\{ B_1 \geq \frac{1}{8} \bigg( \frac{\dl^{(1)}}{\lambda} + \frac{1}{2^{t_0}} \bigg) \mu_0 \sqrt{\frac{r_1}{p_1}} \bigg\} \bigcap \mathcal{\tilde E}_{1-m}^{t_0,1} \Bigg\},
\end{align*}
where we have used Lemma \ref{lem:eventintersection} to conclude that the event in the penultimate line is empty.  Since the event $\mathcal{\tilde E}_{1-m}^{t_0,1} $ is independent from the random variables belonging to $e_m\t \mathbf{Z}_1$, by Lemma \ref{lem:rowbound}, it holds that with probability at least $1 - O(p^{-30})$ that 
\begin{align*}
    \xi_1^{t_0,1-m} &\coloneqq \bigg\| \bigg( \mathbf{Z}_1^{1-m} - \mathbf{Z}_1 \bigg) \mathcal{\tilde P}^{t_0,1-m}_{1} \bigg\| \\
    &\leq C  \sqrt{p_{-1}\log(p)} \bigg\| \mathcal{\tilde P}^{t_0,1-m}_{1}  \bigg\|_{2,\infty}.
\end{align*}
On the event $\mathcal{\tilde E}_{1-m}^{t_0,1}$, we have that
\begin{align*}
    \bigg\| \mathcal{\tilde P}^{t_0,1-m}_{1}  \bigg\|_{2,\infty} &\leq c \Bigg\{ \mu_0^2 \frac{\sqrt{r_2 r_3}}{p_1}  \bigg( \frac{\dl^{(2)}}{\lambda} + \frac{1}{2^{t_0-1}} \bigg)  \bigg( \frac{\dl^{(3)}}{\lambda} + \frac{1}{2^{t_0-1}} \bigg) \\
    &\quad +  \mu_0^2 \frac{\sqrt{r_2 r_3}}{\sqrt{p_1p_3}} \bigg( \frac{\dl^{(2)}}{\lambda} + \frac{1}{2^{t_0-1}} \bigg) + \mu_0^2 \frac{\sqrt{r_2r_3}}{\sqrt{p_1p_2}} \bigg( \frac{\dl^{(3)}}{\lambda} + \frac{1}{2^{t_0-1}} \bigg) \\
    &\quad + \mu_0^2 \frac{\sqrt{r_2 r_3}}{\sqrt{p_2p_3}} \Bigg\}.
\end{align*}
Therefore, with probability at least $1 - O(p^{-30})$, it holds that
\begin{align*}
    B_1 &= \frac{\xi_1^{t_0,1-m}}{\lambda} \bigg( \frac{\tilde C \big( \sqrt{pr} + r + \sqrt{\log(p)} \big)}{\lambda} \bigg) \\
    &\leq \frac{C' \sqrt{p_{-1}\log(p)}}{\lambda} \bigg( \frac{\sqrt{pr} + r + \sqrt{\log(p)} }{\lambda} \bigg) \\
    &\quad \quad \times \bigg\{\mu_0^2 \frac{\sqrt{r_2 r_3}}{p_1}  \bigg( \frac{\dl^{(2)}}{\lambda} + \frac{1}{2^{t_0-1}} \bigg)  \bigg( \frac{\dl^{(3)}}{\lambda} + \frac{1}{2^{t_0-1}} \bigg) \\
    &\quad \quad \quad+  \mu_0^2 \frac{\sqrt{r_2 r_3}}{\sqrt{p_1p_3}} \bigg( \frac{\dl^{(2)}}{\lambda} + \frac{1}{2^{t_0-1}} \bigg) + \mu_0^2 \frac{\sqrt{r_2r_3}}{\sqrt{p_1p_2}} \bigg( \frac{\dl^{(3)}}{\lambda} + \frac{1}{2^{t_0-1}} \bigg) + \mu_0^2 \frac{\sqrt{r_2 r_3}}{\sqrt{p_2p_3}}\bigg\} \\
    &\leq \frac{C' \sqrt{p_{-1}\log(p)}}{\lambda} \bigg( \frac{\sqrt{pr} + r + \sqrt{\log(p)} }{\lambda} \bigg) \\
    &\quad \quad \times \bigg\{\mu_0^2 \frac{\sqrt{r_2 r_3}}{p_1}  \bigg( \frac{\dl^{(2)}}{\lambda} + \frac{1}{2^{t_0-1}} \bigg)  +  \mu_0^2 \frac{\sqrt{r_2 r_3}}{\sqrt{p_1p_3}} \bigg( \frac{\dl^{(2)}}{\lambda} + \frac{1}{2^{t_0-1}} \bigg)  \\
    &\quad \quad \quad+ \mu_0^2 \frac{\sqrt{r_2r_3}}{\sqrt{p_1p_2}} \bigg( \frac{\dl^{(3)}}{\lambda} + \frac{1}{2^{t_0-1}} \bigg) + \mu_0^2 \frac{\sqrt{r_2 r_3}}{\sqrt{p_2p_3}}\bigg\} \\
    &\leq \frac{C_0 \sqrt{p_1\log(p)}}{\lambda} \mu_0 \sqrt{\frac{r_1}{p_1}} \bigg( \sqrt{p_{-1}}\frac{C_0}{C'} \frac{\sqrt{pr}}{\lambda} \bigg) \\
    &\quad \times \bigg( \mu_0 \frac{\sqrt{r_2 r_3}}{p_1 \sqrt{r_1}} \frac{\dl^{(2)}}{\lambda} + 2 \mu_0 \frac{\sqrt{r_2 r_3}}{\sqrt{p_1 p_3 r_1}} \frac{\dl^{(2)}}{\lambda} +  \mu_0 \frac{\sqrt{r_2r_3}}{\sqrt{p_1p_2 r_1}} \frac{\dl^{(3)}}{\lambda} + \mu_0 \frac{\sqrt{r_2 r_3}}{\sqrt{p_2p_3r_1}} \bigg) \\
    &\quad + \frac{1}{2^{t_0}} \mu_0 \sqrt{\frac{r_1}{p_1}} \bigg( \frac{C'' \sqrt{p_1} \sqrt{p_{-1}\log(p)}}{\lambda} \frac{\sqrt{pr}}{\lambda} \bigg) \bigg[ \mu_0 \frac{\sqrt{r_2 r_3}}{p_1 \sqrt{r_1}} + \mu_0\frac{\sqrt{r_2r_3}}{\sqrt{p_1 p_3 r_1}} + \mu_0 \frac{\sqrt{r_2 r_3}}{\sqrt{p_1 p_2 r_1}} \bigg] \\
    &\leq \frac{1}{8} \bigg( \frac{\dl^{(1)}}{\lambda} + \frac{1}{2^{t_0}} \bigg) \mu_0 \sqrt{\frac{r_1}{p_1}},
\end{align*}
where the final inequality holds as long as
\begin{align*}
    \bigg( \sqrt{p_{-1}}\frac{C_0}{C'} \frac{\sqrt{pr}}{\lambda} \bigg) \bigg( \mu_0 \frac{\sqrt{r_2 r_3}}{p_1 \sqrt{r_1}} \frac{\dl^{(2)}}{\lambda} + 2 \mu_0 \frac{\sqrt{r_2 r_3}}{\sqrt{p_1 p_3 r_1}} \frac{\dl^{(2)}}{\lambda} +  \mu_0 \frac{\sqrt{r_2r_3}}{\sqrt{p_1p_2 r_1}} \frac{\dl^{(3)}}{\lambda} + \mu_0 \frac{\sqrt{r_2 r_3}}{\sqrt{p_2p_3r_1}} \bigg) \leq \frac{1}{8}
\end{align*}
and
\begin{align*}
    \bigg( \frac{C'' \sqrt{p_1} \sqrt{p_{-1}\log(p)}}{\lambda} \frac{\sqrt{pr}}{\lambda} \bigg) \bigg[ \mu_0 \frac{\sqrt{r_2 r_3}}{p_1 \sqrt{r_1}} + \mu_0\frac{\sqrt{r_2r_3}}{\sqrt{p_1 p_3 r_1}} + \mu_0 \frac{\sqrt{r_2 r_3}}{\sqrt{p_1 p_2 r_1}}  \bigg] \leq \frac{1}{8},
\end{align*}
both of which hold when $C_0$ is larger than some fixed constant  and $\lambda \gtrsim \kappa \sqrt{\log(p)} p/p_{\min}^{1/4}$, $r_k \asymp r$ and $r \leq C p_{\min}^{1/2}$.
%
This completes the proof.
\end{proof}



 \subsection{Putting it all together: Proof of Theorem \ref{thm:twoinfty}} \label{sec:twoinftyproof}
Recall we define
\begin{align*}
   \mathcal{E}_{\mathrm{Good}} &\coloneqq \bigg\{ \max_k \tau_k \leq C \sqrt{pr} \bigg\} \bigcap \bigg\{ \| \sin\Theta(\uhat_k^{(t)}, \U_k ) \| \leq \frac{\dl\ku}{\lambda} + \frac{1}{2^{t}} \text{ for all $t \leq t_{\max}$ and $1 \leq k \leq 3$ } \bigg\} \\
    &\qquad \bigcap \bigg\{ \max_k \bigg\| \U_k\t \mathbf{Z}_k \mathbf{V}_k \bigg\| \leq C \left( \sqrt{r} + \sqrt{\log(p)} \right) \bigg\}; \\
    &\qquad \bigcap \bigg\{ \max_k  \bigg\| \U_k\t \mathbf{Z}_k \mathcal{P}_{\U_{k+1}}  \otimes   \mathcal{P}_{\U_{k+2}}  \bigg\| \leq C \left( r + \sqrt{\log(p)} \right) \bigg\}; \\
    &\qquad \bigcap \bigg\{ \max_k \bigg\| \mathbf{Z}_k \mathbf{V}_k \bigg\| \leq C \sqrt{p_k} \bigg\}; \\
    \mathcal{E}_{2,\infty}^{t,k} &\coloneqq \bigg\{  \| \uhat_k^{(t)} - \U_k \mathbf{W}_k^{(t)} \|_{2,\infty} \leq \bigg( \frac{\dl\ku}{\lambda} + \frac{1}{2^{t}} \bigg) \mu_0 \sqrt{\frac{r_k}{p_k}} \bigg\}; \\
    \mathcal{E}_{j-m}^{t,k} &\coloneqq \bigg\{ \| \sin\Theta (\utilde_k^{t,j-m}, \uhat_k^{(t)}) \| \leq  \bigg( \frac{\dl\ku}{\lambda} + \frac{1}{2^{t}} \bigg) \mu_0 \sqrt{\frac{r_k}{p_j}} \bigg\}; \\
      \mathcal{\tilde E}_{j-m}^{t,k} &\coloneqq \Bigg\{ \|\mathcal{\tilde P}_k^{t_0,j-m}  \mathbf{V}_k \|_{2,\infty} \leq c \Bigg[ \mu_0^2 \frac{\sqrt{r_{-k}}}{p_j} \bigg( \frac{\dl^{(k+1)}}{\lambda} + \frac{1}{2^{t_0-1}}\bigg)\bigg( \frac{\dl^{(k+2)}}{\lambda} + \frac{1}{2^{t_0-1}}\bigg); \\
     &\quad + \mu_0^2 \frac{\sqrt{r_{-k}}}{\sqrt{p_jp_{k+2}}} \bigg( \frac{\dl^{(k+1)}}{\lambda} + \frac{1}{2^{t_0-1}} \bigg) + \mu_0^2 \frac{\sqrt{r_{-k}}}{\sqrt{p_j p_{k+1}}} \bigg( \frac{\dl^{(k+2)}}{\lambda} + \frac{1}{2^{t_0-1}} \bigg)\\
     &\quad +  \mu_0^2 \frac{\sqrt{r_{-k}}}{\sqrt{p_{-k}}} \bigg( \frac{\dl^{(k+1)}}{\lambda} + \frac{1}{2^{t_0-1}} \bigg) + \mu_0^2 \frac{\sqrt{r_{-k}}}{\sqrt{p_{-k}}} \bigg( \frac{\dl^{(k+2)}}{\lambda} + \frac{1}{2^{t_0-1}} \bigg) + \mu_0 \sqrt{\frac{r_k}{p_{-k}}}\Bigg].\Bigg\} \\
     &\bigcap \Bigg\{ \|\mathcal{\tilde P}_k^{t_0,j-m}   \|_{2,\infty} \leq c \mu_0^2 \frac{\sqrt{r_{-k}}}{p_j}  \bigg( \frac{\dl^{(k+1)}}{\lambda} + \frac{1}{2^{t_0-1}} \bigg)  \bigg( \frac{\dl^{(k+2)}}{\lambda} + \frac{1}{2^{t_0-1}} \bigg) \\
    &\quad +  c\mu_0^2 \frac{\sqrt{r_{-k}}}{\sqrt{p_jp_{k+2}}} \bigg( \frac{\dl^{(k+1)}}{\lambda} + \frac{1}{2^{t_0-1}} \bigg) + c\mu_0^2 \frac{\sqrt{r_{-k}}}{\sqrt{p_jp_{k+1}}} \bigg( \frac{\dl^{(k+2)}}{\lambda} + \frac{1}{2^{t_0-1}} \bigg) + c\mu_0^2 \frac{\sqrt{r_{-k}}}{\sqrt{p_{-k}}}.\Bigg\};\\
  \mathcal{E}_{\mathrm{main}}^{t_0-1,1} &\coloneqq \bigcap_{t=1}^{(t_0-1)} \Bigg\{ \bigcap_{k=1}^{3}  \mathcal{E}^{t,k}_{2,\infty} \cap  \bigcap_{j=1}^{3} \bigcap_{m=1}^{p_j} \mathcal{E}_{k-m}^{t,j} \Bigg\}; \\
       \mathcal{E}_{\mathrm{main}}^{t_0-1,2} &\coloneqq\mathcal{E}_{\mathrm{main}}^{t_0-1,1} \cap \bigg\{ \bigcap_{k=1}^{3} \bigcap_{m = 1}^{p_k} \mathcal{E}_{k-m}^{t_0,1} \bigg\} \cap \mathcal{E}_{2,\infty}^{t_0,1} \\
        \mathcal{E}_{\mathrm{main}}^{t_0-1,3} &\coloneqq\mathcal{E}_{\mathrm{main}}^{t_0-1,2} \cap \bigg\{ \bigcap_{k=1}^{3} \bigcap_{m = 1}^{p_k} \mathcal{E}_{k-m}^{t_0,2} \bigg\} \cap \mathcal{E}_{2,\infty}^{t_0,2}.
  \end{align*}
 
  \begin{proof}[Proof of Theorem \ref{thm:twoinfty}]
   We will show that by induction that with probability at least $1 - 3(t_0+1)p^{-15}$ that simultaneously for all $t \leq t_0$ and each $k$
 \begin{align*}
    \| \uhat_k^{t_0} - \U_k \mathbf{W}_k^{t_0} \|_{2,\infty} \leq \frac{\dl\ku}{\lambda} \mu_0 \sqrt{\frac{r_k}{p_k}} + \frac{1}{2^{t}} \mu_0 \sqrt{\frac{r_k}{p_k}}; \\
     \max_{1 \leq m \leq p_k} \max_{1\leq j\leq 3}\| \sin\Theta(\uhat_j^{t_0}, \utilde_j^{t_0,k-m}) \| \leq \frac{\dl\ku}{\lambda}\mu_0 \sqrt{\frac{r_k}{p_j}} + \frac{1}{2^{t}} \mu_0 \sqrt{\frac{r_k}{p_j}}.
 \end{align*}
 Assuming that for the moment, suppose the algorithm is run for at most $C \log\bigg( \frac{\lambda}{C_0 \kappa \sqrt{p_{\min}\log(p)}} \bigg)$ iterations.  Then 
 it holds that 
  \begin{align*}
 \log(   2^t ) &=\log\bigg( 2^{C \log( \frac{\lambda}{C_0 \kappa \sqrt{p_{\min}\log(p)}})} \bigg) = C \log(2) \log( \frac{\lambda}{C_0 \kappa \sqrt{p_{\min}\log(p)}}) \geq \log\bigg( \frac{\lambda}{C_0 \kappa \sqrt{p_{\min}\log(p)}}\bigg)
  \end{align*}
  which in particular implies that
  \begin{align*}
      \frac{1}{2^t} \leq \frac{C_0 \sqrt{p_{\min}\log(p)}}{\lambda}.
  \end{align*}
  Moreover, from  the assumption $\lambda \leq \exp(c p)$ for some small constant $c$, it holds that
  \begin{align*}
      t &= C  \log( \frac{\lambda}{C_0 \kappa \sqrt{p_{\min}\log(p)}}) \leq C \log( \lambda) \leq c p,
  \end{align*}
  and hence the event holds with probability at least
  \begin{align*}
      1 - (t-1)p^{-15} \geq 1 - (c p - 1)p^{-15} \geq 1 - p^{-10}
  \end{align*}
  provided $p$ is sufficiently large.  
  Therefore, it remains to show that the result holds by induction.  
  \\ \ \\ \noindent
  \textbf{Step 1: Base Case} \\
   By Theorem \ref{thm:spectralinit_twoinfty} it holds that with probability at least $1 - O(p^{-20})$ that
  \begin{align*}
    \| \uhat_k^S - \U_k \mathbf{W}_k^S \|_{2,\infty} &\lesssim \frac{\kappa \mu_0 \sqrt{r_1 \log(p)}}{\lambda} + \frac{\mu_0\sqrt{r _k p_{-k}}\log(p)}{\lambda^2} + \kappa^2 \mu_0^2 \frac{r_k}{p_k} \\ &\leq  \bigg( \frac{C \kappa \sqrt{p_k \log(p)}}{\lambda} + \frac{1}{2} \bigg)\mu_0 \sqrt{\frac{r_k}{p_k}}, 
  \end{align*}
  where the final inequality holds since $\lambda \gtrsim \kappa p \sqrt{\log(p)} p_{\min}^{1/4}$ and $\mu_0^2 r \leq C p_{\min}^{1/2}$ and that $\kappa^2 \leq c p_{\min}^{1/4}$ as long as $c \times \sqrt{C} \leq \frac{1}{4}$.   In addition, by Lemma \ref{lem:spectralinit_leave_one_out_sintheta} we have the initial bound for each $k$ via
  \begin{align*}
     \max_j \max_m  \|\sin\Theta(\uhat_k^S, \utilde_k^{j-m}) \| &\lesssim \frac{\kappa \sqrt{p_k \log(p)}}{\lambda} \mu_0 \sqrt{\frac{r_1}{p_j}} + \frac{(p_1p_2p_3)^{1/2} \log(p)}{\lambda^2} \mu_0 \sqrt{\frac{r_1}{p_j}} \\
     &\leq \bigg( \frac{C \kappa \sqrt{p_k \log(p)}}{\lambda}  + \frac{1}{2} \bigg)  \mu_0 \sqrt{\frac{r_k}{p_j}},
  \end{align*}
  which holds with probability at least $1 - O(p^{-19})$.  
  Therefore, we have established the base case, which holds with probability $1 - O(p^{-19}) \geq 1 - 3p^{-15}$, as long as $C_0$ in the definition of $\dl$ satisfies $C_0 \geq C$, with $C$ as above.  
   \\ \ \\ \noindent
  \textbf{Step 2: Induction Step} \\
  Suppose that for all $t \leq t_0-1$ it holds that with probability at least $1 - 3 t_0 p^{-15}$ that
 \begin{align*}
    \max_k \| \uhat_k^{(t)} - \U_k \mathbf{W}_k^{(t)} \|_{2,\infty} \leq \frac{\dl\ku}{\lambda} \mu_0 \sqrt{\frac{r_k}{p_k}} + \frac{1}{2^{t}} \mu_0 \sqrt{\frac{r_k}{p_k}}; \\
  \max_k    \max_{m} \max_j\| \sin\Theta(\uhat_j^{(t)}, \utilde_j^{t,k-m}) \| \leq \frac{\dl\ku}{\lambda}\mu_0 \sqrt{\frac{r_k}{p_j}} + \frac{1}{2^{t}} \mu_0 \sqrt{\frac{r_k}{p_j}}.
 \end{align*}
  Observe that the induction hypothesis is equivalent to stating that $\mathcal{E}_{1,\mathrm{main}}^{(t_0-1)}$ holds with probability at least $1 - 3 t_0 p^{-15}$.  We will now show that with probability at least $1 - 3 t_0 p^{-15} - p^{-15}$ that $\mathcal{E}_{2,\mathrm{main}}^{(t_0-1)}$ holds, which is equivalent to showing that
  \begin{align*}
      \| \uhat_1^{t_0} - \U_1 \mathbf{W}_1^{t_0} \|_{2,\infty} &\leq \frac{\dl^{(1)}}{\lambda} \mu_0 \sqrt{\frac{r_1}{p_1}} + \frac{1}{2^{t_0}} \mu_0 \sqrt{\frac{r_1}{p_1}}; \\
      \max_{m} \max_j \| \sin\Theta( \uhat^{t_0}_j, \utilde_j^{t_0,1-m} ) \| &\leq \frac{\dl^{(1)}}{\lambda} \mu_0 \sqrt{\frac{r_1}{p_j}} + \frac{1}{2^{t_0}} \mu_0 \sqrt{\frac{r_1}{p_j}}.
  \end{align*}
  In other words, we will show that all of the bounds for the first mode hold.
Note that  
  \begin{align*}
      \hat{\U}_1^{t_0} -\U_1\U_1^\top\hat{\U}_1^{t_0} &= \U_{1\perp} \U_{1\perp}\t \mathbf{Z}_1 \bigg[ \mathcal{P}_{\hat{\U}_{2}^{(t_0-1)}} \otimes \mathcal{P}_{\hat{\U}_{3}^{(t_0-1)}} \bigg] \mathbf{Z}_1\t \hat{\U}_1^{t_0}(\mathbf{\hat{\Lambda}}_1^{(t_0)})^{-2} \\
      &\qquad + \U_{1\perp} \U_{1\perp}\t \mathbf{Z}_1 \bigg[ \mathcal{P}_{\hat{\U}_{2}^{(t_0-1)}} \otimes \mathcal{P}_{\hat{\U}_{3}^{(t_0-1)}} \bigg] \mathbf{T}_1\t \hat{\U}_1^{t_0} (\mathbf{\hat{\Lambda}}_1^{(t_0)})^{-2} \\
        &= \mathbf{Q}_{1}^{t_0} + \mathbf{L}_{1}^{t_0}.
  \end{align*}
  Therefore,
  \begin{align*}
      \uhat_1^{t_0} - \U_1 \mathbf{W}_1^{t_0} &= \uhat_1^{t_0} - \U_1 \U_1 \uhat_1^{t_0} + \U_1(\U_1 \uhat_1^{t_0} - \mathbf{W}_1^{t_0} ) \\
      &= \mathbf{Q}_{1}^{t_0} + \mathbf{L}_{1}^{t_0} + \U_1(\U_1 \uhat_1^{t_0} - \mathbf{W}_1^{t_0} ).
  \end{align*}
  Consequently,
  \begin{align*}
      e_m\t \bigg(  \uhat_1^{t_0} - \U_1 \mathbf{W}_1^{t_0}  \bigg) &= e_m\t \mathbf{Q}_{1}^{t_0} + e_m\t \mathbf{L}_{1}^{t_0} + e_m\t \U_1(\U_1 \uhat_1^{t_0} - \mathbf{W}_1^{t_0} ) .
  \end{align*}
We now proceed by bounding probabilistically.   Observe that 
  \begin{align*}
      \p\bigg\{  &\| \uhat_1^{t_0} - \U_1\mathbf{W}_1^{t_0} \|_{2,\infty} \geq \mu_0 \sqrt{\frac{r_1}{p_1}} \bigg( \frac{\dl^{(1)}}{\lambda} + \frac{1}{2^{t_0}} \bigg) \\
      &\qquad \bigcup \max_{m}\max_j \| \sin\Theta(\uhat_j^{t_0},\utilde_{j}^{t_0,1-m} \| \geq \mu_0 \sqrt{\frac{r_1}{p_j}} \bigg( \frac{\dl^{(1)}}{\lambda} + \frac{1}{2^{t_0}} \bigg) \bigg\} \\
      &\leq \p\bigg\{  \| \uhat_1^{t_0} - \U_1\mathbf{W}_1^{t_0}\|_{2,\infty} \geq \mu_0 \sqrt{\frac{r_1}{p_1}} \bigg( \frac{\dl^{(1)}}{\lambda} + \frac{1}{2^{t_0}} \bigg) \\
      &\qquad \bigcup \max_{m}\max_j \| \sin\Theta(\uhat_j^{t_0},\utilde_{j}^{t_0,1-m} \| \geq \mu_0 \sqrt{\frac{r_1}{p_j}} \bigg( \frac{\dl^{(1)}}{\lambda} + \frac{1}{2^{t_0}} \bigg) \bigcap \mathcal{E}_{\mathrm{1,main}}^{(t_0-1)} \bigg\} + 3 t_0 p^{-15} \\ 
      &\leq \p\bigg\{ \| \uhat_1^{t_0} - \U_1 \mathbf{W}_1^{t_0} \|_{2,\infty}  \geq \mu_0 \sqrt{\frac{r_1}{p_1}} \bigg( \frac{\dl^{(1)}}{\lambda} + \frac{1}{2^{t_0}} \bigg) \bigcap \mathcal{E}_{\mathrm{1,main}}^{(t_0-1)} \bigg\} \\
      &\qquad + \p\bigg\{ \max_{m}\max_j \| \sin\Theta(\uhat_j^{t_0},\utilde_{j}^{t_0,1-m} \| \geq \mu_0 \sqrt{\frac{r_1}{p_j}} \bigg( \frac{\dl^{(1)}}{\lambda} + \frac{1}{2^{t_0}} \bigg) \bigcap \mathcal{E}_{\mathrm{1,main}}^{(t_0-1)} \bigg\} + 3 t_0 p^{-15} \\ 
      &\leq \p\bigg\{ \| \uhat_1^{t_0} - \U_1 \mathbf{W}_1^{t_0} \|_{2,\infty} \geq \mu_0 \sqrt{\frac{r_1}{p_1}} \bigg( \frac{\dl^{(1)}}{\lambda} + \frac{1}{2^{t_0}} \bigg) \bigcap \mathcal{E}_{\mathrm{1,main}}^{(t_0-1)}  \bigcap \mathcal{E}_{\mathrm{Good}}\bigg\} \\
      &\qquad + \p\bigg\{ \max_{m}\max_j \| \sin\Theta(\uhat_j^{t_0},\utilde_{j}^{t_0,1-m} \| \geq \mu_0 \sqrt{\frac{r_1}{p_j}} \bigg( \frac{\dl^{(1)}}{\lambda} + \frac{1}{2^{t_0}} \bigg) \bigcap \mathcal{E}_{\mathrm{1,main}}^{(t_0-1)}  \bigcap \mathcal{E}_{\mathrm{Good}}\bigg\} \\
      &\qquad + O(p^{-30})+ 3 t_0 p^{-15} \\
      &\leq p \max_{m} \p\bigg\{ \| e_m\t \big(\uhat_1^{t_0} - \U_1 \mathbf{W}_1^{t_0}\big) \| \geq \mu_0 \sqrt{\frac{r_1}{p_1}} \bigg( \frac{\dl^{(1)}}{\lambda} + \frac{1}{2^{t_0}} \bigg) \bigcap \mathcal{E}_{\mathrm{1,main}}^{(t_0-1)}  \bigcap \mathcal{E}_{\mathrm{Good}}\bigg\} \\
      &\qquad + p \max_{m} \p\bigg\{ \max_j \| \sin\Theta(\uhat_j^{t_0},\utilde_{j}^{t_0,1-m} \| \geq \mu_0 \sqrt{\frac{r_1}{p_j}} \bigg( \frac{\dl^{(1)}}{\lambda} + \frac{1}{2^{t_0}} \bigg) \bigcap \mathcal{E}_{\mathrm{1,main}}^{(t_0-1)}  \bigcap \mathcal{E}_{\mathrm{Good}}\bigg\} \\
      &\qquad + O(p^{-30})+ 3 t_0 p^{-15} \\
      &\leq p \max_{m} \Bigg[ \p\bigg\{ \| e_m\t \mathbf{L}_1^{t_0} \| \geq \frac{1}{4}\mu_0 \sqrt{\frac{r_1}{p_1}} \bigg( \frac{\dl^{(1)}}{\lambda} + \frac{1}{2^{t_0}} \bigg) \bigcap \mathcal{E}_{\mathrm{1,main}}^{(t_0-1)}  \bigcap \mathcal{E}_{\mathrm{Good}}\bigg\} \\
      &\qquad + \p\bigg\{ \| e_m\t \mathbf{Q}_1^{t_0} \| \geq \frac{1}{4}\mu_0 \sqrt{\frac{r_1}{p_1}} \bigg( \frac{\dl^{(1)}}{\lambda} + \frac{1}{2^{t_0}} \bigg) \bigcap \mathcal{E}_{\mathrm{1,main}}^{(t_0-1)} \bigcap \mathcal{E}_{\mathrm{Good}}\bigg\} \\
      &\qquad + \p\bigg\{ \| e_m\t \U_1 \big(\U_1\t \uhat_1^{t_0} - \mathbf{W}_1^{t_0} \big) \| \geq  \frac{1}{4}\mu_0 \sqrt{\frac{r_1}{p_1}} \bigg( \frac{\dl^{(1)}}{\lambda} + \frac{1}{2^{t_0}} \bigg) \bigcap \mathcal{E}_{\mathrm{1,main}}^{(t_0-1)} \bigcap \mathcal{E}_{\mathrm{Good}}\bigg\}  \Bigg]\\
      &\qquad + p \max_{m} \p\bigg\{ \max_j \| \sin\Theta(\uhat_j^{t_0},\utilde_{j}^{t_0,k-m} \| \geq \mu_0 \sqrt{\frac{r_1}{p_j}} \bigg( \frac{\dl^{(1)}}{\lambda} + \frac{1}{2^{t_0}} \bigg) \bigcap \mathcal{E}_{\mathrm{1,main}}^{(t_0-1)}  \bigcap \mathcal{E}_{\mathrm{Good}}\bigg\} \\
      &\qquad + O(p^{-30})+ 3 t_0 p^{-15} \\
      &\leq 3  p^{-28} + p^{-28} + O(p^{-30}) + 3 t_0p^{-15} \\
      &\leq ( 3 t_0 + 1)p^{-15},
  \end{align*}
  for $p$ sufficiently large,   where the penultimate inequality holds by Lemmas \ref{lem:leaveoneout_goodevent}, \ref{lem:linearterm_goodevent}, and \ref{lem:quadraticterm_goodevent}, and the fact that on $\mathcal{E}_{\mathrm{Good}}$,
  \begin{align*}
      \| e_m\t \U_1 \left( \U_1\t \uhat_1^{t_0} - \mathbf{W}_1^{t_0} \right) \| \leq \frac{1}{4} \mu_0 \sqrt{\frac{r_1}{p_1}} \bigg( \frac{\dl^{(1)}}{\lambda} + \frac{1}{2^{t_0}} \bigg)
  \end{align*}
  by Lemma \ref{lem:orthogonalmatrixlemma} since \textcolor{black}{$t_0 \leq \exp(c p)$ by the assumption $\lambda \leq \exp(c p)$}. Therefore, we have shown that the bound holds for $k = 1$.  For $k = 2$, we proceed similarly, only now on the hypothesis that $\mathcal{E}_{\mathrm{main}}^{t_0-1,2}$ holds with probability at least $1 - 3 t_0 p^{-15} - p^{-15}$.  
The exact same argument goes through,accumulating an additional factor of $p^{-15}$.  Finally, for $k = 3$, we proceed again, only now assuming that  $\mathcal{E}^{t_0-1,3}_{\mathrm{main}}$ holds with probability at least $1 - 3 t_0 p^{-15} - 2p^{-15}$.  This accumulates a final factor of $p^{-15}$. Therefore, since this accumulates three factors of $p^{-15}$, it holds that $\mathcal{E}_{\mathrm{main}}^{t_0,1}$ holds with probability at least $1 - 3 (t_0 + 1) p^{-15}$ as desired, which completes the proof.
 \end{proof}

 \subsection{Initialization Bounds} \label{sec:init}
This section contains the proof  the initialization bounds.   \cref{sec:initlemmas} contains preliminary lemmas and their proofs, \cref{sec:inittwoinftyproof} contains the proof of \cref{thm:spectralinit_twoinfty}, and \cref{sec:initlooproof} contains the proof of \cref{lem:spectralinit_leave_one_out_sintheta}.  

\subsubsection{Preliminary Lemmas} \label{sec:initlemmas}
The following result establishes concentration inequalities for the spectral norm of the noise matrices, needed in order to establish sufficient eigengap conditions.  

\begin{lemma}\label{lem:spectralconcentrationinit}
The following bounds hold simultaneously with probability at least $1 - O(p^{-30}):$
\begin{enumerate}
  \item $\| \diag(\mathbf{Z}_1 \mathbf{T}_1\t) \| \lesssim  \lambda_1  \mu_0^2 r  \sqrt{\frac{\log(p)}{p_1}}$; 
    \item $\| \Gamma(\mathbf{Z}_1 \mathbf{Z}_1\t ) \| \lesssim  (p_1p_2p_3)^{1/2}$
    \item $\| \Gamma(\mathbf{T}_1 \mathbf{Z}_1\t ) \| \lesssim  \lambda_1 \sqrt{p_1}$;
  \item $\| \U_1 \mathbf{Z}_1 \mathbf{V}_1 \| \lesssim  \sqrt{r}$.
\end{enumerate}
\end{lemma}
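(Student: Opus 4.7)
The plan is to treat each of the four bounds separately, since each involves a different noise object but all reduce to standard subgaussian concentration machinery. Throughout I will use $\sigma = 1$ as in the proof setup, and repeatedly use that $\mathbf{T}_1 = \U_1 \mathbf{\Lambda}_1 \mathbf{V}_1^\top$ with $\|\mathbf{\Lambda}_1\| = \lambda_1$ and that the incoherence assumption gives $\|\U_1\|_{2,\infty} \leq \mu_0 \sqrt{r_1/p_1}$.

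For bound (1), since $\diag(\mathbf{Z}_1 \mathbf{T}_1^\top)$ is diagonal, its spectral norm equals $\max_i |(\mathbf{Z}_1 \mathbf{T}_1^\top)_{ii}| = \max_i |\langle (\mathbf{Z}_1)_{i\cdot}, (\mathbf{T}_1)_{i\cdot}\rangle|$, which for each fixed $i$ is a sum of independent centered subgaussian variables with total variance proxy at most $\|(\mathbf{T}_1)_{i\cdot}\|^2$. Since $(\mathbf{T}_1)_{i\cdot} = e_i^\top \U_1 \mathbf{\Lambda}_1 \mathbf{V}_1^\top$ and $\mathbf{V}_1$ is orthonormal, $\|(\mathbf{T}_1)_{i\cdot}\| \leq \|\U_1\|_{2,\infty} \lambda_1 \leq \lambda_1 \mu_0 \sqrt{r_1/p_1}$. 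A Hoeffding/subgaussian tail bound plus a union bound over the $p_1$ diagonal indices gives the claim (in fact the bound $\lambda_1 \mu_0 \sqrt{r \log(p)/p_1}$, which is stronger than stated).

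For bound (2), this is a hollowed Gram matrix of a $p_1 \times (p_2 p_3)$ subgaussian matrix with independent mean-zero entries of variance proxy at most $1$. A standard $\varepsilon$-net/decoupling argument (as used in \citet{zhang_tensor_2018} and the diagonal-deletion literature) yields $\|\Gamma(\mathbf{Z}_1 \mathbf{Z}_1^\top)\| \lesssim \sqrt{p_1 p_2 p_3}$ with probability at least $1 - O(p^{-30})$; explicitly, one bounds $\sup_{\u \in \mathcal{N}_\varepsilon} \u^\top \Gamma(\mathbf{Z}_1 \mathbf{Z}_1^\top) \u$ over a $1/4$-net $\mathcal{N}_\varepsilon$ of $S^{p_1-1}$ by expanding $\u^\top \Gamma(\mathbf{Z}_1 \mathbf{Z}_1^\top) \u = \sum_{i \neq j} u_i u_j \langle (\mathbf{Z}_1)_{i\cdot}, (\mathbf{Z}_1)_{j\cdot}\rangle$, using independence across rows to apply Hanson--Wright to the inner sums. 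The hardest step is keeping the bound free of log factors on $p_2 p_3$; the hollowing is what makes the bound $\sqrt{p_1 p_2 p_3}$ (rather than $p_2 p_3$), since the diagonal of $\mathbf{Z}_1\mathbf{Z}_1^\top$ would contribute a bias of order $p_2 p_3$. This step is the main technical obstacle and I would cite the standard reference rather than reproduce the net argument.

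For bound (3), factor $\mathbf{T}_1 \mathbf{Z}_1^\top = \U_1 \mathbf{\Lambda}_1 \mathbf{V}_1^\top \mathbf{Z}_1^\top$, so
\[
\|\mathbf{T}_1 \mathbf{Z}_1^\top\| \leq \lambda_1 \|\mathbf{V}_1^\top \mathbf{Z}_1^\top\|.
\]
The matrix $\mathbf{V}_1^\top \mathbf{Z}_1^\top$ is $r_1 \times p_1$ with independent columns whose entries (in any fixed direction) have subgaussian norm bounded by $1$ because $\mathbf{V}_1$ is orthonormal. A standard covering argument gives $\|\mathbf{V}_1^\top \mathbf{Z}_1^\top\| \lesssim \sqrt{p_1} + \sqrt{r_1}$ with probability $1 - O(p^{-30})$, so $\|\mathbf{T}_1 \mathbf{Z}_1^\top\| \lesssim \lambda_1 \sqrt{p_1}$. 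Then $\|\Gamma(\mathbf{T}_1 \mathbf{Z}_1^\top)\| \leq \|\mathbf{T}_1 \mathbf{Z}_1^\top\| + \|\diag(\mathbf{T}_1 \mathbf{Z}_1^\top)\|$, and the diagonal piece is bounded using (1).

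For bound (4), the matrix $\U_1^\top \mathbf{Z}_1 \mathbf{V}_1$ has dimension $r_1 \times r_1$, and for each fixed pair of unit vectors $\u \in \mathbb{R}^{r_1}$, $\mathbf{v} \in \mathbb{R}^{r_1}$, the quadratic form $\u^\top \U_1^\top \mathbf{Z}_1 \mathbf{V}_1 \mathbf{v}$ is a weighted sum of the independent subgaussian entries of $\mathbf{Z}_1$ with total variance proxy $\|\U_1 \u\|^2 \|\mathbf{V}_1 \mathbf{v}\|^2 = 1$. A $1/4$-net argument over both spheres yields $\|\U_1^\top \mathbf{Z}_1 \mathbf{V}_1\| \lesssim \sqrt{r_1}$ with probability at least $1 - O(p^{-30})$. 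Taking a union bound over the four events concludes the proof.
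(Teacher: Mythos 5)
Your proposal follows the paper's route part by part: scalar concentration of the diagonal entries in (1), an appeal to an external hollowed-Gram-matrix concentration result in (2), the factorization $\mathbf{T}_1 = \U_1\mathbf{\Lambda}_1\mathbf{V}_1^\top$ plus a triangle inequality over the diagonal in (3), and a standard $\varepsilon$-net over the two $r_1$-spheres in (4). Your treatment of (1) is in fact marginally tighter: concentrating the scalar $\langle(\mathbf{Z}_1)_{i\cdot},(\mathbf{T}_1)_{i\cdot}\rangle$ directly with variance proxy $\|(\mathbf{T}_1)_{i\cdot}\|^2 \leq \lambda_1^2\mu_0^2 r_1/p_1$ yields $\lambda_1\mu_0\sqrt{r\log(p)/p_1}$, whereas the paper passes through \cref{lem:matricizationrowbound} applied to $\|\mathbf{Z}_1\mathbf{V}_1\|_{2,\infty}$ and then uses incoherence of $\mathbf{V}_1$, costing an extra $\mu_0\sqrt{r}$; both suffice downstream. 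One shared imprecision worth flagging: in (4), a union bound over a $1/4$-net of $\mathbb{S}^{r_1-1}\times\mathbb{S}^{r_1-1}$ gives failure probability $e^{-cr_1}$, which is not $O(p^{-30})$ when $r_1 = O(1)$; the probability-$1-O(p^{-30})$ bound is $\sqrt{r}+\sqrt{\log p}$, which is exactly what the paper itself writes into the definition of $\mathcal{E}_{\mathrm{Good}}$, so the $\sqrt{r}$ in the lemma statement is best read as shorthand.
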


\begin{proof}
Part one follows since 
\begin{align*}
    \| \diag(\mathbf{Z}_1 \mathbf{T}_1\t ) \| &= \max_{i} \big| e_i\t \mathbf{Z}_1 \mathbf{T}_1\t e_i \big| \\
    &\leq \max_{i}\| e_i\t \mathbf{Z}_1 \mathbf{V}_1 \| \| e_i\t \U_1 \mathbf{\Lambda}_1 \| \\
    &\leq \| \mathbf{Z}_1 \mathbf{V}_1 \|_{2,\infty} \mu_0 \sqrt{\frac{r_1}{p_1}} \lambda_1 \\
    &\leq C \sqrt{p_{-1} \log(p)} \mu_0 \sqrt{\frac{r_1}{p_1}} \lambda_1 \| \mathbf{V}_1 \|_{2,\infty} \\
    &\lesssim \lambda_1  \mu_0^2 r  \sqrt{\frac{\log(p)}{p_1}},
\end{align*}
where the final inequality holds with probability at least $1 - O(p^{-30})$ by \cref{lem:matricizationrowbound}.

Part two follows by a slight modification of Lemma 1 of \citet{agterberg_entrywise_2022} (with $M$ in the statement therein taken to be 0), where the higher probability holds by adjusting the constant in the definition of $\delta$ in the proof therein.  We omit the detailed proof for brevity.

Part three follows since
\begin{align*}
    \| \Gamma(\mathbf{Z}_1 \mathbf{T}_1\t ) \| &\leq \| \mathbf{Z}_1 \mathbf{T}_1\t \| + \| \diag(\mathbf{Z}_1 \mathbf{T}_1\t ) \| \\
    &\lesssim \| \mathbf{Z}_1 \mathbf{V}_1 \| \lambda_1 +  \lambda_1  \mu_0^2 r  \sqrt{\frac{\log(p)}{p_1}} \\
    &\lesssim  \sqrt{p_1} \lambda_1 + \lambda_1  \mu_0^2 r  \sqrt{\frac{\log(p)}{p_1}} \\
    &\lesssim \sqrt{p_1} \lambda_1,
\end{align*}
where the penultimate inequality $\| \mathbf{Z}_1 \mathbf{V}_1 \| \lesssim \sqrt{p_1}$ holds by a standard $\varepsilon-$net argument, and the final inequality holds since  $\mu_0^2 r \lesssim \sqrt{p_{\min}}$ by assumption. 

Part four follows via a standard $\varepsilon$-net argument.
\end{proof}

We also have the following result, needed in establishing the concentration of the leave-one-out sequences.
\begin{lemma}\label{lem:spectralinitconcentrationloo}
The following bounds hold with probability at least $1 - O(p^{-30})$:
\begin{enumerate}
    \item $\| \Gamma(\mathbf{T}_1 (\mathbf{Z}_1 - \mathbf{Z}_1^{1-m})\t) \| \lesssim  \lambda_1 \mu_0 \sqrt{r\log(p)}$;
    \item $\| \Gamma(\mathbf{Z}_1 \mathbf{Z}_1\t - \mathbf{Z}_1^{1-m} (\mathbf{Z}_1^{1-m})\t ) \| \lesssim (p_1p_2p_3)^{1/2}$
    \item $\| \Gamma(\mathbf{T}_1 (\mathbf{Z}_1^{1-m} - \mathbf{Z}_1^{1-m,1-l} )\t ) \| \lesssim \mu_0 \lambda_1 \sqrt{r\log(p)} \sqrt{\frac{p_1}{p_{-1}}}$;
    \item $\| \Gamma( \mathbf{Z}_1^{1-m}(\mathbf{Z}_1^{1-m})\t -\mathbf{Z}_1^{1-m,1-l} (\mathbf{Z}_1^{1-m,1-l} )\t ) \| \lesssim p$.
    \item  $\| \Gamma( \mathbf{Z}_1^{1-m}(\mathbf{Z}_1^{1-m}) - \mathbf{Z}_1^{1-m,1-l} (\mathbf{Z}_1^{1-m,1-l})\t ) \utilde_1^{S,1-m,1-l} \| \lesssim  \sqrt{p_1} \log(p) \| \utilde_1^{S,1-m,1-l} \|_{2,\infty}$
\end{enumerate}
Here $\mathbf{Z}_1^{1-m,1-l}$ is the matrix $\mathbf{Z}_1$ with its $m$'th row and $l$'th column removed, and $\utilde_1^{S,1-m,1-l}$ is matrix of leading eigenvectors obtained by initializing with the noise matrix $\mathbf{Z}_1$ replaced with $\mathbf{Z}_1^{1-m,1-l}$.
\end{lemma}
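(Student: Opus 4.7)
The plan is to exploit the fact that every one of the five matrices appearing inside $\Gamma(\cdot)$ is a difference of a matrix and a leave-one-out version, so the difference has a very sparse/low-rank structure. Concretely, $\mathbf{Z}_1 - \mathbf{Z}_1^{1-m}$ has only its $m$-th row nonzero (equal to $e_m\t\mathbf{Z}_1$), while $\mathbf{Z}_1^{1-m} - \mathbf{Z}_1^{1-m,1-l}$ has only its $l$-th column nonzero (equal to the $l$-th column of $\mathbf{Z}_1$ with its $m$-th entry zeroed, call it $w_l$). Once this rank-one structure is identified, each spectral-norm bound collapses to a product of deterministic bounds using incoherence and sub-gaussian concentration applied to a single vector, and $\Gamma$ only ever removes a diagonal whose spectral norm is controlled by $\|w_l\|_\infty^2 \lesssim \log p$ or by a previously established bound.

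For bound (1), write $\mathbf{T}_1(\mathbf{Z}_1 - \mathbf{Z}_1^{1-m})\t = (\mathbf{T}_1\mathbf{Z}_1\t e_m)e_m\t$, a rank-one matrix. Its spectral norm is at most $\|\mathbf{T}_1\mathbf{Z}_1\t e_m\| \le \lambda_1\|\mathbf{V}_1\t\mathbf{Z}_1\t e_m\|$, and a Hanson--Wright-type bound on the sub-gaussian vector $\mathbf{V}_1\t\mathbf{Z}_1\t e_m\in\mathbb{R}^{r_1}$, union-bounded over $m$, gives $\|\mathbf{V}_1\t\mathbf{Z}_1\t e_m\|\lesssim\sqrt{r+\log p}\lesssim \mu_0\sqrt{r\log p}$. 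Bound (2) is immediate from the triangle inequality and two applications of \cref{lem:spectralconcentrationinit}(2), once to $\mathbf{Z}_1$ and once to $\mathbf{Z}_1^{1-m}$ (which is still a mean-zero sub-gaussian matrix, merely with one row having degenerate variance). For bound (3), use $\mathbf{Z}_1^{1-m}-\mathbf{Z}_1^{1-m,1-l}=w_l e_l\t$ so $\mathbf{T}_1(\mathbf{Z}_1^{1-m}-\mathbf{Z}_1^{1-m,1-l})\t = (\mathbf{T}_1 e_l)w_l\t$, whose norm is $\|\mathbf{T}_1 e_l\|\cdot\|w_l\|$; incoherence of $\mathbf{V}_1$ gives $\|\mathbf{T}_1 e_l\|\le\lambda_1\mu_0\sqrt{r/p_{-1}}$, and standard sub-gaussian concentration with a union bound over $(m,l)$ gives $\|w_l\|\lesssim\sqrt{p_1\log p}$.

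The algebraic heart of bounds (4) and (5) is the telescoping identity
\begin{equation*}
\mathbf{Z}_1^{1-m}(\mathbf{Z}_1^{1-m})\t - \mathbf{Z}_1^{1-m,1-l}(\mathbf{Z}_1^{1-m,1-l})\t \;=\; w_l w_l\t,
\end{equation*}
which holds because $\mathbf{Z}_1^{1-m,1-l}e_l=0$ kills the cross terms in $AA\t-BB\t = (A-B)A\t + B(A-B)\t$. This is a rank-one PSD matrix, so (4) follows from $\|w_l w_l\t\|=\|w_l\|^2\lesssim p_1\le p$, with the diagonal correction from $\Gamma$ only contributing $\|w_l\|_\infty^2\lesssim\log p$. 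For (5), write $\|w_lw_l\t\utilde_1^{S,1-m,1-l}\|\le \|w_l\|\,\|w_l\t\utilde_1^{S,1-m,1-l}\|$; crucially, $\utilde_1^{S,1-m,1-l}$ is built by deleting both the $m$-th row and $l$-th column, hence is independent of $w_l$, so conditional on it we apply entrywise sub-gaussian concentration to $(w_l\t\utilde_1^{S,1-m,1-l})_k = \sum_j w_{l,j}(\utilde_1^{S,1-m,1-l})_{j,k}$, whose variance is bounded by $\|\utilde_1^{S,1-m,1-l}\|_{2,\infty}^2 p_1$, together with a union bound over the $r_1$ coordinates, giving $\|w_l\t\utilde_1^{S,1-m,1-l}\|\lesssim\sqrt{p_1\log p}\,\|\utilde_1^{S,1-m,1-l}\|_{2,\infty}$; combined with $\|w_l\|\lesssim\sqrt{p_1\log p}$ this yields the claim, and the $\Gamma$ correction $\diag(w_lw_l\t)\utilde_1^{S,1-m,1-l}$ is absorbed into the same bound since its rows are $w_{l,i}^2(\utilde_1^{S,1-m,1-l})_{i,\cdot}$ with $\max_i w_{l,i}^2\lesssim\log p$.

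The only subtle step is managing the dependence between $\utilde_1^{S,1-m,1-l}$ and the randomness in $w_l$ in bound (5); the leave-one-out construction is precisely what makes these independent, and this is why the double deletion (in both the $m$-th row and $l$-th column) was set up in the first place. A secondary technical point is ensuring all five bounds hold simultaneously over all $(m,l)$ pairs -- this is handled uniformly via union bounds, which is why an extra $\log p$ factor appears (relative to a fixed-$(m,l)$ bound) in (1), (3) and (5). Once these concentration estimates are in place the proof is essentially bookkeeping; no delicate matrix perturbation arguments are needed here, in contrast to the leave-one-out $\sin\Theta$ analysis that \cref{lem:spectralinit_leave_one_out_sintheta} will build on top of them.
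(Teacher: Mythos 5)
Your treatment of parts (1)--(4) is correct and in places cleaner than the paper's. For (1) you exploit the rank-one form $\mathbf{T}_1(\mathbf{Z}_1-\mathbf{Z}_1^{1-m})^\top=(\mathbf{T}_1\mathbf{Z}_1^\top e_m)e_m^\top$ and apply Hanson--Wright to $\mathbf{V}_1^\top\mathbf{Z}_1^\top e_m$, which is slightly tighter than the paper's route through $\|\mathbf{Z}_1\mathbf{V}_1\|_{2,\infty}$. Your triangle-inequality argument for (2) is coarser than the paper's decomposition of the sparse difference into $\mathbf{G}_{\mathrm{row}}+\mathbf{G}_{\mathrm{col}}$ but delivers the same rate. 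The telescoping identity $\mathbf{Z}_1^{1-m}(\mathbf{Z}_1^{1-m})^\top - \mathbf{Z}_1^{1-m,1-l}(\mathbf{Z}_1^{1-m,1-l})^\top = w_lw_l^\top$ (with $w_l$ the $l$-th column of $\mathbf{Z}_1^{1-m}$) is a genuine simplification in (4); the paper instead re-derives this by invoking a Gram-matrix concentration lemma. Part (3) is fine as well.

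Part (5), however, contains a gap of order $\sqrt{p_1}$, and it is structural rather than a loose constant. Your factorization yields
\begin{align*}
\|w_l(w_l^\top\utilde_1^{S,1-m,1-l})\| &\le \|w_l\|\cdot\|w_l^\top\utilde_1^{S,1-m,1-l}\| \\
&\lesssim \sqrt{p_1\log p}\cdot\sqrt{p_1\log p}\,\|\utilde_1^{S,1-m,1-l}\|_{2,\infty} = p_1\log p\,\|\utilde_1^{S,1-m,1-l}\|_{2,\infty},
\end{align*}
which overshoots the claimed $\sqrt{p_1}\log p\,\|\utilde_1^{S,1-m,1-l}\|_{2,\infty}$ by a factor of $\sqrt{p_1}$. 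The rank-one spectral factorization necessarily pays the full $\|w_l\|\approx\sqrt{p_1}$ on the left; and indeed the leading rank-one term really does have spectral norm of order $\sqrt{p_1 r\log p}$, which exceeds $\sqrt{p_1}\log p\,\|\utilde_1^{S,1-m,1-l}\|_{2,\infty}\gtrsim\sqrt{r}\log p$ whenever $\utilde_1^{S,1-m,1-l}$ is incoherent, so no amount of tightening your concentration estimates will close the gap. What the paper's proof actually controls is the $\ell_{2,\infty}$ norm: the $a$-th row of $\Gamma(w_lw_l^\top)\utilde_1^{S,1-m,1-l}$ is $(w_l)_a\sum_{j\ne a}(w_l)_j(\utilde_1^{S,1-m,1-l})_{j\cdot}$, so the factor that gets pulled out is the single scalar $(w_l)_a\lesssim\sqrt{\log p}$, not the vector norm $\|w_l\|\approx\sqrt{p_1}$; the inner sum is then bounded via \cref{lem:rowbound} applied to $\mathbf{Z}_1^\top$. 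This row-by-row accounting, made possible precisely by the hollowing in $\Gamma$, is the missing idea. It also indicates the norm in the lemma statement should be read as $\|\cdot\|_{2,\infty}$: as your own computation shows, a bound of the stated size in operator norm would fail for incoherent $\utilde_1^{S,1-m,1-l}$.
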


\begin{proof}[Proof of \cref{lem:spectralinitconcentrationloo}]
For part one, we observe that $\mathbf{Z}_1 - \mathbf{Z}_1^{1-m}$ is a zero matrix with only its $m$'th row nonzero.  Therefore,
\begin{align*}
    \| \Gamma\big(\mathbf{Z}_1 - \mathbf{Z}_1^{1-m}) \mathbf{T}_1\t \big) \| &\leq  \| ( \mathbf{Z}_1 - \mathbf{Z}_1^{1-m}) \mathbf{T}_1\t \| + \| \diag\big((\mathbf{Z}_1 - \mathbf{Z}_1^{1-m}) \mathbf{T}_1\t \big) \| \\
    &= \| e_m\t \mathbf{Z}_1 \mathbf{T}_1\t \| + \max_{i} | e_i\t \big( \mathbf{Z}_1 - \mathbf{Z}_1^{1-m} \big) \mathbf{T}_1\t e_i | \\
    &\leq \| \mathbf{Z}_1 \mathbf{V}_1 \|_{2,\infty} \lambda_1 + \| \mathbf{Z}_1 \mathbf{V}_1 \|_{2,\infty} \lambda_1 \mu_0 \sqrt{\frac{r}{p_1}} \\
    &\lesssim \| \mathbf{Z}_1 \mathbf{V}_1 \|_{2,\infty} \lambda_1 \\
    &\lesssim \lambda_1  \sqrt{p_{-1}\log(p)} \|\mathbf{V}_1 \|_{2,\infty} \\
    &\lesssim \lambda_1 \mu_0 \sqrt{r\log(p)},
\end{align*}
with probability at least $1 - O(p^{-30})$, where the penultimate line follows from \cref{lem:rowbound}.

For part 2, we first observe that $\Gamma\big(\mathbf{Z}_1 \mathbf{Z}_1\t - \mathbf{Z}_1^{1-m} (\mathbf{Z}_1^{1-m})\t\big)$ is a matrix with $i,j$ entry equal to
\begin{align*}
    \Gamma\big(\mathbf{Z}_1 \mathbf{Z}_1\t - \mathbf{Z}_1^{1-m} (\mathbf{Z}_1^{1-m})\t\big)_{ij} &= \begin{cases} \langle e_m\t \mathbf{Z}_1, e_j\t\mathbf{Z}_1 \rangle & i =m, j\neq m \\
    \langle e_m\t \mathbf{Z}_1, e_i\t\mathbf{Z}_1 \rangle & j = m, i\neq m \\
    0 &\text{else}. \end{cases}
\end{align*}
Therefore, we can decompose this matrix via
\begin{align*}
    \Gamma\big(\mathbf{Z}_1 \mathbf{Z}_1\t - \mathbf{Z}_1^{1-m} (\mathbf{Z}_1^{1-m})\t\big) &= \mathbf{G}_{\mathrm{row}} + \mathbf{G}_{\mathrm{col}}, 
\end{align*}
where $\mathbf{G}_{\mathrm{row}}$ is the matrix whose only nonzero row is its $m$'th row, in which case it the $m,j$ entry is $\langle e_m\t\mathbf{Z}_1, e_j\t\mathbf{Z}_1\rangle$ for $j \neq m$, and $\mathbf{G}_{\mathrm{col}}$ is defined as the transpose of this matrix.  We then observe that with high probability
\begin{align*}
    \|  \Gamma\big(\mathbf{Z}_1 \mathbf{Z}_1\t - \mathbf{Z}_1^{1-m} (\mathbf{Z}_1^{1-m})\t\big) \| &\leq \| \mathbf{G}_{\mathrm{row}} \| + \| \mathbf{G}_{\mathrm{col}} \| \\
    &\leq 2 \| \mathbf{G}_{\mathrm{row}} \| \\
    &= 2 \| e_m\t \Gamma(\mathbf{Z}_1 \mathbf{Z}_1\t ) \| \\
    &\leq 2 \| \Gamma(\mathbf{Z}_1 \mathbf{Z}_1\t ) \| \\
    &\lesssim (p_1p_2p_3)^{1/2},
\end{align*}
where the final inequality follows from \cref{lem:spectralconcentrationinit}.  

For part three, we first observe that $\mathbf{Z}_1^{1-m} - \mathbf{Z}_1^{1-m,1-l}$  is a matrix with only its $l$'th column nonzero.  Then we note
\begin{align*}
    \|\Gamma\big( \big( \mathbf{Z}_1^{1-m} - \mathbf{Z}_1^{1-m,1-l} \big) \mathbf{T}_1\t \big)\| &\leq \sqrt{p_1}   \|\Gamma\big( \big( \mathbf{Z}_1^{1-m} - \mathbf{Z}_1^{1-m,1-l} \big) \mathbf{T}_1\t \big)\|_{2,\infty} \\
    &\leq \sqrt{p_1} \bigg[  \| \big( \mathbf{Z}_1^{1-m} - \mathbf{Z}_1^{1-m,1-l} \big) \mathbf{T}_1\t \|_{2,\infty} \\
    &\qquad + \| \diag\bigg(\big( \mathbf{Z}_1^{1-m} - \mathbf{Z}_1^{1-m,1-l} \big)\mathbf{T}_1\t\bigg) \|_{2,\infty} \bigg] \\
    &\leq \sqrt{p_1} \bigg[ \max_{i} \| \big(\mathbf{Z}_1\big)_{il}\big( \mathbf{T}_1\t \big)_{l\cdot} \| + \max_{i} | (\mathbf{Z}_1)_{il} (\mathbf{T}_1\t)_{li} | \bigg] \\
    &\lesssim \sqrt{p_1} \bigg[ \sqrt{\log(p)} \| \mathbf{T}_1\t \|_{2,\infty} + \sqrt{\log(p)} \| \mathbf{T}_1\t \|_{\max} \bigg] \\
    &\lesssim \sqrt{p_1\log(p)} \| \mathbf{T}_1\t \|_{2,\infty} \\
    &\lesssim \sqrt{p_1\log(p)}  \| \mathbf{V}_1 \|_{2,\infty} \lambda_1\\
    &\lesssim \mu_0 \lambda_1 \sqrt{r\log(p)} \sqrt{\frac{p_1}{p_{-1}}},
\end{align*}
where we used the fact that $\max_{i,l} |(\mathbf{Z}_1)_{il}| \lesssim \sqrt{\log(p)}$ with probability at least $1 - O(p^{-30})$.

We next note that $\Gamma\big(\mathbf{Z}_1^{1-m}(\mathbf{Z}_1^{1-m})\t -\mathbf{Z}_1^{1-m,1-l} (\mathbf{Z}_1^{1-m,1-l})\t\big)$ is a matrix with entries equal to $(\mathbf{Z}_1)_{il}(\mathbf{Z}_1)_{jl}$ for $i \neq j$ and $i,j \neq m$.  In particular, it is a the $p_1- 1 \times p_1 -1$ dimensional submatrix of the matrix whose entries are simply $(\mathbf{Z}_1)_{il}(\mathbf{Z}_1)_{jl}$ for $i\neq j$.  This is a sample Gram matrix, so by Lemma 1 of \citet{agterberg_entrywise_2022}, it holds that
\begin{align*}
    \| \Gamma\big(\mathbf{Z}_1^{1-m}(\mathbf{Z}_1^{1-m})\t -\mathbf{Z}_1^{1-m,1-l} (\mathbf{Z}_1^{1-m,1-l})\t\big) \| &\lesssim p_1
\end{align*}
with probability at least $1 - O(p^{-30})$ (where as in the proof of \cref{lem:spectralconcentrationinit} the result holds by taking $M = 0$, $d = 1$, and modifying the constant on $\delta$ in the proof of Lemma 1 of \citet{agterberg_entrywise_2022}).  

For the final term, we note that
\begin{align*}
   \|& \Gamma( \mathbf{Z}_1^{1-m}(\mathbf{Z}_1^{1-m}) - \mathbf{Z}_1^{1-m,1-l} (\mathbf{Z}_1^{1-m,1-l})\t ) \utilde_1^{S,1-m,1-l} \|_{2,\infty} \\
   &= \max_{a} \big\| \sum_{j\neq a} (\mathbf{Z}_1)_{al} (\mathbf{Z}_1)_{jl} \big(\utilde^{S,1-m,1-l}_1 \big)_{j\cdot} \big\| \\
   &\leq \max_{a} | (\mathbf{Z}_1) |_{al}  \| \sum_{j\neq a}(\mathbf{Z}_1)_{jl}  \big(\utilde^{S,1-m,1-l}_1 \big)_{j\cdot} \big\| \\
   &\lesssim \sqrt{\log(p)} \max_{a}  \| \sum_{j\neq a}(\mathbf{Z}_1)_{jl}  \big(\utilde^{S,1-m,1-l}_1 \big)_{j\cdot} \big\| \\
   &\lesssim \sqrt{p_1} \log(p) \| \utilde_1^{S,1-m,1-l} \|_{2,\infty},
\end{align*}
where the final inequality follows from \cref{lem:rowbound} applied to $\mathbf{Z}_1\t$.  
\end{proof}

The following result verifies the eigengap conditions that we use repeatedly throughout the proof.  We adopt similar notation to \citet{cai_subspace_2021}.
\begin{lemma}\label{lem:spectraliniteigengapsloo}
Define the matrices
\begin{align*}
    \mathbf{G} &\coloneqq   \Gamma\big( \mathbf{T}_1 \mathbf{T}_1\t + \mathbf{T}_1 \mathbf{Z}_1\t  + \mathbf{Z}_1 \mathbf{T}_1\t + \mathbf{Z}_1 \mathbf{Z}_1\t \big); \\
    \mathbf{G}^{(m)} &\coloneqq   \Gamma\big( \mathbf{T}_1 \mathbf{T}_1\t + \mathbf{T}_1 (\mathbf{Z}_1^{-m})\t  + \mathbf{Z}_1^{-m} \mathbf{T}_1\t + \mathbf{Z}_1^{-m} (\mathbf{Z}_1^{-m})\t)\big); \\
    \mathbf{G}^{(m,l)} &\coloneqq \Gamma\big(   \mathbf{T}_1 \mathbf{T}_1\t + \mathbf{T}_1 (\mathbf{Z}_1^{-m-l})\t  + \mathbf{Z}_1^{-m-l} \mathbf{T}_1\t + \mathbf{Z}_1^{-m-l} (\mathbf{Z}_1^{-m-l})\t) \big).
\end{align*}
Then on the events in \cref{lem:spectralconcentrationinit} and \cref{lem:spectralinitconcentrationloo}, it holds that
\begin{align*}
    \lambda_r^2 - \| \mathbf{G} - \mathbf{T}_1 \mathbf{T}_1\t \| \gtrsim \lambda^2;\\
    \lambda_r \big( \mathbf{G}\big) - \lambda_{r+1}\big( \mathbf{G} \big) - \| \mathbf{G} - \mathbf{G}^{-m} \| \gtrsim \lambda^2; \\
    \lambda_r \big( \mathbf{G}^{-m}\big) - \lambda_{r+1} \big( \mathbf{G}^{-m} \big) - \| \mathbf{G}^{-m} - \mathbf{G}^{-m-l} \| \gtrsim \lambda^2; \\
    \lambda_r \big(\mathbf{G} \big) \gtrsim \lambda^2.
\end{align*}
\end{lemma}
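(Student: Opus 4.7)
The plan is to reduce all four statements to a handful of spectral-norm bounds on perturbation matrices, each of which has already been controlled in Lemmas \ref{lem:spectralconcentrationinit} and \ref{lem:spectralinitconcentrationloo}. Since $\mathbf{T}_1 \mathbf{T}_1^\top$ has rank $r$ with nonzero eigenvalues equal to $\lambda_1^2(\mathbf{T}_1) \geq \cdots \geq \lambda_r^2(\mathbf{T}_1) \geq \lambda^2$, Weyl's inequality gives the uniform bounds
\begin{align*}
\lambda_r(\mathbf{G}) &\geq \lambda^2 - \|\mathbf{G} - \mathbf{T}_1\mathbf{T}_1^\top\|, & \lambda_{r+1}(\mathbf{G}) &\leq \|\mathbf{G} - \mathbf{T}_1\mathbf{T}_1^\top\|,
\end{align*}
and analogously for $\mathbf{G}^{(m)}$ and $\mathbf{G}^{(m,l)}$ after telescoping. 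Thus every inequality in the lemma will follow once we show that each of $\|\mathbf{G} - \mathbf{T}_1\mathbf{T}_1^\top\|$, $\|\mathbf{G} - \mathbf{G}^{(m)}\|$, and $\|\mathbf{G}^{(m)} - \mathbf{G}^{(m,l)}\|$ is of strictly smaller order than $\lambda^2$ on the good events already established.

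The first key estimate decomposes
\begin{align*}
\mathbf{G} - \mathbf{T}_1\mathbf{T}_1^\top = -\diag(\mathbf{T}_1\mathbf{T}_1^\top) + \Gamma(\mathbf{Z}_1\mathbf{T}_1^\top) + \Gamma(\mathbf{T}_1\mathbf{Z}_1^\top) + \Gamma(\mathbf{Z}_1\mathbf{Z}_1^\top).
\end{align*}
The diagonal bias satisfies $\|\diag(\mathbf{T}_1\mathbf{T}_1^\top)\| \leq \lambda_1^2 \mu_0^2 r / p_1 \lesssim \kappa^2 \mu_0^2 r \lambda^2/p_1$ by incoherence, and the remaining three terms are bounded by Lemma \ref{lem:spectralconcentrationinit} as $\lesssim \lambda_1\sqrt{p_1}$, $\lesssim \lambda_1\sqrt{p_1}$, and $\lesssim \sqrt{p_1 p_2 p_3}$. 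The plan is then to verify, using the assumed SNR $\lambda/\sigma \gtrsim \kappa\sqrt{\log(p)}\,p_{\max}/p_{\min}^{1/4}$ together with $\mu_0^2 r \lesssim p_{\min}^{1/2}$ and $\kappa^2 \lesssim p_{\min}^{1/4}$, that each term is at most a small fraction of $\lambda^2$: the diagonal term uses $\kappa^2 \mu_0^2 r/p_1 \ll 1$, while $\sqrt{p_1 p_2 p_3}/\lambda^2$ and $\lambda_1\sqrt{p_1}/\lambda^2 = \kappa\sqrt{p_1}/\lambda$ are both made small by the SNR assumption.

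For the leave-one-out differences, I will decompose
\begin{align*}
\mathbf{G} - \mathbf{G}^{(m)} &= \Gamma\!\left((\mathbf{Z}_1 - \mathbf{Z}_1^{1-m})\mathbf{T}_1^\top\right) + \Gamma\!\left(\mathbf{T}_1(\mathbf{Z}_1 - \mathbf{Z}_1^{1-m})^\top\right) + \Gamma\!\left(\mathbf{Z}_1\mathbf{Z}_1^\top - \mathbf{Z}_1^{1-m}(\mathbf{Z}_1^{1-m})^\top\right),
\end{align*}
and similarly for $\mathbf{G}^{(m)} - \mathbf{G}^{(m,l)}$. Applying parts (1)--(2) of Lemma \ref{lem:spectralinitconcentrationloo} bounds $\|\mathbf{G} - \mathbf{G}^{(m)}\| \lesssim \lambda_1 \mu_0 \sqrt{r\log p} + \sqrt{p_1 p_2 p_3}$, and parts (3)--(4) bound $\|\mathbf{G}^{(m)} - \mathbf{G}^{(m,l)}\|$ by the analogous column-removal quantities. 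The verification again reduces to algebraic comparison with $\lambda^2$ under the stated SNR and rank conditions, where the $\mu_0\sqrt{r\log p}$ factor is harmless because $\mu_0^2 r \lesssim p_{\min}^{1/2}$ is weaker than the SNR requirement.

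The main obstacle I foresee is bookkeeping: ensuring that the diagonal bias $\|\diag(\mathbf{T}_1\mathbf{T}_1^\top)\|$ and the ``quadratic in $\mathbf{Z}_1$'' term $\sqrt{p_1 p_2 p_3}$ are both dominated by $\lambda^2$ simultaneously, since these are the two terms most sensitive to the relationship between $\kappa$, $\mu_0^2 r$, and $p_{\min}$. Once those two terms are handled, telescoping across the differences $\mathbf{G} \to \mathbf{G}^{(m)} \to \mathbf{G}^{(m,l)}$ together with Weyl's inequality delivers all four claims with room to spare, and the fourth bound $\lambda_r(\mathbf{G}) \gtrsim \lambda^2$ follows immediately from the first by a single Weyl application.
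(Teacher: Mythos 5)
Your proposal is correct and follows essentially the same route as the paper's proof: decompose each perturbation into a diagonal-bias term, cross terms, and a quadratic noise term, invoke the spectral-norm bounds from Lemmas \ref{lem:spectralconcentrationinit} and \ref{lem:spectralinitconcentrationloo} to show each of $\|\mathbf{G} - \mathbf{T}_1\mathbf{T}_1^\top\|$, $\|\mathbf{G} - \mathbf{G}^{(m)}\|$, $\|\mathbf{G}^{(m)} - \mathbf{G}^{(m,l)}\|$ is $\ll \lambda^2$ under the SNR, rank, and incoherence conditions, and then apply Weyl's inequality (with $\lambda_{r+1}(\mathbf{T}_1\mathbf{T}_1^\top)=0$) together with telescoping. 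The ``bookkeeping'' you flag is exactly what the paper checks, so there is no missing step.
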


\begin{proof}[Proof of \cref{lem:spectraliniteigengapsloo}]
First, we note that on the event in \cref{lem:spectralconcentrationinit},
\begin{align*}
    \| \mathbf{G} - \mathbf{T}_1 \mathbf{T}_1\t \| &\leq \| \diag( \mathbf{T}_1 \mathbf{T}_1\t ) \| + 2 \| \Gamma(\mathbf{T}_1 \mathbf{Z}_1\t ) \| + \| \Gamma(\mathbf{Z}_1 \mathbf{Z}_1\t ) \| \\
    &\lesssim  \lambda_1^2 \mu_0^2 \frac{r}{p_1} + \lambda_1 \sqrt{p_1} + (p_1p_2p_3)^{1/2} \\
    &\ll \lambda^2; \\
    \| \mathbf{G} - \mathbf{G}^{-m} \| &\leq 2 \| \Gamma(\mathbf{T}_1 (\mathbf{Z}_1 - \mathbf{Z}_1^{1-m})\t ) \| + \| \Gamma(\mathbf{Z}_1 \mathbf{Z}_1\t - \mathbf{Z}_1^{1-m}(\mathbf{Z}_1^{1-m})\t \| \\
    &\lesssim \lambda_1 \mu_0 \sqrt{r\log(p)} + (p_1p_2p_3)^{1/2} \\
    &\ll \lambda^2; \\
    \| \mathbf{G}^{-m} - \mathbf{G}^{-m-l} \| &\leq \| \Gamma\big( \mathbf{T}_1 ( \mathbf{Z}_1^{1-m} - \mathbf{Z}_1^{1-m,1-l})\t \big) \| + \left\|\Gamma\left(\mathbf{Z}_{1}^{1-m}\left(\mathbf{Z}_{1}^{1-m}\right)^{\top}-\mathbf{Z}_{1}^{1-m, 1-l}\left(\mathbf{Z}_{1}^{1-m, 1-l}\right)^{\top}\right)\right\| \\
    &\lesssim \mu_0 \lambda_1 \sqrt{r\log(p)} \sqrt{\frac{p_1}{p_{-1}}} + p_1 \\
    &\ll \lambda^2.
\end{align*}
Therefore, by Weyl's inequality,
\begin{align*}
    \lambda_r\big( \mathbf{G} \big) - \lambda_{r+1}\big( \mathbf{G}\big) - \| \mathbf{G}- \mathbf{T}_1 \mathbf{T}_1\t \|&\geq \lambda_r^2 - 3  \| \mathbf{G}- \mathbf{T}_1 \mathbf{T}_1\t \| \\
    &\gtrsim \lambda^2.
\end{align*}
Similarly,
\begin{align*}
    \lambda_r\big( \mathbf{G}^{-m} \big) - \lambda_{r+1}\big(\mathbf{G}^{-m} \big) - \| \mathbf{G} - \mathbf{G}^{-m} \| &\gtrsim \lambda_r\big( \mathbf{G} \big) - \lambda_{r+1}\big( \mathbf{G}\big) - 2  \| \mathbf{G} - \mathbf{G}^{-m} \| \\
    &\gtrsim \lambda^2. 
\end{align*}
Finally, we note that
\begin{align*}
    \lambda_r\big(\mathbf{G} \big) \geq \lambda_r^2 - \| \mathbf{G} - \mathbf{T}_1 \mathbf{T}_1\t \| \\
    &\geq \lambda^2 - \| \mathbf{G} - \mathbf{T}_1 \mathbf{T}_1\t \| \\
    &\gtrsim \lambda^2.
\end{align*}
This completes the proof.
\end{proof}

With these spectral norm concentration and eigengap conditions fixed, we now consider the $\ell_{2,\infty}$ analysis.  The first step is to show that several terms are negligible with respect to the main bound. For the remainder of the analysis, we implicitly use the eigenvalue bounds in \cref{lem:spectraliniteigengapsloo}, which hold under the events in \cref{lem:spectralconcentrationinit} and \cref{lem:spectralinitconcentrationloo}.

\begin{lemma}\label{lem:negligibleterms}
The following bounds hold with probability at least $1- O(p^{-30})$:
\begin{enumerate}
    \item  $\| \U_1 \U_1\t \mathbf{Z}_1 \mathbf{T}_1\t \uhat_1^S \mathbf{\hat{\Lambda}}_1^{-2} \|_{2,\infty}\lesssim \mu_0 \sqrt{\frac{r_1}{p_1}} \frac{\sqrt{r} \kappa}{\lambda}$ 
    \item $\| \U_1\U_1\t \Gamma( \mathbf{Z}_1 \mathbf{Z}_1\t ) \uhat_1^S \mathbf{\hat{\Lambda}}^{-2}_1 \|_{2,\infty} \lesssim \mu_0 \sqrt{\frac{r_1}{p_1}} \frac{ (p_1p_2p_3)^{1/2}}{\lambda^2}$
    \item $\| \U_1 \U_1\t \diag\bigg( \mathbf{T}_1 \mathbf{T}_1\t + \mathbf{T}_1\mathbf{Z}_1\t + \mathbf{Z}_1\mathbf{T}_1\t \bigg) \uhat_1^S \mathbf{\hat{\Lambda}}^{-2}_1 \|_{2,\infty} \lesssim \mu_0 \sqrt{\frac{r_1}{p_1}} \bigg( \kappa^2 \mu_0^2 \frac{r}{p_1} + \frac{\kappa \mu_0^2 r \sqrt{\log(p)}}{\lambda \sqrt{p_1}} \bigg) $
    \item $\|\diag( \mathbf{T}_1 \mathbf{T}_1\t +\mathbf{T}_1 \mathbf{Z}_1\t  + \mathbf{Z}_1 \mathbf{T}_1\t ) \uhat_1^S \mathbf{\hat{\Lambda}}^{-2}_1 \|_{2,\infty}\lesssim \kappa^2 \mu_0^2 \frac{r_1}{p_1} + \frac{\kappa \mu_0^2 r \sqrt{\log(p)}}{\lambda \sqrt{p_1}}$
    \item $\|\U_1 ( \mathbf{W}_1^S - \U_1\t \uhat_1^S)\|_{2,\infty}\lesssim \mu_0 \sqrt{\frac{r_1}{p_1}} \bigg( \kappa^2 \mu_0^2 \frac{r}{p_1} + \frac{\kappa \sqrt{p_1}}{\lambda} + \frac{(p_1p_2p_3)^{1/2}}{\lambda^2} \bigg)^2$.
\end{enumerate}
Moreover, all of these terms are upper bounded by the quantity
\begin{align*}
    \bigg(\frac{\kappa \sqrt{p_1\log(p)}}{\lambda} + \frac{(p_1p_2p_3)^{1/2}\log(p)}{\lambda^2} + \kappa^2 \mu_0 \sqrt{\frac{r}{p_1}}\bigg) \mu_0\sqrt{\frac{r}{p_1}}.
\end{align*}
\end{lemma}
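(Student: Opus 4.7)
The plan is to bound each of the five quantities directly by combining the submultiplicative inequality $\|AB\|_{2,\infty}\le \|A\|_{2,\infty}\|B\|$ with the spectral-norm concentration estimates of Lemma~\ref{lem:spectralconcentrationinit}, the incoherence assumption $\|\U_1\|_{2,\infty}\le \mu_0\sqrt{r_1/p_1}$, and the deterministic eigenvalue lower bound $\lambda_{r}(\mathbf{G})\gtrsim \lambda^2$ from Lemma~\ref{lem:spectraliniteigengapsloo}, which yields $\|\mathbf{\hat{\Lambda}}_1^{-2}\|\lesssim \lambda^{-2}$. Parts (1) and (2) follow most directly: for (1),
\[
\|\U_1\U_1^\top \mathbf{Z}_1\mathbf{T}_1^\top\uhat_1^S\mathbf{\hat{\Lambda}}_1^{-2}\|_{2,\infty}
\le \|\U_1\|_{2,\infty}\,\|\U_1^\top\mathbf{Z}_1\mathbf{V}_1\|\,\lambda_1\,\|\mathbf{\hat{\Lambda}}_1^{-2}\|
\lesssim \mu_0\sqrt{r_1/p_1}\cdot\sqrt{r}\kappa/\lambda
\]
by part 4 of Lemma~\ref{lem:spectralconcentrationinit}; for (2), we replace $\U_1^\top\mathbf{Z}_1\mathbf{T}_1^\top$ by $\Gamma(\mathbf{Z}_1\mathbf{Z}_1^\top)$ and invoke part 2 of the same lemma.

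For (3) and (4) the key observation is that $\diag(A)$ is diagonal, so the $i$-th row of $\diag(A)\uhat_1^S$ equals $A_{ii}(\uhat_1^S)_{i\cdot}$, giving $\|\diag(A)\uhat_1^S\|_{2,\infty}\le \|\diag(A)\|\cdot\|\uhat_1^S\|_{2,\infty}$. For (4) I will exploit the trivial bound $\|\uhat_1^S\|_{2,\infty}\le 1$, valid because the diagonal entries of the projector $\uhat_1^S(\uhat_1^S)^\top$ are at most one. The three pieces of the diagonal are then bounded by $\|\diag(\mathbf{T}_1\mathbf{T}_1^\top)\|\le \max_i\|e_i^\top\U_1\mathbf{\Lambda}_1\|^2\le \lambda_1^2\mu_0^2 r/p_1$ (using the incoherence of $\U_1$) and $\|\diag(\mathbf{Z}_1\mathbf{T}_1^\top)\|\lesssim \lambda_1\mu_0^2 r\sqrt{\log(p)/p_1}$ from part 1 of Lemma~\ref{lem:spectralconcentrationinit}; dividing by $\lambda^2$ produces the bound in (4). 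Part (3) is obtained from the same estimate by multiplying through by the extra factor $\|\U_1\|_{2,\infty}\le\mu_0\sqrt{r_1/p_1}$.

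For (5) I apply Lemma~\ref{lem:orthogonalmatrixlemma}, which gives $\|\U_1(\mathbf{W}_1^S-\U_1^\top\uhat_1^S)\|_{2,\infty}\le \mu_0\sqrt{r_1/p_1}\,\|\sin\Theta(\uhat_1^S,\U_1)\|^2$, then bound the $\sin\Theta$ distance by Davis--Kahan using $\mathbf{G}$ and $\mathbf{T}_1\mathbf{T}_1^\top$ as the perturbed and unperturbed matrices. Lemma~\ref{lem:spectraliniteigengapsloo} supplies the eigengap $\gtrsim \lambda^2$, and the triangle inequality splits $\|\mathbf{G}-\mathbf{T}_1\mathbf{T}_1^\top\|$ into $\|\diag(\mathbf{T}_1\mathbf{T}_1^\top)\|$, $2\|\Gamma(\mathbf{T}_1\mathbf{Z}_1^\top)\|$, and $\|\Gamma(\mathbf{Z}_1\mathbf{Z}_1^\top)\|$, handled by Lemma~\ref{lem:spectralconcentrationinit}; this yields $\|\sin\Theta(\uhat_1^S,\U_1)\|\lesssim \kappa^2\mu_0^2 r/p_1 + \kappa\sqrt{p_1}/\lambda + (p_1p_2p_3)^{1/2}/\lambda^2$, and squaring gives (5).

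The concluding claim that each of the five bounds is dominated by the common quantity is a routine algebraic check using the hypotheses $r\lesssim p_{\min}^{1/2}$, $\mu_0^2 r\lesssim p_{\min}^{1/2}$ and $\kappa^2\lesssim p_{\min}^{1/4}$: these imply $\sqrt{r}\lesssim \sqrt{p_1\log(p)}$ (comparing with (1)), and $\mu_0^2 r/p_1\lesssim\mu_0\sqrt{r/p_1}$ together with the squared terms in (5) being dominated by the three linear terms in the target bound. The main obstacle, though a minor one, will be organizing the verification of this last step neatly, particularly ensuring that the cross terms from expanding the square in (5) are each absorbed into one of the three summands in the target rate; essentially all other steps are short applications of known spectral and $2,\infty$ inequalities.
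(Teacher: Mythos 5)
Your proposal is correct and follows essentially the same route as the paper's proof: submultiplicativity of $\|\cdot\|_{2,\infty}$ against the spectral norm, combined with Lemma~\ref{lem:spectralconcentrationinit} for the concentration inputs, the eigengap from Lemma~\ref{lem:spectraliniteigengapsloo} to control $\|\mathbf{\hat{\Lambda}}_1^{-2}\|$, and Lemma~\ref{lem:orthogonalmatrixlemma} plus Davis--Kahan for part~(5). The only cosmetic difference is that the paper bounds part~(4) by passing directly from $\|\cdot\|_{2,\infty}$ to the spectral norm of $\diag(\cdot)\uhat_1^S\mathbf{\hat{\Lambda}}_1^{-2}$ rather than invoking $\|\uhat_1^S\|_{2,\infty}\le1$, but the two routes yield the same bound; and the final domination check is stated without detail in the paper as well, so your choice to sketch it is consistent with the level of rigor expected.
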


\begin{proof}[Proof of \cref{lem:negligibleterms}]
For part one, we note that
\begin{align*}
\| \U_1 \U_1\t \mathbf{Z}_1 \mathbf{T}_1\t \uhat_1^S \mathbf{\hat{\Lambda}}_1^{-2} \|_{2,\infty}  &\lesssim  \| \U_1 \|_{2,\infty}  \frac{ \| \U_1\t \mathbf{Z}_1 \mathbf{V}_1 \| \lambda_1}{\lambda^2} \\
&\lesssim \mu_0 \sqrt{\frac{r_1}{p_1}} \frac{\sqrt{r} \kappa}{\lambda}, \end{align*}
since $\| \U_1\t \mathbf{Z}_1 \mathbf{V}_1 \| \lesssim \sqrt{r}$ by \cref{lem:spectralconcentrationinit}.

For part 2, we note
\begin{align*}
    \| \U_1\U_1\t \Gamma( \mathbf{Z}_1 \mathbf{Z}_1\t ) \uhat_1^S \mathbf{\hat{\Lambda}}^{-2}_1 \|_{2,\infty}     &\lesssim \mu_0 \sqrt{\frac{r_1}{p_1}} \frac{ \| \Gamma( \mathbf{Z}_1 \mathbf{Z}_1\t) \|}{\lambda^2} \\
    &\lesssim \mu_0 \sqrt{\frac{r_1}{p_1}} \frac{ (p_1p_2p_3)^{1/2}}{\lambda^2}.
\end{align*}
by \cref{lem:spectralconcentrationinit}. 

For part 3,
\begin{align*}
   \| \U_1 &\U_1\t \diag\bigg( \mathbf{T}_1 \mathbf{T}_1\t + \mathbf{T}_1\mathbf{Z}_1\t + \mathbf{Z}_1\mathbf{T}_1\t \bigg) \uhat_1^S \mathbf{\hat{\Lambda}}^{-2}_1 \|_{2,\infty} \\
   &\lesssim \mu_0 \sqrt{\frac{r_1}{p_1}} \frac{\| \diag\bigg( \mathbf{T}_1 \mathbf{T}_1\t + \mathbf{T}_1\mathbf{Z}_1\t + \mathbf{Z}_1\mathbf{T}_1\t \bigg) \|}{\lambda^2} \\
   &\lesssim \mu_0 \sqrt{\frac{r_1}{p_1}} \bigg( \frac{1}{\lambda^2} \max_{i} | e_i\t \U_1 \mathbf{\Lambda}_1^2 \U_1\t e_i | + \frac{2 \| \diag(\mathbf{Z}_1 \mathbf{T}_1\t ) \|}{\lambda^2} \bigg) \\
   &\lesssim \mu_0 \sqrt{\frac{r_1}{p_1}} \bigg( \kappa^2 \mu_0^2 \frac{r}{p_1} + \frac{ 2 \| \diag( \mathbf{Z}_1 \mathbf{T}_1\t ) \|}{\lambda^2} \bigg) \\
   &\lesssim \mu_0 \sqrt{\frac{r_1}{p_1}} \bigg( \kappa^2 \mu_0^2 \frac{r}{p_1} + \frac{\kappa \mu_0^2 r \sqrt{\log(p)}}{\lambda \sqrt{p_1}} \bigg),
\end{align*}
by \cref{lem:spectralconcentrationinit}.

For part 4, 
\begin{align*}
   \|\diag( \mathbf{T}_1 \mathbf{T}_1\t +\mathbf{T}_1 \mathbf{Z}_1\t  + \mathbf{Z}_1 \mathbf{T}_1\t ) \uhat_1^S \mathbf{\hat{\Lambda}}^{-2}_1 \|_{2,\infty} 
   &\lesssim \frac{1}{\lambda^2} \bigg(  \|\diag( \mathbf{T}_1 \mathbf{T}_1\t ) \| + 2\| \diag( \mathbf{Z}_1 \mathbf{T}_1\t ) \| \bigg) \\
   &\lesssim \kappa^2 \mu_0^2 \frac{r_1}{p_1} + \frac{\| \diag(\mathbf{Z}_1 \mathbf{T}_1\t ) \|}{\lambda^2} \\
   &\lesssim \kappa^2 \mu_0^2 \frac{r_1}{p_1} + \frac{\kappa \mu_0^2 r \sqrt{\log(p)}\|}{\lambda \sqrt{p_1}}
\end{align*}
by \cref{lem:spectralconcentrationinit}.

Finally, by \cref{lem:orthogonalmatrixlemma},
\begin{align*}
\|\U_1 ( \mathbf{W}_1^S - \U_1\t \uhat_1^S)\|_{2,\infty} 
&\leq \mu_0 \sqrt{\frac{r_1}{p_1}} \| \sin\Theta(\uhat_1^S, \U_1 ) \|^2.
\end{align*}
Note that $\U_1$ are the eigenvectors of $\mathbf{T}_1 \mathbf{T}_1\t$ and $\uhat_1^S$ are the eigenvectors of the matrix
\begin{align*}
    \Gamma\bigg(\mathbf{T}_1 \mathbf{T}_1\t + \mathbf{Z}_1 \mathbf{T}_1\t + \mathbf{T}_1 \mathbf{Z}_1\t + \mathbf{Z_1 Z_1}\t \bigg).
\end{align*}
We note that
\begin{align*}
   \bigg\| \mathbf{T}_1 \mathbf{T}_1\t -\Gamma\bigg(\mathbf{T}_1 \mathbf{T}_1\t + \mathbf{Z}_1 \mathbf{T}_1\t + \mathbf{T}_1 \mathbf{Z}_1\t + \mathbf{Z_1 Z_1}\t \bigg)\bigg\| &\leq \| \diag(\mathbf{T}_1 \mathbf{T}_1\t) \| + 2\| \Gamma(\mathbf{Z}_1 \mathbf{T}_1\t ) \| + \| \Gamma(\mathbf{Z}_1 \mathbf{Z}_1\t ) \| \\
   &\lesssim \lambda_1^2 \mu_0^2 \frac{r}{p_1} + \lambda_1 \sqrt{p_1} + (p_1p_2p_3)^{1/2} \\
   &\ll  \lambda^2,
\end{align*}
where we note that we used the fact that $\lambda \gtrsim \kappa p_{\max} \sqrt{\log(p)} p_{\min}^{1/4}$, the fact that $\mu_0^2 r\lesssim \sqrt{p_{\min}}$, and the assumption $\kappa \lesssim p_{\min}^{1/4}$.  Therefore, by the Davis-Kahan Theorem,
\begin{align*}
    \| \sin\Theta(\uhat_1^S, \U_1 ) \| &\lesssim \frac{\lambda_1^2 \mu_0^2 \frac{r}{p_1} + \lambda_1 \sqrt{p_1} + (p_1p_2p_3)^{1/2}}{\lambda^2} \\
    &\lesssim \kappa^2 \mu_0^2 \frac{r}{p_1} + \frac{\kappa \sqrt{p_1}}{\lambda} + \frac{(p_1p_2p_3)^{1/2}}{\lambda^2}.
\end{align*}
Therefore,
\begin{align*}
    \|\U_1 ( \mathbf{W}_1^S - \U_1\t \uhat_1^S)\|_{2,\infty} 
&\lesssim \mu_0 \sqrt{\frac{r_1}{p_1}} \bigg( \kappa^2 \mu_0^2 \frac{r}{p_1} + \frac{\kappa \sqrt{p_1}}{\lambda} + \frac{(p_1p_2p_3)^{1/2}}{\lambda^2} \bigg)^2.
\end{align*}

\end{proof}

\subsubsection{Proof of \cref{thm:spectralinit_twoinfty}} \label{sec:inittwoinftyproof}

\begin{proof}[Proof of \cref{thm:spectralinit_twoinfty}]
Without loss of generality, we consider $k = 1$. We simply decompose
\begin{align*}
    \uhat^S_1 - \U_1 \mathbf{W}_1^S &= \mathbf{Z}_1 \mathbf{T}_1\t \uhat^S_1 \mathbf{\hat{\Lambda}}^{-2}_1 + \Gamma( \mathbf{Z}_1 \mathbf{Z}_1\t) \uhat_1^S \mathbf{\hat{\Lambda}}^{-2}_1 - \U_1 \U_1\t \mathbf{Z}_1 \mathbf{T}_1\t \uhat_1^S \mathbf{\hat{\Lambda}}^{-2}_1 - \U_1\U_1\t \Gamma( \mathbf{Z}_1 \mathbf{Z}_1\t ) \uhat_1^S \mathbf{\hat{\Lambda}}^{-2}_1 \\
&\quad + \U_1 \U_1\t \diag\bigg( \mathbf{T}_1 \mathbf{T}_1\t + \mathbf{T}_1\mathbf{Z}_1\t + \mathbf{Z}_1\mathbf{T}_1\t \bigg) \uhat_1^S \mathbf{\hat{\Lambda}}^{-2}_1 \\
&\quad - \diag( \mathbf{T}_1 \mathbf{T}_1\t +\mathbf{T}_1 \mathbf{Z}_1\t  + \mathbf{Z}_1 \mathbf{T}_1\t ) \uhat_1^S \mathbf{\hat{\Lambda}}^{-2}_1 + \U_1 ( \mathbf{W}_1^S - \U_1\t \uhat_1^S) \\
&= (I) + (II) + (III) + (IV) + (V) + (VI),
\end{align*}
where
\begin{align*}
 (I) &\coloneqq  \mathbf{Z}_1 \mathbf{T}_1\t \uhat^S_1 \mathbf{\hat{\Lambda}}^{-2}_1; \\
 (II) &\coloneqq \Gamma( \mathbf{Z}_1 \mathbf{Z}_1\t) \uhat_1^S \mathbf{\hat{\Lambda}}^{-2}_1 \\
 (III) &= - \U_1 \U_1\t \mathbf{Z}_1 \mathbf{T}_1\t \uhat_1^S \mathbf{\hat{\Lambda}}^{-2}_1; \\
 (IV) &= - \U_1\U_1\t \Gamma( \mathbf{Z}_1 \mathbf{Z}_1\t ) \uhat_1^S \mathbf{\hat{\Lambda}}^{-2}_1 \\
(V) &= \U_1 \U_1\t \diag\bigg( \mathbf{T}_1 \mathbf{T}_1\t + \mathbf{T}_1\mathbf{Z}_1\t + \mathbf{Z}_1\mathbf{T}_1\t \bigg) \uhat_1^S \mathbf{\hat{\Lambda}}^{-2}_1 \\
(VI) &= - \diag( \mathbf{T}_1 \mathbf{T}_1\t +\mathbf{T}_1 \mathbf{Z}_1\t  + \mathbf{Z}_1 \mathbf{T}_1\t ) \uhat_1^S \mathbf{\hat{\Lambda}}^{-2}_1 \\
(VII) &= \U_1 ( \mathbf{W}_1^S - \U_1\t \uhat_1^S) \\
\end{align*}
We note that terms $(III) - (VII)$ are all of smaller order than the bound we desire by \cref{lem:negligibleterms} (with high probability).  With these bounds out of the way, we now turn our attention to terms $(I)$ and $(II)$.  For Term $(I)$, we simply note that by \cref{lem:rowbound}
\begin{align*}
    \| \mathbf{Z}_1 \mathbf{T}_1\t \uhat_1^S \mathbf{\hat{\Lambda}}_1^{-2} \|_{2,\infty} &\lesssim \frac{ \kappa \| \mathbf{Z}_1 \mathbf{V}_1 \|_{2,\infty} }{\lambda} \\
    &\lesssim \frac{\kappa \mu_0 \sqrt{r\log(p)}}{\lambda}. 
\end{align*}
It remains to show that the final term is of smaller order than the bound we desire, which will require the leave-one-out sequences.  Note that
\begin{align*}
    \| e_m\t \Gamma(\mathbf{Z}_1\mathbf{Z}_1\t) \uhat_1^S  \mathbf{\hat{\Lambda}}^{-2} \| &\lesssim \| e_m\t \Gamma(\mathbf{Z}_1\mathbf{Z}_1\t) \uhat_1^S - \utilde_1^{S,1-m} (\utilde_1^{S,1-m})\t \uhat_1^S \| \lambda^{-2}+ \| e_m\t \Gamma(\mathbf{Z}_1\mathbf{Z}_1\t) \utilde_1^{S,1-m} \| \lambda^{-2}\\
    &\lesssim \frac{\| \Gamma(\mathbf{Z}_1\mathbf{Z}_1\t) \|}{\lambda^2} \| \uhat_1^S (\uhat_1^S)\t - \utilde_1^{S,1-m} (\utilde_1^{S,1-m})\t \| + \| e_m\t \Gamma(\mathbf{Z}_1\mathbf{Z}_1\t) \utilde_1^{S,1-m} \|\lambda^{-2} \\
    &\coloneqq A + B.
\end{align*}
\noindent 
\textbf{The term $B$}: For this term, we note that
\begin{align*}
    e_m\t \Gamma(\mathbf{Z}_1\mathbf{Z}_1\t) \utilde_1^{S,1-m} &= \sum_{a\neq m} \langle \mathbf{Z}_{m\cdot}, \mathbf{Z}_{a\cdot} \rangle (\utilde_1^{S,1-m})_{a\cdot} \\
    &= \sum_{l} \mathbf{Z}_{ml} \bigg( \sum_{a\neq m} \mathbf{Z}_{al} (\utilde_{1}^{S,1-m})_{a\cdot} \bigg)
\end{align*}
is a sum of $p_{-1}$ independent random variables (over $l$), and hence satisfies
\begin{align*}
    \| \sum_{l} \mathbf{Z}_{ml} \bigg( \sum_{a\neq m} \mathbf{Z}_{al} (\utilde_{1}^{S,1-m})_{a\cdot} \bigg) \| &\lesssim \sqrt{p_{-1}\log(p)} \max_{l} \| \sum_{a\neq m} (\mathbf{Z}_{al} \utilde_{1}^{S,1-m})_{a\cdot} \|.
\end{align*}
However $\utilde_1^{S,1-m}$ is still dependent on the $a$'th column of $\mathbf{Z}$, so we introduce a leave-two-out estimator $\utilde_1^{S,1-m,1-l}$, obtained by initializing (with diagonal deletion) with the noise matrix $\mathbf{Z}_1$ replaced with $\mathbf{Z}_1^{1-m,1-l}$. For fixed $l$, we observe that
\begin{align*}
    \| \sum_{a\neq m} (\mathbf{Z}_{al} \utilde_{1}^{S,1-m})_{a\cdot} \| &\leq \| \sum_{a\neq m} (\mathbf{Z}_{al}) \big( \utilde_{1}^{S,1-m})_{a\cdot} - \big(\utilde_1^{S,1-m,1-l} (\utilde_1^{S,1-m,1-l})\t \utilde_{1}^{S,1-m} \big)_{a\cdot} \| \\
    &\quad+   \| \sum_{a\neq m} (\mathbf{Z}_{al})\big(\utilde_1^{S,1-m,1-l} (\utilde_1^{S,1-m,1-l})\t \utilde_{1}^{S,1-m} \big)_{a\cdot} \| \\
    &\leq \| (\mathbf{Z}^{-m})\t \| \| \utilde_{1}^{S,1-m} (\utilde_{1}^{S,1-m})\t - \utilde_1^{S,1-m,1-l} (\utilde_1^{S,1-m,1-l})\t \|\\
    &\quad + \| e_l\t (\mathbf{Z}^{-m})\t \utilde_1^{S,1-m,1-l} \|.
\end{align*}
Note that by Lemma \ref{lem:matricizationrowbound}, it holds that
\begin{align}
    \| e_l\t (\mathbf{Z}^{-m})\t \utilde_1^{S,1-m,1-l} \| &\lesssim\sqrt{p_1\log(p)} \| \utilde_1^{S,1-m,1-l} \|_{2,\infty}. \label{eq:alpha3bound}
\end{align}
In addition, by the Davis-Kahan Theorem (using the eigengap condition in \cref{lem:spectraliniteigengapsloo}),
\begin{align}
   \| &\utilde_{1}^{S,1-m} (\utilde_{1}^{S,1-m})\t - \utilde_1^{S,1-m,1-l} (\utilde_1^{S,1-m,1-l})\t \| \nonumber \\
    &\lesssim \frac{1}{\lambda^2} \bigg( \| \Gamma(\mathbf{Z}_1^{1-m} - \mathbf{Z}_1^{1-m,1-l}) \mathbf{T}_1\t \| \nonumber\\
    &\qquad\qquad + \| \Gamma( \mathbf{Z}_1^{1-m}(\mathbf{Z}_1^{1-m}) - \mathbf{Z}_1^{1-m,1-l} (\mathbf{Z}_1^{1-m,1-l})\t ) \utilde_1^{S,1-m,1-l} \| \bigg) \nonumber\\
    &\lesssim \frac{1}{\lambda^2} \bigg( \mu_0 \lambda_1 \sqrt{r\log(p)} \sqrt{\frac{p_1}{p_{-1}}} + \sqrt{p_1} \log(p) \| \utilde_1^{S,1-m,1-l} \|_{2,\infty} \bigg), \label{eq:leavetwooutsintheta}
\end{align}
where the final inequality holds by \cref{lem:spectralinitconcentrationloo}. 
Consequently, plugging this and \eqref{eq:alpha3bound} into our bound for $B$, we obtain
\begin{align}
    B &\lesssim  \frac{ \sqrt{p_{-1}\log(p)}}{\lambda^2} \bigg\{ \| \mathbf{Z}^{-m} \| \frac{1}{\lambda^2} \bigg(  \lambda_1 \mu_0 \sqrt{r\log(p)} \sqrt{\frac{p_1}{p_{-1}}} + \sqrt{p_1} \log(p) \| \utilde_1^{S,1-m,1-l} \|_{2,\infty} \bigg) \nonumber \\
    &\qquad +  \sqrt{p_1\log(p)} \| \utilde_1^{S,1-m,1-l}\|_{2,\infty} \bigg\}. \nonumber \\
    &\lesssim \frac{ \sqrt{p_{-1}\log(p)}}{\lambda^2} \bigg\{ \frac{p}{\lambda^2} \bigg( \lambda_1 \mu_0 \sqrt{r\log(p)} \sqrt{\frac{p_1}{p_{-1}}} + \sqrt{p_1}\log(p) \| \utilde_1^{S,1-m,1-l} \|_{2,\infty} \bigg) \nonumber \\
    &\qquad + \sqrt{p_1\log(p)} \|  \utilde_1^{S,1-m,1-l} \|_{2,\infty} \bigg\}, \label{BBound}
\end{align}
where we used the fact that $\|\mathbf{Z}^{-m}\| \leq \| \mathbf{Z} \| \lesssim p$ with high probability. The  bound \eqref{BBound} can be improved so as not to depend on $\utilde_1^{S,1-m,1-l}$.  By the bound in \eqref{eq:leavetwooutsintheta}, it holds that
\begin{align*}
    \| \utilde_{1}^{S,1-m,1-l} \|_{2,\infty} &\leq \| \utilde_{1}^{S,1-m} (\utilde_1^{S,1-m})\t \utilde_1^{S,1-m,1-l} \|_{2,\infty} \\
    &\qquad + \| \utilde_{1}^{S,1-m,1-l} - \utilde_{1}^{S,1-m} (\utilde_1^{S,1-m})\t \utilde_1^{S,1-m,1-l} \|_{2,\infty} \\
    &\leq \| \utilde_{1}^{S,1-m}  \|_{2,\infty} + \frac{1}{\lambda^2} \bigg( \mu_0 \lambda_1 \sqrt{r\log(p)} \sqrt{\frac{p_1}{p_{-1}}} + \sqrt{p_1} \log(p) \| \utilde_1^{S,1-m,1-l} \|_{2,\infty} \bigg),\\
    &\leq \| \utilde_1^{S,1-m} \|_{2,\infty} + \mu_0 \sqrt{\frac{r}{p_1}} + o(1) \| \utilde_{1}^{S,1-m,1-l}\|_{2,\infty},
\end{align*}
where we have implicitly observed that
\begin{align*}
    \frac{\kappa p_1\sqrt{\log(p)}}{\lambda \sqrt{p_{-1}}} \mu_0 \sqrt{\frac{r}{p_1}} \leq \mu_0 \sqrt{\frac{r}{p_1}},
\end{align*}
which holds since $\lambda \gtrsim \kappa \sqrt{\log(p)} p/p_{\min}^{1/4}$.  By rearranging, we therefore have that
\begin{align*}
    \| \utilde_1^{S,1-m,1-l} \|_{2,\infty} \lesssim \mu_0 \sqrt{\frac{r}{p_1}} + \| \utilde_1^{S,1-m} \|_{2,\infty}.
\end{align*}
Plugging this into \eqref{BBound}, we obtain
\begin{align*}
     B &\lesssim  \frac{ \sqrt{p_{-1}\log(p)}}{\lambda^2} \bigg\{ \frac{p}{\lambda^2} \bigg( \lambda_1 \mu_0 \sqrt{r\log(p)} \sqrt{\frac{p_1}{p_{-1}}} + \sqrt{p_1}\log(p) \|\bigg[ \|  \utilde_1^{S,1-m} \|_{2,\infty} + \mu_0 \sqrt{\frac{r}{p_1}} \bigg] \bigg) \nonumber \\
    &\qquad + \sqrt{p_1\log(p)} \bigg[ \|  \utilde_1^{S,1-m} \|_{2,\infty} + \mu_0 \sqrt{\frac{r}{p_1}} \bigg]\bigg\} \\
    &\lesssim \frac{p\sqrt{p_1 r} \kappa \mu_0 \log(p)}{\lambda^3} + \frac{p \sqrt{p_1p_2p_3} \log^{3/2}(p)}{\lambda^4} \| \utilde_1^{S,1-m}\|_{2,\infty} + \frac{p \sqrt{p_1p_2p_3} \log^{3/2}(p)}{\lambda^4} \mu_0 \sqrt{\frac{r}{p_1}} \\
    &\quad + \frac{\sqrt{p_1p_2p_3}\log(p)}{\lambda^2} \|\utilde_1^{S,1-m}\|_{2,\infty} + \frac{\sqrt{p_1p_2p_3}\log(p)}{\lambda^2} \mu_0 \sqrt{\frac{r}{p_1}} \\
    &\asymp  \frac{\kappa \sqrt{p_1\log(p)}}{\lambda} \mu_0 \sqrt{\frac{r}{p_1}} \bigg( \frac{p \sqrt{p_1 \log(p)}}{\lambda^2} \bigg) + \frac{\sqrt{p_1p_2p_3} \log(p)}{\lambda^2} \|\utilde_1^{S,1-m}\|_{2,\infty} \bigg( 1 + \frac{p \sqrt{\log(p)}}{\lambda^2} \bigg) \\
    &\quad + \frac{\sqrt{p_1p_2p_3}\log(p)}{\lambda^2}\mu_0 \sqrt{\frac{r}{p_1}} \bigg( 1 + \frac{p \sqrt{\log(p)}}{\lambda^2} \bigg) \\
    &\lesssim  \frac{\kappa \sqrt{p_1\log(p)}}{\lambda} \mu_0 \sqrt{\frac{r}{p_1}} + \frac{\sqrt{p_1p_2p_3} \log(p)}{\lambda^2} \|\utilde_1^{S,1-m}\|_{2,\infty} + \frac{\sqrt{p_1p_2p_3} \log(p)}{\lambda^2} \mu_0 \sqrt{\frac{r}{p_1}},
\end{align*}
which holds whenever $\lambda^2 \gtrsim p\sqrt{p_1 \log(p)}$.  
This bound still depends on the leave-one-out sequence, but we will obtain a bound independent of this sequence shortly upon analyzing term $A$. Note that we have also demonstrated the high-probability upper bound
\begin{align}
    \| e_m\t &\Gamma(\mathbf{Z}_1\mathbf{Z}_1\t) \utilde^{S,1-m} \| \nonumber \\
    &\lesssim \kappa \sqrt{p_1 \log(p)} \mu_0 \sqrt{\frac{r}{p_1}} + \sqrt{p_1p_2p_3}\log(p)\| \utilde_1^{S,1-m}\|_{2,\infty} +\sqrt{p_1p_2p_3}\log(p) \mu_0 \sqrt{\frac{r}{p_1}}. \label{yeree}
\end{align}
\\ \
\\ \noindent
\textbf{The term $A$:} Note that by \cref{lem:spectralconcentrationinit} we have that $\|\Gamma(\mathbf{Z}_1\mathbf{Z}_1\t) \| \lesssim (p_1p_2p_3)^{1/2}$, which yields
\begin{align*}
    A &\lesssim \frac{(p_1p_2p_3)^{1/2}}{\lambda^2}  \| \uhat_1^S (\uhat_1^S)\t - \utilde_1^{S,1-m} (\utilde_1^{S,1-m})\t \|.
\end{align*}
Therefore it suffices to bound the term on the right.  By the Davis-Kahan Theorem, it holds that
\begin{align*}
    \| &\uhat_1^S (\uhat_1^S)\t - \utilde_1^{S,1-m} (\utilde_1^{S,1-m})\t \| \\
    &\lesssim \frac{ \| \Gamma((\mathbf{T}_1 \mathbf{Z}_1 - \mathbf{Z}_1^{1-m})\t + (\mathbf{Z}_1 - \mathbf{Z}_1^{1-m})\mathbf{T}_1\t + \mathbf{Z}_1 \mathbf{Z}_1\t - \mathbf{Z}_1^{1-m} (\mathbf{Z}_1^{1-m})\t \big) \utilde_1^{S,1-m} \|}{\lambda^2} \\
    &\lesssim \frac{1}{\lambda^2} \bigg\{ \| \Gamma \big( (\mathbf{Z}_1 - \mathbf{Z}_1^{1-m})\mathbf{T}_1\t \big) \| + \| \Gamma\big( \mathbf{Z}_1 \mathbf{Z}_1\t - \mathbf{Z}_1^{1-m} (\mathbf{Z}_1^{1-m})\t \big) \utilde_1^{S,1-m} \| \bigg\}.
\end{align*}
By \cref{lem:spectralinitconcentrationloo}, it holds that
\begin{align}
    \| \Gamma \big( (\mathbf{Z}_1 - \mathbf{Z}_1^{1-m})\mathbf{T}_1\t \big) \| &\lesssim \lambda_1 \mu_0 \sqrt{r\log(p)}, \label{eq:A1}
\end{align}
so it suffices to consider the second term.  Note that the matrix
\begin{align*}
    \Gamma\big(\mathbf{Z}_1 \mathbf{Z}_1\t - \mathbf{Z}_1^{1-m} (\mathbf{Z}_1^{1-m})\t\big)
\end{align*}
is rank one symmetric matrix whose $(m,l)$ entry is simply $\langle e_m\t \mathbf{Z}_1, e_l\t \mathbf{Z}_1 \rangle$ for $l \neq m$.  Therefore, define the matrices $\mathbf{G}_{\mathrm{col}}$ and $\mathbf{G}_{\mathrm{row}}$ with $\mathbf{G}_{\mathrm{col}}$ the matrix whose only nonzero entries are in the $m$'th column, in which case they are $\langle e_m\t\mathbf{Z}_1, e_l\mathbf{Z}_1 \rangle$ for $l\neq m$, and $\mathbf{G}_{\mathrm{row}}$ the matrix whose only nonzero entries are in the $m$'th row, with entries defined similarly.  Then
\begin{align*}
     \big\|\Gamma\big(\mathbf{Z}_1 \mathbf{Z}_1\t - \mathbf{Z}_1^{1-m} (\mathbf{Z}_1^{1-m})\t\big) \utilde_{1}^{S,1-m} \| &\leq \| \mathbf{G}_{\mathrm{row}} \utilde_1^{S,1-m} \| + \| \mathbf{G}_{\mathrm{col}} \utilde_1^{S,1-m}\|.
\end{align*}
We consider each term separately.  First, note that
\begin{align*}
    \| \mathbf{G}_{\mathrm{row}} \utilde_1^{S,1-m} \| &= \| e_m\t \Gamma(\mathbf{Z}_1\mathbf{Z}_1\t) \utilde_1^{S,1-m} \|.
\end{align*}
This was already bounded en route to the analysis for term $B$.  In fact, by \eqref{yeree} we already have the upper bound
\begin{align}
    \| e_m\t &\Gamma(\mathbf{Z}_1\mathbf{Z}_1\t) \utilde^{S,1-m} \| \nonumber \\
    &\lesssim \kappa \sqrt{p_1 \log(p)} \mu_0 \sqrt{\frac{r}{p_1}} + \sqrt{p_1p_2p_3}\log(p)\| \utilde_1^{S,1-m}\|_{2,\infty} +\sqrt{p_1p_2p_3}\log(p) \mu_0 \sqrt{\frac{r}{p_1}}. \label{A2}
\end{align}
Next, we argue similarly to the proof of Lemma 4 of \citet{cai_subspace_2021}.  We have
\begin{align*}
    \| \mathbf{G}_{\mathrm{col}} \utilde_{1}^{S,1-m} \| &\leq  \| \mathbf{G}_{\mathrm{col}}\utilde_{1}^{S,1-m} \|_F \\
    &= \bigg( \sum_{j\neq m} \| \langle e_m\t \mathbf{Z}_1 , e_j\t \mathbf{Z}_1 \rangle (\utilde_{1}^{S,1-m})_{m\cdot} \|^2 \bigg)^{1/2} \\
    &\leq \bigg( \sum_{j\neq m} | \langle e_m\t \mathbf{Z}_1, e_j\t \mathbf{Z}_1 \rangle |^2 \| (\utilde_{1}^{S,1-m})_{m\cdot} \|^2 \bigg)^{1/2} \\
    &\leq \| \utilde_1^{S,1-m} \|_{2,\infty} \| \Gamma(\mathbf{Z}_1\mathbf{Z}_1\t) \| \\
    &\lesssim (p_1p_2p_3)^{1/2} \| \utilde_1^{S,1-m} \|_{2,\infty}.
\end{align*}
Therefore,
\begin{align*}
      \| &\uhat_1^S (\uhat_1^S)\t - \utilde_1^{S,1-m} (\utilde_1^{S,1-m})\t \| \\
    &\lesssim \frac{1}{\lambda^2}\bigg\{ \lambda_1 \mu_0 \sqrt{r\log(p)} +  \kappa \sqrt{p_1 \log(p)} \mu_0 \sqrt{\frac{r}{p_1}} + \sqrt{p_1p_2p_3}\log(p)\| \utilde_1^{S,1-m}\|_{2,\infty}\\
    &\quad +\sqrt{p_1p_2p_3}\log(p) \mu_0 \sqrt{\frac{r}{p_1}} + (p_1p_2p_3)^{1/2} \|\utilde_1^{S,1-m} \|_{2,\infty} \bigg\} \\
    &\lesssim \frac{\kappa \sqrt{p_1 \log(p)}}{\lambda} \mu_0 \sqrt{\frac{r}{p_1}} + \frac{\sqrt{p_1p_2p_3}\log(p)}{\lambda^2} \mu_0 \sqrt{\frac{r}{p_1}} + \frac{\sqrt{p_1p_2p_3}\log(p)}{\lambda^2} \| \utilde_1^{S,1-m} \|_{2,\infty}.
\end{align*}
Therefore, it holds that
\begin{align*}
    \|& \utilde_1^{S,1-m} \|_{2,\infty} \\
    &\leq \| \uhat_1^S (\uhat_1^S)\t \utilde_1^{S,1-m}\|_{2,\infty} + \| \utilde_1^{S,1-m} - \uhat_1^S (\uhat_1^S)\t \utilde_1^{S,1-m}\|_{2,\infty}  \\
    &\leq \| \uhat_1^S \|_{2,\infty} +  \frac{\kappa \sqrt{p_1 \log(p)}}{\lambda} \mu_0 \sqrt{\frac{r}{p_1}} + \frac{\sqrt{p_1p_2p_3}\log(p)}{\lambda^2} \mu_0 \sqrt{\frac{r}{p_1}} + \frac{\sqrt{p_1p_2p_3}\log(p)}{\lambda^2} \| \utilde_1^{S,1-m} \|_{2,\infty} \\
    &\leq \| \uhat_1^S \|_{2,\infty}  + \mu_0 \sqrt{\frac{r}{p}} + o(1) \| \utilde_1^{S,1-m} \|_{2,\infty},
\end{align*}
so by rearranging we arrive at
\begin{align*}
     \| \utilde_1^{S,1-m} \|_{2,\infty} &\lesssim  \| \uhat_1^S \|_{2,\infty}  + \mu_0 \sqrt{\frac{r}{p}}.
\end{align*}
Consequently,
\begin{align}
    \| \uhat_1^S& (\uhat_1^S)\t - \utilde_1^{S,1-m} (\utilde_1^{S,1-m})\t \| \nonumber \\
    &\lesssim \frac{\kappa \sqrt{p_1 \log(p)}}{\lambda} \mu_0 \sqrt{\frac{r}{p_1}} + \frac{\sqrt{p_1p_2p_3}\log(p)}{\lambda^2} \mu_0 \sqrt{\frac{r}{p_1}} + \frac{\sqrt{p_1p_2p_3}\log(p)}{\lambda^2} \| \uhat_1^{S} \|_{2,\infty}\label{uhatsintheta}
\end{align}
Therefore, we have that
\begin{align*}
    A 
     &\lesssim \frac{(p_1p_2p_3)^{1/2}}{\lambda^2}  \| \uhat_1^S (\uhat_1^S)\t - \utilde_1^{S,1-m} (\utilde_1^{S,1-m})\t \| \\
     &\lesssim \frac{(p_1p_2p_3)^{1/2}}{\lambda^2}  \bigg\{ \frac{\kappa \sqrt{p_1 \log(p)}}{\lambda} \mu_0 \sqrt{\frac{r}{p_1}} + \frac{\sqrt{p_1p_2p_3}\log(p)}{\lambda^2} \mu_0 \sqrt{\frac{r}{p_1}} + \frac{\sqrt{p_1p_2p_3}\log(p)}{\lambda^2} \| \uhat_1^{S} \|_{2,\infty} \bigg\} \\
     &\ll  \frac{\kappa \sqrt{p_1 \log(p)}}{\lambda} \mu_0 \sqrt{\frac{r}{p_1}} + \frac{\sqrt{p_1p_2p_3}\log(p)}{\lambda^2} \mu_0 \sqrt{\frac{r}{p_1}} + \frac{\sqrt{p_1p_2p_3}\log(p)}{\lambda^2} \| \uhat_1^{S} \|_{2,\infty}.
\end{align*} 
In addition,
\begin{align*}
    B &\lesssim \frac{\kappa \sqrt{p_1\log(p)}}{\lambda} \mu_0 \sqrt{\frac{r}{p_1}} + \frac{\sqrt{p_1p_2p_3} \log(p)}{\lambda^2} \|\utilde_1^{S,1-m}\|_{2,\infty} + \frac{\sqrt{p_1p_2p_3} \log(p)}{\lambda^2} \mu_0 \sqrt{\frac{r}{p_1}}\\
    &\lesssim  \frac{\kappa \sqrt{p_1\log(p)}}{\lambda} \mu_0 \sqrt{\frac{r}{p_1}}  + \frac{\sqrt{p_1p_2p_3} \log(p)}{\lambda^2} \|\uhat_1^S\|_{2,\infty}+ \frac{\sqrt{p_1p_2p_3} \log(p)}{\lambda^2} \mu_0 \sqrt{\frac{r}{p_1}}.
\end{align*}
Combining both of these with the initial bounds in \cref{lem:negligibleterms}, we arrive at
\begin{align*}
    \|e_m\t \big( \uhat_1^S - \U_1 \mathbf{W}_1^S \big) \| &\lesssim  \bigg(\frac{ \kappa \sqrt{p_1\log(p)}}{\lambda} + \frac{(p_1p_2p_3)^{1/2} \log(p)}{\lambda^2} + \kappa^2 \mu_0 \sqrt{\frac{r}{p_1}}\bigg) \mu_0 \sqrt{\frac{r}{p_1}}\\
    &\quad + \frac{(p_1p_2p_3)^{1/2}}{\lambda^2} \|\uhat_1^S \|_{2,\infty}.
\end{align*}
By taking a union bound over all the rows, we have that with probability at least $1 - O(p^{-29})$ that
\begin{align*}
    \| \uhat_1^S - \U_1 \mathbf{W}_1^S \|_{2,\infty} &\lesssim\bigg(\frac{ \kappa \sqrt{p_1\log(p)}}{\lambda} + \frac{(p_1p_2p_3)^{1/2} \log(p)}{\lambda^2} + \kappa^2 \mu_0 \sqrt{\frac{r}{p_1}}\bigg) \mu_0 \sqrt{\frac{r}{p_1}} \\
    &\quad + \frac{(p_1p_2p_3)^{1/2}}{\lambda^2} \|\uhat_1^S \|_{2,\infty}.
\end{align*}
Therefore, 
\begin{align*}
    \|\uhat \|_{2,\infty} &\leq \| \U_1 \|_{2,\infty} + \| \uhat_1^S - \U_1 \mathbf{W}_1^S \|_{2,\infty} \\
    &\leq \mu_0 \sqrt{\frac{r}{p_1}} + o(1) \|\uhat_1^S \|_{2,\infty},
\end{align*}
which, by rearranging, yields
\begin{align*}
    \|\uhat_1^S \|_{2,\infty} &\lesssim \mu_0 \sqrt{\frac{r}{p_1}}.
\end{align*}
Therefore,
\begin{align*}
    \| \uhat_1^S - \U_1 \mathbf{W}_1^S \|_{2\infty} &\lesssim\bigg(\frac{ \kappa \sqrt{p_1\log(p)}}{\lambda} + \frac{(p_1p_2p_3)^{1/2} \log(p)}{\lambda^2} + \kappa^2 \mu_0 \sqrt{\frac{r}{p_1}}\bigg) \mu_0 \sqrt{\frac{r}{p_1}}.
\end{align*}
In addition, \eqref{uhatsintheta} together with the bound above shows that with high probability,
\begin{align*}
     \| \uhat_1^S& (\uhat_1^S)\t - \utilde_1^{S,1-m} (\utilde_1^{S,1-m})\t \| \nonumber \\
    &\lesssim \frac{\kappa \sqrt{p_1 \log(p)}}{\lambda} \mu_0 \sqrt{\frac{r}{p_1}} + \frac{\sqrt{p_1p_2p_3}\log(p)}{\lambda^2} \mu_0 \sqrt{\frac{r}{p_1}} + \frac{\sqrt{p_1p_2p_3}\log(p)}{\lambda^2} \| \uhat_1^{S} \|_{2,\infty} \\
    &\lesssim \frac{\kappa \sqrt{p_1 \log(p)}}{\lambda} \mu_0 \sqrt{\frac{r}{p_1}} + \frac{\sqrt{p_1p_2p_3}\log(p)}{\lambda^2} \mu_0 \sqrt{\frac{r}{p_1}}. 
\end{align*}
Taking another union bound over $m$ shows that this bound holds for all $m$ with probability at least $1 - O(p^{-29})$. Both of these bounds therefore hold with probability at least $1 - p^{-20}$.  
\end{proof}

\subsubsection{Proof of \cref{lem:spectralinit_leave_one_out_sintheta}} \label{sec:initlooproof}
In this section we prove \cref{lem:spectralinit_leave_one_out_sintheta}, which controls the remaining two leave-one-out sequences not bounded in \cref{thm:spectralinit_twoinfty}.

\begin{proof}[Proof of \cref{lem:spectralinit_leave_one_out_sintheta}]
First we provide concentration guarantees, similar to \cref{lem:spectralinitconcentrationloo}.  We will bound the following terms:
\begin{itemize}
    \item $\| \Gamma\big( \mathbf{T}_1 ( \mathbf{Z}_1 - \mathbf{Z}_1^{j-m})\t \big) \|$; 
    \item $\| \Gamma\big( \mathbf{Z}_1 \mathbf{Z}_1\t - \mathbf{Z}_1^{j-m} (\mathbf{Z}_1^{j-m})\t \big) \|$
    \item $\| \Gamma\big(\mathbf{Z}_1 \mathbf{Z}_1\t - \mathbf{Z}_1^{j-m} (\mathbf{Z}_1^{j-m})\t \big) \utilde_1^{S,j-m} \| $.
\end{itemize}
First, note that by \cref{lem:matricizationrowbound}, with probability at least $1 - O(p^{-30})$ it holds that
\begin{align*}
    \| \Gamma\big( (\mathbf{Z}_1 - \mathbf{Z}_1^{j-m}) \mathbf{T}_1\t \big) \| &\leq \| (\mathbf{Z}_1 - \mathbf{Z}_1^{j-m}) \mathbf{T}_1\t \| + \| \diag\big( (\mathbf{Z}_1 - \mathbf{Z}_1^{j-m}) \mathbf{T}_1\t \big) \| \\
    &\leq 2 \| (\mathbf{Z}_1 - \mathbf{Z}_1^{j-m}) \mathbf{T}_1\t \| \\
    &\lesssim  \sqrt{p_{-j}\log(p)} \| \mathbf{T}_1\t \|_{2,\infty} \\
    &\lesssim \lambda_1 \mu_0 \sqrt{r_1 \frac{p_1}{p_j} \log(p)}. 
\end{align*}
Next, we consider the matrix $\Gamma\big( \mathbf{Z}_1 \mathbf{Z}_1\t - \mathbf{Z}_1^{j-m} (\mathbf{Z}_1^{j-m})\t \big)$.  First, observe that for $i\neq k$ this matrix has entries of the form
\begin{align*}
    \Gamma\big( \mathbf{Z}_1 \mathbf{Z}_1\t - \mathbf{Z}_1^{j-m} (\mathbf{Z}_1^{j-m})\t \big)_{ik} &= \sum_{l\in \Omega} (\mathbf{Z}_1)_{il} (\mathbf{Z}_1)_{kl},
\end{align*}
where $\Omega$ is the set of indices such that the $l$'th column of $\mathbf{Z}_1$ corresponds to elements belonging to the $m$'th row of $\mathbf{Z}_j$.  A general formula is possible, but not needed for our purposes here; the cardinality of $\Omega$ is equal to the number of nonzero columns of $\mathbf{Z}_1 - \mathbf{Z}_1^{j-m}$, which is $p_{-1-j}$.  Since this matrix is a sample gram matrix, by Lemma 1 of \citet{agterberg_entrywise_2022} it  holds that with probability at least $1 - O(p^{-30})$ (where as in the proof of \cref{lem:spectralconcentrationinit} the higher probability holds by modifying the constant on $\delta$ in the proof of Lemma 1 of \citet{agterberg_entrywise_2022}) that
\begin{align*}
     \| \Gamma\big( \mathbf{Z}_1 \mathbf{Z}_1\t - \mathbf{Z}_1^{j-m} (\mathbf{Z}_1^{j-m})\t \big) \| &\lesssim \sqrt{p_1} + \sqrt{p_1 p_{-1-j}} \\
     &\lesssim \sqrt{p_1} + \sqrt{p_{-j}} \\
     &\ll p^2/p_{\min}^{1/2}
\end{align*}
For the remaining term, we note that
\begin{align*}
    \| \Gamma\big(\mathbf{Z}_1 \mathbf{Z}_1\t - \mathbf{Z}_1^{j-m} (\mathbf{Z}_1^{j-m})\t \big) \utilde_1^{S,j-m} \| &\leq \sqrt{p_1} \| \Gamma\big(\mathbf{Z}_1 \mathbf{Z}_1\t - \mathbf{Z}_1^{j-m} (\mathbf{Z}_1^{j-m})\t \big) \utilde_1^{S,j-m} \|_{2,\infty} \\ 
    &= \sqrt{p_1} \max_i \| \sum_{k\neq i,k=1}^{p_1} \sum_{l\in \Omega} (\mathbf{Z}_1)_{il} (\mathbf{Z}_1)_{kl}\big( \utilde_{1}^{S,j-m}\big)_{k\cdot} \| \\
    &\lesssim \sqrt{p_1}  \sqrt{p_{-1-j} \log(p)}   \max_l \| \sum_{k\neq i,k=1}^{p_1} (\mathbf{Z}_1)_{kl} \big( \utilde_{1}^{S,j-m}\big)_{k\cdot} \| \\
    &\lesssim  p_1 \sqrt{p_{-1-j}}\log(p) \| \utilde_1^{S,j-m} \|_{2,\infty} \\
    &\lesssim \sqrt{p_1} \sqrt{p_{-j}} \log(p) \| \utilde_1^{S,j-m} \|_{2,\infty},
\end{align*}
where we have implicitly used the matrix Hoeffding's inequality twice: once over the summation over $l$ conditional on the collection $(\mathbf{Z}_1)_{kl}$ for $k \neq i$, and then again over the summation in $k$.  

Note that $\uhat_1^S$ are the eigenvectors of the matrix $\Gamma\big(  \mathbf{T}_1 \mathbf{T}_1\t + \mathbf{Z}_1 \mathbf{Z}_1\t  + \mathbf{T}_1 \mathbf{Z}_1\t + \mathbf{Z}_1 \mathbf{T}_1\t\big)$ and $\utilde_1^{S,j-m}$ are the eigenvectors of the matrix $\Gamma\big( \mathbf{T}_1 \mathbf{T}_1\t + \mathbf{Z}_1^{j-m} (\mathbf{Z}_1^{j-m})\t  + \mathbf{T}_1 (\mathbf{Z}_1^{j-m})\t + \mathbf{Z}_1^{j-m} \mathbf{T}_1\t \big)$.  Therefore, the spectral norm of the difference is upper bounded by
\begin{align*}
     2\| \Gamma\big( \mathbf{T}_1 ( \mathbf{Z}_1 - \mathbf{Z}_1^{j-m})\t \big) \| + \|  \Gamma\big( \mathbf{Z}_1 \mathbf{Z}_1\t - \mathbf{Z}_1^{j-m} (\mathbf{Z}_1^{j-m})\t \big) \|     &\lesssim \lambda_1 \mu_0 \sqrt{r_1 \frac{p_1}{p_j} \log(p)} + p^2/p_{\min}^{1/2}\\
    &\ll \lambda^2.
\end{align*}
Moreover, \cref{lem:spectraliniteigengapsloo} shows that
\begin{align*}
    \lambda_r\bigg( \Gamma\big(  \mathbf{T}_1 \mathbf{T}_1\t + \mathbf{Z}_1 \mathbf{Z}_1\t  + \mathbf{T}_1 \mathbf{Z}_1\t + \mathbf{Z}_1 \mathbf{T}_1\t\big) \bigg) - \lambda_{r+1}\bigg( \Gamma\big(  \mathbf{T}_1 \mathbf{T}_1\t + \mathbf{Z}_1 \mathbf{Z}_1\t  + \mathbf{T}_1 \mathbf{Z}_1\t + \mathbf{Z}_1 \mathbf{T}_1\t\big) \bigg) &\gtrsim \lambda^2,
\end{align*}
so by the Davis-Kahan Theorem,
\begin{align}
    \| \utilde_1^{j-m} (\utilde_{1}^{S,j-m})\t - \uhat_1^S (\uhat_1^S)\t \| &\lesssim \frac{1}{\lambda^2} \bigg( \lambda_1 \mu_0 \sqrt{r_1 \frac{p_1}{p_j} \log(p)} + \sqrt{p_1} \sqrt{p_{-j}} \log(p) \| \utilde_1^{S,j-m} \|_{2,\infty} \bigg)\nonumber \\
    &\lesssim \frac{ \kappa \sqrt{p_1\log(p)}}{\lambda} \mu_0 \sqrt{\frac{r_1}{p_j}} + \frac{ \sqrt{p_1 p_{-j}} \log(p)}{\lambda^2} \| \utilde_1^{S,j-m} \|_{2,\infty} \label{toplugin}
\end{align}
In addition, we note that by \cref{thm:spectralinit_twoinfty}, with probability at least $1 - O(p^{-20})$ it holds that
\begin{align*}
    \|  \utilde_1^{S,j-m} \|_{2,\infty} &\leq \| \uhat_1^S (\uhat_1^S)\t \utilde_1^{S,j-m}  \|_{2,\infty} + \| \utilde_1^{S,j-m} - \uhat_1^S (\uhat_1^S)\t \utilde_1^{S,j-m} \|_{2,\infty} \\
    &\leq \| \uhat_1^S \|_{2,\infty} + \| \utilde_1^{S,j-m} (\utilde_1^{S,j-m})\t - \uhat_1^S (\uhat_1^S)\t  \|\\
    &\leq \| \uhat_1^S \|_{2,\infty} + \frac{ \kappa \sqrt{p_1\log(p)}}{\lambda} \mu_0 \sqrt{\frac{r_1}{p_1}} + \frac{(p_1p_2p_3)^{1/2} \log(p)}{\lambda^2} \| \utilde_1^{S,j-m} \|_{2,\infty} \\
    &\leq \mu_0 \sqrt{\frac{r_1}{p_1}} + o(1) \| \utilde_1^{S,j-m} \|_{2,\infty},
\end{align*}
so by rearranging we obtain that
\begin{align*}
    \|  \utilde_1^{S,j-m} \|_{2,\infty} &\lesssim \mu_0 \sqrt{\frac{r_1}{p_1}}.
\end{align*}
Plugging this into \eqref{toplugin} yields 
\begin{align*}
     \| \utilde_1^{j-m} (\utilde_{1}^{S,j-m})\t - \uhat_1^S (\uhat_1^S)\t \| &\lesssim \frac{\kappa \sqrt{p_1 \log(p)}}{\lambda} \mu_0 \sqrt{\frac{r_1}{p_j}} + \frac{\sqrt{p_{-j}} \log(p)}{\lambda^2} \mu_0 \sqrt{r_1} \\
     &\lesssim \frac{\kappa \sqrt{p_1 \log(p)}}{\lambda} \mu_0 \sqrt{\frac{r_1}{p_j}} + \frac{(p_1p_2p_3)^{1/2} \log(p)}{\lambda^2} \mu_0 \sqrt{\frac{r_1}{p_j}}
\end{align*}
with probability at least $1 - O(p^{-20})$. The proof is then completed by taking a union bound over all $p_1$ rows.  
\end{proof}

\section{Proofs of Tensor Mixed-Membership Blockmodel Identifiability and Estimation}
In this section we prove our main results concerning the mixed-membership identifiability and estimation.  First we establish \cref{prop:identifiability} as well as \cref{lem:relationship} relating the properties of the tensor mixed-membership blockmodel to the tensor denoising model.  We then prove our estimation guarantees \cref{thm:estimation}.   Throughout we let $\mathbf{S}_k = \mathcal{M}_k(\mathcal{S})$ and $\mathbf{T}_k$ defined similarly.  

\subsection{Proofs of \cref{prop:identifiability}, \cref{prop:relationship}, and \cref{lem:relationship}}

First we prove \cref{prop:relationship} and \cref{lem:relationship} simultaneously as we will require part of the proof in the proof of \cref{prop:identifiability}.

\begin{proof}[Proof of \cref{prop:relationship} and \cref{lem:relationship}]
For the first part, we follow the proof of Lemma 2.3 of \citet{mao_estimating_2021}.  Without loss of generality we prove the result for mode 1. Let $\mathbf{T}_1$ have singular value decomposition  $\mathbf{T}_1 = \U_1 \mathbf{\Lambda}_1 \mathbf{V}_1\t$ Then since $\U_1 \mathbf{\Lambda}_1 \mathbf{V}_1\t = \mathbf{\Pi}_1 \mathbf{S}_1 (\mathbf{\Pi_2}\otimes \mathbf{\Pi}_3)\t,$ without loss of generality we may assume the first $r_1$ rows of $\mathbf{T}_1$ correspond to pure nodes.   We note that therefore
\begin{align*}
    \U_1\pure \mathbf{\Lambda}_1^2 (\U_1\pure)\t &=  \mathbf{S}_1(\mathbf{\Pi}_2\otimes \mathbf{\Pi}_3)\t(\mathbf{\Pi}_2\otimes \mathbf{\Pi}_3)\mathbf{S}_1\t.
\end{align*}
Since the rank of $\mathbf{\Pi}_2$ and $\mathbf{\Pi}_3$ are $r_2$ and $r_3$ respectively, it holds that the matrix above is rank $r_1$ as long as $r_1 \leq r_2 r_3$ since $\mathbf{S}_1$ is rank $r_1$, which shows that $\U_1\pure$ is rank $r_1$. Furthermore, we have that  $\mathbf{T}_1[1:r_1,\cdot] = \U_1\pure \mathbf{\Lambda}_1 \mathbf{V}_1\t = \mathbf{S}_1(\mathbf{\Pi_2}\otimes \mathbf{\Pi}_3)\t$ which shows that $\U_1\pure = \mathbf{S}_1(\mathbf{\Pi_2}\otimes \mathbf{\Pi}_3)\t \mathbf{V}_1 \mathbf{\Lambda}_1\inv$. Therefore,
\begin{align*}
    \U_1 &= \mathbf{T}_1 \mathbf{V}_1 \mathbf{\Lambda}_1\inv \\
    &= \mathbf{\Pi}_1 \mathbf{S}_1 ( \mathbf{\Pi}_2 \otimes \mathbf{\Pi}_3 )\t \mathbf{V}_1 \mathbf{\Lambda}_1\inv \\
    &= \mathbf{\Pi}_1 \U_1\pure.
\end{align*}
Next, we observe that 
\begin{align*}
    \lambda^2 &= \min_{k} \lambda_{\min} \big( \mathbf{T}_k \mathbf{T}_k\t \big) \\
    &= \min_{k} \lambda_{\min} \big( \mathbf{\Pi}_k \mathbf{S}_k \big( \mathbf{\Pi}_{k+1} \otimes \mathbf{\Pi}_{k+2} \big)\t\big( \mathbf{\Pi}_{k+1} \otimes \mathbf{\Pi}_{k+2} \big) \mathbf{S}_k\t \mathbf{\Pi}_k\t \big) \\
    &\geq \min_{k} \lambda_{\min} ( \mathbf{\Pi}_k\t \mathbf{\Pi}_k ) \lambda_{\min} \big(\mathbf{S}_k \big( \mathbf{\Pi}_{k+1} \otimes \mathbf{\Pi}_{k+2} \big)\t\big( \mathbf{\Pi}_{k+1} \otimes \mathbf{\Pi}_{k+2} \big) \mathbf{S}_k\t \big) \\
    &\geq \min_k \frac{p_k}{r_k} \lambda_{\min} \big(\mathbf{S}_k \big( \mathbf{\Pi}_{k+1} \otimes \mathbf{\Pi}_{k+2} \big)\t\big( \mathbf{\Pi}_{k+1} \otimes \mathbf{\Pi}_{k+2} \big) \mathbf{S}_k\t \big) \\
    &\gtrsim \Delta^2 \min_k \frac{ p_k }{r_k} \lambda_{\min} \bigg( \big( \mathbf{\Pi}_{k+1} \otimes \mathbf{\Pi}_{k+2} \big)\t \big( \mathbf{\Pi}_{k+1} \otimes \mathbf{\Pi}_{k+2} \big)  \bigg) \\
    &\gtrsim \Delta^2 \frac{p_1 p_2 p_3}{r_1 r_2 r_3},
\end{align*}
where the penultimate line follows from the fact that $ \big( \mathbf{\Pi}_{k+1} \otimes \mathbf{\Pi}_{k+2} \big)$ has full column rank.  Therefore, $\lambda \gtrsim \Delta \frac{(p_1 p_2 p_3)^{1/2}}{(r_1 r_2 r_3)^{1/2}}$.  For the reverse direction, by a similar argument,
\begin{align*}
    \lambda^2 &= \min_k \lambda_{\min}( \mathbf{T}_k \mathbf{T}_k\t ) \\
    &= \min_{k} \lambda_{\min} \big( \mathbf{\Pi}_k \mathbf{S}_k \big( \mathbf{\Pi}_{k+1} \otimes \mathbf{\Pi}_{k+2} \big)\t\big( \mathbf{\Pi}_{k+1} \otimes \mathbf{\Pi}_{k+2} \big) \mathbf{S}_k\t \mathbf{\Pi}_k\t \big) \\
    &\leq  \min_{k} \lambda_{\max} ( \mathbf{\Pi}_k\t \mathbf{\Pi}_k ) \lambda_{\min} \big(\mathbf{S}_k \big( \mathbf{\Pi}_{k+1} \otimes \mathbf{\Pi}_{k+2} \big)\t\big( \mathbf{\Pi}_{k+1} \otimes \mathbf{\Pi}_{k+2} \big) \mathbf{S}_k\t \big) \\
    &\leq \min_k \frac{p_k}{r_k} \lambda_{\min} \big( \mathbf{S}_k \big( \mathbf{\Pi}_{k+1} \otimes \mathbf{\Pi}_{k+2} \big)\t\big( \mathbf{\Pi}_{k+1} \otimes \mathbf{\Pi}_{k+2} \big) \mathbf{S}_k\t \big) \\
    &\leq \min_k \frac{p_k}{r_k} \lambda_{\max} \big(  \mathbf{\Pi}_{k+1} \otimes \mathbf{\Pi}_{k+2} \big)\t\big( \mathbf{\Pi}_{k+1} \otimes \mathbf{\Pi}_{k+2} \big) \lambda_{\min} ( \mathbf{S}_k\t \mathbf{S}_k ) \\
    &\leq \min_k \frac{p_k p_{k+1} p_{k+2}}{r_1 r_2 r_3} \lambda_{\min} ( \mathbf{S}_k\t \mathbf{S}_k ) \\
    &\leq \Delta^2 \frac{p_1 p_2 p_3}{r_1 r_2 r_3},
\end{align*}
where we have used the assumption that $\lambda_{\max}(\mathbf{\Pi}_k\t \mathbf{\Pi}_k) \leq \frac{p_k}{r_k}$.  

For the remaining part, we note that by the previous argument, we have
\begin{align*}
    \U_k &= \mathbf{\Pi}_k \U_k\pure.
\end{align*}
Since $\U_k\t \U_k = \mathbf{I}_{r_1}$, it holds that
\begin{align*}
    \U_k\pure( \U_k\pure)\t \mathbf{\Pi}_k\t   \mathbf{\Pi}_k  \U_k\pure( \U_k\pure)\t = \U_k\pure \U_k\t \U_k (\U_k\pure)\t = \U_k\pure (\U_k\pure)\t,
\end{align*}
which demonstrates that
\begin{align*}
    \U_k\pure (\U_k\pure)\t &= (\mathbf{\Pi}_k\t \mathbf{\Pi}_k )\inv.
\end{align*}
Since $\U_k\pure$ is an $r_1 \times r_1$ matrix and $\lambda_{r_1}(\mathbf{\Pi}_k\t \mathbf{\Pi}_k ) \gtrsim \frac{p_k}{r_k}$, it holds that 
\begin{align*}
    \| \U_{k}\pure \|_{2,\infty}^2 &= \max_i \langle \big(\U_k\pure\big)_{i\cdot}, \big(\U_k\pure\big)_{i\cdot} \rangle \\
    &\leq \lambda_{\max} \big( \U_k\pure (\U_k\pure)\t \big) \\
    &\leq \lambda_{\max} \big( \mathbf{\Pi}_k\t \mathbf{\Pi}_k \big) \\
    &\lesssim \frac{r_k}{p_k}.
\end{align*}
Since $\U_k = \mathbf{\Pi}_k \U_k\pure$ has rows that are convex combinations of $\U_k\pure$, it holds that
\begin{align*}
    \| \U_k \|_{2,\infty} &\lesssim \sqrt{\frac{r_k}{p_k}},
\end{align*}
which demonstrates that $\mu_0 = O(1)$.  This completes the proof.
\end{proof}

\begin{proof}[Proof of \cref{prop:identifiability}]
The proof of the first part is similar to Theorem 2.1 of \citet{mao_estimating_2021}. 
Suppose that $\mathbf{T}_k$ has SVD $\U_k \mathbf{\Lambda}_k \mathbf{V}_k\t$. By   \cref{prop:relationship} (which only relies on the assumptions in \cref{prop:identifiability}) it holds that there an invertible matrix $\U_k^{(\mathrm{pure})}$ such that $\U_k = \mathbf{\Pi}_k \U_k^{(\mathrm{pure})}$, where $\U_k\pure$ consists of the rows of $\U_k$ corresponding to pure nodes.  Therefore, for each $i$ it holds that $(\U_k)_{i\cdot}$ is in the convex hull of $\U_k^{(\mathrm{pure})}$.  

Now suppose that there exists other parameters $\mathcal{S}', \mathbf{\Pi}_1'$, $\mathbf{\Pi}_2'$ and $\mathbf{\Pi}_3'$ such that $\mathcal{T} = \mathcal{S}' \times_1 \mathbf{\Pi}_1' \times_2 \mathbf{\Pi}_2' \times_3 \mathbf{\Pi}_3'$, where each  $\mathbf{\Pi}_{k}'$ may have  different pure nodes.  Note that since $\mathcal{T}$ is the same regardless of $\mathbf{\Pi}_k$ and $\mathbf{\Pi}_k'$, its singular value decomposition is fixed (where we arbitrarily specify a choice of sign for unique singular values or basis for repeated singular values).  By the previous argument we have that $\tilde{\U}_k\pure$ must belong to the convex hull of $\U_k\pure$, where $\tilde{\U}_k\pure$ corresponds to the pure nodes associated to $\mathbf{\Pi}_k'$.  By applying \cref{prop:relationship} again to the new decomposition, it must hold that $\U_k = \mathbf{\Pi}_k' \tilde{\U}_k\pure$, which shows that $\U_k\pure$ belongs to the convex hull of $\tilde{\U}_k\pure$.  Since both convex hulls are subsets of each other, it holds that the convex hulls of $\U_k\pure$ and $\tilde{\U}_k\pure$ are the same.  Consequently, it must hold that $\U_k\pure = \mathcal{P}_k\tilde{\U}_k\pure$ for some permutation matrix $\mathcal{P}_k$.  

Now we note that by the identity $\U_k = \mathbf{\Pi}_k \U_k\pure = \mathbf{\Pi}_k' \tilde{\U}_k\pure$, it holds that $\mathbf{\Pi}_k \U_k\pure = \mathbf{\Pi}_k' \mathcal{P}_k \U_k\pure$, which demonstrates that
\begin{align*}
    ( \mathbf{\Pi}_k - \mathbf{\Pi}_k' \mathcal{P}_k ) \U_k\pure = 0.
\end{align*}
Since $\U_k\pure$ is full rank, it must therefore hold that $\mathbf{\Pi}_k = \mathbf{\Pi}_k' \mathcal{P}_k$. 
Consequently,
\begin{align*}
    \mathcal{T} &= \mathcal{S}' \times_1 \big( \mathbf{\Pi}_1 \mathcal{P}_1 \big) \times_2  \big( \mathbf{\Pi}_{2} \mathcal{P}_{2} \big) \times_3 \big (\mathbf{\Pi}_{3} \mathcal{P}_{3}\big), 
\end{align*}
which shows that $\mathcal{S} = \mathcal{S}' \times_1 \mathcal{P}_1 \times_2 \mathcal{P}_2 \times_3 \mathcal{P}_3$, which completes the proof of the first part of the result.

The second part of the result essentially follows the proof of Theorem 2.2  of \citet{mao_estimating_2021}. Without loss of generality we prove the result for mode $1$.  Assume for contradiction that there is a community without any pure nodes; without loss of generality let it be the first community.  Then there is some $\delta > 0$ such that $(\mathbf{\Pi}_1)_{i1} \leq 1 - \delta$ for all $i$.  Define
\begin{align*}
    \mathbf{H} :&= \left[
\begin{array}{c|c}
1 + (r_1 - 1)\eps^2 & -\eps^2 \mathbf{1}\t_{r_1-1} \\
\hline
0 & \eps \mathbf{1}_{r_1-1}\mathbf{1}\t_{r_1-1} + (1 - (r_1 - 1)\eps) \mathbf{I}_{r_1 - 1}
\end{array}
\right],
\end{align*}
where $0 < \eps < \delta$.  For $\eps$ sufficiently small, $\mathbf{H}$ is full rank, and the rows of $\mathbf{H}$ sum to one.  Consequently, $\mathbf{\tilde \Pi}_1 := \mathbf{\Pi}_1 \mathbf{H}$ also has rows that sum to  one.  Moreover, for any $i$, $(\mathbf{\tilde \Pi}_1)_{i1} = (\mathbf{\Pi}_1)_{i1}( 1 + (K-1)\eps^2) \geq 0$, and for any $2 \leq l \leq r_1$,
\begin{align*}
    (\mathbf{\tilde \Pi}_1)_{il} &= - (\mathbf{\Pi}_1)_{i1} \eps^2 + \sum_{l'=1}^{r_1} (\mathbf{\Pi}_{1})_{il'} \mathbf{H}_{l'l} \\
    &=  - (\mathbf{\Pi}_1)_{i1} \eps^2 + (\mathbf{\Pi}_1)_{il}( 1 - (K-1)\eps) + \sum_{l'=1}^{r_1} (\mathbf{\Pi}_{1})_{il'} \eps \\
    &\geq  - (\mathbf{\Pi}_1)_{i1} \eps^2 + \sum_{l'=1}^{r_1} (\mathbf{\Pi}_{1})_{il'} \eps \\
    &\geq (1-\delta) \eps^2 + \eps \delta \\
    &> 0,
\end{align*}
and hence $\mathbf{\tilde \Pi}_1$ has positive entries.  Therefore, for $\eps$ sufficiently small $\mathbf{\tilde \Pi}_1$ is a valid membership matrix.  In addition, we have that
\begin{align*}
    \mathcal{M}_1(\mathcal{T}) &= \mathbf{\Pi}_1 \mathcal{M}_1(\mathcal{S}) \big( \mathbf{\Pi}_2 \otimes \mathbf{\Pi}_3 \big)\t \\
    &= \mathbf{\tilde \Pi}_1 \mathbf{H}\inv \mathcal{M}_1(\mathcal{S}) \big( \mathbf{\Pi}_2 \otimes \mathbf{\Pi}_3 \big)\t \\
    &= \mathbf{\tilde \Pi}_1  \mathcal{M}_1(\mathcal{S} \times_1 \mathbf{H}\inv ) \big( \mathbf{\Pi}_2 \otimes \mathbf{\Pi}_3 \big)\t,
\end{align*}
which shows that $\mathcal{T} = \mathcal{\tilde S} \times_1 \mathbf{\tilde \Pi}_1 \times_2 \mathbf{\Pi}_2 \times_3 \mathbf{\Pi}_3$ is another representation of $\mathcal{T}$, where $\mathcal{\tilde S} = \mathcal{S} \times_1 \mathbf{H}\inv$.  Since $\mathbf{H}$ is not a permutation matrix, we see that we have a contradiction, which completes the proof.
\end{proof}

\subsection{Proof of \cref{thm:estimation}}
\begin{proof}[Proof of \cref{thm:estimation}]
Our proof is similar to the proof of the main result in \citet{mao_estimating_2021} as well as the proof of Theorem 4.9 in \citet{xie_entrywise_2022}, where we will apply  Theorem 3 of \citet{gillis_fast_2014}.   We first prove the result assuming that $\lambda/\sigma \gtrsim \kappa p \sqrt{\log(p)}/p_{\min}^{1/4}$, that $r \leq p_{\min}^{1/4}$ and that $\mu_0 = O(1)$; the result will then follow by applying \cref{lem:relationship}. Without loss of generality, we prove the result for $k = 1$.

First, observe that by \cref{thm:twoinfty}, with probability at least $1 - p^{-10}$ it holds that there is an orthogonal matrix $\mathbf{W}$ such that for $t$ iterations with $t$ as in \cref{thm:twoinfty} it holds that the output $\uhat$ of HOOI satisfies
\begin{align*}
    \uhat &= \U \mathbf{W} + \mathrm{error},
\end{align*}
with 
\begin{align*}
    \| \mathrm{error} \|_{2,\infty} &\lesssim \frac{\kappa  \sqrt{r_k\log(p)}}{\lambda/\sigma}.
\end{align*}
Since by \cref{lem:relationship} $\U = \mathbf{\Pi} \U\pure$, it holds that
\begin{align*}
    \uhat\t &= \mathbf{W}\t (\U\pure)\t \mathbf{\Pi}\t + \mathrm{error}\t.
\end{align*}
We will apply Theorem 3 of \citet{gillis_fast_2014}, with $\mathbf{M}$, $\mathbf{W}$, $\mathbf{H}$  and $\mathbf{N}$ therein equal to $\U$, $\mathbf{W}\t (\U\pure)\t$, $\mathbf{\Pi}\t$ and $\mathrm{error}\t$ respectively.  Define,  for some sufficiently large constant $C$,
\begin{align*}
    \eps &\coloneqq C \frac{\kappa  \sqrt{r_k\log(p)}}{\lambda/\sigma}.
\end{align*}
It then holds that $\|\mathrm{error}_{i\cdot}\| \leq \eps$ on the event in \cref{thm:twoinfty}.  We also need to check the bound
\begin{align*}
    \eps < \lambda_{\min}( \U\pure ) \min\bigg( \frac{1}{2 \sqrt{r_1 - 1}}, \frac{1}{4} \bigg) \bigg( 1 + 80 \frac{ \sigma_1^2( \U\pure)}{\sigma_r^2(\U\pure)} \bigg)\inv.
\end{align*}
First we note that by the proof of \cref{lem:relationship}, we have that $\lambda_{\min}^2(\U\pure) = \lambda_{\min} \big( \U\pure (\U\pure)\t \big) = \lambda_{\min}\big( ( \mathbf{\Pi\t\Pi} )\inv \big)$.  Since $\lambda_{\max} \big( \mathbf{\Pi\t\Pi}\big) \lesssim \frac{p_1}{r_1}$, we have that $\lambda_{\min}(\U\pure) \gtrsim \frac{\sqrt{r_1}}{\sqrt{p_1}}$.  

We note that
\begin{align*}
   \frac{ \| \U\pure \mathbf{W} \|_{2,\infty}^2}{\lambda_r^2(\U\pure)} &\leq \frac{ \lambda_{\max}^2 (\U\pure) }{\lambda_r^2(\U\pure)} \\
   &=  \frac{ \lambda_{\max}^2 (\U\pure) }{\lambda_r^2(\U\pure)} \\
   &= \frac{\lambda_{\max}(\mathbf{\Pi\t\Pi})}{\lambda_{\min}(\mathbf{\Pi\t\Pi})} \\
   &\asymp C,
\end{align*}
since $\lambda_{\min}(\mathbf{\Pi\t\Pi})\gtrsim p_1/r_1$ by assumption.  Consequently, plugging in these estimates, it suffices to show that
\begin{align*}
    \eps < c \frac{\sqrt{r_1}}{\sqrt{p_1}} \frac{1}{\sqrt{r_1}} =  \frac{c}{\sqrt{p_1}},
\end{align*}
where $c$ is some sufficiently small constant. Plugging in the definition of $\eps$, we see that we require that
\begin{align*}
    C \frac{\kappa \sqrt{r_1 \log(p)}}{\lambda/\sigma} \leq \frac{c}{\sqrt{p_1}},
\end{align*}
which is equivalent to the condition
\begin{align*}
    \lambda/\sigma \gtrsim \kappa \sqrt{p_1 r_1 \log(p)},
\end{align*}
which holds under the condition $\lambda/\sigma \gtrsim \kappa p \sqrt{\log(p)}/p_{\min}^{1/4}$ and $r \leq p_{\min}^{1/4}$.   Therefore, we may apply  Theorem 3 of \citet{gillis_fast_2014} to find that there exists a permutation $\mathcal{P}$ such that
\begin{align*}
    \| \uhat\pure - \mathcal{P}\t \U\pure \mathbf{W} \|_{2,\infty} \leq  C \eps.
\end{align*}
We now use this bound to provide our final bound. 
First, since $\eps \lesssim \frac{1}{\sqrt{p}}$, by Weyl's inequality it holds that
\begin{align*}
    \lambda_{\min}(\uhat\pure) &\geq \lambda_{\min} ( \U_k\pure) - \sqrt{r_1} \eps \\
    &\geq C\frac{\sqrt{r_1}}{\sqrt{p_1}} - \frac{c \sqrt{r_1}}{\sqrt{p_1}} \\
    &\gtrsim \frac{\sqrt{r_1}}{{\sqrt{p_1}}},
\end{align*}
as long as $c$ is sufficiently small. Consequently, $\|(\uhat\pure)\inv \| \lesssim \sqrt{\frac{p_1}{r_1}}$.  Therefore,
\begin{align*}
    \| \mathbf{\widehat \Pi } - \mathbf{\Pi}\mathcal{P} \|_{2,\infty} &= \| \uhat (\uhat\pure)\inv - \U (\U\pure)\inv \mathcal{P} \|_{2,\infty} \\
    &= \| \uhat (\uhat\pure)\inv - \U \mathbf{W} (\mathcal{P}\t\U\pure  \mathbf{W} )\inv \|_{2,\infty} \\
    &\leq \| \big(\uhat - \U \mathbf{W} \big) (\uhat\pure)\inv \|_{2,\infty} + \| \U \mathbf{W} \big( (\uhat\pure)\inv - (\mathcal{P}\t\U\pure  \mathbf{W} )\inv \big) \|_{2,\infty} \\
    &\leq \| \uhat - \U \mathbf{W} \|_{2,\infty} \| (\uhat\pure)\inv \| + \| \mathbf{\Pi} \U\pure \mathbf{W} \big( (\uhat\pure)\inv - (\mathcal{P}\t\U\pure  \mathbf{W} )\inv \big) \|_{2,\infty} \\
    &\leq \eps \| (\uhat\pure)\inv \| + \| \mathbf{\Pi} \|_{\infty \to \infty} \|  \U\pure \mathbf{W} \big( (\uhat\pure)\inv - (\mathcal{P}\t\U\pure  \mathbf{W} )\inv \big) \|_{2,\infty} \\
    &\leq \eps \| (\uhat\pure)\inv \| + \| \U\pure \mathbf{W} \big( (\mathcal{P}\t \uhat\pure)\inv - (\U\pure \mathbf{W} )\inv  \big) \mathcal{P} \|_{2,\infty} \\
    &\leq \eps \| (\uhat\pure)\inv \| + \| \big(\U\pure \mathbf{W}  (\mathcal{P} \uhat\pure)\inv - \mathbf{I}_{r_k}  \big) \mathcal{P} \|_{2,\infty} \\
    &\leq \eps \| (\uhat\pure)\inv \| + \| \big(\U\pure \mathbf{W}  - \mathcal{P} \uhat\pure  \big)   (\mathcal{P} \uhat\pure)\inv \mathcal{P} \|_{2,\infty} \\
    &\leq \eps \| (\uhat\pure)\inv \| + \| \mathcal{P}\t \U\pure \mathbf{W}  -  \uhat\pure \|_{2,\infty} \| (\mathcal{P} \uhat\pure)\inv  \|_{2,\infty} \\
    &\leq 2 \eps\sqrt{p_1/r_1} \\
    &\lesssim  \frac{\kappa \sqrt{r_1 \log(p)}}{\lambda/\sigma} \sqrt{\frac{p_1}{r_1}} \\
    &\asymp \frac{\kappa \sqrt{p_1 \log(p)}}{\lambda/\sigma}.
\end{align*}
Therefore, all that remains is to apply \cref{lem:relationship}.  First, we need to check that the condition
\begin{align*}
    \lambda/\sigma \gtrsim \kappa p \sqrt{\log(p)}/p_{\min}^{1/4}
\end{align*}
holds; by \cref{lem:relationship} this is equivalent to the condition
\begin{align*}
    \Delta/\sigma \gtrsim \frac{\kappa p \sqrt{\log(p)}}{p_{\min}^{1/4}} \frac{\sqrt{r_1 r_2 r_3}}{\sqrt{p_1 p_2 p_3}},
\end{align*}
which is in Assumption \ref{assumption:signalstrength}.  \textcolor{black}{Similarly, the upper bound on the number of iterations applies by substituting $\Delta/\sigma \frac{(p_1p_2p_3)^{1/2}}{(r_1r_2r_3)^{1/2}}$ for the quantity $\lambda/\sigma$ and adjusting the constant $c$ in the exponential.} Finally, by \cref{lem:relationship}, we obtain the final upper bound
\begin{align*}
      \| \mathbf{\widehat \Pi } - \mathbf{\Pi}\mathcal{P} \|_{2,\infty} &\lesssim \frac{\kappa \sigma\sqrt{r_1 r_2 r_3  \log(p)}}{\Delta (p_{-1})^{1/2}},
\end{align*}
as desired. 
\end{proof}
\subsection{Proof of \cref{cor:averagecase}}

\begin{proof}[Proof of \cref{cor:averagecase}]
Fix an index $k$, and let $\mathcal{P}$ denote the permutation  matrix from \cref{thm:estimation}.  Then it holds that
\begin{align*}
    \inf_{\mathrm{Permutations }\ \mathcal{P}}\|  \big(\mathbf{\widehat \Pi}_k - \mathbf{\Pi}_k \mathcal{P} \big)_{i\cdot} )_{i\cdot} \|_1 &\leq \sqrt{r_k} \| \big(\mathbf{\widehat \Pi}_k - \mathbf{\Pi}_k \mathcal{P} \big)_{i\cdot} \|_2 \\
    &\leq \frac{r^2 \kappa \sqrt{\log(p)}}{(\Delta/\sigma) (p_{-k})^{1/2}}.
\end{align*}
Averaging over the rows completes the proof.
\end{proof}

\section{Auxiliary Probabilistic Lemmas} 
 \label{sec:twoinftyaux}
  \begin{lemma}\label{lem:rowbound}
 Let $\mathbf{A}$ be any fixed matrix independent from $e_m\t \mathbf{Z}_k$.  Then there exists an absolute constant $C > 0$ such that with probability at least $1 - O(p_{\max}^{-20})$,
 \begin{align*}
     \| e_m\t \mathbf{Z}_k \mathbf{A} \| &\leq C \sigma \sqrt{p_{-k} \log(p_{\max})}\| \mathbf{A} \|_{2,\infty}.
 \end{align*}
 \end{lemma}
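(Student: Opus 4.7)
The plan is to reinterpret $\|e_m^\top \mathbf{Z}_k \mathbf{A}\|^2$ as a quadratic form in the subgaussian vector $\mathbf{Y} := \mathbf{Z}_k^\top e_m \in \mathbb{R}^{p_{-k}}$ and then appeal to the Hanson-Wright inequality. Since the entries of $\mathcal{Z}$ are independent mean-zero with $\|\mathcal{Z}_{ijk}\|_{\psi_2}\le \sigma$, the coordinates of $\mathbf{Y}$ are independent mean-zero $\sigma$-subgaussian; the independence of $\mathbf{A}$ from $e_m^\top \mathbf{Z}_k$ lets us condition on $\mathbf{A}$ and treat it as deterministic. Writing $\mathbf{M} := \mathbf{A}\mathbf{A}^\top$, we obtain $\|e_m^\top \mathbf{Z}_k \mathbf{A}\|^2 = \mathbf{Y}^\top \mathbf{M} \mathbf{Y}$ with $\mathbb{E}[\mathbf{Y}^\top \mathbf{M} \mathbf{Y}] \le c\sigma^2 \operatorname{tr}(\mathbf{M}) = c\sigma^2 \|\mathbf{A}\|_F^2$.

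The next step is to translate the relevant operator- and Frobenius-norm quantities for $\mathbf{M}$ into bounds involving $\|\mathbf{A}\|_{2,\infty}$ and $p_{-k}$. Because $\mathbf{A}$ has $p_{-k}$ rows, the trivial estimate $\|\mathbf{A}\|_F^2 = \sum_{j=1}^{p_{-k}} \|\mathbf{A}_{j\cdot}\|^2 \le p_{-k}\|\mathbf{A}\|_{2,\infty}^2$ gives $\operatorname{tr}(\mathbf{M}) \le p_{-k}\|\mathbf{A}\|_{2,\infty}^2$, while $\|\mathbf{M}\| = \|\mathbf{A}\|^2 \le \|\mathbf{A}\|_F^2 \le p_{-k}\|\mathbf{A}\|_{2,\infty}^2$, and $\|\mathbf{M}\|_F^2 = \operatorname{tr}(\mathbf{M}^2) \le \|\mathbf{M}\|\operatorname{tr}(\mathbf{M}) \le p_{-k}^2\|\mathbf{A}\|_{2,\infty}^4$.

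Now I would invoke the Hanson-Wright inequality (e.g., Theorem 6.2.1 of Vershynin) in the form
\[
\mathbb{P}\!\left(\bigl|\mathbf{Y}^\top \mathbf{M}\mathbf{Y} - \mathbb{E}\mathbf{Y}^\top \mathbf{M}\mathbf{Y}\bigr| > t\right) \le 2\exp\!\left(-c\min\!\left\{\frac{t^2}{\sigma^4 \|\mathbf{M}\|_F^2}, \frac{t}{\sigma^2 \|\mathbf{M}\|}\right\}\right)
\]
and choose $t = C\sigma^2 p_{-k}\log(p_{\max})\|\mathbf{A}\|_{2,\infty}^2$. With the bounds of the previous paragraph, the linear term in the $\min$ contributes $cC\log(p_{\max})$, which dominates the quadratic term once $C$ is at most of order $\log(p_{\max})$, so by taking $C$ a sufficiently large absolute constant the tail probability is at most $2p_{\max}^{-20}$. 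On the complementary event, $\mathbf{Y}^\top\mathbf{M}\mathbf{Y} \le (c + C)\sigma^2 p_{-k}\log(p_{\max})\|\mathbf{A}\|_{2,\infty}^2$, and taking square roots yields the stated bound.

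No step here is a real obstacle; the only subtlety is making sure the independence of $\mathbf{A}$ from $e_m^\top \mathbf{Z}_k$ (which in the paper's applications will be arranged via the leave-one-out construction so that $\mathbf{A}$ is either deterministic or independent of the nonzero entries of $\mathbf{Z}_k - \mathbf{Z}_k^{j-m}$) is explicitly invoked before applying Hanson-Wright. Alternative arguments — such as a direct $\varepsilon$-net over the unit sphere in the range of $\mathbf{A}^\top$ — would pick up an extraneous $\sqrt{r}$ factor because one would need to cover an $r$-dimensional sphere, so Hanson-Wright (which is dimension-free once the trace and operator norms of $\mathbf{M}$ are controlled) is the natural tool.
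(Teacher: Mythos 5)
Your proof is correct. Let me note how it relates to the paper's argument and to the paper's own treatment of the analogous lemma.

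The paper's ``proof'' of this lemma is simply a citation to Lemma~12 of \citet{cai_subspace_2021}, so you have produced a self-contained argument where the paper delegates. Your route is the Hanson--Wright inequality applied to the quadratic form $\mathbf{Y}^\top \mathbf{M}\mathbf{Y}$ with $\mathbf{Y} = \mathbf{Z}_k^\top e_m$ and $\mathbf{M} = \mathbf{A}\mathbf{A}^\top$; the key observation that makes this dimension-free is exactly the one you make, namely that $\operatorname{tr}(\mathbf{M})$, $\|\mathbf{M}\|$, and $\|\mathbf{M}\|_F^2$ are all controlled by $p_{-k}\|\mathbf{A}\|_{2,\infty}^2$ (or its square), so the $\min$ in the Hanson--Wright exponent is always decided by the linear term and gives $\gtrsim C\log(p_{\max})$. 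One small wording issue: you say the linear term ``dominates the quadratic term once $C$ is at most of order $\log(p_{\max})$,'' but in fact the linear exponent $C\log(p_{\max})$ is always $\leq C^2\log^2(p_{\max})$ for $C \geq 1$, so the linear term always decides the minimum; this only helps you. The paper itself, when it \emph{does} give a self-contained proof of the closely related Lemma~\ref{lem:matricizationrowbound}, takes a genuinely different route: it views $e_m^\top\mathbf{Z}_k\mathbf{A} = \sum_l (\mathbf{Z}_k)_{ml}\,\mathbf{A}_{l\cdot}$ as a sum of independent random vectors and applies the matrix Bernstein inequality (Proposition 2 of \citet{koltchinskii2011nuclear}), with variance proxy $\sigma^2\|\mathbf{A}\|_F^2 \leq \sigma^2 p_{-k}\|\mathbf{A}\|_{2,\infty}^2$ and uniform term $\sigma\|\mathbf{A}\|_{2,\infty}$. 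Both tools land on the same rate. Your closing remark about $\varepsilon$-nets over the range of $\mathbf{A}^\top$ picking up a $\sqrt{r}$ factor is accurate, but it slightly overstates the uniqueness of Hanson--Wright: the vector-valued Bernstein route also avoids any $\sqrt{r}$ loss, since the dimension enters there only logarithmically, and that is in fact the paper's preferred tool in the analogous situation.
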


 \begin{proof}
 This follows from \cite{cai_subspace_2021}, Lemma 12.
 \end{proof}
 
 \begin{lemma}\label{lem:matricizationrowbound}
 Let $\mathbf{A}$ be a matrix independent from $\mathbf{Z}_k - \mathbf{Z}_k^{j-m}$, where $\mathbf{Z}_k^{j-m}$ is defined in \cref{sec:fullproof}.  Then there exists an absolute constant $C > 0 $ such that with probability at least $1 - O(p_{\max}^{-30})$,
 \begin{align*}
      \| \bigg(\mathbf{Z}_k - \mathbf{Z}_k^{j-m} \bigg) \mathbf{A} \| &\leq C \sigma \sqrt{p_{-j} \log(p_{\max})}\| \mathbf{A} \|_{2,\infty}.
 \end{align*}
 \end{lemma}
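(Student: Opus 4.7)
The plan is to split the argument into two cases according to whether $k=j$ or $k \neq j$, and in the harder second case reduce the spectral-norm bound to a standard $\varepsilon$-net argument for an unstructured rectangular subgaussian matrix.

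When $k=j$, the matrix $\mathbf{Z}_k - \mathbf{Z}_k^{k-m}$ agrees with $\mathbf{Z}_k$ on its $m$-th row and is zero elsewhere. Thus $(\mathbf{Z}_k - \mathbf{Z}_k^{k-m})\mathbf{A}$ has a single nonzero row, namely $e_m^{\top} \mathbf{Z}_k \mathbf{A}$, and its spectral norm equals the Euclidean norm of that row. Since $\mathbf{A}$ is independent of $e_m^{\top} \mathbf{Z}_k$, the previous Lemma \ref{lem:rowbound} immediately yields the bound with $p_{-j}=p_{-k}$.

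When $k \neq j$, the nonzero entries of $\mathbf{Z}_k - \mathbf{Z}_k^{j-m}$ are exactly those entries of $\mathcal{Z}$ with $i_j=m$, arranged across $p_{-k-j}=p_{-k}/p_j$ columns of the $k$-mode matricization. I would collect these entries into a $p_k \times p_{-k-j}$ matrix $\mathbf{B}$ with i.i.d.\ mean-zero $\sigma$-subgaussian entries, and write $(\mathbf{Z}_k - \mathbf{Z}_k^{j-m})\mathbf{A} = \mathbf{B}\mathbf{A}'$, where $\mathbf{A}'$ is the $p_{-k-j} \times c$ submatrix of $\mathbf{A}$ selected by the rows corresponding to $i_j=m$. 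By hypothesis $\mathbf{A}$, and hence $\mathbf{A}'$, is independent of $\mathbf{B}$. Let $\mathbf{A}' = \mathbf{U}_A \mathbf{\Sigma}_A \mathbf{V}_A^{\top}$ be its thin SVD with $r' = \operatorname{rank}(\mathbf{A}') \leq p_{-k-j}$; then $\|\mathbf{B}\mathbf{A}'\| \leq \|\mathbf{B}\mathbf{U}_A\|\,\|\mathbf{A}'\|$ and $\mathbf{U}_A$ is independent of $\mathbf{B}$.

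For the first factor I would combine the fact that for any deterministic unit $\mathbf{v}\in\mathbb{R}^{r'}$ the vector $\mathbf{B}\mathbf{U}_A \mathbf{v}$ has i.i.d.\ $\sigma$-subgaussian entries with a $1/4$-net over the unit sphere in $\mathbb{R}^{r'}$ of cardinality at most $9^{r'}$, obtaining $\|\mathbf{B}\mathbf{U}_A\| \lesssim \sigma(\sqrt{p_k} + \sqrt{r'} + \sqrt{\log p_{\max}})$ with probability at least $1 - O(p_{\max}^{-30})$. For the second factor, $\|\mathbf{A}'\| \leq \|\mathbf{A}'\|_F \leq \sqrt{p_{-k-j}}\,\|\mathbf{A}\|_{2,\infty}$. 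Multiplying and using $p_k p_{-k-j} = p_{-j}$ gives
\[
\|(\mathbf{Z}_k-\mathbf{Z}_k^{j-m})\mathbf{A}\| \;\lesssim\; \sigma\bigl(\sqrt{p_{-j}} + \sqrt{(r'+\log p_{\max})\,p_{-k-j}}\bigr)\|\mathbf{A}\|_{2,\infty},
\]
which is at most $C\sigma\sqrt{p_{-j}\log p_{\max}}\,\|\mathbf{A}\|_{2,\infty}$ whenever $r' \lesssim p_k \log p_{\max}$, a condition satisfied in every invocation of this lemma (in all uses $\mathbf{A}$ is either an orthonormal factor, a projection of rank $r_{-k}$, or $\mathbf{V}_k$, so $r' = O(r)$).

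The main obstacle is really bookkeeping: identifying the correct reduction in Case 2 so that the random matrix $\mathbf{B}$ isolates only the independent noise entries tied to the slice $i_j=m$, and ensuring the resulting $\mathbf{A}'$ is independent of $\mathbf{B}$ while still obeying $\|\mathbf{A}'\|_{2,\infty} \leq \|\mathbf{A}\|_{2,\infty}$. Once this reduction is carried out, the tail bound is completely standard.
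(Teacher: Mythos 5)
Your proposal is correct and does prove the bound, but by a genuinely different route than the paper. The $k=j$ branch is identical (reduce to Lemma~\ref{lem:rowbound}). In the $k\neq j$ branch, the paper instead bounds $\|(\mathbf{Z}_k-\mathbf{Z}_k^{j-m})\mathbf{A}\|$ by $\sqrt{p_k}$ times the $\ell_{2,\infty}$ norm, writes each row of the product as a sum $\sum_{l\in\Omega}(\mathbf{Z}_k)_{ql}\mathbf{A}_{l\cdot}$ over the $p_{-k-j}$ nonzero columns, applies matrix Bernstein to each such row, and then union bounds over the $p_k$ rows; the identity $p_k\,p_{-k-j}=p_{-j}$ produces the stated bound directly. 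You instead factor the product as $\mathbf{B}\mathbf{A}'$, split off the orthonormal factor $\mathbf{U}_A$ from the SVD of $\mathbf{A}'$, control $\|\mathbf{B}\mathbf{U}_A\|$ by an $\varepsilon$-net over the sphere in $\mathbb{R}^{r'}$, and control $\|\mathbf{A}'\|$ by the Frobenius-norm bound $\sqrt{p_{-k-j}}\,\|\mathbf{A}\|_{2,\infty}$. Both approaches are valid and produce the same rate, and your bookkeeping (including the identification of the $p_k\times p_{-k-j}$ block $\mathbf{B}$ and the independence of $\mathbf{A}'$ from $\mathbf{B}$) is right.

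The one substantive difference is scope: your $\varepsilon$-net route yields the claimed bound only under the extra hypothesis $\operatorname{rank}(\mathbf{A}')\lesssim p_k\log p_{\max}$, which the lemma as stated does not assume. You flag this and note it holds in every invocation (where $\mathbf{A}$ is always a low-rank projection or a thin orthonormal factor), which is accurate, but it does mean you are proving a slightly weaker statement than what the paper proves. The paper's row-by-row argument, being insensitive to the rank of $\mathbf{A}$, gives the lemma as written for free. One minor inaccuracy to fix: the entries of $\mathbf{B}$ are independent with $\psi_2$ norm uniformly bounded by $\sigma$, but they are not assumed i.i.d.\ (the noise is allowed to be heteroskedastic); this does not affect your $\varepsilon$-net bound, which only needs independence and a uniform subgaussian bound, but the phrasing should be corrected.
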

 
 \begin{proof}
 If $j = k$, the result follows by Lemma \ref{lem:rowbound}.  Therefore, we restrict our attention to when $j \neq k$.  First, note that 
 \begin{align*}
     \| \bigg( \mathbf{Z}_k  - \mathbf{Z}_k^{j-m} \bigg) \mathbf{A} \| &\leq \sqrt{p_k}   \| \bigg( \mathbf{Z}_k  - \mathbf{Z}_k^{j-m} \bigg) \mathbf{A} \|_{2,\infty}.
 \end{align*}
 Next, consider any fixed row of $\mathbf{Z}_k - \mathbf{Z}_k^{j-m}$.  Observe that the $q$'th row can be written as
 \begin{align*}
     \sum_{\Omega} \big(\mathbf{Z}_k \big)_{ql} \mathbf{A}_{l\cdot},
 \end{align*}
 where the set $\Omega$ consists of the $p_{-k-j}$ random variables in the $q$'th row of $\mathbf{Z}_k  - \mathbf{Z}_k^{j-m}$.   Note that this is a sum of independent random matrices.  By the matrix Bernstein inequality (Proposition 2 of \cite{koltchinskii2011nuclear}), it holds that with probability at least $1 - p_{\max}^{-31}$ that
 \begin{align*}
     \| \sum_{\Omega} \big(\mathbf{Z}_k \big)_{ql} \mathbf{A}_{l\cdot} \| &\leq C \max\bigg\{ \sigma_{Z} \sqrt{p_{-k-j}\log(p_{\max})}, U_{Z}\log(p)  \bigg\}
 \end{align*}
 where
 \begin{align*}
     \sigma_Z^2 &\coloneqq\max_l \max\bigg\{ \bigg\| \E \big(  \big(\mathbf{Z}_k \big)_{ql} \mathbf{A}_{l\cdot} \big)\big( \big(\mathbf{Z}_k \big)_{ql} \mathbf{A}_{l\cdot} \big)\t \bigg\|, \bigg\|  \E \big( \big(\mathbf{Z}_k \big)_{ql} \mathbf{A}_{l\cdot} \big)\t\big( \big(\mathbf{Z}_k \big)_{ql} \mathbf{A}_{l\cdot} \big) \bigg\| \bigg\};\\
     U_Z &\coloneqq \max_l \| (\mathbf{Z}_k)_{ql} A_{l\cdot} \|_{\psi_2}
 \end{align*}
 (Note that Proposition 2 of \cite{koltchinskii2011nuclear} holds for IID random matrices, but the proof works equally as well if uniform bounds on $\sigma_Z$ and $U_Z$ are obtained).   Observe that
 \begin{align*}
  \bigg\|   \E \bigg[(\mathbf{Z}_k)_{ql} \mathbf{A}_{l\cdot}  \bigg] \bigg[ (\mathbf{Z}_k)_{ql} \mathbf{A}_{l\cdot}  \bigg]\t \bigg\| &\leq \sigma^2  \| \mathbf{A}_{l\cdot} \mathbf{A}_{l\cdot}\t \|\\
  &\leq \sigma^2  \| \mathbf{A} \|_{2,\infty}^2; \\
  \bigg\|   \E \bigg[ (\mathbf{Z}_k)_{ql} \mathbf{A}_{l\cdot}  \bigg]\t \bigg[ (\mathbf{Z}_k)_{ql} \mathbf{A}_{l\cdot}  \bigg] \bigg\| &\leq \sigma^2 \ \| \mathbf{A}_{l\cdot}\t \mathbf{A}_{l\cdot} \| \\
  &\leq \sigma^2  \| \mathbf{A} \|_{2,\infty}^2.
 \end{align*}
 Similarly, by subgaussianity of the entries of $\mathbf{Z}_k$,
 \begin{align*}
     \max_l \| (\mathbf{Z}_k)_{ql} \mathbf{A}_{l\cdot} \|_{\psi_2} &\leq C \sigma \| \mathbf{A} \|_{2,\infty}.
 \end{align*}
 Therefore, with probability at least $1 - p_{\max}^{-31}$, it holds that
 \begin{align*}
     \| \sum_{\Omega} \big(\mathbf{Z}_k \big)_{ql} \mathbf{A}_{l\cdot} \| &\leq C \max\bigg\{ \sigma_{Z} \sqrt{p_{-k-j}\log(p_{\max})}, U_{Z}\log(p_{\max})  \bigg\} \\
     &\leq C \sigma \| \mathbf{A} \|_{2,\infty} \max\bigg\{ \sqrt{p_{-k-j} \log(p_{\max})},\log(p_{\max}) \bigg\} \\
     &\leq C \sigma \| \mathbf{A} \|_{2,\infty} \sqrt{p_{-k-j} \log(p_{\max})}.
 \end{align*}
 Taking a union bound over all $p_k$ rows shows that this holds uniformly with probability at least $1 - O(p_{\max}^{-30})$.  Therefore,
 \begin{align*}
     \bigg\| \bigg( \mathbf{Z}_k  - \mathbf{Z}_k^{j-m} \bigg) \mathbf{A} \| &\leq \sqrt{p_k}  \| \bigg( \mathbf{Z}_k  - \mathbf{Z}_k^{j-m} \bigg) \mathbf{A} \bigg\|_{2,\infty} \\
     &\leq C \sigma \|\mathbf{A} \|_{2,\infty} \sqrt{p_{-j} \log(p_{\max})}
 \end{align*}
 as desired.
 \end{proof}

\begin{lemma}\label{lem:taubound}
Suppose $\mathcal{Z} \in \R^{p_1 \times p_2 \times p_3}$ is a tensor with mean-zero subgaussian entries, each with $\psi_2$ norm bounded by $1$.  Suppose that $r_2 r_3 \leq p_1 r_1$. Then for some universal constant $C$, the following holds with probability at least $1 - c \exp(- c p_{\max})$:
\begin{align*}
    \sup_{\substack{\| \U_1\| = 1, \mathrm{rank}(\U_1) \leq 2r_1 \\, \| \U_2\| = 1, \mathrm{rank}(\U_2) \leq 2r_2}} \| \mathbf{Z} \bigg( \mathcal{P}_{\U_1} \otimes \mathcal{P}_{\U_2} \bigg) \| &\leq C \sqrt{p_{\max}r_{\max}}.
\end{align*}
\end{lemma}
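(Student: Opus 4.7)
The plan is a standard $\varepsilon$-net argument over the Stiefel manifolds parametrizing the low-rank projections, combined with a sharp subgaussian spectral-norm bound on the reduced matrix $\mathbf{Z}(\U_1 \otimes \U_2)$ for each fixed pair $\U_1, \U_2$. Throughout I will write $\mathbf{Z}$ for the relevant matricization of $\mathcal{Z}$, and without loss of generality I will assume $\U_1, \U_2$ have orthonormal columns (replacing rank-$\leq 2r_i$ projections by orthonormal matrices of appropriate dimension).

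First I would reduce the supremum to a cleaner form. Since $\mathcal{P}_{\U_1}\otimes\mathcal{P}_{\U_2} = (\U_1\otimes\U_2)(\U_1\otimes\U_2)^\top$ whenever $\U_1,\U_2$ have orthonormal columns, and spectral norm is invariant under right multiplication by an isometry, I have
\begin{equation*}
\sup_{\U_1,\U_2}\|\mathbf{Z}(\mathcal{P}_{\U_1}\otimes\mathcal{P}_{\U_2})\|
=\sup_{\U_1,\U_2}\|\mathbf{Z}(\U_1\otimes\U_2)\|,
\end{equation*}
where the sup is over orthonormal $\U_1 \in \R^{p_2\times 2r_1}$ and $\U_2\in\R^{p_3\times 2r_2}$ (with dimensions adjusted to whichever mode is of interest).

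The next step is a pointwise bound. For any fixed orthonormal $\U_1,\U_2$, the columns of $\U_1\otimes\U_2$ form an orthonormal family in $\R^{p_2 p_3}$, so the entries of $\mathbf{Z}(\U_1\otimes\U_2) \in \R^{p_1\times 4r_1 r_2}$ are subgaussian linear combinations of the independent entries of $\mathcal{Z}$, each with subgaussian norm bounded by a constant. By the standard subgaussian matrix deviation inequality (e.g.\ Theorem 4.4.5 in \citet{vershynin_high-dimensional_2018}), for any $u\geq 0$,
\begin{equation*}
\p\Bigl(\|\mathbf{Z}(\U_1\otimes\U_2)\|\ \geq\ C_1\bigl(\sqrt{p_1}+\sqrt{r_1 r_2}+u\bigr)\Bigr)\ \leq\ 2\exp(-u^2).
\end{equation*}
Under the hypothesis $r_2 r_3 \lesssim p_1 r_1$ (or the analogous condition for the general mode), $\sqrt{p_1}+\sqrt{r_1 r_2}\lesssim \sqrt{p_{\max}r_{\max}}$, so the deterministic part is already of the desired size.

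The bulk of the work is the $\varepsilon$-net step. The Stiefel manifold $V_{2r}(\R^p)$ admits an $\varepsilon$-net $\mathcal{N}_\varepsilon$ in the operator norm of cardinality at most $(C/\varepsilon)^{2pr}$. Taking a product net $\mathcal{N}_1\times\mathcal{N}_2$ with $\varepsilon = 1/4$ (say), I union-bound the pointwise bound over $|\mathcal{N}_1||\mathcal{N}_2|\leq \exp(C(p_2 r_1 + p_3 r_2))$ pairs, choosing $u^2 = C_2(p_2 r_1 + p_3 r_2 + p_{\max})$. This gives
\begin{equation*}
\sup_{(\U_1,\U_2)\in\mathcal{N}_1\times\mathcal{N}_2}\|\mathbf{Z}(\U_1\otimes\U_2)\|\ \leq\ C_3\sqrt{p_{\max}r_{\max}}
\end{equation*}
with probability at least $1-c\exp(-c p_{\max})$, since $p_2 r_1+p_3 r_2 \lesssim p_{\max}r_{\max}$ dominates $\sqrt{p_1}+\sqrt{r_1 r_2}$ in our regime. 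Then I would use a standard approximation/peeling argument: given arbitrary orthonormal $\U_1,\U_2$, pick nearest $\tilde\U_1\in\mathcal{N}_1,\tilde\U_2\in\mathcal{N}_2$, write
\begin{equation*}
\U_1\otimes\U_2 = \tilde\U_1\otimes\tilde\U_2 + (\U_1-\tilde\U_1)\otimes\U_2 + \tilde\U_1\otimes(\U_2-\tilde\U_2),
\end{equation*}
bound the two difference terms by the supremum itself times $\varepsilon$ (after noting the differences still have bounded operator norm and low rank), and solve the resulting self-bounding inequality $S\leq C_3\sqrt{p_{\max}r_{\max}}+2\varepsilon S$ to obtain $S\leq 2C_3\sqrt{p_{\max}r_{\max}}$.

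The main obstacle is the self-bounding approximation step: the low-rank structure must be preserved under the decomposition so that differences $\U_1-\tilde\U_1$ still live in a set whose spectral norm is controlled by the same supremum. This is handled by restricting the nets to the correct Stiefel manifolds and observing that $(\U_1-\tilde\U_1)\otimes\U_2$ admits an SVD of rank at most $2r_1$, so the quantity $\|\mathbf{Z}((\U_1-\tilde\U_1)\otimes\U_2)\|$ can be absorbed into $\varepsilon\cdot S$ after normalizing. The remaining details are routine bookkeeping of constants.
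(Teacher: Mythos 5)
Your proof is correct, and it essentially reconstructs the argument behind the results the paper relies on: the paper does not prove this lemma itself but simply cites Lemma 8 of \citet{han_exact_2021} and Lemma 3 of \citet{zhang_optimal_2019}, both of which are established by exactly the kind of $\varepsilon$-net over low-dimensional projection subspaces combined with a pointwise subgaussian spectral bound that you describe. The one point to make explicit when writing this out is that the self-bounding step is legitimate because, by positive homogeneity together with an SVD factorization $\mathbf{V}=\mathbf{A}\mathbf{\Sigma}$ with $\mathbf{A}$ orthonormal and $\|\mathbf{\Sigma}\|\le 1$, the supremum over $\{\|\mathbf{V}\|\le 1,\ \mathrm{rank}(\mathbf{V})\le 2r\}$ coincides with the supremum over orthonormal frames with $2r$ columns, so the rescaled differences $\varepsilon^{-1}(\U_i-\tilde\U_i)$ genuinely lie in the same parameter class you are taking the supremum over.
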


\begin{proof}
See Lemma 8 of \citet{han_exact_2021} or Lemma 3 of \citet{zhang_optimal_2019}.
\end{proof}

\begin{lemma}\label{lem:Egood}
Define $\mathcal{E}_{\mathrm{Good}}$ as in \eqref{Egood}.  Then under the conditions of \cref{thm:twoinfty}, it holds that $\p(\mathcal{E}_{\mathrm{Good}}) \geq 1 - O(p^{-30})$.      
\end{lemma}

\begin{proof}
Recall the definition of $\mathcal{E}_{\mathrm{Good}}$:
\begin{align*}
    \mathcal{E}_{\mathrm{Good}} &\coloneqq \bigg\{ \max_k \tau_k \leq C \sqrt{pr} \bigg\} \bigcap \bigg\{ \| \sin\Theta(\uhat_k^{(t)}, \U_k ) \| \leq \frac{\dl\ku}{\lambda} + \frac{1}{2^{t}} \text{ for all $t \leq t_{\max}$ and $1\leq k \leq 3$ } \bigg\} \\
    &\qquad \bigcap \bigg\{ \max_k \bigg\| \U_k\t \mathbf{Z}_k \mathbf{V}_k \bigg\| \leq C \left( \sqrt{r} + \sqrt{\log(p)} \right) \bigg\}; \\
    &\qquad \bigcap \bigg\{ \max_k  \bigg\| \U_k\t \mathbf{Z}_k \mathcal{P}_{\U_{k+1}}  \otimes   \mathcal{P}_{\U_{k+2}}  \bigg\| \leq C \left( r + \sqrt{\log(p)} \right) \bigg\}; \\
    &\qquad \bigcap \bigg\{ \max_k \bigg\| \mathbf{Z}_k \mathbf{V}_k \bigg\| \leq C \sqrt{p_k} \bigg\}. \numberthis \label{Egood2}
    \end{align*}
Define the events
\begin{align*}
    \mathcal{E}_1 :&= \bigg\{ \max_k \tau_k \leq C \sqrt{pr} \bigg\}; \\
        \mathcal{E}_2 :&= \bigg\{ \| \sin\Theta(\uhat_k^{(t)}, \U_k ) \| \leq \frac{\dl\ku}{\lambda} + \frac{1}{2^{t}} \text{ for all $t \leq t_{\max}$ and $1\leq k \leq 3$ } \bigg\} ;\\
        \mathcal{E}_3 :&= \bigg\{ \max_k  \bigg\| \U_k\t \mathbf{Z}_k \mathcal{P}_{\U_{k+1}}  \otimes   \mathcal{P}_{\U_{k+2}}  \bigg\| \leq C \left( r + \sqrt{\log(p)} \right) \bigg\};\\
    \mathcal{E}_4 :&= \bigg\{ \max_k \bigg\| \U_k\t \mathbf{Z}_k \mathbf{V}_k \bigg\| \leq C \left( \sqrt{r} + \sqrt{\log(p)} \right) \bigg\}; \\
    \mathcal{E}_5 :&=  \bigg\{ \max_k \bigg\| \mathbf{Z}_k \mathbf{V}_k \bigg\| \leq C \sqrt{p_k} \bigg\}.
\end{align*}
We aim to demonstrate that each event $\mathcal{E}_i$ holds with probability at least $1 - O(p^{-30})$ under the conditions of \cref{thm:twoinfty}, whence the result is complete via a union bound. First we will verify all events except $\mathcal{E}_2$, which we will do last.
\begin{itemize}
    \item \textbf{The event $\mathcal{E}_1$}: We recall that $\tau_k$ is defined via
    \begin{align*}
    \tau_k := \sup_{\substack{ \| \mathbf{U}_1 \| = 1, \mathrm{rank}(\U_1) \leq 2 r_{k+1}\\ \|\mathbf{U}_2\| =1, \mathrm{rank}(\U_2) \leq 2 r_{k+2}} } \|  \mathbf{Z}_k \bigg( \mathcal{P}_{\mathbf{U}_1} \otimes \mathcal{P}_{\mathbf{U}_2} \bigg)\|.
    \end{align*}
    Therefore, the result is implied by \cref{lem:taubound}.   
    \item \textbf{The event $\mathcal{E}_3$:} We note that it suffices to prove the bound for $k = 1$ since the right hand side is invariant to the index $k$.  First, we note that by properties of the Kronecker product and projection  matrices,
    \begin{align*}
        \| \U_1\t \mathbf{Z}_1 \mathcal{P}_{\mathbf{U}_2} \otimes \mathcal{P}_{\mathbf{U}_3} \| &= \| \U_1\t \mathbf{Z}_1 \big( \U_2 \otimes \U_3 \big) \|.
    \end{align*}
Next, let $x$ and $y$ be deterministic unit vectors of dimensions $r_1$ and $r_2 r_3$ respectively.  Observe that
\begin{align*}
    x\t \U_1\t \mathbf{Z}_1  \big( \U_2 \otimes \U_3 \big) y = \sum_{i=1}^{p_1} \sum_{j=1}^{p_2p_3} (\U_1 x)_{i} (\mathbf{Z}_1)_{ij} \bigg(\big( \U_2 \otimes \U_3 \big) y  \bigg)_j,
\end{align*}
which is a sum of independent random variables.  By Hoeffding's inequality for subgaussian random variables, it holds that
\begin{align*}
    \p\bigg\{ \big| x\t \U_1\t \mathbf{Z}_1  \big( \U_2 \otimes \U_3 \big) y \big| \geq t \bigg\} \leq 2 \exp\bigg\{ -c \frac{t^2}{\| \mathbf{U}_1 x\|^2 \| \mathbf{U}_2 \otimes \mathbf{U}_3 y \|^2} \bigg\}.
\end{align*}
By taking $t = C(r + \sqrt{\log(p)}) \| \U_1 x\| \| \U_2\otimes \U_3 y \|$, we have that
\begin{align*}
    \big| x\t \U_1\t \mathbf{Z}_1  \big( \U_2 \otimes \U_3 \big) y \big| &\leq C (r + \sqrt{\log(p)}) \| \U_1 x\| \| \U_2\otimes \U_3 y \| \\
    &\leq C(r + \sqrt{\log(p)})
\end{align*}
with probability at least $1 - 2 \exp\bigg( - C( r+ \sqrt{\log(p)} )^2 \bigg)$,  where we have used the fact that $\U_1$ and $\U_2\otimes \U_3$ are orthonormal matrices.  Since the bound above does not depend on $x$ and $y$, let $\mathcal{N}_1$ and $\mathcal{N}_2$ be $1/4$-nets  for the unit ball in $\mathbb{R}^{r_1}$ $\mathbb{R}^{r_2r_3}$ respectively. By Corollary 4.2.13 of \citet{vershynin_high-dimensional_2018}, we have
\begin{align*}
    | \mathcal{N}_1 | \leq 12^{r_1} \qquad | \mathcal{N}_2 | \leq 12^{r_2r_3}.
\end{align*}
Therefore, by taking a union bound, we see that
\begin{align*}
    \sup_{x \in \mathcal{N}_1, y \in \mathcal{N}_2} \big| x\t \U_1\t \mathbf{Z}_1  \big( \U_2 \otimes \U_3 \big) y \big| &\leq C\big( r + \sqrt{\log(p)}\big)
\end{align*}
with probability at least $1 - 12^{r_1 + r_2r_3} \exp\bigg( - C (r + \sqrt{\log(p)})^2 \bigg) \geq 1 - O(p^{-30})$ as long as $C$ is chosen sufficiently large.  Finally, by exercise 4.4.3 of \citet{vershynin_high-dimensional_2018} it holds that
\begin{align*}
    \| \U_1\t \mathbf{Z}_1 \big( \U_2 \otimes \U_3) \| &= \sup_{\|x\| = 1, \|y\|=1} | \big| x\t \U_1\t \mathbf{Z}_1  \big( \U_2 \otimes \U_3 \big) y \big| \\
    &\leq 2 \sup_{x \in \mathcal{N}_1, y \in \mathcal{N}_2} \big| x\t \U_1\t \mathbf{Z}_1  \big( \U_2 \otimes \U_3 \big) y \big|.
\end{align*}
This shows that $\p\big( \mathcal{E}_3 \big) \geq 1- O(p^{-30})$ as required.
\item \textbf{The events $\mathcal{E}_4$ and $\mathcal{E}_5$:} These bounds follow from the same arguments as the previous bound, only taking $1/4$ nets over different dimensions.  
\item \textbf{The event $\mathcal{E}_2$}: We will replicate the proof of Theorem 1 of \citet{zhang_tensor_2018}.  First, we note that by the proof of \cref{lem:spectraliniteigengapsloo} (which does not depend on the statement of this result), it holds that the initialization satisfies
\begin{align*}
    \| \sin\Theta(\mathbf{\hat U}_k^{(0)}, \U_k) \| \leq \kappa^2 \mu_0^2 \frac{r}{p_k} + \frac{\kappa \sqrt{p_k}}{\lambda} + \frac{(p_1p_2p_3)^{1/2}}{\lambda^2},
\end{align*}
which together with the assumption $\lambda \gtrsim \kappa \sqrt{\log(p)} p/p_{\min}^{1/4}$ implies that for each $k$,
\begin{align*}
    \| \sin\Theta(\mathbf{\hat U}_k^{(0)}, \U_k) \|  \leq \frac{1}{2} \leq \frac{C_0 \kappa\sqrt{p_k \log(p)}}{\lambda} + \frac{1}{2}. \numberthis \label{boundholdsguy}
\end{align*}
These bounds hold with probability at least $1 - O(p_{\max}^{-30})$. Let $\mathcal{E}$ be the event that \eqref{boundholdsguy} holds and that $\mathcal{E}_1$ holds.  We will show that deterministically on this event that our required bound holds.  The proof is by induction on $t$.  Suppose the bound holds up to some $t$.  Define
\begin{align*}
    \mathbf{\hat T}_1^{(t+1)} := \mathcal{M}_1 \big( \mathcal{\hat T} \times_2 (\U_2^{(t)})\t \times_3 (\U_3^{(t)})\t \big); \\
    \mathbf{T}_1^{(t)} := \mathcal{M}_1 \big( \mathcal{ T} \times_2 (\U_2^{(t)})\t \times_3 (\U_3^{(t)})\t \big); \\
     \mathbf{Z}_1^{(t)} := \mathcal{M}_1 \big( \mathcal{ Z} \times_2 (\U_2^{(t)})\t \times_3 (\U_3^{(t)})\t \big).
\end{align*}
It holds that
\begin{align*}
    \lambda_{r_1} \bigg( \mathbf{T}_1^{(t+1)} \bigg) &= \lambda_{r_1} \bigg( \mathbf{T}_1 ( \uhat_2^{(t)} \otimes \uhat_3^{(t)} \bigg) \\
    &= \lambda_{r_1} \bigg( \mathbf{T}_1 \U_2 \otimes \U_3 (\U_2 \otimes \U_3)\t ( \uhat_2^{(t)} \otimes \uhat_3^{(t)} \bigg) \\
    &\geq \lambda_{r_1} \bigg( \mathbf{T}_1 \U_2 \otimes \U_3 \bigg) \lambda_{\min} \bigg( (\U_2 \otimes \U_3)\t ( \uhat_2^{(t)} \otimes \uhat_3^{(t)} \bigg) \\
    &\geq \lambda_{r_1} \bigg( \mathbf{T}_1 \U_2 \otimes \U_3 \bigg) \lambda_{\min} \big( \U_2\t \uhat_2^{(t)} \big)  \lambda_{\min} \big( \U_3\t \uhat_3^{(t)} \big) \\
    &\geq \lambda (1 - \frac{1}{4}),
\end{align*}
where we have used the fact that $\lambda_{\min}(\U_2\t \uhat_2^{(t)}) = \sqrt{ 1 - \| \sin\Theta(\U_2,\uhat_2^{(t)}) \|^2}$.  Furthermore, we have that
\begin{align*}
    \| \mathbf{Z}_1^{(t)} \| &= \| \mathbf{Z}_1( \uhat_2^{(t)} \otimes \uhat_3^{(t)} \| \\
    &\leq \| \mathbf{Z}_1 \big( \U_2 \otimes \U_3) \| + \| \mathbf{Z}_1 ( \mathcal{P}_{\U_{2\perp}} \uhat_2^{(t)}) \otimes \mathcal{P}_{\U_3} \uhat_3^{(t)} \| \\
    &\quad + \| \mathbf{Z}_1 ( \mathcal{P}_{\U_{2}} \uhat_2^{(t)}) \otimes \mathcal{P}_{\U_{3\perp}} \uhat_3^{(t)} \| + \| \mathbf{Z}_1 ( \mathcal{P}_{\U_{2\perp}} \uhat_2^{(t)}) \otimes \mathcal{P}_{\U_{3\perp}} \uhat_3^{(t)} \| \\
    &\leq C \sqrt{p_1} +3 C \sqrt{p r} \| \sin\Theta(\uhat_2^{(t)}, \U_2) \| + C \sqrt{pr} \|\sin\Theta(\uhat_3^{(t)}, \U_3) \| \\
    &\quad +  C \sqrt{pr} \| \sin\Theta(\uhat_2^{(t)}, \U_2) \| \|\sin\Theta(\uhat_3^{(t)},\U_3) \| \\
    &\leq C \sqrt{p_1} + 3 C \sqrt{pr}\bigg( \frac{\delta^{(2)}_{L}}{\lambda} + \frac{\delta^{(3)}_{L}}{\lambda}+ \frac{1}{2^t} \bigg).
\end{align*}
therefore, since $\mathbf{T}_1^{(t)}$ and $\mathbf{T}_1$ have the same left singular vectors, Wedin's Theorem implies that
\begin{align*}
    \|\sin\Theta(\uhat^{(t+1)}_1,\U_1) \| &\leq \frac{C\sqrt{p_1}}{\lambda} + \frac{3 C \sqrt{pr}}{\lambda} \bigg( \frac{\dl^{(2)}}{\lambda} + \frac{\dl^{(3)}}{\lambda} \bigg) + \frac{3 C \sqrt{pr}}{\lambda} \frac{1}{2^t} \\
    &\leq \frac{C \sqrt{p_1}}{\lambda} + \frac{3 C C_0 \kappa \sqrt{pp_2 r \log(p)}}{\lambda} + \frac{3 C C_0 \kappa \sqrt{pp_3 r \log(p)}}{\lambda} + \frac{1}{2^{t+1}} \\
    &\leq \frac{C_0 \kappa \sqrt{p_1 \log(p)}}{\lambda} + \frac{1}{2^{t+1}},
\end{align*}
where the final inequality holds from the assumption that $\lambda \gtrsim \kappa \sqrt{\log(p)} p/p_{\min}^{1/4}$.  Arguing similarly for the other modes, we see that for all $t$ and $k$ it holds that
\begin{align*}
    \|\sin\Theta(\uhat_k^{(t)},\U_k) \| \leq \frac{\dl^{(k)}}{\lambda} + \frac{1}{2^t}
\end{align*}
with probability at least $1 - O(p^{-30})$, which completes the proof.
\end{itemize}
 \end{proof}


\bibliographystyle{tensor-perturb-2-infinity/plainnat_JA}
\bibliography{tensor-perturb-2-infinity/reference, tensor-perturb-2-infinity/tensors}

\end{document}